\documentclass[10pt,reqno]{amsart} 

\usepackage{appendix}
\usepackage{float} 

\usepackage{soul}

\usepackage{hhline}

\usepackage{yhmath}\usepackage{epsfig,amssymb,amsmath,version}
\usepackage{amssymb,version,graphicx,fancybox,mathrsfs}
\usepackage{mathtools}

\usepackage{enumitem}
\usepackage{bbm}
\usepackage{url,hyperref}

\usepackage{zref-clever}

\usepackage{subfigure}
\usepackage{color}
\usepackage{stmaryrd}
\usepackage{multirow}
\usepackage{booktabs,siunitx}
\usepackage{booktabs}
\usepackage{multicol,epstopdf}
\usepackage{xypic}
\usepackage[]{graphicx}
\usepackage[all]{xy}
\usepackage{tikz,ifthen}
\usetikzlibrary{positioning, math, decorations.markings, arrows.meta}
\usetikzlibrary{calc,shapes,cd}
\usetikzlibrary{knots} 
\usetikzlibrary{patterns}
\usepgfmodule{decorations}
\allowdisplaybreaks
\usetikzlibrary{arrows.meta,decorations.pathreplacing}

\tikzset{knotarrow/.pic={ \draw[edge, <-] (0,0) -- +(-.001,0);}}

\tikzset{edge/.style={line width=0.8}}
\tikzset{wall/.style={very thick}}

\tikzset{->-/.style n args={2}{decoration={markings, mark=at position #1 with {\arrow{#2}}}, postaction={decorate}}} 

\ExplSyntaxOn
\cs_new_eq:NN \ifstreqF \str_if_eq:nnF
\cs_new_eq:NN \ifstreqTF \str_if_eq:nnTF
\ExplSyntaxOff

\tikzset{-o-/.code 2 args={\ifstreqF{#2}{} 
{\ifstreqTF{#2}{>}
	{\pgfkeysalso{decoration={markings,mark=at position #1 with {\arrow[scale=0.8]{#2}}}
			,postaction={decorate}}
	}
	{\ifstreqTF{#2}{<}
		{\pgfkeysalso{decoration={markings,mark=at position #1 with {\arrow[scale=0.8]{#2}}}
				,postaction={decorate}}
		}
		{\pgfkeysalso{decoration={markings,
					mark=at position #1 with
					{\draw[black, fill={#2}] circle[radius=2pt];}}
				,postaction={decorate}}
		}
	}
}}}

\tikzset{
geod/.style={circle, thick, draw=black, fill=white, inner sep=1.5pt},
cusp/.style={circle, thick, draw=black, fill=white, inner sep=1.5pt},
bmark/.style={circle, thick, draw=black, fill=black, inner sep=1.5pt}
}

\allowdisplaybreaks[4]

\newtheorem{theorem}{Theorem}[section]
\newtheorem{lemma}[theorem]{Lemma}
\newtheorem{corollary}[theorem]{Corollary}
\newtheorem{proposition}[theorem]{Proposition}

\theoremstyle{definition}
\newtheorem{definition}{Definition}[section]

\newtheorem{remark}[definition]{Remark}

\newcommand{\Rlabel}[1]{\hyperref[R#1]{(R#1)}}

\newcommand{\bp}{\begin{proposition}}
\newcommand{\ep}{\end{proposition}}
\newcommand{\bpr}{\begin{proof}}
\newcommand{\epr}{\end{proof}}

\newcommand{\bt}{\begin{theorem}}
\newcommand{\et}{\end{theorem}}

\newcommand{\bl}{\begin{lemma}}
\newcommand{\el}{\end{lemma}}

\newcommand{\bcr}{\begin{corollary}}
\newcommand{\ecr}{\end{corollary}}

\newcommand{\be}{\begin{equation}}
\newcommand{\ee}{\end{equation}}

\newcommand{\bes}{\begin{equation*}}
\newcommand{\ees}{\end{equation*}}

\newcommand{\ba}{\begin{align}}
\newcommand{\ea}{\end{align}}

\newcommand{\bas}{\begin{align*}}
\newcommand{\eas}{\end{align*}}

\DeclareMathOperator{\im}{\mathrm{Im}}

\DeclareMathOperator{\rank}{\text{rank}}

\graphicspath{{./Figures/} } 

\newcommand{\vs}[0]{\vspace{2mm}}

\begin{document}

\bibliographystyle{amsalpha}

\title{Generalized quantum cluster structure of reduced L{\^e}--Roger--Yang skein algebras and their Frobenius maps}

\author[Min Huang]{Min Huang}
\address{Min Huang, School of Mathematics (Zhuhai), Sun Yat-sen University, Zhuhai, China.}
\email{huangm97@mail.sysu.edu.cn}

\author[Zhihao Wang]{Zhihao Wang}
\address{Zhihao Wang, School of Mathematics, Korea Institute for Advanced Study (KIAS), 85 Hoegi-ro, Dongdaemun-gu, Seoul 02455, Republic of Korea}
\email{zhihaowang@kias.re.kr}

\keywords{L{\^e}--Roger--Yang skein algebra, generalized quantum cluster algebra, Frobenius map}

\maketitle

\begin{abstract}
We establish a generalized quantum cluster algebra structure, in the sense
of Bai--Chen--Ding--Xu \cite{BCDX0}, on the reduced
L{\^e}--Roger--Yang skein algebra of every quasi-triangulable punctured
surface. We define Frobenius maps for generalized quantum cluster
algebras at roots of unity and use these maps, together with the cluster
algebra structure, to construct Frobenius maps for the (reduced)
L{\^e}--Roger--Yang skein algebras of all punctured surfaces. 
As applications of the Frobenius map, we determine the centers of both
the Muller--Roger--Yang skein algebras and the reduced L{\^e}--Roger--Yang skein algebras   at roots of unity, and
compute their ranks as modules over their centers.
\end{abstract}

\tableofcontents

\newcommand{\ca}{{\cev{a}  }}

\def\BZ{\mathbb Z}
\def\Id{\mathrm{Id}}
\def\Mat{\mathrm{Mat}}
\def\BN{\mathbb N}

\def \cb {\color{blue}}
\def \cred {\color{red}}
\def \cbf {\color{blue}\bf}
\def \credf {\color{red}\bf}
\definecolor{ligreen}{rgb}{0.0, 0.3, 0.0}
\def \cg {\color{ligreen}}
\def \cgf {\color{ligreen}\bf}
\definecolor{darkblue}{rgb}{0.0, 0.0, 0.55}
\def \dbf {\color{darkblue}\bf}
\definecolor{anti-flashwhite}{rgb}{0.55, 0.57, 0.68}
\def \afw {\color{anti-flashwhite}}
\def\cF{\mathcal F}
\def\cP{\mathcal P}
\def\embed{\hookrightarrow}
\def\pr{\mathrm{pr}}
\def\cV{\mathcal V}
\def\ot{\otimes}
\def\buu{{\mathbf u}}


\def \ri {{\rm i}}
\newcommand{\bs}[1]{\boldsymbol{#1}}
\newcommand{\cev}[1]{\reflectbox{\ensuremath{\vec{\reflectbox{\ensuremath{#1}}}}}}
\def\bS{\bar \fS}
\def\cE{\mathcal E}
\def\fB{\mathfrak B}
\def\cR{\mathcal R}
\def\cY{\mathcal Y}
\def\cS{\mathscr S}
\def\rS{\overline{\cS}_q}

\def\fS{\mathfrak{S}}

\def\MN {(M)}
\def\cN {\mathcal{N}}
\def\SL{{\rm SL}_n}

\def\bP{\mathbb P}
\def\bR{\mathbb R}

\def\rd{\overline{\cS}}

\def\SS{\cS_{q}(\fS)}
\def\rdS{\overline \cS_{q}(\fS)}
\def\rdP{\overline \cS_{q}(\mathbb{P}_4)}

\def\Vm{\mathcal V_{\text{mut}}}
\def\Vc{\mathcal V}

\def\Zq{\mathbb Z[q^{\pm \frac{1}{2}}]}
\def\Zpq{\mathbb Z_{\geq 0}[q^{\pm \frac{1}{2}}]}

\newcommand{\beq}{\begin{equation}}
\newcommand{\eeq}{\end{equation}}

\section{Introduction}\label{sec-intro}

Let $R$ be a commutative domain with a distinguished invertible element $q^{\frac{1}{2}}$.

\subsection{(Reduced) L{\^e}--Roger--Yang skein algebras}
The (Kauffman bracket) skein algebra of a surface was introduced independently by Przytycki and Turaev \cite{Przytycki1991,turaev1990conway}. It provides a quantization of the enhanced Teichm{\"u}ller space \cite{turaev1991skein,Bul97,przytycki1997skein,BW11}. L{\^e} subsequently generalized this construction by introducing the stated skein algebra, originally designed to construct the quantum trace map in \cite{le2018triangular}. It was later discovered that the stated skein algebra has many other interesting properties, including the existence of a splitting homomorphism \cite{le2018triangular}, a geometric realization of the quantum group $\mathcal O_q({\rm SL}_2)$ \cite{CLL}, and connections with cluster algebras \cite{muller2016skein,LY22}.

Roger and Yang introduced the Roger--Yang skein algebra \cite{roger2014skein} as a candidate quantization of the decorated Teichm{\"u}ller space. The results of \cite{moon2022consequences,li2025skein,bloomquist2025degenerations} provide further evidence supporting this interpretation. In the classical setting, the Roger--Yang skein algebra is closely related to cluster algebras \cite{karuo2025cluster,li2025skein,mandel2023bracelets}. By contrast, much less is known in the quantum setting, partly because a quantum cluster algebra structure in the sense of \cite{BZ} does not exist when the surface has interior punctures.

Bloomquist, Karuo, and L{\^e} unified the constructions of the stated skein algebra and the Roger--Yang skein algebra by introducing the L{\^e}--Roger--Yang skein algebra \cite{bloomquist2025degenerations}. This algebra is associated with a punctured surface $\fS$ as in Definition~\ref{def-punctured-surface}. An $\fS$-tangle $\alpha$ is a compact, unoriented, properly embedded one-dimensional submanifold of $\fS\times(-1,1)$ satisfying the conditions in Definition~\ref{def-tangle}. 
\begin{align}\label{intro-V}
\text{Let $V$ denote the set of boundary components of $\fS$ that are diffeomorphic to $S^1$.}
\end{align}
Define
\begin{align}\label{intro-R}
\widetilde{R}=R[v^{\frac{1}{2}}\mid v\in V].
\end{align}
A state of $\alpha$ is a map
$(\partial\alpha)\setminus\left(\bigcup_{v\in V}v\right)
\longrightarrow \{-,+\}.$
The \emph{L{\^e}--Roger--Yang skein algebra} $\cS_q(\fS)$ is defined as the quotient of the free $\widetilde{R}$-module generated by the isotopy classes of stated $\fS$-tangles by the relations \eqref{cross}--\eqref{unknot-RY}. Its multiplication is given by stacking stated $\fS$-tangles.
The \emph{reduced L{\^e}--Roger--Yang skein algebra} $\rd_q(\fS)$ is defined as the quotient of $\cS_q(\fS)$ by the ideal generated by the bad arcs depicted in Figure~\ref{fig:badarc}.

When $V=\emptyset$, the L{\^e}--Roger--Yang skein algebra $\cS_q(\fS)$ is the stated skein algebra in \cite{le2018triangular}. When $\fS$ contains no punctures, it is the Roger--Yang skein algebra in \cite{roger2014skein}.

\subsection{Generalized quantum cluster algebras}
Cluster algebras were introduced by Fomin and Zelevinsky \cite{FZ} with the primary goal of studying (dual) canonical bases. By definition, a cluster algebra is a commutative algebra endowed with a distinguished set of generators, known as \emph{cluster variables}. These variables are grouped into overlapping subsets called \emph{seeds}, which are interconnected via a fundamental operation termed \emph{mutation}. A cornerstone of this theory is the \emph{Laurent phenomenon}, which asserts that every cluster variable can be expressed as a Laurent polynomial in the variables of any given seed.

Motivated by the Teichm\"{u}ller theory of Riemann surfaces with orbifold points, Chekhov and Shapiro \cite{CS} introduced \emph{generalized cluster algebras}, a natural extension of the ordinary framework. In a broader context, Lam and Pylyavskyy \cite{LP} developed \emph{Laurent phenomenon algebras}, a further generalization designed specifically to preserve the Laurent phenomenon.

On the quantum side, Berenstein and Zelevinsky \cite{BZ} introduced quantum cluster algebras as quantizations of their classical counterparts. More recently, Bai, Chen, Ding, and Xu \cite{BCDX0} introduced the quantum analogue of generalized cluster algebras, termed \emph{generalized quantum cluster algebras} (see Definition \ref{def:qca}). Notably, both quantum cluster algebras and generalized quantum cluster algebras satisfy the quantum analogue of the Laurent phenomenon; see \cite{BZ,BCDX}. Additionally, fully non-commutative (generalized) cluster algebras arising from surfaces were introduced by Berenstein and Retakh in \cite{BR,BHR}.

Roughly speaking, a generalized quantum cluster algebra is constructed from an initial generalized quantum seed ${\bf s}=(A,{\bf h},\Lambda,\widetilde B)$ (see Definition \ref{def:generalizedQS}), where $A$ is a set of $m$ pairwise quasi-commuting variables, called \emph{cluster variables}, whose commutation relations are governed by $\Lambda$, and the \emph{exchange matrix} $\widetilde B$ is an $m\times n$ skew-symmetrizable matrix compatible with $\Lambda$ in the sense of Berenstein--Zelevinsky \cite{BZ}. New seeds are obtained through iterated applications of \emph{mutations}, which are determined by the string sequence ${\bf h}$ and the exchange matrix $\widetilde B$. The generalized quantum cluster algebra is then defined as the algebra generated by all cluster variables, coefficient strings and all frozen cluster variables with their inverses (see Definition \ref{def:qca}).

The quest to identify (quantum) cluster structures on significant algebraic and geometric objects has garnered substantial interest; see, for instance, \cite{FZ,S,SW21,CGGLS,GLSB,QY} and the references therein. Furthermore, cluster algebras and varieties have been extensively constructed within the context of Teichm\"{u}ller theory \cite{FG06,FST,GSV,GS19} and are intimately linked to skein theory \cite{muller2016skein,ishibashi2023skein,LY22,LY23,KimWang,Kim21,IY1,huang2025quantum,CHW2026}. Despite these advances, examples of generalized (quantum) cluster algebras arising from interesting algebraic and geometric objects remain scarce.

\subsection{Generalized quantum cluster structures on reduced L{\^e}--Roger--Yang skein algebras}
When $V=\emptyset$ (see \eqref{intro-V}) and $\fS$ has no interior punctures, a quantum cluster algebra structure on $\rd_q(\fS)$ was constructed in \cite{muller2016skein,LY22} using the so-called $\mathcal A$-quantum trace map; see \S\ref{sec-A-quantum-trace}.

Our goal is to construct an appropriate analogue of this quantum cluster structure on $\rd_q(\fS)$. 
When $V\neq\emptyset$ or $\fS$ has interior punctures, however, the standard quantum cluster algebra structure associated with $\fS$ is no longer available. To overcome this difficulty, we employ the generalized quantum cluster algebra framework (Definitions~\ref{def:generalizedQS} and \ref{def:qca}).

To construct a generalized quantum seed in $\rd_q(\fS)$, we begin with a quasi-triangulation of $\fS$, namely, a maximal collection of pairwise compatible admissible ideal arcs; see Definition~\ref{def-quasitri}. More precisely, an admissible ideal arc is either an embedding
$(0,1)\hookrightarrow \fS$
connecting two boundary punctures and not cutting out a disk, or an embedding
$[0,1)\hookrightarrow \fS$
connecting a boundary component in $V$ (see \eqref{intro-V}) to a boundary puncture. 

To each admissible ideal arc $\alpha$, we associate an element
$A_\alpha\in\rd_q(\fS);$
see \eqref{eq-D-X-q}. This element will serve as a cluster variable. For any two compatible admissible ideal arcs $\alpha$ and $\beta$ in $\fS$, there exists an integer $\Lambda_{\alpha,\beta}$, defined in \eqref{def--Lambda}, such that
\begin{equation}\label{intro-quasi-commutative}
A_\alpha A_\beta
=
q^{\Lambda_{\alpha,\beta}}A_\beta A_\alpha
\qquad\text{in }\rdS;
\end{equation}
see Proposition~\ref{prop:quasi-commutative}.

Let $\Delta$ be a quasi-triangulation of $\fS$, and set
$\Lambda^\Delta
=
\bigl(\Lambda_{\alpha,\beta}\bigr)_{\alpha,\beta\in\Delta}.$
We define the exchange matrix $B^\Delta$ by \eqref{def1-BB} and \eqref{def2-BB}. In Proposition~\ref{prop:compatible}, we prove that the pair $(\Lambda^\Delta,B^\Delta)$ is compatible in the sense of Definition~\ref{def:compatible pair}.

Following Definition~\ref{def:generalizedQS}, it remains to define the coefficient string
${\bf h}^\Delta
=
\bigl(\mathbf h_\gamma\mid\gamma\in\mathring{\Delta}\bigr),$
where $\mathring{\Delta}\subseteq\Delta$ consists of the admissible ideal arcs that are not isotopic to boundary components of $\fS$. Let $\mathring{\mathcal P}$ denote the set of interior punctures of $\fS$. For each $p\in\mathring{\mathcal P}$, let $\alpha_p$ denote the crossingless loop surrounding $p$, called the \emph{peripheral loop}. For each $\gamma\in\mathring{\Delta}$, define
\begin{equation}\label{intro-eq:h}
\mathbf h_\gamma
=
\begin{cases}
	(1,\alpha_p,1),
	& 
	\text{$\gamma$ is a loop enclosing an interior puncture }
	p\in\mathring{\mathcal P},\\
	(1,1),
	& \text{otherwise }.
\end{cases}
\end{equation}
We denote the skew-field of fractions of $\rdS$ by ${\rm Frac}(\rdS)$. We can now state the following result.

\begin{proposition}[Proposition~\ref{prop:mutation}]
\label{intro:mutation}
Let $\fS$ be a quasi-triangulable punctured surface, and let $\Delta$ be a quasi-triangulation of $\fS$ (Definition~\ref{def-quasitri}). Then
\[
{\bf s}(\Delta)
:=
\bigl((A_\alpha)_{\alpha\in\Delta},
{\bf h}^\Delta,
\Lambda^\Delta,
B^\Delta\bigr)
\]
is a generalized quantum seed in ${\rm Frac}(\rdS)$ (Definition~\ref{def:generalizedQS}). Moreover, suppose that two quasi-triangulations $\Delta$ and $\Delta'$ are related by a flip at an admissible ideal arc $\alpha\in\Delta$. Then
\begin{equation}\label{intro:seedmutation}
	{\bf s}(\Delta')
	=
	\mu_\alpha\bigl({\bf s}(\Delta)\bigr),
\end{equation}
where $\mu_\alpha$ denotes the mutation defined in Definition~\ref{def:mutation}.
\end{proposition}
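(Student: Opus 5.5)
To prove that ${\bf s}(\Delta)$ is a generalized quantum seed, I will check the conditions of Definition~\ref{def:generalizedQS} one at a time.

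\textbf{Quasi-commutation and compatibility.} Proposition~\ref{prop:quasi-commutative} gives the relations $A_\alpha A_\beta=q^{\Lambda_{\alpha,\beta}}A_\beta A_\alpha$. Hence $(A_\alpha)_{\alpha\in\Delta}$ generates a quantum torus in ${\rm Frac}(\rdS)$ with commutation form $\Lambda^\Delta$. Proposition~\ref{prop:compatible} gives the compatibility of $(\Lambda^\Delta,B^\Delta)$.

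\textbf{Independence and the coefficient strings.} Two further points remain.
\begin{itemize}
\item I must show the $A_\alpha$ are algebraically independent, i.e.\ the monomials $A^{\mathbf a}$ are linearly independent. I will apply the $\mathcal A$-quantum trace of \S\ref{sec-A-quantum-trace}, or its quasi-triangulation analogue. It should send $A_\alpha$ to a generator of a quantum torus, possibly up to a unit, so it is injective on the subalgebra they generate. Alternatively, leading terms with respect to a filtration by geometric intersection numbers would give independence directly.
\item I must check the requirements on ${\bf h}^\Delta$. The entries must be central or quasi-commute appropriately and satisfy the palindromicity required in Definition~\ref{def:generalizedQS}. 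For $(1,\alpha_p,1)$ this reduces to showing that the peripheral loop $\alpha_p$ is central in $\rdS$, which follows from the skein relations. The strings $(1,1)$ are trivial. The degree $d_\gamma$ of each string, $2$ or $1$, must also match the skew-symmetrizer appearing in \eqref{def1-BB}--\eqref{def2-BB}.
\end{itemize}

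\textbf{Mutation: the exchange matrix and $\Lambda$.} For the flip statement I will compare ${\bf s}(\Delta')$ with $\mu_\alpha({\bf s}(\Delta))$ componentwise. The matrix $\mu_\alpha(B^\Delta)$ must equal $B^{\Delta'}$. This is a local combinatorial check in the region where the flip occurs, which is a quadrilateral or a once-punctured or $V$-holed degenerate piece, using the explicit formulas \eqref{def1-BB} and \eqref{def2-BB}. The form $\Lambda^{\Delta'}$ is then forced by compatibility, since the mutated $\Lambda$ is determined by the quasi-commutation of $A_{\alpha'}$ with the unchanged arcs. Alternatively, $\Lambda_{\alpha',\beta}$ can be read off directly from \eqref{def--Lambda}. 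The strings are unchanged away from $\alpha$, and ${\bf h}_{\alpha'}={\bf h}_\alpha$ because the flipped arc encloses the same puncture type.

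\textbf{Mutation: the exchange relation (main obstacle).} The key step is
\[
A_{\alpha}A_{\alpha'}=\sum_{r} h_{\alpha,r}\,q^{(\cdot)}\,M_r,
\]
where the $M_r$ are monomials prescribed by Definition~\ref{def:mutation}. I will establish it in $\rdS$ by applying the skein relations to the product of the two arcs, which intersect once, or twice in the punctured-monogon case.
\begin{itemize}
\item In the ordinary quadrilateral case, resolving the crossing gives the usual two-term Ptolemy relation. The reduction kills bad arcs, and the height exchanges produce the powers of $q$.
\item When $\alpha$ is a loop around an interior puncture $p$, the resolution yields three terms. The middle term contains $\alpha_p$, matching ${\bf h}=(1,\alpha_p,1)$.
\item For arcs ending on $V$, the puncture-type relation \eqref{unknot-RY} replaces the crossing rule, producing a two-term relation with half-integer powers of $v$ absorbed into the normalization \eqref{eq-D-X-q}.
\end{itemize}
I expect the main difficulty to be matching the precise $q$-exponents with the Weyl-ordered normalization of the mutation formula. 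I would handle this by localizing to the small surface, where the flip happens, via splitting or embedding maps, so that each case becomes a finite computation.
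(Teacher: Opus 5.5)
Your plan matches the paper's proof: the seed axioms come from Propositions~\ref{prop:quasi-commutative}, \ref{prop:quantumtrace} and \ref{prop:compatible} together with the divisibility $d_\gamma\mid b^\Delta_{\alpha,\gamma}$; the exchange relation is verified case by case ($w(\alpha)=\tfrac12,2,1$) by resolving $A_{\alpha'}A_\alpha$ with the skein relations and then normalizing with the values of $\Lambda_{\alpha_i,\alpha}$; and $B^{\Delta'}=\mu_\alpha(B^\Delta)$ is quoted from \cite{FST1} via Remark~\ref{rem:thesame} rather than checked by hand. Two corrections to your sketch: the pending-arc flip, where two endpoints sit on the boundary circle $v$, is governed by the Roger--Yang relation \eqref{RY-relation} (which is where the factor $v^{-1}$ comes from), not by \eqref{unknot-RY}; and in the weight-$1$ flips whose quadrilateral has a pending-arc side, the resolution produces an arc enclosing the boundary circle, which you must rewrite via \eqref{eq-square-double-plus} as $v$ times the square of the pending arc to get the exponent $b^\Delta_{\alpha_4,\alpha}=2$.
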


The main difficulty in proving Proposition~\ref{intro:mutation} is verifying the mutation relation \eqref{eq:mutation-X}. This is accomplished through the diagrammatic computations in \eqref{eqqqq-1}--\eqref{eqqqq-9}.

Let $\mathscr A_q(\fS)$ denote the generalized quantum cluster algebra
(Definition \ref{def:qca}) associated with the generalized quantum seed
${\bf s}(\Delta)$ over the ground ring $\mathfrak R=\widetilde{R}$ 
(see \eqref{intro-R}). By Proposition~\ref{intro:mutation},
$\mathscr A_q(\fS)$ is independent of the choice of quasi-triangulation
$\Delta$.
Our first main result identifies $\mathscr A_q(\fS)$ with the reduced
L{\^e}--Roger--Yang skein algebra $\rdS$. Thus, every quasi-triangulable
punctured surface admits a generalized quantum cluster algebra structure
on its reduced L{\^e}--Roger--Yang skein algebra.

\begin{theorem}[Theorem~\ref{thm:skein=A}, Corollary~\ref{cor:bi}]\label{intro-thm1}
Let $\fS$ be a quasi-triangulable punctured surface (see Definitions~\ref{def-punctured-surface} and \ref{def-quasitri}). Then the following equality holds:
\[
\overline{\cS}_q(\fS)=\mathscr A_q(\fS).
\]
Furthermore, there exist the following canonical bijections:
\begin{enumerate}[label={\rm (\alph*)}]
	\item The exchangeable quantum cluster variables of $\mathscr{A}_q(\fS)$ correspond to the interior admissible ideal arcs in $\fS$;
	\item The frozen quantum cluster variables of $\mathscr{A}_q(\fS)$ correspond to the boundary ideal arcs in $\fS$;
	\item The coefficients of $\mathscr{A}_q(\fS)$ correspond to the peripheral loops in $\fS$;
	\item The quantum clusters of $\mathscr{A}_q(\fS)$ correspond to the quasi-triangulations of $\fS$.
\end{enumerate}
In particular, these bijections are compatible with mutations and flips, respectively.
\end{theorem}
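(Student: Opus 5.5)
We prove the equality $\rdS=\mathscr A_q(\fS)$ by proving two inclusions inside ${\rm Frac}(\rdS)$. Then we read off the bijections (a)--(d) from Proposition~\ref{intro:mutation} together with the combinatorics of flips.

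\emph{Inclusion $\mathscr A_q(\fS)\subseteq\rdS$.} The flip graph of quasi-triangulations of $\fS$ is connected. So by Proposition~\ref{intro:mutation}, every seed mutation-equivalent to ${\bf s}(\Delta)$ along a sequence of flips has the form ${\bf s}(\Delta')$. Its cluster variables are therefore the elements $A_\alpha\in\rdS$ for admissible ideal arcs $\alpha$. We must still check that every mutation sequence of the seed is realized by flips. Every exchangeable arc in a quasi-triangulation can be flipped, including a loop around an interior puncture or an arc ending at $v\in V$; this is exactly why the generalized strings ${\bf h}_\gamma$ were introduced. Hence every iterated mutation is realized by iterated flips. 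The coefficients are the peripheral loops $\alpha_p$, which lie in $\rdS$. Frozen variables are boundary arcs, and these are invertible in $\rdS$ (as in the reduced stated skein setting, via the height-exchange/corner relations), so their inverses also lie in $\rdS$. Finally, the elements $v^{\pm 1/2}$ lie in the ground ring $\widetilde R$. This gives $\mathscr A_q(\fS)\subseteq\rdS$.

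\emph{Inclusion $\rdS\subseteq\mathscr A_q(\fS)$.} This is the main obstacle. We need a generating set of $\rdS$ and an expression of each generator through cluster data. Using the skein relations \eqref{cross}--\eqref{unknot-RY} and the reduction by bad arcs, $\rdS$ is generated over $\widetilde R$ by simple stated arcs, simple closed curves, and boundary-inverse corrections. Each stated arc with a $-$ state can be rewritten, up to powers of $q$ and boundary arcs, as an arc with $+$ states, i.e. some $A_\alpha$. An arc ending at $v\in V$ is handled via $v^{1/2}$. It then remains to show that every simple closed curve $c$ (nonperipheral, or going around $V$-boundaries) lies in $\mathscr A_q(\fS)$. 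For this we pick a quasi-triangulation and an arc $\alpha$ meeting $c$ minimally. We then resolve the product $A_\alpha\cdot A_\beta$ of two arcs crossing once or twice by the Kauffman relation, which writes $c$ as a polynomial in cluster variables, frozen inverses, and peripheral loops. This is the standard Muller-type argument: cut $c$ along an arc, and use the fact that the resulting arc products lie in $\mathscr A_q$ by induction on the geometric intersection number with $\Delta$. Each smoothing lowers the intersection number, so the induction closes. The base cases are arcs in $\Delta$ and peripheral loops. The delicate cases are those involving interior punctures and $V$-components, where the resolution yields the terms $\alpha_p$ or $v^{\pm1/2}$ that appear in \eqref{eqqqq-1}--\eqref{eqqqq-9}.

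\emph{Bijections.} The map $\alpha\mapsto A_\alpha$ from admissible ideal arcs to cluster variables is surjective by the flip argument above. For injectivity, we use a leading-term or quantum-trace argument: the $\mathcal A$-quantum trace sends distinct arcs to distinct Laurent monomial leading terms, or alternatively we use the fact that distinct arcs have distinct intersection vectors with a fixed $\Delta$, i.e. distinct $g$-vectors. Under this map, interior arcs correspond to exchangeable variables and boundary arcs to frozen ones, giving (a) and (b). For (d), clusters correspond to quasi-triangulations because $\Delta$ is recovered from its set of arcs. For (c), coefficients correspond to peripheral loops via \eqref{intro-eq:h}. Compatibility of these bijections with flips and mutations is \eqref{intro:seedmutation}.
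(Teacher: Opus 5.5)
You handle the inclusion $\mathscr A_q(\fS)\subseteq\rdS$ and the bijections (a)--(d) as the paper does: every mutation is realized by a flip via Proposition~\ref{intro:mutation}, and frozen variables are invertible by Lemma~\ref{lem-positive-reduced}. Injectivity is even simpler than you suggest, since distinct admissible arcs give distinct basis elements $D_+(\alpha)\in\widetilde{B}(\fS)$.

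The gap is in the reverse inclusion $\rdS\subseteq\mathscr A_q(\fS)$, at the step for loops. Resolving a product $A_\alpha A_\beta$ of two arcs with \eqref{cross} gives a relation in which $c$ appears, if at all, multiplied by further arcs. Those arcs are exchangeable, hence not invertible in $\mathscr A_q(\fS)$. So such a relation only gives $c\cdot(\text{arcs})\in\mathscr A_q(\fS)$, not $c\in\mathscr A_q(\fS)$. Your induction on intersection numbers with $\Delta$ does not fix this. For arcs it is not needed, because every admissible arc is already a cluster variable. For loops you never specify a relation in which $c$ has an invertible coefficient and the other terms have lower complexity. The exchange relations \eqref{eqqqq-1}--\eqref{eqqqq-9} concern mutation, not generation, and play no role here. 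You also omit circle arcs joining two distinct boundary circles. They lie in the basis $\overline{B}(\fS)$, carry no states, and are not of the form $A_\alpha$, so ``handled via $v^{1/2}$'' does not cover them.

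The missing idea is to cut at a boundary edge, where the relevant coefficient is a unit. Every component of a quasi-triangulable $\fS$ has a boundary edge $e$. A finger move (an isotopy) makes a small sub-arc of a loop or circle arc $\gamma$ run parallel to $e$. Relation \eqref{hight} then gives $\gamma=q^{1/2}\bigl(\gamma_{+-}-q^{2}\gamma_{-+}\bigr)$. Here $\gamma_{\mu\nu}$ is $\gamma$ cut open at $e$ with the states of \eqref{hight}: a simple edge arc if $\gamma$ is a loop, and, up to a power of $q$, a product of two simple edge-circle arcs if $\gamma$ is a circle arc. Stated simple essential arcs lie in $\mathscr A_q(\fS)$ by Lemma~\ref{lem-arc-in-A}. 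Positively stated ones are cluster variables, and negative states are traded for frozen inverses via Lemma~\ref{lem-m-arc}. In Muller's language, this amounts to multiplying $\gamma$ by the frozen variable $A_e$ after pushing $\gamma$ across $e$ twice.

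You also need a justified generation statement. Your claim that $\rdS$ is generated by simple stated arcs and loops is asserted without proof; the paper gets it from Corollary~\ref{cor:gen}, which rests on Lemmas~\ref{lem-positive-reduced} and~\ref{lem-finite-order}. With these two ingredients, your overall structure goes through.
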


As we mentioned before, when $V=\emptyset$ (see \eqref{intro-V}) and $\fS$ has no interior punctures, Theorem~\ref{intro-thm1} was proved in \cite{muller2016skein,LY22}.

In the classical setting ($q^{\frac{1}{2}}=1$), when $\fS$ has no punctures, it was proved in \cite{moon2022consequences} that $\overline{\cS}_1(\fS)$ contains a copy of a certain commutative cluster algebra associated with $\fS$; see also \cite{karuo2025cluster,li2025skein} for related results. The generalized quantum cluster algebra considered here is different from the commutative cluster algebras studied in \cite{karuo2025cluster,moon2022consequences,li2025skein}; in particular, even the extended exchange matrices differ.

\vs

For readers more familiar with cluster algebras arising from surfaces, we make the following remark.

\begin{remark}\label{rem:introthesame}(Remark \ref{rem:thesame})
Following the terminology of \cite{FST1} (see also \cite{BHR,H2,H3}), we associate to each quasi-triangulable surface $\fS$ an unpunctured orbifold whose orbifold points carry weights. The correspondence is given by converting boundary punctures into boundary marked points, boundary circles into orbifold points of weight $\frac{1}{2}$, and each interior puncture $p$ into an orbifold point of weight $\alpha_p$. Under this correspondence, quasi-triangulations of $\fS$ are identified with triangulations of the resulting unpunctured orbifold. Hence the generalized quantum cluster algebra in Theorem~\ref{intro-thm1} may be regarded as arising from an unpunctured orbifold.
\end{remark}

The generalized quantum cluster algebra structure has several applications. In
\S\ref{intro-sec4}, we use Theorem~\ref{intro-thm1} to construct a Frobenius
map for the reduced L{\^e}--Roger--Yang skein algebra $\rdS$. We also
expect that, together with the results of \cite{muller2024poisson}, this
theorem will help determine the Azumaya locus of $\rdS$ at roots of unity
and thereby classify its irreducible representations of maximal dimension.
As in quantum cluster algebra theory, we define the generalized upper
quantum cluster algebra $\mathscr U_q$ as the intersection of the
associated quantum tori \cite{BCDX0} (Definition~\ref{def:qca}). In forthcoming
work with Lang Mou, we prove that
$\overline{\cS}_q=\mathscr A_q=\mathscr U_q$.

\subsection{Frobenius maps for (reduced) L{\^e}--Roger--Yang skein algebras}\label{intro-sec4}
The representation theory of the (reduced) L{\^e}--Roger--Yang skein algebra is particularly rich when the quantum parameter $q^{\frac12}$ is a root of unity. The Frobenius map plays a central role in its study: its image, or a suitable subalgebra of its image, lies in the center of the (reduced) L{\^e}--Roger--Yang skein algebra.
\begin{equation}\label{intro-eq-unity}
\text{ Suppose that $q^2$ has odd order $\ell$, and set
	$\eta^{\frac12}=q^{\frac{\ell^2}{2}}.$}
	\end{equation}
	We seek an algebra embedding
	\begin{align}\label{intro-eq-Fro}
\cF\colon \rd_\eta(\fS)\longrightarrow\rdS.
\end{align}
We will explain why the requirement ``odd order'' is necessary below. 

We first construct a Frobenius map for generalized quantum cluster algebras (Theorem \ref{thm-Fro-cluster}). Using Theorem~\ref{intro-thm1}, we then obtain a Frobenius map 
\begin{align}\label{eq-intro-tri}
\cF\colon \rd_\eta(\fS)\longrightarrow\rdS\quad
\text{for a quasi-triangulable punctured surface $\fS$}.
\end{align}
Finally, we extend the construction to arbitrary punctured surfaces.

\vs

To be precise, on a quantum torus, the Frobenius map sends each variable to its $\ell$-th power. To extend this map to a generalized quantum cluster algebra, we must also determine its action on the coefficients $h_{i,k}$ (Definition~\ref{def:generalizedQS}) and verify compatibility with mutation. We find that the required image of $h_{i,k}$ is
\[
T_{\ell,\ell_i}^{(k)}
\bigl(h_{i,1},\dots,h_{i,\ell_i-1}\bigr) \qquad (1\leq k\leq \ell_i-1),
\]
where $T_{\ell,\ell_i}^{(k)}$ is the polynomial defined in Definition~\ref{def:generalizedCP}. Although its coefficients are initially defined as complex numbers using a primitive $\ell$-th root of unity, Theorem~\ref{thm:expan} shows that they are all integers. With this choice of images for the coefficients, we prove that the quantum torus Frobenius map is compatible with mutation (Proposition~\ref{pro:compatible}). This yields the Frobenius map for generalized quantum cluster algebras (Theorem~\ref{thm-Fro-cluster}).

\begin{remark}
We show that $T_{\ell,n}^{(k)}$ coincides with the \emph{reduced power elementary polynomial} $\overline{P}_{\ell,n}^{(k)}$ of \cite{bonahon2024central}. A higher-rank analogue of the stated skein algebra is the stated ${\rm SL}_n$-skein algebra \cite{LS21}, defined using oriented loops and graphs whose vertices are either $1$-valent or $n$-valent. It was conjectured in \cite{bonahon2024central} and proved in \cite{kim2025frobenius} that the Frobenius image of a loop is obtained by threading it with the polynomial $\overline{P}_{\ell,n}^{(k)}$.
It is well-known that $\overline{P}_{\ell,2}^{(1)}$ equals the Chebyshev polynomial of the first kind \cite{bonahon2024central};
see \eqref{Chebyshev-polynomial}.
So $T_{\ell,n}^{(k)} = \overline{P}_{\ell,2}^{(k)}$ is a generalization of the first kind Chebyshev polynomial.
\end{remark}

Proposition~\ref{pro:compatible} and Theorem~\ref{thm-Fro-cluster} impose a condition on the order of the root of unity to ensure that the quantum torus Frobenius map is compatible with mutation. For a generic quasi-triangulable punctured surface, this condition requires both $q^2$ and $q^4$ to have order $\ell$. Thus, under assumption~\ref{intro-eq-unity}, Theorem~\ref{intro-thm1} yields the Frobenius map in \eqref{eq-intro-tri}. The construction shows that 
\begin{align}\label{intro-eq-arcs}
\text{$\cF(\alpha)=\alpha^\ell$ whenever $\alpha$ is a cluster variable.}
\end{align}
We next determine $\cF(\alpha)$ for a general stated $\fS$-tangle $\alpha$.

\vs

Let $\alpha$ be a connected (stated) $\fS$-tangle. For each positive
integer $m$, let $\alpha^{[m]}$ be the disjoint union of $m$ parallel
copies of $\alpha$ in the direction of its framing, contained in a
sufficiently small open tubular neighborhood of $\alpha$. Define
\[
\alpha^{(m)}
:=
\left(\prod_{v\in C(\alpha)}v^{\frac{m-1}{2}}\right)\alpha^{[m]},
\]
where $C(\alpha)$ is defined in Definition~\ref{def-arcs}. For a
polynomial $P(x)=\sum_{i=1}^{n}c_i x^i\in\mathbb Z[x]$, set
\[
\alpha^{(P)}:=\sum_{i=1}^{n}c_i\alpha^{(i)}.
\]

Let $T_\ell$ be the $\ell$-th Chebyshev polynomial of the first kind;
see \eqref{Chebyshev-polynomial}. For a framed knot $\alpha_1$, it is
well known that \cite{BL22,BW16}
\begin{align}\label{intro-image1}
\cF(\alpha_1)=\alpha_1^{(T_\ell)}.
\end{align}
Using \eqref{intro-eq-arcs}, we also prove that
\begin{align}\label{intro-image2}
\cF(\alpha_2)=\alpha_2^{(T_\ell)}
\end{align}
for a crossingless arc $\alpha_2$ connecting two distinct 
boundary circles of $\fS$ (Definition~\ref{def-punctured-surface}). The proof proceeds by cutting $\alpha_2$
along an ideal arc; see Proposition~\ref{prop-Fro-arc-T}.

We then show that the Frobenius map in \eqref{eq-intro-tri} is compatible
with the functoriality of Lemma~\ref{lem-functoriality}; see
\eqref{eq-f-Fro-com}. This compatibility, together with
\eqref{intro-eq-arcs}, \eqref{intro-image1}, and \eqref{intro-image2},
gives the following formula. Let
$\beta=
\left(\bigcup_{i=1}^{m}\beta_i\right)
\cup
\left(\bigcup_{j=1}^{k}\gamma_j\right)$
be a stated $\fS$-tangle, where each $\beta_i$ is an essential arc and
each $\gamma_j$ is either a loop or a circle arc
(Definition~\ref{def-arcs}). Then
\begin{align}\label{intro-eq-image-all-diagrams}
\cF(\beta)
=
\left(\bigcup_{i=1}^{m}\beta_i^{(\ell)}\right)
\cup
\left(\bigcup_{j=1}^{k}\gamma_j^{(T_\ell)}\right).
\end{align}
Finally, we use \eqref{intro-eq-image-all-diagrams} and the
functoriality described in Remarks~\ref{rem-blow-map}
and~\ref{rem-adding-puncture} to extend the Frobenius map in
\eqref{eq-intro-tri} to arbitrary punctured surfaces.
Then we obtain our second main result, which establishes the Frobenius map for the reduced L{\^e}--Roger--Yang skein algebra.

\begin{theorem}\label{intro-thm-Fro-main}(Theorem \ref{thm-Fro-main})
Let $\fS$ be a punctured surface, and suppose that $q^2$ is a root
of unity of odd order $\ell$. Set
$\eta^{\frac12}=q^{\frac{\ell^2}{2}}$. Then there is a unique
algebra embedding
\[
\cF\colon\rd_\eta(\fS)\longrightarrow\rdS
\]
satisfying \eqref{intro-eq-image-all-diagrams}. Moreover, $\cF$
has the following properties:
\begin{enumerate}
	\item[\rm (a)] It is compatible with the cutting homomorphism
	(see \S\ref{sec-cutting}); equivalently, the diagram
	\eqref{diag-Fro-cut} commutes.
	
	\item[\rm (b)] If $\fS$ is triangulable with triangulation
	$\lambda$, then it is compatible with the $\mathcal X$-quantum
	trace map (see \S\ref{sec:xqtm}); equivalently, the diagram
	\eqref{diag-Fro-trace} commutes.
\end{enumerate}
\end{theorem}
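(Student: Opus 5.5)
We would build $\cF$ in three stages, following the outline in \S\ref{intro-sec4}.

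\textbf{Stage 1: quasi-triangulable surfaces.} Let $\fS$ be quasi-triangulable and fix a quasi-triangulation $\Delta$. By Theorem~\ref{intro-thm1} we have $\rd_\eta(\fS)=\mathscr A_\eta(\fS)$ and $\rdS=\mathscr A_q(\fS)$. Under the hypothesis \eqref{intro-eq-unity}, the orders of $q^2$ and $q^4$ both equal $\ell$, since $\ell$ is odd. Hence Theorem~\ref{thm-Fro-cluster} gives an algebra embedding $\mathscr A_\eta(\fS)\to\mathscr A_q(\fS)$ with the following properties:
\begin{enumerate}
\item it restricts on each quantum torus of a seed to $A_\alpha\mapsto A_\alpha^\ell$;
\item it sends each coefficient $\alpha_p$ to $T_\ell(\alpha_p)$.
\end{enumerate}
Injectivity comes from the embedding into quantum tori. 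For property (2), note that $T^{(1)}_{\ell,2}$ is the Chebyshev polynomial $T_\ell$. This already gives \eqref{intro-eq-arcs}.

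\textbf{Stage 2: the formula \eqref{intro-eq-image-all-diagrams}.} We verify the formula generator by generator, using three inputs:
\begin{enumerate}
\item essential arcs, via \eqref{intro-eq-arcs} and the identification of $\beta^{(\ell)}$ with $\beta^\ell$ up to the normalization $\prod v^{(\ell-1)/2}$;
\item loops, via \eqref{intro-image1};
\item circle arcs between distinct boundary circles, via Proposition~\ref{prop-Fro-arc-T}.
\end{enumerate}
The image of a disjoint union of components is the product of the images. This holds because components with no crossings quasi-commute with the same exponent pattern. For threaded components (loops and circle arcs), the images are central-type or transparent elements, so the framing and ordering of the factors is irrelevant.

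We then use the compatibility \eqref{eq-f-Fro-com} with functoriality (Lemma~\ref{lem-functoriality}). This transports the formula between surfaces, and in particular makes it independent of the choice of $\Delta$. Uniqueness follows because stated tangles of the given form span $\rd_\eta(\fS)$.

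\textbf{Stage 3: arbitrary punctured surfaces.} Embed a general $\fS$, by adding punctures or blowing up (Remarks~\ref{rem-blow-map} and~\ref{rem-adding-puncture}), into a quasi-triangulable surface $\fS'$. The induced maps $\rd(\fS)\to\rd(\fS')$ are injective, so we define $\cF$ on $\fS$ by restriction. The main obstacle is to show that $\cF_{\fS'}$ maps the image of $\rd_\eta(\fS)$ into the image of $\rdS$. Formula \eqref{intro-eq-image-all-diagrams} settles this: the right-hand side is a combination of tangles built from parallel copies of the diagram, and these visibly lie in $\fS$. The same formula shows that the resulting map is independent of the choice of $\fS'$, and that it is an injective algebra map.

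\textbf{Properties (a) and (b).} For (a), check the square \eqref{diag-Fro-cut} on the spanning set. Cutting commutes with taking parallel copies and with threading by $T_\ell$, since cutting a threaded arc gives a sum of stated threaded arcs. The Chebyshev identity for stated arcs then reduces this to \eqref{intro-eq-arcs} on both sides.

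For (b), first use (a) to reduce to a single triangle, where the $\mathcal X$-quantum trace is the identification with a quantum torus. There the map is $x\mapsto x^\ell$, and one checks \eqref{diag-Fro-trace} on the corner arcs, which generate the skein algebra of the triangle.
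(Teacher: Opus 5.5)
Your Stage~1 and the uniqueness argument match the paper (Corollary~\ref{Cor-Fro-first}). Stage~3, however, does not work as stated. Blowing up an interior puncture (Remark~\ref{rem-blow-map}) does give an embedding $\rd(\fS)\hookrightarrow\rd(\fS')$, but \emph{adding} a puncture does not. The natural map goes the other way, $\mathbf P\colon\rd_q(\fS\setminus\{p\})\twoheadrightarrow\rdS$ (Remark~\ref{rem-adding-puncture}), and its kernel is generated by differences of diagrams related by sliding a strand over $p$. Blowing up needs an interior puncture to start with, so surfaces such as closed surfaces, or surfaces whose boundary consists only of circles, can never be reached by restriction alone.

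The paper instead proceeds in four steps:
\begin{enumerate}
\item it removes three interior points;
\item it blows one of them up, obtaining a quasi-triangulable $\fS_1'$;
\item it restricts $\cF$ to $\rd(\fS_1)\subset\rd(\fS_1')$;
\item it \emph{descends} along $\mathbf P$.
\end{enumerate}
The descent uses \eqref{eq-image-all-diagrams} to get $\cF(\ker\mathbf P_\eta)\subseteq\ker\mathbf P_q$: a slide of a strand over $p$ becomes a slide of its $\ell$ parallel copies, or of its $T_\ell$-threading. After a descent, injectivity is no longer inherited, so the paper proves it separately:
\begin{enumerate}
\item in the quasi-triangulable case, via the cluster embedding;
\item for the bigon and the one-circle monogon, by direct computation;
\item for surfaces without boundary, via a threaded basis that $\cF$ maps injectively into itself.
\end{enumerate}
Neither the descent nor this separate injectivity argument appears in your proposal.

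Stage~2 also has a gap. The identity \eqref{intro-eq-arcs} only covers simple arcs: the cluster variables, and via Lemma~\ref{lem-m-arc} their other stated versions. It says nothing about essential arcs with self-crossings. You cannot recover those from the functoriality square \eqref{eq-f-Fro-com}. That square is proved by checking it on tangles whose components are essential arcs, so it already presupposes the formula for \emph{all} essential arcs in the target surface. The paper fills this gap independently:
\begin{enumerate}
\item non-simple edge arcs and loops, by comparing $\cF$ with the map $\Phi$ of \cite{BL22} on the surface with boundary circles turned into punctures (diagram \eqref{diag-Fro-sliced}, checked on cluster generators);
\item non-simple edge--circle arcs, by the local resolution identity of Lemma~\ref{lem-arc-resolution}, which is proved by cutting.
\end{enumerate}
Only after this does \eqref{eq-f-Fro-com} become available for circle arcs.

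Your proof of (a) has a similar problem. It rests on the claim that cutting a threaded arc gives a sum of threaded stated arcs, and this is false. Cutting $\alpha^{(\ell)}$ produces all mixed state assignments on the $\ell$ parallel strands. Their cancellation is a genuine $q$-binomial identity; compare Lemma~\ref{lem-quantum-binomial-uplus-uminus}. On quasi-triangulable surfaces the paper avoids this computation by reversing your order:
\begin{enumerate}
\item it proves (b) first, from the cluster construction and the $\mathcal A$/$\mathcal X$ trace compatibility \eqref{eq-com-traces-AX} (Lemma~\ref{lem-Fro-trace});
\item it deduces (a) from the injectivity of ${\rm tr}_\lambda$ and $\mathbb S_e$ (Lemma~\ref{lem-Fro-Cut});
\item it transports (a) to general $\fS$ through the restriction and the descent.
\end{enumerate}
Only then does it derive (b) from (a) in general, as you propose. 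When reducing (b) to local pieces, note that the pieces include the monogons $\mathfrak M$ around boundary circles (Theorem~\ref{trace-monogon}), not only triangles.
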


When $\fS$ has no  boundary circles (Definition~\ref{def-punctured-surface}), the Frobenius map of
Theorem~\ref{intro-thm-Fro-main} was established in \cite{BL22,BW16}.
Extending this result to the reduced L{\^e}--Roger--Yang skein algebra
requires accounting for relation~\eqref{RY-relation}.
Our construction circumvents this difficulty without dealing with the relation directly.

\vs

The Frobenius map for the reduced L{\^e}--Roger--Yang skein algebra
induces a Frobenius map for the L{\^e}--Roger--Yang skein algebra through
an algebra embedding
$ \cS_q(\fS)\hookrightarrow\rd_q(\fS^\ast);$
see \S\ref{sec-Fro-stated}. Here $\fS^\ast$ is obtained from $\fS$ by
attaching ideal triangles along its boundary edges; see
Figure~\ref{fig:attaching-triangle}.

\subsection{Centers and ranks of the Muller--Roger--Yang and reduced L{\^e}--Roger--Yang skein algebras}

Throughout this subsection, assume that $R=\mathbb C$.
Let $\cS_q^+(\fS)$ be the $\widetilde{\mathbb{C}}$-subalgebra (see \eqref{intro-R}) of $\cS_q(\fS)$ generated by positively stated $\fS$-tangles. We call it the \emph{Muller--Roger--Yang skein algebra} \cite{bloomquist2025degenerations}. The canonical projection
$\cS_q(\fS)\longrightarrow \overline{\cS}_q(\fS)$
restricts to an algebra embedding
$\cS_q^+(\fS)\hookrightarrow\overline{\cS}_q(\fS)$, and $\overline{\cS}_q(\fS)$ is obtained from $\cS_q^+(\fS)$ by localizing at positively stated boundary arcs \cite{karuo2025cluster}; see Proposition~\ref{lem-positive-reduced}.

Let $\fS$ be a triangulable punctured surface with an ideal triangulation $\lambda$ (Definition~\ref{def-ideal-arc}). The algebra $\cS_q^+(\fS)$ has a basis parametrized by a submonoid of $\mathbb N^\lambda$ (Lemma~\ref{lem-positive-reduced}). This parametrization is compatible with the $\mathcal X$-quantum trace map and with multiplication, as described in Lemmas~\ref{lem-leading-term} and \ref{lem-leading-deg}, respectively. Under assumption \ref{intro-eq-unity} that $q^2$ is a root of unity of odd order, using these lemmas and the Frobenius map, we determine the center of $\cS_q^+(\fS)$; see Theorem~\ref{thm-center} and Remark~\ref{rem-center}. The center plays a central role in the representation theory of $\cS_q^+(\fS)$.

When $q^2$ is a root of unity of odd order, we show that $\cS_q^+(\fS)$ is finitely generated as a module over its center $\mathcal Z(\cS_q^+(\fS))$; see Lemma~\ref{lem-finite-center}. It follows that $\cS_q^+(\fS)$ is an affine almost Azumaya algebra (Definition~\ref{def-Azu} and Proposition~\ref{prop-Azumaya}). The unicity theorem of \cite{unicity} then classifies its irreducible representations of maximal dimension by the Azumaya locus, a Zariski open dense subset of the maximal ideal spectrum of its center. Their maximal dimension is the square root of the rank of $\cS_q^+(\fS)$ over its center (Definition~\ref{def-rank}). We compute this rank in Theorem~\ref{prop-rank}.

\vs

We now introduce notation for a summary of these results. For an $\fS$-tangle diagram $\alpha$ and a boundary component $e$ of $\fS$, let $e(\alpha)$ be the number of endpoints of $\alpha$ on $e$. Define
\[
\cS^{\rm ev}_q(\fS)
:=
{\rm Span}_{\widetilde{\mathbb{C}}}
\left\{
\alpha \,\middle|\,
\begin{array}{l}
\alpha \text{ is a positively stated }\partial\fS\text{-tangle diagram, and}\\
e(\alpha)\equiv c(\alpha)\pmod 2
\text{ for all boundary components }c,e\text{ of }\fS
\end{array}
\right\},
\]
and set
\[
\im_q\cF
:=
\begin{cases}
\cF\bigl(\cS^+_\eta(\fS)\bigr),
& \text{if the order of $q$ is odd},\\[2pt]
\cF\bigl(\cS^{\rm ev}_\eta(\fS)\bigr),
& \text{if the order of $q$ is even}.
\end{cases}
\]

Let $\mathfrak M$ be the punctured surface obtained from a closed annulus by removing one point from a boundary component, and let $\mathfrak M_1$ be the punctured surface obtained by removing one interior point from a closed annulus. The algebras $\cS_q^+(\mathfrak M)$ and $\cS_q^+(\mathfrak M_1)$ are commutative.

\begin{theorem}[Theorems~\ref{thm-center} and \ref{prop-rank}, and Proposition~\ref{prop-Azumaya}]\label{into-thm4}
Let $\fS\neq\mathfrak M,\mathfrak M_1$ be a triangulable punctured surface, and suppose that $q^2$ is a root of unity of odd order.
\begin{enumerate}
	\item[\rm (a)] The center $\mathcal Z\bigl(\cS_q^+(\fS)\bigr)$, viewed as a $\widetilde{\mathbb{C}}$-subalgebra (see \eqref{intro-R}) of $\cS_q^+(\fS)$, is generated by $\im_q\cF$, the peripheral loops, and the boundary central elements defined in \eqref{boundary-central}.
	
	\item[\rm (b)] The algebra $\cS_q^+(\fS)$ is affine almost Azumaya (Definition~\ref{def-Azu}).
	
	\item[\rm (c)] The rank of $\cS_q^+(\fS)$ over its center (Definition~\ref{def-rank}) is
	\[
	\begin{cases}
		\ell^{\,2a+2b+6g+2d-6},
		& \text{if the order of $q$ is odd},\\[2pt]
		2^{2\left\lfloor\frac{b-1}{2}\right\rfloor}
		\ell^{\,2a+2b+6g+2d-6},
		& \text{if the order of $q$ is even},
	\end{cases}
	\]
	where $a$ is the number of interior punctures, $g$ is the genus of $\fS$, $b$ is the number of connected components of $\partial\fS$, and $d$ is the number of components of $\partial\overline{\fS}$ (Definition~\ref{def-punctured-surface}) containing at least one puncture. The notation $\left\lfloor\frac{b-1}{2}\right\rfloor$ is defined in \eqref{eq-def-lrfloor}.
\end{enumerate}
\end{theorem}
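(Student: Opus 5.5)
The plan is to work throughout with the $\mathcal X$-quantum trace and the leading-term parametrization. Fix an ideal triangulation $\lambda$ of $\fS$. By Lemma~\ref{lem-positive-reduced}, $\cS_q^+(\fS)$ has a basis $\{b_{\mathbf k}\}$ indexed by a submonoid $\Gamma\subseteq\mathbb N^\lambda$. By Lemmas~\ref{lem-leading-term} and \ref{lem-leading-deg}, the quantum trace sends $b_{\mathbf k}$ to a Laurent polynomial whose leading monomial is $X^{\mathbf k}$, and $b_{\mathbf k}b_{\mathbf k'}=q^{\frac12\langle \mathbf k,\mathbf k'\rangle}b_{\mathbf k+\mathbf k'}+(\text{lower})$, where $\langle\cdot,\cdot\rangle$ is the Weil--Petersson/face form. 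This gives a filtration whose associated graded algebra is the monoid algebra of $\Gamma$ twisted by $q^{\frac12\langle\cdot,\cdot\rangle}$.

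The first step is the lower bound for part (a). I would show that $\im_q\cF$, the peripheral loops and the boundary central elements \eqref{boundary-central} are all central. For $\im_q\cF$ this follows from Theorem~\ref{intro-thm-Fro-main}(b): through the quantum trace, $\cF$ becomes the $\ell$-th power map on the quantum torus. When the order of $q$ is even, the parity condition defining $\cS^{\rm ev}$ is exactly what forces the sign $q^{\frac{\ell^2}{2}\langle\cdot,\cdot\rangle}$ to be trivial. The peripheral loops and the boundary elements are checked to be central by direct skein computation.

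The second step is the upper bound. Take a central $z$ with leading term $\mathbf k$. Commuting $z$ with every $b_{\mathbf k'}$ forces $q^{\langle \mathbf k,\mathbf k'\rangle}=1$ for all $\mathbf k'\in\Gamma$. So $\mathbf k$ lies in
\[
\Gamma_0=\{\mathbf k\in\Gamma : \langle \mathbf k,\Gamma\rangle\subseteq \ell\mathbb Z\}
\]
(or $2\ell\mathbb Z$ in the even case, adjusted appropriately). I would then describe $\Gamma_0$ lattice-theoretically: it is generated by $\ell\Gamma$ (the leading terms of $\im_q\cF$ via \eqref{intro-eq-image-all-diagrams}), the leading terms of the peripheral loops, and those of the boundary central elements. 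This uses the known kernel of the face form, which is spanned by the puncture and boundary vectors. Subtracting a suitable product of generators lowers the leading term, and induction on the filtration finishes (a).

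For (b) and (c), Lemma~\ref{lem-finite-center} gives finiteness over the center. Together with the fact that $\cS_q^+(\fS)$ is a domain and affine, this yields (b) via Proposition~\ref{prop-Azumaya}. The rank equals the rank of the associated graded algebra over its center, which is the index $[\Gamma:\Gamma_0]$, with $\Gamma$ replaced by its group completion $\mathbb Z^\lambda$-sublattice. I would compute this index via the Smith normal form of $\langle\cdot,\cdot\rangle$ reduced mod $\ell$. Its rank is $|\lambda|$ minus the kernel dimension $a+d$ plus corrections, and this gives $\ell^{2a+2b+6g+2d-6}$ after substituting $|\lambda|$ for the surface. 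In the even case an extra $2$-torsion factor appears, because only $\cS^{\rm ev}$ is central. Its size is $2^{2\lfloor (b-1)/2\rfloor}$, coming from the $\mathbb Z/2$-quotient by the parity condition across the $b$ boundary components.

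I expect the main obstacle to be the lattice computation identifying $\Gamma_0$ exactly, in particular the even-order case and the $2$-adic contributions. The first thing I would try is to reduce to the group completion, where the form is explicit from the triangulation. I would then verify that the monoid saturation causes no extra elements, by appealing to the balanced/Thurston-type description of $\Gamma$.
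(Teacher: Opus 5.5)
Your architecture matches the paper's: leading terms through $\mathrm{tr}_\lambda$, induction on $\deg$, and the lattice index $|\langle\Gamma_\lambda\rangle/\langle\Gamma_q\rangle|$ for the rank. Deriving centrality of $\im_q\cF$ from the trace compatibility (with the mod-$2$ congruence of Lemma~\ref{lem-leading-term} and Lemma~\ref{lem-even} in the even case) is a fine substitute for the paper's transparency relations \eqref{eq-height-l}--\eqref{eq-height-21}.

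The gap is the step you flag yourself. Identifying $\Gamma_0$ is the whole content of (a) and of the exponent in (c), and the tool you name does not settle it by itself. Knowing that the integral kernel of $\overline Q_\lambda$ is spanned by puncture and boundary vectors only describes solutions of $\mathbf n\overline Q_\lambda=0$ over $\mathbb Z$. What centrality actually gives (after using $2\mathbb N^\lambda\subseteq\Gamma_\lambda$) is $\mathbf n\overline Q_\lambda\equiv 0\pmod{\ell}$. The solution set of this congruence in $\langle\Gamma_\lambda\rangle$ equals $\ell\langle\Gamma_\lambda\rangle+\Gamma^\circ$ only if the nonzero invariant factors of the form on $\langle\Gamma_\lambda\rangle$ are prime to $\ell$. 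These factors are not all $1$: on a once-punctured torus every entry of $\overline Q_\lambda$ is $\pm 2$. So this is exactly where the oddness of $\ell$ must enter, and your argument never uses it.

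The induction also needs a monoid statement, not a lattice one. The leading exponent $\mathbf n_{\alpha'}\in\Gamma_\lambda$ must equal $\ell\mathbf m'+\mathbf n_\beta$ with $\mathbf m'\in\Gamma_\lambda$ and $\beta$ a monomial with \emph{nonnegative} exponents in peripheral loops and boundary central elements. Membership in $\ell\langle\Gamma_\lambda\rangle+\Gamma^\circ$ does not give this. Your ``Smith normal form \ldots plus corrections'' in (c) is the same unresolved point.

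The missing idea is the paper's corner-coordinate computation, which handles both issues at once.
\begin{enumerate}
\item Factor the leading basis diagram as $\alpha'=\alpha\beta$, where $\beta$ collects its peripheral loops and boundary central elements. Since $\mathbf n_\beta\overline Q_\lambda=0$, you still have $\mathbf n_\alpha\overline Q_\lambda\equiv0\pmod\ell$.
\item Pass to corner coordinates $g_c=\tfrac12(n_{e_1}+n_{e_3}-n_{e_2})\in\mathbb N$; these recover $\mathbf n_\alpha$ via $n_e=g_c+g_{c'}$.
\item For an edge $e$ between two triangles, $(\mathbf n\overline Q_\lambda)(e)=\pm2(g_c-g_{c'})$, where $c,c'$ are the corners adjacent across $e$ at one endpoint. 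For an edge cutting off a one-circle monogon, or a boundary edge, $(\mathbf n\overline Q_\lambda)(e)$ is a plain difference of two corner coordinates.
\item Since $\ell$ is odd, the corner coordinates are therefore congruent mod $\ell$ around each puncture and along each puncture-bearing component of $\partial\overline{\fS}$.
\item Because $\alpha$ contains no peripheral loop and no boundary central element, some corner vanishes at every interior puncture and on every such boundary component. Hence every $g_c\equiv0\pmod\ell$, and $(g_c/\ell)_c$ is again a nonnegative corner vector. This yields $\mathbf m'\in\Gamma_\lambda$ with $\ell\mathbf m'=\mathbf n_\alpha$, which is exactly what the subtraction step needs.
\item In the even case, choose $\gamma$ with $\mathbf n_\gamma=\mathbf m'$. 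Then $\ell\,\mathbf n_\gamma\overline Q_\lambda\mathbf k^T\equiv0\pmod{2\ell}$ forces $\mathbf n_\gamma\overline Q_\lambda\mathbf k^T$ to be even, and Lemma~\ref{lem-even} gives $\gamma\in B^{\rm ev}(\fS)$.
\end{enumerate}
The same computation shows $\langle\Gamma_q\rangle=\ell\langle\Gamma_\lambda\rangle+\Gamma^\circ$ in the odd case. Since $\Gamma^\circ$ is a direct summand of rank $a+d$, the index is exactly $\ell^{|\lambda|-(a+d)}=\ell^{2a+2b+6g+2d-6}$, with no corrections. The even case adds only the factor $2^{2\lfloor\frac{b-1}{2}\rfloor}$ from Lemma~\ref{lem-gamma-even}.
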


When $\fS$ has no interior punctures and $q$ has odd order, Theorem~\ref{into-thm4}(a) and (b) were established in \cite{karuo2026center}. Their description of the center did not use a Frobenius map, whose existence was not known at the time. Their methods do not extend to the case where $q$ has even order.

\vs

The center and the rank in Theorem~\ref{into-thm4} reflect the center and the rank of $\rdS$ (Remarks~\ref{rem-center-reduced} and \ref{rem-rankkk}).

\medskip

{\bf Acknowledgements:}
This work was partially supported by the National Natural Science Foundation
of China (No.12471023) (M.H.).  
Z.W. was supported by a KIAS Individual Grant (MG104701) at the Korea Institute for Advanced Study. We would like to thank Lang Mou, Helen Wong, and Travis Mandel for inspirit discussions.

\section{(Reduced) L{\^e}--Roger--Yang skein algebras}\label{sec}

In this section, we review the (reduced) Lê--Roger--Yang skein algebra introduced in \cite{bloomquist2025degenerations}, which generalizes the Roger--Yang skein algebra of \cite{roger2014skein} to the stated setting. We also recall the quantum trace and cutting homomorphisms constructed in \cite{bloomquist2025degenerations}, which will be used in subsequent sections.

\subsection{Punctured surfaces and stated tangles}

The (reduced) Lê--Roger--Yang skein algebra is defined for the following punctured surface.

\begin{definition}\label{def-punctured-surface}
A punctured surface is a surface of the form
$\mathfrak{S}=\overline{\fS}\setminus\mathcal{P},$
where $\overline\fS$ is a compact oriented surface with possibly empty boundary
$\partial(\overline\fS)$, and $\mathcal{P}\subset\overline\fS$ is a finite set. An element of
$\mathcal{P}$ is called a \textbf{puncture}, or an \textbf{ideal point}, of
$\mathfrak{S}$. A puncture is called a \textbf{boundary puncture} if it lies in
$\partial(\overline\fS)$, and an \textbf{interior puncture} otherwise. The set of interior punctures is denoted by $\mathring{\mathcal P}$.

Each connected
component of $\partial(\overline\fS)\setminus\mathcal{P}$ that is homeomorphic to an
open interval (resp. a circle) is called a \textbf{boundary edge} (resp. {\bf boundary circle}) of $\mathfrak{S}$.
\end{definition}

Note that it is not required for each connected component of $\partial(\overline{\fS})$ to contain at least one puncture.

\begin{definition}\label{def-ideal-arc}
An \textbf{ideal arc} in $\fS$ is a smooth embedding
$e\colon (0,1)\hookrightarrow \fS$
that extends to an immersion
$\overline{e}\colon [0,1]\to \fS$
such that $\overline{e}(0),\overline{e}(1)\in\mathcal{P}$. An ideal arc is called \textbf{trivial} if the image of $\overline{e}$ bounds a disk in $\fS$. 

\begin{enumerate}
	\item[(a)] An $n$-gon is the surface obtained by removing $n$ points from the boundary of the standard oriented closed disk.
	
	\item[(b)] A punctured surface $\fS$ is called \textbf{triangulable} if every connected component of $\overline\fS$ contains at least one puncture and is not one of the following exceptional surfaces:
	\begin{enumerate}
		\item[(1)] a sphere with at most two punctures;
		\item[(2)] an $n$-gon with $n=1$ or $n=2$.
	\end{enumerate}
	
	\item[(c)] An \textbf{ideal triangulation} of a triangulable surface $\fS$ is a maximal collection $\lambda$ of pairwise disjoint, pairwise non-isotopic, non-trivial ideal arcs.
\end{enumerate}

\end{definition}

Define
\[
\widetilde{\fS}:=\fS\times(-1,1),
\]
equipped with the product orientation determined by the orientation of $\fS$
and the standard orientation of $(-1,1)$.

The (reduced) Lê--Roger--Yang skein algebra of $\fS$ is defined by stated $\fS$-tangles.

\begin{definition}[\cite{bloomquist2025degenerations}]\label{def-tangle}
\begin{enumerate}
	\item[(a)] A \textbf{$ \fS$-tangle} is a compact, unoriented, properly embedded $1$-dimensional submanifold
	\[
	\alpha\hookrightarrow \fS\times(-1,1),
	\]
	equipped with a normal vector field, called its \textbf{framing}, satisfying the following conditions:
	\begin{itemize}
		\item at every boundary point of $\alpha$, the framing vector is vertical;
		\item any two boundary points of $\alpha$ lying on the same connected component of $\partial \fS\times(-1,1)$ have distinct heights.
	\end{itemize}
	The set of endpoints of $\alpha$ is denoted by $\partial\alpha$.
	
	\item[(b)] The \textbf{height order} on $\partial\alpha$ is the partial order obtained by comparing the heights of endpoints lying on the same connected component of $\partial \fS\times(-1,1)$.
	
	\item[(c)] Two endpoints of $\alpha$ are called \textbf{consecutive} if no endpoint of $\alpha$ lies strictly between them with respect to the height order.
	
	\item[(d)] By convention, the empty set is regarded as a $\fS$-tangle.
	
	\item[(e)] Isotopies of $\fS$-tangles are required to remain within the class of $ \fS$-tangles throughout the isotopy.
\end{enumerate}
\end{definition}

As usual, $\fS$-tangles are represented by diagrams on $\fS$, as described below. Every $\fS$-tangle is isotopic to one with vertical framing. Let $\alpha$ be a vertically framed $\fS$-tangle in general position with respect to the standard projection
$\pi\colon \fS\times(-1,1)\longrightarrow \fS.$
Thus, the restriction $\pi|_\alpha\colon\alpha\to\fS$ is an immersion whose only singularities are transverse double points, none of which lies on $\partial\fS$. The image
$D=\pi(\alpha),$
together with the overcrossing and undercrossing information at each double point, is called a \textbf{$\fS$-tangle diagram}, or simply a \textbf{tangle diagram on $\fS$}.

\begin{definition}\label{def-CCCC}
Let $\alpha$ be an $\fS$-tangle represented by a diagram $D(\alpha)$. We define $C(\alpha)$ to be the multiset of boundary circles of $\fS$ on which the endpoints of $D(\alpha)$ lie. 
\end{definition}

A tangle diagram $D$ is said to be \textbf{boundary ordered} if, for every boundary component $b$ of $\fS$, the set $D\cap b$ is equipped with a linear order. Isotopies of $\fS$-tangle diagrams are ambient isotopies in $\fS$. A diagram $D$ of a $\fS$-tangle $\alpha$ inherits a boundary order from the height order of the endpoints of $\alpha$. The boundary ordered diagram $D$ uniquely determines the isotopy class of $\alpha$. Therefore, when no confusion can arise, we identify a boundary ordered $\fS$-tangle diagram with the corresponding isotopy class of $\fS$-tangles.

A \textbf{marked orientation} $o$ consists of an orientation of $\partial\fS$ together with a distinguished point, called the \textbf{initial point}, on each circular component of $\partial\fS$. A $\fS$-tangle diagram $D$ is said to have the \textbf{$o$-order} if its boundary order agrees with the direction prescribed by $o$. More precisely, on each boundary component $b$, the height order increases as one travels along $b$ in the direction of $o$, starting from the initial point when $b$ is a circle. If the orientation of every boundary component is induced by the orientation of $\fS$, then the corresponding $o$-order is called the \textbf{positive order}.

\begin{definition}\label{def-crossingless}
A (boundary ordered) $\fS$-tangle diagram $\alpha$ is called
\textbf{crossingless} if it has no crossings in the interior of
$\fS$ and has at most one endpoint on each boundary circle of $\fS$.
\end{definition}

\begin{definition}\label{def-arcs}

A connected $\fS$-tangle is called a \textbf{loop} if it is
diffeomorphic to $S^1$, and an \textbf{arc} if it is diffeomorphic to
$[0,1]$.
It is called {\bf simple} if it is represented by a crossingless diagram. 

An arc $\alpha$ is called an \textbf{edge arc} if both endpoints of its diagram lie on boundary edges, and an \textbf{edge-circle arc} if one endpoint lies on a boundary edge and the other on a boundary circle. It is called a \textbf{circle arc} if both endpoints lie on boundary circles, which are not necessarily distinct.

An edge arc or an edge--circle arc is called an \textbf{essential arc}.

\end{definition}

\begin{definition}\label{def-state}
For a $\fS$-tangle $\alpha$, or a $\fS$-tangle diagram $\alpha$, define the \textbf{wall boundary} $\partial_w(\alpha)$ to be the set of all endpoints of $\alpha$ lying on the boundary edges of $\fS$. A \textbf{state} of $\alpha$ is a map
$s\colon \partial_w(\alpha)\longrightarrow \{+,-\}.$
Thus, no states are assigned to the endpoints of $\alpha$ lying on the boundary circles of $\fS$.
\end{definition}

\begin{definition}
A state on a boundary ordered $\partial\mathfrak S$-tangle diagram
$\alpha$ is \textbf{increasing} if the state function
$s:\partial_{ w}(\alpha)\longrightarrow\{-,+\}$
is increasing with respect to the height order. That is, $x\leq y$
implies $s(x)\leq s(y)$, where $-<+$.
\end{definition}

\subsection{(Reduced) L{\^e}-Roger-Yang skein algebras}

Let $R$ be a commutative domain with a distinguished invertible element $q^{\frac{1}{2}}$.
We use $V$ to denote the set of all the boundary circles of $\fS$.  Define 
\begin{align}\label{eq-Rtilde}
\widetilde{R}:= R[v^{\pm \frac{1}{2}}\mid v\in V].
\end{align}

We use 
$\begin{tikzpicture}
\filldraw[draw=black,fill=white] (0.4,-0.1) rectangle (0.6, 0.1);
\end{tikzpicture}$ and $\begin{tikzpicture}
\draw[draw=black,fill=white]  (0.5,0) circle (0.1);
\end{tikzpicture}$ to represent the punctures and boundary circles respectively.

The \textbf{L{\^e}-Roger-Yang skein algebra} (or \textbf{LRY skein algebra}) $\cS_{q}(\fS)$ of $\fS$ is
the quotient module of the $\widetilde R$-module freely generated by the set 
of all isotopy classes of stated 
$\fS$-tangles subject to  relations \eqref{cross}-\eqref{unknot-RY} \cite{bloomquist2025degenerations}. 
\begin{equation}\label{cross}
\raisebox{-.20in}{
	\begin{tikzpicture}
		\filldraw[draw=white,fill=gray!20] (-0,-0.2) rectangle (1, 1.2);
		\draw [line width =1pt](0.6,0.6)--(1,1);
		\draw [line width =1pt](0.6,0.4)--(1,0);
		\draw[line width =1pt] (0,0)--(0.4,0.4);
		\draw[line width =1pt] (0,1)--(0.4,0.6);
		\draw[line width =1pt] (0.6,0.6)--(0.4,0.4);
	\end{tikzpicture}
}=
q
\raisebox{-.20in}{
	\begin{tikzpicture}
		\filldraw[draw=white,fill=gray!20] (-0,-0.2) rectangle (1, 1.2);
		\draw[line width =1pt] plot[smooth] coordinates {(0,1) (0.4,0.5) (0,0)};
		\draw[line width =1pt] plot[smooth] coordinates {(1,1) (0.6,0.5) (1,0)};
	\end{tikzpicture}
}
+
q^{-1}
\raisebox{-.20in}{
	\begin{tikzpicture}
		\filldraw[draw=white,fill=gray!20] (-0,-0.2) rectangle (1, 1.2);
		\draw[line width =1pt] plot[smooth] coordinates {(0,1) (0.5,0.6) (1,1)};
		\draw[line width =1pt] plot[smooth] coordinates {(0,0) (0.5,0.4) (1,0)};
	\end{tikzpicture}
}, 
\end{equation}
\begin{equation}\label{unknot}
\raisebox{-.15in}{
	\begin{tikzpicture}
		\filldraw[draw=white,fill=gray!20] (-0,-0) rectangle (1, 1);
		\draw [line width =1pt] (0.5,0.5) circle (0.3);
	\end{tikzpicture}
}=-(q^2+q^{-2})
\raisebox{-.15in}{
	\begin{tikzpicture}
		\filldraw[draw=white,fill=gray!20] (-0,-0) rectangle (1, 1);
	\end{tikzpicture}
},
\end{equation}
\begin{equation}\label{arc}
\raisebox{-.25in}{
	\begin{tikzpicture}
		\filldraw[draw=white,fill=gray!20] (-0.2,-0) rectangle (1.2, 0.6);
		\draw [line width =1.5pt,decoration={markings, mark=at position 1 with {\arrow{>}}},postaction={decorate}](-0.2,-0)--(1.2,0);
		\draw [line width =1pt]  (0.8 ,0) arc (0:180:0.4 and 0.4);
		\node[below] at (0,0) {\small $-$};
		\node [below]  at (0.8,0) {\small $+$};
	\end{tikzpicture}
}=q^{-\frac{1}{2}}
\raisebox{-.15in}{
	\begin{tikzpicture}
		\filldraw[draw=white,fill=gray!20] (-0.2,-0) rectangle (1.2, 0.6);
		\draw [line width =1.5pt,decoration={markings, mark=at position 1 with {\arrow{>}}},postaction={decorate}](-0.2,-0)--(1.2,0);
	\end{tikzpicture}
},\;\;
\raisebox{-.25in}{
	\begin{tikzpicture}
		\filldraw[draw=white,fill=gray!20] (-0.2,-0) rectangle (1.2, 0.6);
		\draw [line width =1.5pt,decoration={markings, mark=at position 1 with {\arrow{>}}},postaction={decorate}](-0.2,-0)--(1.2,0);
		\draw [line width =1pt]  (0.8 ,0) arc (0:180:0.4 and 0.4);
		\node[below] at (0,0) {\small $+$};
		\node [below]  at (0.8,0) {\small $+$};
	\end{tikzpicture}
}=\raisebox{-.25in}{
	\begin{tikzpicture}
		\filldraw[draw=white,fill=gray!20] (-0.2,-0) rectangle (1.2, 0.6);
		\draw [line width =1.5pt,decoration={markings, mark=at position 1 with {\arrow{>}}},postaction={decorate}](-0.2,-0)--(1.2,0);
		\draw [line width =1pt]  (0.8 ,0) arc (0:180:0.4 and 0.4);
		\node[below] at (0,0) {\small $-$};
		\node [below]  at (0.8,0) {\small $-$};
	\end{tikzpicture}
} =0,
\end{equation}
\begin{equation}\label{hight}
\raisebox{-.23in}{
	\begin{tikzpicture}
		\filldraw[draw=white,fill=gray!20] (-0.2,-0) rectangle (1.2, 0.6);
		\draw [line width =1.5pt,decoration={markings, mark=at position 1 with {\arrow{>}}},postaction={decorate}](-0.2,-0)--(1.2,0);
		\draw [line width =1pt](0.1,0.6)--(0.1,0);
		\draw [line width =1pt](0.9,0.6)--(0.9,0);
		\node [below] at (0.1,0) {$+$};
		\node [below] at (0.9,0) {\small $-$};
	\end{tikzpicture}
}=q^{2}
\raisebox{-.23in}{
	\begin{tikzpicture}
		\filldraw[draw=white,fill=gray!20] (-0.2,-0) rectangle (1.2, 0.6);
		\draw [line width =1.5pt,decoration={markings, mark=at position 1 with {\arrow{>}}},postaction={decorate}](-0.2,-0)--(1.2,0);
		\draw [line width =1pt](0.1,0.6)--(0.1,0);
		\draw [line width =1pt](0.9,0.6)--(0.9,0);
		\node [below] at (0.1,0) {$-$};
		\node [below] at (0.9,0) {\small $+$};
	\end{tikzpicture}
} + q^{-\frac{1}{2}}
\raisebox{-.16in}{
	\begin{tikzpicture}
		\filldraw[draw=white,fill=gray!20] (-0.2,-0) rectangle (1.2, 0.6);
		\draw [line width =1.5pt,decoration={markings, mark=at position 1 with {\arrow{>}}},postaction={decorate}](-0.2,-0)--(1.2,0);
		\draw[line width =1pt] plot[smooth] coordinates {(0.1,0.6) (0.5, 0.15) (0.9,0.6)};
	\end{tikzpicture}
},
\end{equation}
\begin{equation}\label{RY-relation}
\raisebox{-.16in}{
	\begin{tikzpicture}
		\filldraw[draw=white,fill=gray!20] (-0.2,-0) rectangle (1.2, 1);
		\draw [line width =1pt](0,1)--(0.35,0.65);
		\draw [line width =1pt](1,1)--(0.5,0.5);
		\filldraw[draw=black,fill=white,thick]  (0.5,0.5) circle (0.2);
		\draw[->, thick]
		(0.5,0.7)
		arc[start angle=90, end angle=-200, radius=0.2];
		\filldraw[draw=black]  (0.7,0.5) circle (0.05);
	\end{tikzpicture}
}=v^{-1} \left( q^{-\frac{1}{2}}
\raisebox{-.16in}{
	\begin{tikzpicture}
		\filldraw[draw=white,fill=gray!20] (-0.2,-0) rectangle (1.2, 1);
		\draw[line width =1pt] plot[smooth] coordinates {(0,1)  (0.5, 0.75) (1,1)};
		\filldraw[draw=black,fill=white]  (0.5,0.5) circle (0.1);
	\end{tikzpicture}
} + q^{\frac{1}{2}}
\raisebox{-.16in}{
	\begin{tikzpicture}
		\filldraw[draw=white,fill=gray!20] (-0.2,-0) rectangle (1.2, 1);
		\draw[line width =1pt] plot[smooth] coordinates {(0,1)  (0.5, 0.15) (1,1)};
		\filldraw[draw=black,fill=white]  (0.5,0.5) circle (0.1);
	\end{tikzpicture}
} \right) \quad \text{the boundary circle is $v$,}
\end{equation}
\begin{equation}\label{unknot-RY}
\raisebox{-.15in}{
	\begin{tikzpicture}
		\filldraw[draw=white,fill=gray!20] (-0,-0) rectangle (1, 1);
		\draw [line width =1pt] (0.5,0.5) circle (0.3);
		\filldraw[draw=black,fill=white]  (0.5,0.5) circle (0.1);
	\end{tikzpicture}
}=q +q^{-1}
\raisebox{-.15in}{
	\begin{tikzpicture}
		\filldraw[draw=white,fill=gray!20] (-0,-0) rectangle (1, 1);
		\filldraw[draw=black,fill=white]  (0.5,0.5) circle (0.1);
	\end{tikzpicture}
},
\end{equation}
where multiplication is defined by stacking with respect to $(-1,1)$, i.e., for any two stated $\fS$-tangles $\alpha,\alpha'
\subset\fS\times(-1,1)$, the product $\alpha\alpha'$ is defined by stacking $\alpha$ above $\alpha'$. That is, if $\alpha \subset \fS\times(0,1)$ and $\alpha' \subset \fS\times (-1,0)$, we have $\alpha \alpha' = \alpha \cup \alpha'$.
Note that the definition of $\cS_{q}(\fS)$ involves $q^{\frac{1}{2}}$. 
For simplicity, however, we write $\cS_{q}(\fS)$ instead of $\cS_{q^{\frac{1}{2}}}(\fS)$.

The figures in this paper are interpreted according to the following convention. Each shaded region $S\subset\fS$ is a closed square, except in Relations~\eqref{RY-relation} and~\eqref{unknot-RY}, where $S$ is a closed square with an open disk removed. The diagram shown in $S$ is the projection of
$\alpha\cap(S\times J),$
where $J\subset(-1,1)$ is a closed interval such that
$\alpha\cap(S\times\partial J)=\varnothing.$
In Relations~\eqref{arc} and~\eqref{hight}, each vertical line represents a portion of $\partial\fS$. The endpoints lying on this line are consecutive in the height order, which increases in the indicated direction.

\def\wR{\widetilde{R}}

Let $B(\fS)$ denote the set of increasingly stated, positively ordered crossingless $\fS$-tangle diagrams. 
\cite[Theorem~3.6]{bloomquist2025degenerations} shows that
$\cS_q(\fS)$ is a free $\wR$-module with basis $B(\fS)$.

\def\OB{\overline{B}(\fS)}
\def\WB{\widetilde{B}(\fS)}

\vs

We denote by $I$ the two-sided ideal of $\SS$ generated by all bad arcs (see Figure~\ref{fig:badarc}).
The \textbf{reduced L{\^e}–Roger–Yang skein algebra} (or simply the \textbf{reduced LRY skein algebra}) of $\fS$ is defined as
\begin{align*}
\rdS \;:=\; \cS_q(\fS) \big/ I.
\end{align*}

\begin{figure}[h]
\centering
\raisebox{-.15in}{
\begin{tikzpicture}
\filldraw[draw=white,fill=gray!20] (-1.5,-0) rectangle (1.5, 1);
\draw [line width =1.5pt](-1.5,-0)--(1.5,0);
\draw [line width =1pt]  (1,0) arc (0:180:1 and 0.7);
\filldraw[draw=black,fill=white] (-0.1,-0.1) rectangle (0.1, 0.1);
\node at (-1,-0.2) {\small $+$};
\node at (1,-0.2) {\small $-$};
\end{tikzpicture}
}
\caption{A bad arc.}
\label{fig:badarc}
\end{figure}

For an element $X\in \cS_q(\fS)$, we will, by abuse of notation, continue to write $X$ for its image under the projection $\cS_q(\fS)\twoheadrightarrow \rdS$.

A simple loop in $\fS$ that surrounds an interior puncture is called a 
\textbf{peripheral loop}. Define
\begin{align}
\overline{B}(\fS)&:=\{\,D\in B(\fS)\mid D \text{ contains no bad arcs}\,\}\label{def-OB},\\
\WB&:=\{\,D\in \OB\mid D \text{ contains no peripheral loops}\,\}\label{def-WB}.
\end{align}

\begin{theorem}[{\cite[Theorem~3.11]{bloomquist2025degenerations}}]\label{thm-basis-reduced}
$\overline{\cS}_q(\fS)$ is a free $R$-module with basis $\overline{B}(\fS)$. 
\end{theorem}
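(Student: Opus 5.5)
Since $\cS_q(\fS)$ is a free $\wR$-module with basis $B(\fS)$ by \cite[Theorem~3.6]{bloomquist2025degenerations}, the natural plan is to show two things. First, the ideal $I$ generated by bad arcs is spanned by the elements of $B(\fS)$ that contain a bad arc. Second, $\overline{B}(\fS)$ therefore projects to a basis of the quotient. (Over $\wR$: the statement says $R$-module, but the coefficient ring should be $\wR$ as in the definition.)

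The first step is to set $J:=\mathrm{Span}_{\wR}\{D\in B(\fS)\mid D \text{ contains a bad arc}\}$. Clearly $J\subseteq I$, because each such $D$ is, up to a power of $q$, a product of a bad arc with the rest of the diagram. To do this, I would isotope the bad arc to lie near its boundary puncture, at consecutive heights on the two adjacent edges. It is then a disjoint factor, and reordering heights costs only a unit thanks to the height-exchange relations for increasingly stated endpoints.

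The main step, and the main obstacle, is the reverse inclusion $I\subseteq J$. For this I would show that $J$ is a two-sided ideal, i.e.\ $xD,Dx\in J$ for $x\in B(\fS)$ and $D\in J$. Stacking $x$ over $D$ places the bad arc $\beta\subset D$ near its corner. One can isotope $x$ so that near the puncture its strands crossing the corner region pass either above or below $\beta$. Then one resolves all crossings with \eqref{cross}, removes returning arcs with \eqref{arc} and \eqref{hight}, and applies \eqref{RY-relation} and \eqref{unknot-RY} away from the corner. The key observation to verify is that every relation used is local and supported away from a small corner neighborhood of the puncture, so each resulting diagram still contains $\beta$, or a strand parallel to $\beta$, in the corner.

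The delicate point is the state/height reordering near the corner. Strands of $x$ ending on the two edges adjacent to the puncture must be moved past the endpoints of $\beta$. Here I would use the known fact from the stated skein theory of \cite{le2018triangular,bloomquist2025degenerations} that a bad arc $(+,-)$ at a corner $q$-commutes with endpoints on adjacent edges, up to terms containing a bad arc again. This is a direct local computation in a neighborhood of the corner, where no boundary circles appear. So the argument for stated skein algebras (Costantino--L\^e's reduced stated skein algebra) carries over verbatim.

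Once $I=J$, the quotient $\cS_q(\fS)/I$ is free with basis given by the images of $B(\fS)\setminus\{D \text{ containing a bad arc}\}=\overline{B}(\fS)$, which proves the theorem.
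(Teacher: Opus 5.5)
Your proposal is correct and is essentially the standard argument behind the cited result, as in the proof of \cite[Theorem~7.1]{CLL}, which \cite{bloomquist2025degenerations} adapts and which the paper also invokes for Lemma~\ref{lem-reduced-module}; the paper itself gives no proof beyond the citation, and you are right that the coefficient ring should be $\wR$ rather than $R$. One tightening: the corner exchange is exact, not merely ``up to terms containing a bad arc'' (moving the $+$ endpoint of $\beta$ to the top of its edge, or its $-$ endpoint to the bottom of the other edge, always leaves an increasingly stated pair, so the Kauffman correction term cancels the one from \eqref{hight}, which is the same fact you use in your first step), and the locality claim is best made rigorous as the well-defined linear map $D\mapsto\beta\sqcup D$ induced by an embedding of $\widetilde{\fS}$ into the complement of a small corner neighborhood.
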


The following lemma follows directly from \eqref{hight}.  

\begin{lemma}\label{lem-m-arc}
In $\rdS$, we have the following relation 
$$
\begin{array}{c}
	\includegraphics[scale=0.3]{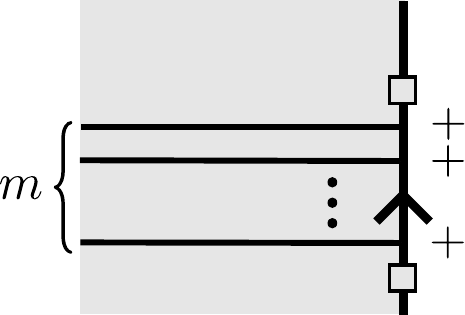} 
\end{array}
= q^{2m} q^{\frac{m(m-1)}{2}}
\begin{array}{c}
	\includegraphics[scale=0.3]{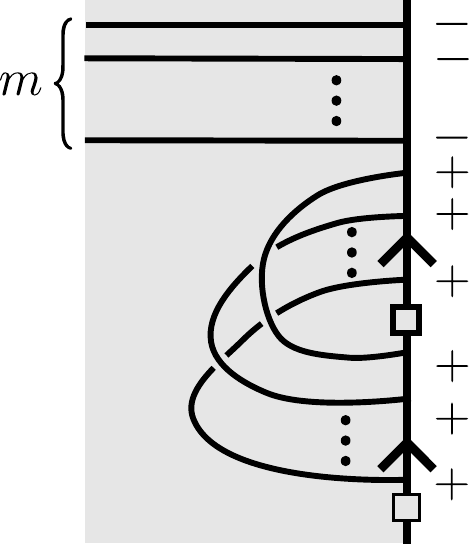}
\end{array}.
$$
\end{lemma}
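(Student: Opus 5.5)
The figure in the statement did not survive extraction, so I cannot see exactly which two diagrams are being compared. My plan therefore rests on a reading of the shape of the factor $q^{2m}q^{\frac{m(m-1)}{2}}$, and the first step is to confirm that reading against the actual picture.

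\textbf{Assumed setup.} I read the statement as follows. There is a boundary edge next to a puncture, with a bundle of $m$ parallel strands ending on it; these are consecutive in the height order. The two sides of the relation differ by moving one oppositely stated endpoint (the one adjacent to the puncture) past all $m$ of these endpoints, and by reversing the height order within the bundle. I then account for the two factors separately.

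\textbf{The factor $q^{2m}$.} Move the distinguished endpoint past the $m$ strands one at a time, each time applying \eqref{hight} to the two consecutive endpoints with states $+$ and $-$. The first term on the right of \eqref{hight} contributes a factor $q^{2}$ and performs the swap. The second term joins the two strands by a cap running along the boundary edge. In the configuration of the statement, that cap cuts off a disk containing only the puncture, with the endpoint states $+$ on one side and $-$ on the other as in Figure~\ref{fig:badarc}. Equivalently, after an isotopy the picture contains a bad arc. So the cap term lies in the ideal $I$ and vanishes in $\rdS$. Hence each swap gives exactly $q^2$, and $m$ swaps give $q^{2m}$.

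\textbf{The factor $q^{\frac{m(m-1)}{2}}$.} I expect this to come from reordering the $m$ endpoints of the bundle, which all carry the same state. I will first derive the same-state height exchange in the present normalization. Take two consecutive endpoints with equal states and resolve the crossing produced by exchanging their heights using \eqref{cross}. The resolution in which the two strands are joined produces a returning arc with two equal states, which vanishes by \eqref{arc}. What remains is a single power of $q$, which I expect to be $q^{\pm1}$ (the sign is fixed by the orientation conventions). Reversing the height order of $m$ endpoints needs $\binom{m}{2}$ such adjacent transpositions, giving $q^{\frac{m(m-1)}{2}}$.

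\textbf{Carrying it out and the main obstacle.} I will organize the argument as an induction on $m$. The case $m=1$ is a single use of \eqref{hight} plus the vanishing of the bad arc. In the inductive step I isolate the lowest strand, apply the case $m-1$ to the remaining strands, and then perform $m-1$ same-state exchanges and one $\pm$ exchange. The step I expect to be most delicate is the bookkeeping of conventions. I need to check that the height direction, the orientation of the boundary edge and the position of the puncture in the figure really make each discarded cap a bad arc, and not a trivial arc that would contribute through \eqref{arc}. I also need to check that every same-state exchange gives $q^{+1}$ rather than $q^{-1}$. I would settle both points by carrying out the case $m=2$ explicitly on the figure before writing the induction.
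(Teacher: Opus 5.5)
\textbf{Genuine gap: the justification of the factor $q^{2m}$ fails.} You apply relation~\eqref{hight} to two endpoints that are consecutive on one boundary edge. Its cap term is the diagram in which the two strands are fused by an arc lying in a small neighbourhood of the segment of that edge between them. That segment lies inside an open boundary edge, so it contains no puncture. The fused diagram has no endpoints there, and the region between the fusing arc and the segment contains nothing. So the cap never creates a bad arc, in any configuration.

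In your set-up (a distinguished strand next to the puncture and a bundle of $m$ other strands), the cap term is simply the tangle in which the distinguished strand is joined to the nearest strand of the bundle. In general this is a nonzero element of $\rdS$. Hence no individual swap is a pure scalar, and $q^{2m}$ does not follow.

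Near the puncture, the bad-arc relation can only kill the \emph{other} term of \eqref{hight}. This happens when the strand whose endpoint is adjacent to the puncture is a corner arc around it, and the state exchange gives it the pattern $(+,-)$ of Figure~\ref{fig:badarc}. A cap term can vanish only because of what happens at the far ends of the two fused strands. For example, if they are parallel with equal states there, the fused curve is a trivial returning arc and is killed by \eqref{arc}.

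The paper's one-line proof, that the lemma follows ``directly from \eqref{hight}'', amounts to iterating \eqref{hight} in a configuration where one of the two terms dies at every step. Which term survives decides whether a step contributes $q^{2}$ or the cap coefficient $q^{-\frac12}$. That is exactly what you need to pin down.

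Your reading of the picture is also hard to reconcile with how the lemma is used. In Lemmas~\ref{lem-arc-in-A} and~\ref{lem-Fro-arc-circle}, it is what replaces a negatively stated endpoint of a simple essential arc by positively stated data. So the two sides should differ by a change of state at an endpoint, with corner arcs at the puncture absorbing the difference. A relation that only transfers a $-$ from one strand to another strand ending on the same edge does not achieve this.

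The basic mechanism works as follows. Put a positively stated corner arc around a suitable adjacent puncture, with its endpoint consecutive to the negatively stated one, and apply \eqref{hight} there. The state-exchanged term then contains a bad arc and vanishes. The cap term fuses the arc with the corner arc, carrying its endpoint around the puncture with state $+$.

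Recover the actual diagrams before doing the bookkeeping. Your derivation of the same-state height exchange from \eqref{cross} and \eqref{arc} is correct as a tool, but whether and where it enters depends on the real picture.
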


\def\Cut{{\rm Cut}}

\subsection{Cutting homomorphism}\label{sec-cutting}
Let $c$ be an ideal arc contained in the interior of a punctured
surface $\fS$. Cutting $\fS$ along $c$ produces a punctured
surface $\Cut_c\fS$ with two boundary edges $c_1$ and $c_2$
such that
\[
\fS=\Cut_c\fS/(c_1=c_2),
\qquad c=c_1=c_2.
\]
Suppose that a tangle diagram $D$ is transverse to
$c$, and let $h$ be a linear order on $D\cap c$. Denote by
$p\colon \Cut_c\fS\longrightarrow \fS$
the natural projection. For each map
$s\colon D\cap c\longrightarrow\{+,-\},$
let $(D,h,s)$ denote the stated tangle diagram over
$\Cut_c\fS$ whose underlying diagram is $p^{-1}(D)$, and whose
height orders and states along $c_1\cup c_2$ are induced, via $p$,
by $h$ and $s$, respectively.

\begin{theorem}[\cite{bloomquist2025degenerations}]\label{thm-splitting-map}
Let $c$ be an interior ideal arc of a punctured surface $\fS$.
\begin{enumerate}
	\item[\textup{(a)}]
	There exists a unique $\wR$-algebra embedding
	\[
	\Theta_c\colon
	\cS_q(\fS)\hookrightarrow \cS_q(\Cut_c\fS)
	\]
	with the following property. If $D$ is a stated tangle diagram
	representing $\alpha\in\cS_q(\fS)$, transverse to $c$, and
	$h$ is any linear order on $D\cap c$, then
	\begin{equation}\label{eq-splitting-map}
		\Theta_c(\alpha)
		=
		\sum_{s\colon D\cap c\to\{+,-\}}(D,h,s).
	\end{equation}
	
	\item[\textup{(b)}]
	The homomorphism $\Theta_c$ descends to an algebra embedding
	between the reduced LRY skein algebras:
	\[
	\overline{\Theta}_c\colon
	\overline{\cS}_q(\fS)
	\hookrightarrow
	\overline{\cS}_q(\Cut_c\fS).
	\]
\end{enumerate}
\end{theorem}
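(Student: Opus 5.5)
The plan is to follow the strategy of L\^e's original proof for stated skein algebras \cite{le2018triangular}, adapted to the presence of boundary circles as in \cite{bloomquist2025degenerations}. Throughout, the key observation is that $c$ is an interior ideal arc whose endpoints are punctures, so $c$ is disjoint from every boundary circle in $V$. Consequently the new relations \eqref{RY-relation} and \eqref{unknot-RY} are supported in regions that can be isotoped off $c$, and they are respected by the state sum trivially.

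\emph{Well-definedness.} First I will show that the right-hand side of \eqref{eq-splitting-map} does not depend on the linear order $h$. It suffices to treat two points of $D\cap c$ that are consecutive in $h$ and to swap them. On each side of the cut, the two resulting endpoints lie on $c_1$, respectively $c_2$, with opposite relative heights. Applying the height exchange relation \eqref{hight}, together with its companion versions obtained from \eqref{arc}, on both $c_1$ and $c_2$ and summing over the states, the correction terms should cancel exactly as in \cite{le2018triangular}. This is the standard $2\times 2$ computation with the $R$-matrix.

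Next I will check invariance under isotopy of diagrams. Reidemeister moves away from $c$ are harmless. The moves that pass a strand or a crossing across $c$ reduce, via independence of $h$, to the identity obtained by summing \eqref{arc} over the two states: a small arc bouncing off $c$ contributes $\sum_s(\cdots)=$ the arc pushed off $c$. Finally, the defining relations \eqref{cross}--\eqref{unknot-RY} can all be isotoped to lie off $c$, so they map to the corresponding relations in $\cS_q(\Cut_c\fS)$. This yields a $\wR$-linear map $\Theta_c$.

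\emph{Algebra homomorphism.} For a product $\alpha\alpha'$, I choose $h$ so that every point of $\alpha\cap c$ lies above every point of $\alpha'\cap c$. With this choice the state sum visibly factors as $\Theta_c(\alpha)\Theta_c(\alpha')$. Uniqueness is immediate, since stated diagrams span $\cS_q(\fS)$.

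\emph{Injectivity.} Here I will use the basis $B(\fS)$ from \cite[Theorem~3.6]{bloomquist2025degenerations}. Put each basis diagram $D$ in minimal position with respect to $c$, and take $h$ to be a fixed order. I then single out the term $(D,h,s_0)$ in which the states $s_0$ on $c$ are chosen so that the result is increasingly stated and positively ordered, and hence lies in $B(\Cut_c\fS)$. I will argue that all other terms are smaller for a filtration by the states, for instance by the number of $+$ states on $c_1$. I will also argue that $D\mapsto(D,h,s_0)$ is injective, because $D$ is recovered by gluing. A triangularity argument then gives injectivity.

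\emph{Reduced version.} A bad arc crossing $c$ meets it once, near a boundary puncture. Cutting it produces two corner arcs with state pairs $(+,s)$ and $(s,-)$. For $s=-$ the first corner arc is bad, and for $s=+$ the second is bad. Hence $\Theta_c(I)\subset I'$, where $I'$ is the bad-arc ideal of $\cS_q(\Cut_c\fS)$, and $\overline{\Theta}_c$ is defined.

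For injectivity of $\overline{\Theta}_c$ I repeat the leading-term argument with the basis $\OB$ of Theorem~\ref{thm-basis-reduced}. I expect this to be the main obstacle. One must choose $s_0$ so that the leading term $(D,h,s_0)$ contains no bad arc in $\Cut_c\fS$, and also so that lower terms containing bad arcs (which vanish) cannot cancel it. My first attempt will be to check directly that the cut pieces of a basis diagram without bad arcs are corner arcs at the new punctures whose states, under the increasing choice $s_0$, are never of the bad form $(+,-)$. If that fails, the fallback is to compose with the $\mathcal X$-quantum trace and compare leading monomials.
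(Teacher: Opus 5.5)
The paper does not prove this theorem; it is quoted from \cite{bloomquist2025degenerations}. So I judge your outline on its own merits.

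\textbf{Well-definedness, homomorphism, and the descent in (b).} These steps are the standard L\^e argument and are fine as a sketch. Your observation that $c$ misses every boundary circle is exactly why \eqref{RY-relation} and \eqref{unknot-RY} cause no trouble. One minor correction: if both ends of $c$ lie at the same boundary puncture, a bad arc there meets $c$ twice, not once. The three pieces then carry states $(+,s_1),(s_1,s_2),(s_2,-)$, some consecutive pair must be $(+,-)$, and so $\Theta_c(I)\subset I'$ still holds.

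\textbf{Injectivity in (a).} This step is wrong as stated.
\begin{itemize}
\item No choice of $s_0$ makes $(D,h,s_0)$ positively ordered. The order $h$ is transported to both $c_1$ and $c_2$, which carry opposite boundary orientations. Whenever $|D\cap c|\ge 2$, the induced order therefore cannot be positive on both, whatever the states are.
\item The filtration by the number of $+$ states on $c_1$ isolates only the constant state $s_+\equiv +$, not an arbitrary increasing state.
\end{itemize}
The repair is as follows. Take $h$ positive on $c_1$ and $s_0=s_+$. Reversing the order on $c_2$ for equal states costs only a power of $q$ (equal-state height exchange, cf.\ Lemma~\ref{lem-height-change}), so the top term is $q^mE_D$ with $E_D\in B(\Cut_c\fS)$. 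The relations used to reduce the other terms only permute states on an edge or delete a pair, so they never raise the $+$-count on $c_1$. Finally, $E_D$ determines $D$ by regluing.

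\textbf{Injectivity in (b).} You correctly identify this as the crux but do not resolve it, and your first attempt fails.
\begin{itemize}
\item Let $D\in\OB$ contain a corner arc with states $(-,-)$ at a boundary puncture where $c$ ends. The state $s_+$ cuts it into a $(-,+)$ arc and a bad $(+,-)$ arc, so the top term dies in $\rd_q(\Cut_c\fS)$.
\item The state $s_-$ fails in the same way on $(+,+)$ corners.
\item When $c$ joins two boundary punctures that share no edge, $D$ can carry a $(+,+)$ corner at one end of $c$ and a $(-,-)$ corner at the other. This forces a non-constant state, for which neither the scalar reordering nor the $+$-count filtration works.
\end{itemize}
Your $\mathcal X$-trace fallback covers only triangulable $\fS$ with $c$ in some $\lambda$. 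It also needs injectivity of $\tr_\lambda$ as an independent input, and the leading-term control of Lemma~\ref{lem-leading-term} is again only for positively stated diagrams.

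A clean way out is to avoid negative states altogether.
\begin{itemize}
\item By Lemma~\ref{lem-positive-reduced}, for $x\in\rdS$ there is $m\in\mathcal M$ with $mx\in\cS_q^+(\fS)$. If $\overline\Theta_c(x)=0$ then $\overline\Theta_c(mx)=0$, so it suffices to treat combinations of positively stated elements of $B(\fS)$.
\item For these, the $s_+$ terms have only $+$ states and hence no bad arcs.
\item The unreduced filtration argument then passes to the quotient. The projection $\cS_q(\Cut_c\fS)\to\rd_q(\Cut_c\fS)$ sends each element of $B(\Cut_c\fS)$ either to $0$ or to itself in $\overline B(\Cut_c\fS)$. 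Indeed, in an increasingly stated, positively ordered diagram, if some corner arc at a puncture is bad then so is the innermost one, which is killed by Lemma~\ref{lem-reduced-module}.
\end{itemize}
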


\subsection{Reflections}\label{sub-reflection}

Let $A$ be a $\mathbb Z[q^{\pm\frac{1}{2}}]$-algebra that is torsion-free as a
$\mathbb Z[q^{\pm\frac{1}{2}}]$-module, and let
$x_1,\ldots,x_n\in A$ be pairwise $q$-commuting elements. Since $A$ is torsion-free over
$\mathbb Z[q^{\pm\frac{1}{2}}]$, whenever $x_ix_j\neq 0$, there is a unique integer $c_{ij}$ such that
$x_ix_j=q^{c_{ij}}x_jx_i.$
The \textbf{Weyl normalization} of the product $x_1x_2\cdots x_n$ is defined by
\begin{align}\label{eq-weyl}
[x_1x_2\cdots x_n]_{\mathrm{norm}}
:=
q^{-\frac12\sum_{i<j}c_{ij}}x_1x_2\cdots x_n.
\end{align}

If $R$ is a general ground ring, that is, a commutative
$\mathbb Z[q^{\pm\frac{1}{2}}]$-domain, we also use the notation
$[ab]_{\mathrm{norm}}$ for the element
\begin{align}\label{eq-weyl1}
[ab]_{\mathrm{norm}}\otimes 1
\in A\otimes_{\mathbb Z[q^{\pm\frac{1}{2}}]}R.
\end{align}

A $\mathbb Z[q^{\pm\frac{1}{2}}]$-algebra \textbf{with reflection} is an algebra $A$ equipped with a
$\mathbb Z$-linear anti-involution
${\bf R}\colon A \rightarrow A$,
called the \textbf{reflection}, satisfying
${\bf R}(q^{1/2})=q^{-1/2}$. Equivalently, for all $x,y\in A$,
\[
{\bf R}(xy)={\bf R}(y){\bf R}(x),
\qquad
{\bf R}(q^{1/2}x)=q^{-1/2}{\bf R}(x).
\]
An element $z\in A$ is called \textbf{reflection invariant} if
${\bf R}(z)=z$.

A nonzero element $z\in A$ is called \textbf{reflection normalizable} if
${\bf R}(z)=q^m z$
for some $m\in\mathbb Z$. Since $A$ is torsion-free over
$\mathbb Z[q^{\pm\frac12}]$, the integer $m$ is uniquely determined. We define the
\textbf{reflection normalization} of $z$ by
\begin{align}\label{eq-norm}
[z]_{\mathrm{norm}}:=q^{\frac{m}{2}}z.
\end{align}
This notation agrees with the Weyl normalization defined in
\eqref{eq-weyl}. In particular, $[z]_{\mathrm{norm}}$ is reflection invariant.
As in \eqref{eq-weyl1}, if $R$ is a general ground ring, we also use
$[z]_{\mathrm{norm}}$ to denote the element
\begin{align}\label{eq-norm1}
[z]_{\mathrm{norm}}\otimes 1
\in A\otimes_{\mathbb Z[q^{\pm\frac12}]}R.
\end{align}

The algebra $\cS_q(\fS)$ admits a natural reflection ${\bf R}$. 
For each $v\in V$, define ${\bf R}(v^{\pm \frac{1}{2}})=v^{\pm \frac{1}{2}}$.
For any stated, boundary-ordered $\fS$-tangle diagram $D$, the diagram ${\bf R}(D)$ is obtained by switching every crossing and reversing the height order of the endpoints on each boundary edge \cite{bloomquist2025degenerations}.

It is straightforward to verify that ${\bf R}$ descends to a reflection for $\rdS$, which is still denoted by ${\bf R}$.

\subsection{Functoriality}

The following lemma follows from the proof of \cite[Theorem 7.1]{CLL} (see \cite[Figures 27 and 29]{CLL}).

\begin{lemma}\label{lem-reduced-module}
Let $\fS$ be punctured surface.
Then $\overline{\cS}_{q}(\fS)$ of $\fS$ is
the quotient module of the $\wR$-module freely generated by the set 
of all isotopy classes of stated 
$\fS$-tangles subject to  relations \eqref{cross}-\eqref{unknot-RY}, and 
\begin{align}\label{lem-bad-arc-zero}
	\text{bad arc}=0.
\end{align}

\end{lemma}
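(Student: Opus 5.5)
The goal of Lemma~\ref{lem-reduced-module} is to show that the ideal $I$ generated by bad arcs coincides with the $\wR$-submodule $I'$ spanned by all stated tangles containing a bad arc. Writing $M$ for the free module quotient by \eqref{cross}--\eqref{unknot-RY} and \eqref{lem-bad-arc-zero}, the lemma asserts precisely $\cS_q(\fS)/I \cong \cS_q(\fS)/I'$. The inclusion $I'\subseteq I$ is immediate, since a tangle containing a bad arc $b$ near a boundary puncture can be written, after isotoping $b$ to the top of the height order locally, as a product of $b$ with the rest, up to height-exchange relations. So the task is to show that $I'$ is already a two-sided ideal; equivalently, that multiplying a bad-arc-containing diagram by an arbitrary stated tangle on either side yields an element of $I'$.

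To prove this, I will follow the argument of \cite[Theorem 7.1]{CLL}. Take a diagram $D$ with a bad arc $b$ near a boundary puncture $p$, and stack an arbitrary stated tangle $\beta$ on top of it. In the product $\beta D$, strands of $\beta$ may pass over $b$, and endpoints of $\beta$ on the two boundary edges adjacent to $p$ may interleave with those of $b$ in the height order. I will push the bad arc toward the puncture in two steps.

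\begin{enumerate}
\item \emph{Crossings over $b$.} For each strand of $\beta$ crossing over $b$, I apply \eqref{cross}. This yields one term where $b$ is rerouted and one where the strand slides past $b$. The key local computations, the analogues of \cite[Figures 27 and 29]{CLL}, show that each resolved term is again a diagram containing a bad arc (possibly after using \eqref{arc}, \eqref{hight}), or cancels.
\item \emph{Height interleaving.} I handle endpoints of $\beta$ on the adjacent edges interleaving with $b$'s endpoints using \eqref{hight}. Moving a $\pm$ endpoint past $b$'s endpoints produces $q$-multiples of diagrams still containing a small bad arc cornering $p$, plus correction terms with a returning arc. Those correction terms should again reduce to diagrams containing a bad arc via \eqref{arc}.
\end{enumerate}

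Since all relations involved are local near $p$ and the boundary, the presence of boundary circles and of relations \eqref{RY-relation} and \eqref{unknot-RY} does not interfere, as these occur away from $p$. The argument for multiplication on the other side is symmetric, using the reflection ${\bf R}$ of \S\ref{sub-reflection}, which sends bad arcs to bad arcs up to units.

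The main obstacle is the local computation of step~(1) and step~(2), namely verifying that every term produced stays in $I'$. The delicate case is when the sliding strand is itself an arc ending on the adjacent edges with opposite states, since \eqref{hight} then produces a cap that must be recognized as forming a new bad arc. I would first check this on the bigon region around $p$ by direct enumeration of state configurations, exactly as in \cite[Figure 29]{CLL}.
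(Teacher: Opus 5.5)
\paragraph{Verdict.} Your reformulation as $I=I'$ is correct. However, you have put the difficulty in the wrong inclusion, and the inclusion you call immediate is the one with real content.

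\paragraph{The inclusion $I\subseteq I'$ is trivial.} Equivalently, $I'$ is a two-sided ideal, and this needs no skein computation. Suppose $D$ meets $S\times J$ only in the bad arc $b$. Then $\beta D$ and $D\beta$ meet a rescaled copy of $S\times J$ only in $b$, so they are again instances of the local relation $\text{bad arc}=0$. Stacking never interleaves heights, since all endpoints of $\beta$ lie above (or below) those of $D$.

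If you want a clean diagram, isotopy suffices. A strand of $\beta$ that crosses $b$ either crosses it twice, and then it can be pulled off because it lies entirely on one side of $b$ in height. Or it ends on $e_1\cup e_2$ between $b$ and $p$, and then that endpoint slides along the boundary past the endpoint of $b$ because their heights differ. So your steps (1)--(2), and the reflection argument, address a non-issue.

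\paragraph{The gap is in $I'\subseteq I$.} No isotopy changes the height of $b_1\in e_1$ or $b_2\in e_2$ relative to the other endpoints on these edges, and some of those endpoints may lie outside the square. You must therefore use height exchanges. For endpoints with opposite states, a height exchange generically produces an extra term in which $b$ is capped off onto the neighbouring strand, and such a term is not visibly in $I$. Saying ``up to height-exchange relations'' hides exactly this.

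\paragraph{The cancellation that closes the gap.} The argument works only because of a cancellation specific to the bad arc. Take two endpoints that are consecutive in height and adjacent on an edge, drawn as in \eqref{hight}. Suppose the left strand carries $-$ and the right strand carries $+$. Write $D_<$ for the diagram in which the left endpoint is lower, and $D_>$ for the one in which it is higher.
\begin{itemize}
\item Draw $D_>$ with a crossing next to the boundary and resolve it by \eqref{cross}.
\item One resolution is $q$ times the return arc with states $(+,-)$, times the capped diagram $C$. By \eqref{arc}, \eqref{hight} and \eqref{unknot} this return arc equals $-q^{-5/2}$, so this term is $-q^{-3/2}C$.
\item The other resolution is $q^{-1}$ times the state-swapped diagram. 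Applying \eqref{hight} to it gives $qD_<+q^{-3/2}C$.
\item The $C$-terms cancel, so $D_>=q\,D_<$ exactly.
\end{itemize}

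\paragraph{Why every needed exchange is of this type.} The bad arc $b$ is innermost at $p$, with $+$ on $e_1$ and $-$ on $e_2$. Hence every neighbour of $b_1$ lies to its left and every neighbour of $b_2$ lies to its right. Each exchange needed to push $b_1$ and $b_2$ to the top of their edges is therefore either between equal states or of the favourable type above. This gives $D\overset{q}{=}b\cdot D'$, so $D\in I$.

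\paragraph{Why this cannot be skipped.} For the corner arc with the opposite states, the analogous exchange leaves a residual term $(q^{1/2}-q^{-7/2})C$. So the step is genuinely state-dependent, and this verification is what your proposal is missing.
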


The following straightforward lemma will be used later

\begin{lemma}\label{lem-functoriality}
Let $\fS_1$ and $\fS_2$ be punctured surfaces, and let
\[
f\colon \widetilde{\fS}_1\longrightarrow \widetilde{\fS}_2
\]
be an orientation-preserving embedding. Suppose that, for every boundary
component $b_1$ of $\overline{\fS}_1$, there exist a boundary component
$b_2$ of $\overline{\fS}_2$ and open regular neighborhoods $U_1$ and $U_2$
of $b_1\times (-1,1)$ and $b_2\times (-1,1)$, respectively, such that
\[
f\bigl(U_1\cap\widetilde{\fS}_1\bigr)
\subset
U_2\cap\widetilde{\fS}_2.
\]
If $b_1$ contains at least one puncture, assume further that the restriction
\[
f\big|_{U_1\cap\widetilde{\fS}_1}\colon
U_1\cap\widetilde{\fS}_1
\longrightarrow
U_2\cap\widetilde{\fS}_2
\]
is a diffeomorphism. Then $f$ induces an $R[v^{\frac{1}{2}}\mid v\in V_1]$-module homomorphism
\[
f_*\colon
\overline{\cS}_q(\fS_1)
\longrightarrow
\overline{\cS}_q(\fS_2),
\]
where $V_1$ is the set of boundary circles of $\fS_1$.
\end{lemma}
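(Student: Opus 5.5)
The plan is to define $f_*$ first at the level of the free $R[v^{\frac12}\mid v\in V_1]$-module spanned by isotopy classes of stated $\fS_1$-tangles. Then, using the presentation of Lemma~\ref{lem-reduced-module}, we check that every defining relation \eqref{cross}--\eqref{unknot-RY} together with \eqref{lem-bad-arc-zero} is carried to a relation that already holds in $\overline{\cS}_q(\fS_2)$.

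\textbf{The map on tangles.} For a stated $\fS_1$-tangle $\alpha$, the image $f(\alpha)$ is a compact $1$-submanifold of $\widetilde{\fS}_2$; its framing is pushed forward by $df$. The hypothesis on neighborhoods of boundary components ensures that endpoints of $\alpha$ on $b_1\times(-1,1)$ land on the corresponding $b_2\times(-1,1)$. Two cases arise.
\begin{itemize}
\item If $b_1$ contains a puncture, the restriction of $f$ is a diffeomorphism near the boundary. We may therefore first isotope $\alpha$ so that its framing is vertical near the boundary and its endpoints have distinct heights. Then $f(\alpha)$ is a genuine $\fS_2$-tangle, and the states on boundary-edge endpoints transfer directly.
\item If $b_1$ is a boundary circle $v$, then $b_2$ is also a boundary circle; endpoints on circles carry no states. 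We need $f(\alpha)$ to meet $b_2\times(-1,1)$ properly with vertical framing and distinct heights. After an isotopy within $U_2$ (sliding along the collar), we arrange this.
\end{itemize}
Isotopic tangles in $\widetilde{\fS}_1$ map to isotopic tangles in $\widetilde{\fS}_2$, since $f$ is an embedding and the collar correction can be made continuously in families. The coefficient $v^{\frac12}$ for $v\in V_1$ is sent to $v_2^{\frac12}$, where $v_2$ is the circle containing $f(v)$; this makes $\overline{\cS}_q(\fS_2)$ an $R[v^{\frac12}\mid v\in V_1]$-module.

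\textbf{Relations.} All relations are local.
\begin{itemize}
\item \eqref{cross} and \eqref{unknot} live in a cube $S\times J$ in the interior. Since $f$ is orientation preserving, the image of that cube is a ball in $\widetilde{\fS}_2$, which can be isotoped to a standard cube. So the images satisfy the same relations.
\item \eqref{arc}, \eqref{hight} and the bad arc relation \eqref{lem-bad-arc-zero} live in a neighborhood of a boundary edge. There $f$ is a diffeomorphism onto a neighborhood of a boundary edge of $\fS_2$. For the bad arc, this neighborhood must contain the adjacent puncture, so the image is again a bad arc, hence zero.
\item \eqref{RY-relation} and \eqref{unknot-RY} live in an annular neighborhood of a boundary circle $v$, and $f$ maps this into an annular collar of $v_2$. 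Orientation preservation guarantees the direction of the arrow in \eqref{RY-relation} is respected. Hence both sides transform to the corresponding relation at $v_2$, with $v^{-1}$ mapping to $v_2^{-1}$.
\end{itemize}

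\textbf{Main obstacle.} The step I expect to be most delicate is the boundary-circle case. There $f$ need not be a diffeomorphism near the boundary, so we must check that the collar isotopy making endpoints proper is well defined up to isotopy. The issue is whether endpoint heights and the framing can be straightened consistently. The key point is that heights on a boundary circle impose no constraint in the LRY relations, apart from their being distinct: endpoints on circles can be freely reordered by isotopy around the circle in $U_2$. This makes the construction independent of choices.
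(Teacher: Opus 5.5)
Your overall strategy is the paper's own: push tangles forward and check locally the defining relations of Lemma~\ref{lem-reduced-module}. The paper's proof is just the sentence that $f$ preserves \eqref{cross}--\eqref{unknot-RY} and \eqref{lem-bad-arc-zero}.

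The gap is in the step you flag as the main obstacle. The claim that heights on a boundary circle impose no constraint, and that endpoints there ``can be freely reordered by isotopy around the circle,'' is false. By Definition~\ref{def-tangle}(a),(e), two endpoints on the same cylinder $v\times(-1,1)$ must have distinct heights at every moment of an isotopy, so their height order is an isotopy invariant. Sliding them around $v$ changes only their angular positions.

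The algebra also detects this order. Reversing the heights of the two endpoints in \eqref{RY-relation} (apply $t\mapsto -t$) interchanges the coefficients $q^{-1/2}$ and $q^{1/2}$ on the right-hand side, so the two orderings give different elements (compare \eqref{eq-height-2} and \eqref{eq-height-21}). Consequently, a ``collar correction'' whose effect on heights depends on where the endpoints sit is not well defined on isotopy classes, and your independence-of-choices argument does not hold.

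The fix is a hypothesis, not an argument, and the paper leaves it implicit. The map $f$ should:
\begin{itemize}
\item carry each $b_1\times(-1,1)$ into $b_2\times(-1,1)$, with $b_2$ of the same type as $b_1$ (you also assume ``$b_2$ is a circle'' without justification);
\item send vertical framings there to vertical framings;
\item preserve the height order there.
\end{itemize}
This holds in every application, e.g.\ Lemmas~\ref{lem-arc-resolution} and~\ref{imgea-circle-arc2}, where $v\times(-1,1)$ is mapped monotonically onto a strip $v\times(t-\epsilon,t+\epsilon)$. With it, $f(\alpha)$ is already an $\fS_2$-tangle, images of isotopies are isotopies, no collar correction is needed, and your local check of the relations goes through.

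You use the same hypothesis silently in your boundary-edge bullet (``$f(\alpha)$ is a genuine $\fS_2$-tangle'') and in ``orientation preservation guarantees the direction of the arrow.'' An orientation-preserving diffeomorphism of a collar can reverse the height direction together with the direction along the edge, e.g.\ $(x,y,t)\mapsto(-x,y,-t)$. Such a reversal turns the left side of \eqref{arc} with states $(-,+)$ into the cap with states $(+,-)$, which equals $-q^{-5/2}$ by \eqref{hight}, not $q^{-1/2}$. So height compatibility is a genuine hypothesis, not something isotopy can absorb.
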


\begin{proof}
It is straightforward to check that $f$ preserves 
relations \eqref{cross}-\eqref{unknot-RY}, and \eqref{lem-bad-arc-zero}.
\end{proof}

\begin{remark}\label{rem-blow-map}
Let $\fS$ be a punctured surface with an interior puncture $p$, and let
$c_p$ be a trivial ideal arc based at $p$. Thus, both endpoints of
$c_p$ are equal to $p$, and $c_p$ bounds an embedded monogon $D_p$; see
Figure~\ref{fig_cp}. Removing the interior of $D_p$ from $\fS$ produces
a new punctured surface $\fS'$ in which $c_p$ becomes a boundary edge.

After identifying
$\fS=\fS'\setminus c_p,$
there is, up to isotopy, a unique embedding
$f\colon\fS\hookrightarrow\fS'$
that agrees with the identity outside a small neighborhood of $D_p$.
Intuitively, the embedding $f$ enlarges the puncture $p$ into a boundary
edge.

As in Lemma~\ref{lem-functoriality}, the embedding $f$ induces an
$\wR$-module homomorphism
\[
f_*\colon\rdS\longrightarrow\rd_q(\fS').
\]
It is straightforward to verify that $f_*$ preserves multiplication and
maps $\overline{B}(\fS)$ injectively into $\overline{B}(\fS')$.
Consequently, $f_*$ is an algebra embedding.
\end{remark}

\begin{figure}
\centering
\begin{tikzpicture}
\draw[draw=white,fill=gray!20]  (-1,-1) rectangle (1,1);
    \draw[wall,fill=gray!20] (0,0) circle[radius=0.5];
    \draw[fill=white] (0.4,-0.1) rectangle (0.6,0.1);
    \node at (0.8,0) {$p$};
    \node [above] at (-0.7,0.1) {$c_p$};
\end{tikzpicture}
\caption{The arc $c_p$ and the enclosed monogon $D_p$.}\label{fig_cp}
\end{figure}

\begin{remark}\label{rem-adding-puncture}
Let $\fS$ be a punctured surface, and let $p$ be an interior point of
$\fS$. Set
$\fS'=\fS\setminus\{p\}.$
The natural embedding $\fS'\hookrightarrow\fS$ induces a surjective
$\wR$-algebra homomorphism
\[
\mathbf{P}\colon\rd_q(\fS')\longrightarrow\rdS.
\]
Its kernel is generated by elements of the form $a-a'$, where $a$ and
$a'$ are stated $\fS'$-tangle diagrams related by sliding a
strand across the puncture $p$; see Figure~\ref{eq-relation-point-p}.

\end{remark}

\begin{figure}
\centering
\raisebox{-.35\height}{
\begin{tikzpicture}
\draw[draw=white,fill=gray!20]  (-1,-1) rectangle (1,1);
    \draw[fill=white] (-0.1,-0.1) rectangle (0.1,0.1);
    \node [right] at (0.1,0) {$p$};
    \node [left] at (-1,0) {$a=$};
    \draw (1,0) parabola bend (0,-0.5) (-1,0);
\end{tikzpicture}}
\;\;\;\;\;\;\;\;\;\;
\raisebox{-.35\height}{
\begin{tikzpicture}
\draw[draw=white,fill=gray!20]  (-1,-1) rectangle (1,1);
    \draw[fill=white] (-0.1,-0.1) rectangle (0.1,0.1);
    \node [right] at (0.1,0) {$p$};
    \node [left] at (-1,0) {$a'=$};
    \draw (1,0) parabola bend (0,0.5) (-1,0);
\end{tikzpicture}}
\caption{$n$-web diagrams $a$ and $a'$ in  
	$\Sigma_{g,1}$ related by sliding over the puncture $p$}
\label{eq-relation-point-p}
\end{figure}

\subsection{$\mathcal A$-quantum trace maps}\label{sec-A-quantum-trace}

In this section, we review the $\mathcal A$-quantum trace map constructed in \cite{bloomquist2025degenerations} and extend this construction to a more general setting; see Proposition~\ref{prop:quantumtrace}. In \S\ref{sec-skein=A=U}, we use the quantum trace map from Proposition~\ref{prop:quantumtrace} to construct a generalized quantum cluster algebra structure on the reduced LRY skein algebra; see \S\ref{subsec-generalized}.

\begin{definition}\label{def-quasi}
\begin{enumerate}
	\item[(a)] An \textbf{admissible ideal arc} in $\fS$ is either an ideal arc connecting two boundary punctures or an embedding
	$a\colon [0,1)\hookrightarrow \fS$
	that extends to a proper embedding
	$\overline{a}\colon [0,1]\hookrightarrow \overline{\fS}$
	such that $\overline{a}(1)$ is a boundary puncture and $a(0)$ lies on a boundary circle of $\fS$. In the latter case, we call $a$ a \textbf{pending arc}. For $\epsilon\in(0,\tfrac{1}{2})$ sufficiently small, the subpaths $\alpha([0,\epsilon])$ and $\alpha([1-\epsilon,1])$ are called the {\bf half-edges} of $\alpha$.
	\item[(b)] An \textbf{admissible ideal multi-arc} is a disjoint union of admissible ideal arcs such that it intersects each boundary circle at most once. Given an admissible ideal multi-arc $\alpha$, the $\fS$-tangle diagram $D(\alpha)$ is obtained by moving all branches of $\alpha$ near each ideal point to the left, as illustrated in Figure \ref{fig:move-left}.
\end{enumerate}
\end{definition}

\begin{figure}[ht]
\centering
\begin{tikzpicture}[
	line cap=round,
	line join=round,
	arc/.style={line width=1pt},
	boundary/.style={line width=1.5pt},
	ideal point/.style={
		draw,
		fill=white,
		line width=0.7pt,
		minimum size=5pt,
		inner sep=0pt
	}
	]
	
	\begin{scope}
		\fill[gray!20] (-1.05,0) rectangle (1.05,1.05);
		
		\draw[boundary] (-1.05,0) -- (1.05,0);
		
		\draw[arc] (-0.85,1.05) -- (0.55,0);
		\draw[arc] (-0.10,1.05) -- (0.55,0);
		\draw[arc] ( 0.80,1.05) -- (0.55,0);
		
		\node[ideal point] at (0.55,0) {};
		
		\node at (0,-0.34) {$\alpha$};
	\end{scope}
	
	\begin{scope}[xshift=3.1cm]
		\fill[gray!20] (-1.05,0) rectangle (1.05,1.05);
		
		\draw[boundary] (-1.05,0) -- (1.05,0);
		
		\draw[arc] (-0.80,1.05) -- (-0.65,0);
		\draw[arc] (-0.15,1.05) -- (-0.30,0);
		\draw[arc] ( 0.65,1.05) -- ( 0.20,0);
		
		\node[ideal point] at (0.65,0) {};
		
		
		\node at (0,-0.34) {$D(\alpha)$};
	\end{scope}
	
\end{tikzpicture}

\caption{Moving left: from the admissible ideal multiarc
	$\alpha$ to $D(\alpha)$. If a component of $\alpha$ is a
	boundary edge, we first isotope it slightly into the interior and
	then apply the move-left operation.}
\label{fig:move-left}
\end{figure}

For each boundary puncture $p$, suppose that the endpoints of $\alpha$ incident to $p$ are equipped with a linear order. These orders determine a boundary ordered $\fS$-tangle diagram $D(\alpha)$. If, in addition, each endpoint of $\alpha$ incident to a boundary puncture is assigned a state in $\{-,+\}$, then $D(\alpha)$ represents a stated $\fS$-tangle. In this case, we identify $\alpha$ with the corresponding stated $\fS$-tangle.
We use $D_+(\alpha)\in \WB$ (see \eqref{def-WB}) to denote the positively stated and positive ordered $\fS$-tangle diagram obtained from $\alpha$. 

When only two endpoints of $\alpha$ incident to a boundary puncture are depicted, we indicate their linear order by breaking one of the strands, as in
\raisebox{-.16in}{
\begin{tikzpicture}
	\fill[gray!20] (-0.2,0) rectangle (1.2,1);
	\draw[line width=1.5pt] (-0.2,0)--(1.2,0);
	\draw[line width=1pt] (0,1)--(0.4,0.2);
	\draw[line width=1pt] (1,1)--(0.6,0.2);
	\draw[line width=1pt] (0.5,0)--(0.6,0.2);
	\filldraw[draw=black,fill=white]
	(0.4,-0.1) rectangle (0.6,0.1);
\end{tikzpicture}
}.
Here the broken strand is immediately below the unbroken strand in the height order.

If no states are explicitly assigned to the endpoints of $\alpha$, then, whenever $\alpha$ is regarded as a stated $\fS$-tangle diagram, all its endpoints are understood to have positive states.

We adopt this notation for stated $\fS$-tangle diagrams because it is well suited to the $\mathcal A$-quantum trace map. Moreover, in \S\ref{sec-skein=A=U}, it makes the correspondence between cluster variables and positively stated  simple essential arcs (Definition~\ref{def-arcs}) transparent.

\begin{definition}\label{def-quasitri}
\begin{enumerate}
	\item[(a)] We call $\fS$ \emph{quasi-triangulable} if, for every connected component $\fS_0$ of $\overline\fS$, the following hold:
	\begin{itemize}
		\item $\fS_0$ has at least one boundary puncture;
		\item the total number of punctures and boundary circles is at least two;
		\item $\fS_0$ is neither a $2$-gon (Definition~\ref{def-ideal-arc}) nor an annulus with one boundary puncture. 
	\end{itemize}
	
	\item[(b)] Two admissible ideal arcs are called {\bf compatible} if they do not cross each other,  do not connect to a same boundary circle, and are not isotopic to each other.
	
	\item[(c)] A \textbf{quasi-ideal triangulation} $\Delta$ of a quasi-triangulable surface $\fS$ is a maximal collection of pairwise compatible admissible ideal arcs. Note that $\Delta$ contains every boundary edge of $\fS$. An arc in $\Delta$ that is not a boundary edge is called an \textbf{interior edge}, and we denote the set of all interior edges of $\Delta$ by $\mathring{\Delta}$.
\end{enumerate} 
\end{definition}

\begin{remark}
Note that every quasi-triangulable surface is triangulable. For each quasi-triangulation $\Delta$ of such a surface, we will construct an ideal triangulation $\lambda(\Delta)$ in Section \ref{sec:xqtm}, as illustrated in Figure~\ref{fig:triangulatio}.  
\end{remark}

\begin{remark}\label{rem:thesame}
Following the terminology of \cite{FST1} (see also \cite{BHR,H2,H3}), we associate to each quasi-triangulable surface $\fS$ an unpunctured orbifold whose orbifold points carry weights. The correspondence is given by converting boundary punctures into boundary marked points, boundary circles into orbifold points of weight $\frac{1}{2}$, and each interior puncture $p$ into an orbifold point of weight $\alpha_p$. Under this correspondence, quasi-triangulations of $\fS$ are identified with triangulations of the resulting unpunctured orbifold. Hence the generalized quantum cluster algebra in Theorem~\ref{intro-thm1} may be regarded as arising from an unpunctured orbifold.
\end{remark}

Cutting $\fS$ along all the interior edges of $\Delta$ decomposes it into regions of the following types:

\begin{equation*}
\begin{tikzpicture}
	\filldraw[fill=gray!20, line width=1pt]
	(0,0)--(2,0)--(1,1.73)--cycle;
	\filldraw[draw=black,fill=white] (-0.1,-0.1) rectangle (0.1, 0.1);
	\filldraw[draw=black,fill=white] (1.9,-0.1) rectangle (2.1, 0.1);
	\filldraw[draw=black,fill=white] (0.9,1.63) rectangle (1.1, 1.83);
\end{tikzpicture}
\hspace{1.5cm}
\begin{tikzpicture}
	\filldraw[fill=gray!20, line width=1pt]
	(1,0) arc[start angle=-90, end angle=270, radius=1] -- cycle;
	\draw [line width =1pt](1,0)--(1,1.2);
	\filldraw[fill=white]  (1,1.2) circle (0.1);
	\filldraw[draw=black,fill=white] (0.9,-0.1) rectangle (1.1, 0.1);
	\filldraw[draw=black,fill=white] (0.9,1.9) rectangle (1.1, 2.1);
\end{tikzpicture}
\hspace{1.5cm}
\begin{tikzpicture}
	\filldraw[fill=gray!20, line width=1pt]
	(1,0) arc[start angle=-90, end angle=270, radius=1] -- cycle;
	\draw [line width =1pt](1,0)--(0.4,1.2);
	\draw [line width =1pt](1,0)--(1.6,1.2);
	\filldraw[fill=white]  (0.4,1.2) circle (0.1);
	\filldraw[fill=white]  (1.6,1.2) circle (0.1);
	\filldraw[draw=black,fill=white] (0.9,-0.1) rectangle (1.1, 0.1);
\end{tikzpicture}
\hspace{1.5cm}
\begin{tikzpicture}
	\filldraw[fill=gray!20, line width=1pt]
	(1,0) arc[start angle=-90, end angle=270, radius=1] -- cycle;
	\draw[ draw=black, fill=white]
	(0.9,1.1) rectangle (1.1,1.3);
	\filldraw[draw=black,fill=white] (0.9,-0.1) rectangle (1.1, 0.1);
\end{tikzpicture}
\end{equation*}

We denote the two half-edges (Definition \ref{def-quasi}) of an admissible ideal arc $\gamma$ by $\partial_1(\gamma)$ and $\partial_2(\gamma)$.

Following \cite{muller2016skein,LY22,bloomquist2025degenerations}, for two compatible admissible deal arcs $\alpha,\beta$, denote 
\begin{equation}\label{def--Lambda}
\Lambda_{\alpha,\beta}=
	\sum_{i,j\in \{1,2\}} \left\{
	\begin{array}{c|c}
		0 & \text{if $\partial_i(\alpha)$ and $\partial_j(\beta)$ are not incident} \\
		1 & \text{if $\partial_i(\alpha)$ and $\partial_j(\beta)$ are incident, and $\partial_i(\alpha)$ is clockwise to $\partial_j(\beta)$} \\
		-1 & \text{if $\partial_i(\alpha)$ and $\partial_j(\beta)$ are incident, and $\partial_j(\beta)$ is clockwise to $\partial_i(\alpha)$} \\
	\end{array}\right\}.
\end{equation}

\vs
As we will see in Proposition~\ref{prop:quasi-commutative}, the definition of $\Lambda_{\alpha,\beta}$ is motivated by the following lemma.

\begin{lemma}[{\cite[Lemma~2.4]{le2018triangular}}]\label{lem-height-change}
	In $\SS$, we have 
	\begin{align*}
		\raisebox{-.16in}{
			\begin{tikzpicture}
				\filldraw[draw=white,fill=gray!20] (-0.2,-0) rectangle (1.2, 1);
				\draw [line width =1.5pt](-0.2,-0)--(1.2,0);
				\draw [line width =1pt](0,1)--(0.4,0.2);
				\draw [line width =1pt](1,1)--(0.6,0.2);
				\draw [line width =1pt](0.5,0)--(0.6,0.2);
				\filldraw[draw=black,fill=white] (0.4,-0.1) rectangle (0.6, 0.1);
				\node at (0.1,0.2) {$+$};
				\node at (0.9,0.2) {\small $+$};
			\end{tikzpicture}
		}=q
		\raisebox{-.16in}{
			\begin{tikzpicture}
				\filldraw[draw=white,fill=gray!20] (-0.2,-0) rectangle (1.2, 1);
				\draw [line width =1.5pt](-0.2,-0)--(1.2,0);
				\draw [line width =1pt](0,1)--(0.5,0);
				\draw [line width =1pt](1,1)--(0.6,0.2);
				\filldraw[draw=black,fill=white] (0.4,-0.1) rectangle (0.6, 0.1);
				\node at (0.1,0.2) {$+$};
				\node at (0.9,0.2) {\small $+$};
		\end{tikzpicture}}.
	\end{align*}
\end{lemma}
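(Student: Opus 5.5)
This is a local identity near a single boundary puncture, so I will prove it by reducing it to relation \eqref{hight} together with the defining relations \eqref{cross}--\eqref{arc}. Both sides agree outside a small neighborhood of the boundary edge where the two positively stated endpoints sit. On the left-hand side, the strand ending closest to the puncture is the lower one in the height order. On the right-hand side, the two strands have been swapped along the boundary so that the other strand is lower, and this swap introduces one crossing in the interior near the boundary. I will resolve that crossing and compare.

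First, I apply \eqref{cross} to the crossing in the right-hand diagram, just before the strands reach the boundary edge. This gives $q$ times one smoothing plus $q^{-1}$ times the other. The first smoothing reconnects the strands so that they run straight to the boundary. Because the original endpoints are consecutive and the states are $(+,+)$, this term is the left-hand diagram after a boundary isotopy that keeps the height order. The second smoothing produces a diagram containing a small returning arc attached to the boundary edge with two adjacent endpoints, both stated $+$. By the second part of \eqref{arc}, an arc with states $(+,+)$ on consecutive endpoints vanishes. Hence the right-hand diagram equals $q$ times the left-hand one. Equivalently, the left-hand diagram equals $q$ times the diagram in which the height order has been reversed, which is the claimed identity.

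The main obstacle is bookkeeping: I must check which smoothing carries the coefficient $q$ and which carries $q^{-1}$, and that the vanishing term is the $(+,+)$ cap rather than a $(-,+)$ cap. The $(-,+)$ cap would instead give the factor $q^{-1/2}$. This depends on the convention that the product stacks the first factor above and on the direction in which height increases along the edge. I will fix these conventions using the figure in \eqref{arc} and check the sign by comparing with \eqref{hight}. In \eqref{hight}, specializing both states to $+$ kills the correction term and makes the two orders agree up to a power of $q$, which gives a consistency check on the exponent $1$. Since the statement is quoted from \cite[Lemma~2.4]{le2018triangular} and the relations \eqref{cross}--\eqref{hight} are identical to those of the stated skein algebra, the same computation holds verbatim in $\SS$. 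No boundary circles are involved, so relations \eqref{RY-relation} and \eqref{unknot-RY} play no role.
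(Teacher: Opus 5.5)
Your strategy is the standard proof of \cite[Lemma~2.4]{le2018triangular}; the paper only cites that lemma. You redraw the reversed-order diagram in positive order, which creates one crossing near the boundary, resolve it with \eqref{cross}, and kill one term with the $(+,+)$ case of \eqref{arc}. The problem is the bookkeeping you postpone: your write-up reaches the stated identity only through two errors that cancel.

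First, you give the coefficient $q$ to the parallel smoothing and conclude that the right-hand diagram equals $q$ times the left-hand one. Second, you call this equivalent to ``the left-hand diagram equals $q$ times the reversed one.'' Those two statements are not equivalent: $R=qL$ gives $L=q^{-1}R$. As written, your computation contradicts the lemma unless $q^2=1$, since the left-hand diagram is a nonzero basis element.

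Here is the computation done correctly. On the left-hand side, the broken strand $L$ (from the upper left) lies below the unbroken strand $R$. After moving left, the endpoint of $R$ is nearest the puncture, and it is the \emph{higher} one, because the positive order increases toward the puncture. This is the opposite of your claim that the strand ending closest to the puncture is the lower one.

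On the right-hand side, $R$ lies below $L$. To put this diagram in positive order, slide the endpoint of $L$ rightward past the endpoint of $R$. Since $L$ is higher, $L$ passes over $R$, so the over strand runs from upper left to lower right. This is relation \eqref{cross} rotated by $90^\circ$. The coefficient $q$ now belongs to the smoothing that joins the two upper ends to each other and the two boundary endpoints to each other. That second piece is a $(+,+)$ cap on consecutive endpoints, so it vanishes by \eqref{arc}. The coefficient $q^{-1}$ belongs to the parallel smoothing, which is the left-hand diagram. Hence the right-hand diagram equals $q^{-1}$ times the left-hand diagram, which is exactly the lemma.

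Your proposed consistency check against \eqref{hight} cannot catch this error. That relation exchanges opposite states at fixed heights and has no $(+,+)$ specialization, so it says nothing about reordering two endpoints that carry the same state. The stacking convention for products is also irrelevant here. Over versus under at the crossing is determined solely by the heights of the two endpoints.
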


Denote
\begin{align}\label{eq-same-height}
	\raisebox{-.16in}{
		\begin{tikzpicture}
			\filldraw[draw=white,fill=gray!20] (-0.2,-0) rectangle (1.2, 1); 
			\draw [line width =1.5pt](-0.2,-0)--(1.2,0);
			\draw [line width =1pt](0,1)--(0.5,0);
			\draw [line width =1pt](1,1)--(0.6,0.2);
			\draw [line width =1pt](0.5,0)--(0.6,0.2);
			\filldraw[draw=black,fill=white] (0.4,-0.1) rectangle (0.6, 0.1);
			\node at (0.1,0.2) {$+$};
			\node at (0.9,0.2) {\small $+$};
		\end{tikzpicture}
	}:=
	q^{-\tfrac{1}{2}}
	\raisebox{-.16in}{
		\begin{tikzpicture}
			\filldraw[draw=white,fill=gray!20] (-0.2,-0) rectangle (1.2, 1); 
			\draw [line width =1.5pt](-0.2,-0)--(1.2,0);
			\draw [line width =1pt](0,1)--(0.4,0.2);
			\draw [line width =1pt](1,1)--(0.6,0.2);
			\draw [line width =1pt](0.5,0)--(0.6,0.2);
			\filldraw[draw=black,fill=white] (0.4,-0.1) rectangle (0.6, 0.1);
			\node at (0.1,0.2) {$+$};
			\node at (0.9,0.2) {\small $+$};
		\end{tikzpicture}
	}
	=
	q^{\tfrac{1}{2}}
	\raisebox{-.16in}{
		\begin{tikzpicture}
			\filldraw[draw=white,fill=gray!20] (-0.2,-0) rectangle (1.2, 1); 
			\draw [line width =1.5pt](-0.2,-0)--(1.2,0);
			\draw [line width =1pt](0,1)--(0.5,0);
			\draw [line width =1pt](1,1)--(0.6,0.2);
			\filldraw[draw=black,fill=white] (0.4,-0.1) rectangle (0.6, 0.1);
			\node at (0.1,0.2) {$+$};
			\node at (0.9,0.2) {\small $+$};
		\end{tikzpicture}
	},
\end{align}
and we say that the two half-edges in 
$\raisebox{-.16in}{
	\begin{tikzpicture}
		\filldraw[draw=white,fill=gray!20] (-0.2,-0) rectangle (1.2, 1); 
		\draw [line width =1.5pt](-0.2,-0)--(1.2,0);
		\draw [line width =1pt](0,1)--(0.5,0);
		\draw [line width =1pt](1,1)--(0.6,0.2);
		\draw [line width =1pt](0.5,0)--(0.6,0.2);
		\filldraw[draw=black,fill=white] (0.4,-0.1) rectangle (0.6, 0.1);
		\node at (0.1,0.2) {$+$};
		\node at (0.9,0.2) {\small $+$};
	\end{tikzpicture}
}$
are placed at the same height.
Note that this notation and the phrase “placing at the same height’’ can be extended to any number of half-edges incident to the same boundary puncture.

Let $\alpha$ be an admissible ideal multi-arc in $\fS$. 
Lemma~\ref{lem-height-change} implies that 
$D_+(\alpha)$ is reflection normalizable (see \S\ref{sub-reflection}). Then denote 
\begin{align}\label{eq-D-X-q}
	A_\alpha:=\left(\prod_{v\in C(\alpha)} v^{\frac{1}{2}} \right)[D_+(\alpha)]_{\rm norm}\in\cS_q(\fS),\qquad
	\text{(see Definition~\ref{def-CCCC})}.
\end{align}

Using Lemma~\ref{lem-height-change},
the following comes from direct computation. It
was proved in \cite{bloomquist2025degenerations} in the case $\fS$ contains no interior punctures, in \cite{LY22} in the case $\fS$ contains no boundary circles, and in \cite{muller2016skein} in the case $\fS$ contains neither interior punctures nor boundary circles.

\begin{proposition}\label{prop:quasi-commutative}
	For any punctured surface  $\fS$, in ${\cS}_q(\fS)$, for any two compatible admissible ideal arcs $\alpha,\beta$ in $\fS$, 
	we have
	\begin{equation}
		A_{\alpha}A_\beta
		=q^{\Lambda_{\alpha,\beta}}A_\beta A_\alpha. 
	\end{equation}
\end{proposition}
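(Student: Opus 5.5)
The plan is to compare $D_+(\alpha)D_+(\beta)$ and $D_+(\beta)D_+(\alpha)$ directly as stated diagrams and reduce everything to local height exchanges at boundary punctures. Since $\alpha$ and $\beta$ are compatible, they do not cross in the interior of $\fS$. They also never meet at a common boundary circle. The scalar prefactors $\prod_{v\in C(\cdot)}v^{\frac12}$ are central, so they can be ignored; likewise the Weyl normalization only rescales each factor by a power of $q$. Hence it suffices to show
\[
D_+(\alpha)D_+(\beta)=q^{\Lambda_{\alpha,\beta}}D_+(\beta)D_+(\alpha).
\]

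First, realize the product $D_+(\alpha)D_+(\beta)$ by stacking. We isotope the two diagrams into disjoint positions in $\fS$, keeping $\alpha$ higher near every boundary puncture. This is possible because the projections are disjoint away from the punctures and the boundary circles. Near a boundary circle only one of the two arcs is present, so the height order there is trivially the same in both products. Both products therefore have the same underlying crossingless diagram. They differ only in the height order of the endpoints at each boundary puncture $p$ where some half-edge $\partial_i(\alpha)$ and some half-edge $\partial_j(\beta)$ are incident.

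Second, apply a local computation at each such pair, using the move-left convention of Figure~\ref{fig:move-left}. All endpoints at $p$ lie on the adjacent boundary edge, are positively stated, and are consecutive after isotopy. Exchanging the heights of a pair of positively stated endpoints, one from $\alpha$ and one from $\beta$, multiplies the diagram by $q^{\pm1}$ by Lemma~\ref{lem-height-change}, with no extra terms since both states are $+$. The sign is determined by which half-edge is clockwise to the other. With the strand coming from the left lower, Lemma~\ref{lem-height-change} gives the factor $q$.

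Third, process the exchanges one at a time. Moving all of $\beta$'s endpoints at $p$ above all of $\alpha$'s endpoints means passing each $\beta$-endpoint past each $\alpha$-endpoint once, and each such swap is a single application of Lemma~\ref{lem-height-change}. When an arc has both half-edges at the same puncture, or $\alpha=\beta$ is a boundary edge pushed inside, the swaps still occur pairwise, with a factor $q^{\pm1}$ for each incident pair $(\partial_i(\alpha),\partial_j(\beta))$. Multiplying over all pairs gives exactly $q^{\Lambda_{\alpha,\beta}}$ by \eqref{def--Lambda}.

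The main obstacle is the sign bookkeeping. We must check that "$\partial_i(\alpha)$ clockwise to $\partial_j(\beta)$" matches the direction of Lemma~\ref{lem-height-change} under the positive-order and move-left conventions. We must also handle boundary-edge arcs, whose two half-edges sit at the two ends of the edge and are pushed slightly inside. Both cases reduce to the same local picture, already verified in \cite{muller2016skein,LY22,bloomquist2025degenerations}. Pending arcs and interior punctures introduce no new local configurations, because the only interactions occur at boundary punctures.
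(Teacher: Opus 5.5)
Your proposal is correct and follows the paper's route: the paper asserts the identity by direct computation from Lemma~\ref{lem-height-change} and cites the earlier special cases. Your reduction to one $q^{\pm1}$ height exchange per incident pair of half-edges at each boundary puncture is that computation made explicit, with boundary circles contributing nothing because compatible arcs never share one. One small slip: compatible arcs are by definition non-isotopic, so ``$\alpha=\beta$ is a boundary edge pushed inside'' should read ``$\alpha$ or $\beta$ is a boundary edge pushed inside.''
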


The following is immediate.

\begin{lemma}
	Let $\alpha$ be an admissible ideal multi-arc consisting of admissible ideal arc components $\alpha_1,\dots,\alpha_r$. 
	We have
	\begin{align}\label{eq-X-multi}
		A_\alpha\;=\;\left[A_{\alpha_1}\cdots A_{\alpha_r} \right]_{\rm norm} \;=\; 
		q^{-\tfrac{1}{2}\sum_{1 \leq i \leq j \leq r} \Lambda_{\alpha_i,\alpha_j}}
		\, A_{\alpha_1}\cdots A_{\alpha_r}\in\SS.
	\end{align}
\end{lemma}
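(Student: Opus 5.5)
The plan is to expand the Weyl normalization of the product $A_{\alpha_1}\cdots A_{\alpha_r}$ and compare it directly with the definition \eqref{eq-D-X-q} of $A_\alpha$.

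\emph{The prefactors.} The components $\alpha_i$ are pairwise disjoint, and $\alpha$ meets each boundary circle at most once. Hence the multiset $C(\alpha)$ is the disjoint union of the multisets $C(\alpha_i)$. The factors $v^{\frac12}$ are central scalars in $\widetilde R$, so they may be pulled out of the product.

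\emph{Stacking versus the diagram.} Stacking $D_+(\alpha_1)$ above $D_+(\alpha_2)$, and so on, gives a stated $\fS$-tangle. Away from the boundary punctures it agrees with $D_+(\alpha)$, because the components are disjoint. Near each boundary puncture $p$, the half-edges of the different components appear in some height order that need not be the positive order. By Lemma~\ref{lem-height-change}, each such discrepancy changes the element only by a power of $q$. Therefore
\[
D_+(\alpha_1)\cdots D_+(\alpha_r)=q^{N}D_+(\alpha)
\]
for some integer $N$. Taking reflection normalizations of both sides then gives
\[
\bigl[[D_+(\alpha_1)]_{\rm norm}\cdots[D_+(\alpha_r)]_{\rm norm}\bigr]_{\rm norm}=[D_+(\alpha)]_{\rm norm}.
\]
The reason is that each side is reflection invariant and the two sides differ by a power of $q$. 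A reflection invariant element times $q^k$ is reflection invariant only when $k=0$, using torsion-freeness.

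\emph{Reflection invariance of the Weyl normalization.} The Weyl normalization of a product of pairwise $q$-commuting reflection invariant elements is itself reflection invariant. To see this, apply ${\bf R}$: it reverses the order of the factors and inverts $q$. Reordering back using $x_ix_j=q^{c_{ij}}x_jx_i$ then recovers the original expression. The commutation exponents here are $c_{ij}=\Lambda_{\alpha_i,\alpha_j}$ by Proposition~\ref{prop:quasi-commutative}. This yields the first equality $A_\alpha=[A_{\alpha_1}\cdots A_{\alpha_r}]_{\rm norm}$. The second equality is just \eqref{eq-weyl} written out with these $c_{ij}$; the diagonal terms $\Lambda_{\alpha_i,\alpha_i}$ vanish by antisymmetry, so including $i=j$ in the sum is harmless.

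\emph{Main obstacle.} The only delicate point is ensuring that nothing is lost to torsion, that is, that the comparison constant $N$ is well defined and the products are nonzero. Since $D_+(\alpha)\in\WB$ is a basis element, the freeness statement preceding Theorem~\ref{thm-basis-reduced} (that $\cS_q(\fS)$ is free over $\widetilde R$ with basis $B(\fS)$) guarantees this. One can work over $\mathbb Z[q^{\pm\frac12}]$ and then base change, as in \eqref{eq-weyl1}.
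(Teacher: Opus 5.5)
Your argument is correct and is essentially the reasoning the paper compresses into ``the following is immediate.'' The stacked product differs from $D_+(\alpha)$ by a power of $q$ via the height-exchange Lemma~\ref{lem-height-change}, and both sides are reflection invariant, so torsion-freeness over $\mathbb Z[q^{\pm\frac12}]$ forces them to be equal. One small point is worth adding: components of a multi-arc may be parallel copies of the same arc (as in $\beta_{\bf k}$), in which case Proposition~\ref{prop:quasi-commutative} does not literally apply, but then $A_{\alpha_i}=A_{\alpha_j}$ commute and the corresponding $\Lambda_{\alpha_i,\alpha_j}$ is $0$, so nothing in your argument changes.
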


\def\qe{\overset{q}{=}}

For two elements $x$ and $y$, we write
$x\overset{q}{=}y$
if
$x=q^{\frac{m}{2}}f\,y$
for some $m\in\mathbb Z$ and some Laurent monomial $f$ in the variables
$\left\{v^{\frac12}\ \middle|\ v \text{ is a boundary circle of }\fS\right\}.$

\def\A{\mathcal A_q(\fS,\Delta)}
\def\Ap{\mathcal A_q^{+}(\fS,\Delta)}

\begin{definition}\label{def:orientationmatrix}
	For a quasi-ideal triangulation $\Delta$ of $\fS$, we call the matrix $\Lambda^\Delta=(\Lambda_{\alpha,\beta})_{\alpha,\beta\in \Delta}$ the {\bf orientation matrix } of $\Delta$.
\end{definition}

Denote
\begin{align}\label{eq-RP}
	\wR[P]:= \wR[\alpha_p\mid p \text{ is an interior puncture of $\fS$}],
\end{align}
where the elements $\alpha_p$ are algebraically independent over $\wR$.
Define 
\begin{align}
	\mathcal A_q(\fS,\Delta)=
	\wR[P]\langle A_\alpha^{\pm 1}\mid \alpha\in\Delta \rangle/
	(A_\alpha A_\beta = q^{\Lambda_{\alpha,\beta}} A_{\beta} A_\alpha
	\text{ for }\alpha,\beta\in\Delta).
\end{align}
For each ${\bf k}\in\mathbb Z^{\Delta}$, define
\begin{align}\label{def-monomial-A}
	A({\bf k}) \;:=\;
	\left[\;\prod_{\alpha\in\Delta} A_\alpha^{{\bf k}(\alpha)} \;\right]_{\rm norm},
\end{align}
where the bracket $[\;\cdot\;]_{\rm norm}$ is defined as in \eqref{eq-weyl}.  
Set
\begin{align}
	\mathcal A_q^{+}(\fS,\Delta)
	\;:=\;
	{\rm Span}_{\wR[P]}\{\,A({\bf k})\mid {\bf k}\in \mathbb N^{\Delta}\,\}.
\end{align}
It follows immediately that $\mathcal A_q^{+}(\fS,\Delta)$ is an 
$\wR[P]$-algebra, free over $\wR[P]$ with basis 
$\{\,A({\bf k})\mid {\bf k}\in \mathbb N^{\Delta}\,\}$.

For each interior puncture $p$, by a slight abuse of notation, we denote the simple loop in $\fS$ surrounding $p$ by $\alpha_p$; this causes no ambiguity since  all these loops are algebraically independent over $\wR$.
It is straightforward to see that $\alpha_p$ lies in the center of 
$\overline{\cS}_q(\fS)$. 
Thus, $\overline{\cS}_q(\fS)$ can be viewed as an $\wR[P]$-algebra.

Theorem~\ref{thm-basis-reduced} implies the following.
\begin{corollary}\label{cor-basis-reduced}
	$\overline{\cS}_q(\fS)$ is a free $\wR[P]$-module with basis $\widetilde{B}(\fS)$ (see \eqref{def-WB}).
\end{corollary}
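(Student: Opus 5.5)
The plan is to deduce Corollary~\ref{cor-basis-reduced} from Theorem~\ref{thm-basis-reduced} by showing that $\overline{B}(\fS)$ is exactly the set of products $\beta\cdot\prod_{p}\alpha_p^{n_p}$ with $\beta\in\widetilde{B}(\fS)$ and $n_p\in\mathbb N$. Concretely, consider a diagram $D\in\overline{B}(\fS)$. It is crossingless, so its components are pairwise disjoint simple curves. Some of these components may be peripheral loops around interior punctures. For each interior puncture $p$, let $n_p(D)$ be the number of components of $D$ isotopic to $\alpha_p$. Deleting these components gives a diagram $D'$. This $D'$ is still crossingless, increasingly stated, positively ordered and without bad arcs, since peripheral loops have no endpoints. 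It also contains no peripheral loops, so $D'\in\widetilde{B}(\fS)$.

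Next we check that $D=D'\prod_p\alpha_p^{n_p(D)}$ in $\rdS$. Each $\alpha_p$ is a closed loop that can be isotoped into a small neighborhood of $p$, disjoint from everything else in $D'$. Hence stacking it above or below $D'$ yields the disjoint union diagram with no extra crossings and no change to the boundary order or states. So the product is literally the diagram $D$, with no scalar factor. Conversely, for any $\beta\in\widetilde{B}(\fS)$ and any $(n_p)$, the diagram $\beta\cup\bigcup_p n_p\alpha_p$, with the loops placed near the punctures, is crossingless and without bad arcs. It therefore lies in $\overline{B}(\fS)$. The two maps are mutually inverse, because decomposing a crossingless diagram into components is unique up to isotopy.

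This gives a bijection $\widetilde{B}(\fS)\times\mathbb N^{\mathring{\mathcal P}}\to\overline{B}(\fS)$, sending $(\beta,(n_p))$ to $\beta\prod_p\alpha_p^{n_p}$. Since $\overline{B}(\fS)$ is an $\wR$-basis by Theorem~\ref{thm-basis-reduced}, and $\wR[P]$ is free over $\wR$ with basis the monomials $\prod_p\alpha_p^{n_p}$, the set $\widetilde{B}(\fS)$ spans $\rdS$ over $\wR[P]$ and is linearly independent over $\wR[P]$. The $\wR[P]$-module structure is well defined because $\alpha_p$ is central.

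The only point needing care is the interpretation of Theorem~\ref{thm-basis-reduced}: it should give a basis over $\wR$, not merely over $R$. I would read it that way, since $\cS_q(\fS)$ is an $\wR$-module.
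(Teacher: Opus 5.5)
Your proposal is correct and takes the same approach as the paper, which only says that the corollary follows from Theorem~\ref{thm-basis-reduced}; your bijection $\widetilde{B}(\fS)\times\mathbb N^{\mathring{\mathcal P}}\to\overline{B}(\fS)$, $(\beta,(n_p))\mapsto\beta\prod_p\alpha_p^{n_p}$, is the implicit content of that remark, and reading the theorem as giving an $\wR$-basis is the intended interpretation. The one detail worth stating explicitly is why no non-peripheral component of a basis diagram can lie inside the once-punctured disk bounded by a peripheral loop: a simple closed curve there is either trivial or parallel to $\alpha_p$, and basis diagrams have no trivial components. This is what lets you push the peripheral components next to $p$ and obtain $D=D'\prod_p\alpha_p^{n_p(D)}$ exactly.
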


Let $\cS_q^+(\fS)$ denote the subalgebra of $\cS_q(\fS)$ generated by positively stated $\fS$-tangles.
We call $\cS_q^+(\fS)$ the \textbf{Muller--Roger--Yang skein algebra} (or \textbf{MRY skein algebra}).
Let $\mathcal M$ be the multiplicative subset of $\cS_q^+(\fS)$ generated by the elements $A_e$, where $e$ ranges over all boundary edges of $\fS$.
Then we have the following.

\begin{lemma}{\cite[Proposition~3.1]{karuo2025cluster}}\label{lem-positive-reduced}
	The canonical projection
	$\cS_q(\fS)\longrightarrow \overline{\cS}_q(\fS)$
	restricts to an algebra embedding
	\[
	\cS_q^+(\fS)\hookrightarrow \overline{\cS}_q(\fS).
	\]
	Moreover,
	\[
	\overline{\cS}_q(\fS)
	=
	\cS_q^+(\fS)[\mathcal M^{-1}].
	\]
\end{lemma}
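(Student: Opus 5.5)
The statement has two parts. First, the projection $\pi\colon\cS_q(\fS)\to\rdS$ is injective on $\cS_q^+(\fS)$. Second, $\rdS$ is the localization of $\cS_q^+(\fS)$ at $\mathcal M$. I would prove both using bases.

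\emph{Injectivity.} First I will show that $\cS_q^+(\fS)$ is the $\wR$-span of the positively stated, positively ordered crossingless diagrams in $B(\fS)$. Any product of positively stated tangles can be resolved by \eqref{cross}, \eqref{unknot}, \eqref{RY-relation} and \eqref{unknot-RY} into crossingless diagrams. Each resulting diagram carries only $+$ states, because these relations never change states. Height reordering of two $+$ endpoints only costs a power of $q$ (Lemma~\ref{lem-height-change}). Trivial $++$ arcs vanish by \eqref{arc}. Hence the span is closed under multiplication and contains the generators. Positively stated diagrams contain no bad arcs, since a bad arc needs a $-$ state. So these basis elements form a subset of $\overline{B}(\fS)$. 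By Theorem~\ref{thm-basis-reduced}, their images are linearly independent in $\rdS$, which gives injectivity.

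\emph{Localization.} Each $A_e$ is normal in $\rdS$: it $q$-commutes with every stated diagram by a height-exchange computation at the two endpoints of $e$. It is also invertible there, with $A_e^{-1}$ given up to a power of $q$ by the boundary edge carrying states $(-,+)$ in the appropriate order. Invertibility follows from \eqref{hight} together with the vanishing of the bad arc, exactly as in the stated-skein case. So $\mathcal M$ is an Ore set of regular elements in the domain $\rdS$, and the embedding extends to $\cS_q^+(\fS)[\mathcal M^{-1}]\to\rdS$. This map stays injective because Ore localization of a subring of a domain embeds.

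It remains to prove surjectivity, which is the main obstacle. Every element of $\overline{B}(\fS)$ must become a positive element times a monomial in the $A_e^{\pm1}$. I would induct on the number of $-$ states. Take a $-$ endpoint $x$ on a boundary edge $e$; since the states are increasing, $x$ is lowest in the height order there. Multiply by $A_e$, placing it on the appropriate side. Then use \eqref{hight} and the relation pairing $A_e$ with the strand at $x$ to slide the strand across the puncture. This expresses $A_e\cdot D$ as a $q$-multiple of a diagram with one fewer $-$ state, plus terms containing a bad arc, which vanish in $\rdS$, or terms that are already positive.

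Concretely I expect the local identity
\[
A_e\cdot(\text{strand with state } -\text{ at } e)\;\overset{q}{=}\;(\text{strand rerouted with state } + \text{ on the adjacent edge}) .
\]
This is the skein version of the Muller/Lê–Yu computation. I would verify it using the $2$-gon and triangle relations near a single boundary puncture. There no boundary circles intervene, so \eqref{RY-relation} plays no role. Iterating the identity proves surjectivity.
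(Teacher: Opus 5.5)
The paper does not prove this lemma; it imports it from \cite{karuo2025cluster}. Your outline follows the standard route for statements of this type, and its skeleton is sound. The injectivity step is correct as written, and so is your sliding identity. With the move-left convention, $A_e\overset{q}{=}C_{++}$, the positively stated corner arc at the initial puncture $p$ of $e$, with endpoints on the edge preceding $e$ and on $e$. Applying \eqref{hight} to its endpoint on $e$ and to the lowest, negatively stated endpoint there gives $q^{2}$ times a diagram containing a bad arc plus $q^{-1/2}$ times the rerouted diagram, as you predict. Since no relation increases the number of $-$ states, your induction closes.

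Three claims need correcting.

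\emph{The inverse of $A_e$.} It is not a $(-,+)$-stated copy of $e$; it is the corner arc $C_{--}$ with both states negative. In $C_{++}C_{--}$, apply \eqref{hight} on the edge preceding $e$. The state-swapped term is a product of a mixed corner arc with the bad arc, so it lies in $I$. The joined term is a trivial arc on $e$, which evaluates to a unit. The mixed arcs cannot serve as the inverse: $C_{+-}$ is the bad arc itself, and in $C_{++}C_{-+}$ the joined term is a $(+,+)$ trivial arc, hence $0$. So that product is only a $q$-multiple of the two-component diagram $C_{-+}C_{++}$, not a unit. For the bigon, the reduced algebra is $R[a^{\pm1}]$ with $a=\alpha_{++}$: both mixed arcs vanish, and $a^{-1}\overset{q}{=}\alpha_{--}$.

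\emph{Normality of $A_e$.} This is not a pure height-exchange fact. On the edge preceding $e$, moving the endpoint of $C_{++}$ does cost only powers of $q$, already in $\cS_q(\fS)$. On $e$ itself, however, the two height positions of the $+$ endpoint of $C_{++}$ next to a $-$ endpoint give different combinations of a state-swapped diagram and the joined diagram. These are not proportional in $\cS_q(\fS)$; compare $a$ and $d$ in $\mathcal O_q({\rm SL}_2)$, which do not $q$-commute. They become proportional in $\rdS$ only because the state-swapped diagram contains a bad arc; after that, both are $q$-multiples of the rerouted diagram. So normality is the same local computation as your sliding identity, and it genuinely uses the reduction.

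\emph{The Ore condition.} For $\cS_q^+(\fS)[\mathcal M^{-1}]$ to make sense, you need $\mathcal M$ to be Ore in $\cS_q^+(\fS)$. This holds because $A_e$ already $q$-commutes with positively stated diagrams there. The elements of $\mathcal M$ are regular because they become units in $\rdS$, so you do not need $\rdS$ to be a domain.
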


For each ${\bf k}\in\mathbb N^{\Delta}$, we define an admissible ideal multiarc $\beta_{\bf k}$ as follows. 
For every ideal arc $\alpha$, the admissible ideal multiarc $\beta_{\bf k}$ contains ${\bf k}(\alpha)$ copies of $\alpha$. 
For every pending arc $\alpha'$ (Definition~\ref{def-quasi}), let $\alpha''$ be the ideal arc bounding a monogon that contains $\alpha'$. Then:  
\begin{itemize}
	\item if ${\bf k}(\alpha')$ is even, $\beta_{\bf k}$ contains $\tfrac{1}{2}{\bf k}(\alpha')$ copies of $\alpha''$;  
	\item if ${\bf k}(\alpha')$ is odd, $\beta_{\bf k}$ contains $\tfrac{1}{2}\bigl({\bf k}(\alpha')-1\bigr)$ copies of $\alpha''$ together with one copy of $\alpha'$.  
\end{itemize}
It is straightforward to verify that $\beta_{\bf k}\neq \beta_{\bf t}$ if ${\bf k}\neq {\bf t}$.

The following Proposition establishes the $\mathcal A$-quantum trace map of the reduced LRY skein algebra, which
extends \cite[Theorem 6.5]{baseilhac2023unrestricted} to the case when $\fS$ contains interior punctures. 
In \S\ref{sec-skein=A=U}, we will use this quantum trace map to establish a generalized cluster structure of the reduced LRY skein algebra.

\begin{proposition}\label{prop:quantumtrace}
	Let $\fS$ be a punctured surface with a quasi-ideal triangulation $\Delta$.
	Then there exists a unique 
	$\wR[P]$-algebra embedding 
	\begin{align*}
		{\rm tr}_\Delta^A\colon \overline{\cS}_q(\fS)\hookrightarrow \A, \qquad
		A_\alpha \longmapsto A_\alpha \quad (\alpha\in\Delta),
	\end{align*}
	such that 
	\begin{align}\label{eq-sandwitsh}
		\Ap \subset \im {\rm tr}_\Delta^A \subset \A.
	\end{align}
\end{proposition}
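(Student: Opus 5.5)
The construction follows the strategy of \cite{muller2016skein,LY22} and \cite[Theorem 6.5]{baseilhac2023unrestricted}: a Muller-type localization argument, carried out over the enlarged ground ring $\wR[P]$. Since each $\alpha_p$ is central in $\rdS$, we regard $\rdS$ as an $\wR[P]$-algebra.

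\textbf{Step 1.} Construct an algebra map $\phi\colon \Ap\to\rdS$ sending $A_\alpha\mapsto A_\alpha$ and fixing $\wR[P]$. By Proposition~\ref{prop:quasi-commutative}, the elements $A_\alpha$ ($\alpha\in\Delta$) $q$-commute in $\rdS$ with exponents $\Lambda_{\alpha,\beta}$. Hence $\phi$ is well defined on the quantum affine space $\Ap$.

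\textbf{Step 2.} Show that $\phi$ is injective. For ${\bf k}\in\mathbb N^\Delta$, expand $\phi(A({\bf k}))$ using \eqref{eq-X-multi} and Lemma~\ref{lem-height-change}. Two copies of a pending arc $\alpha'$ concatenate, via relation \eqref{RY-relation} at the boundary circle, to $v^{-1}$ times a combination of $\alpha''$ and a diagram with a bad arc. The bad-arc term vanishes in $\rdS$. Therefore $\phi(A({\bf k}))\qe D_+(\beta_{\bf k})$, up to a unit of $\wR$. The normalization factors $v^{1/2}$ in \eqref{eq-D-X-q} are designed to make this work. Since the $\beta_{\bf k}$ are pairwise distinct elements of $\WB$, Corollary~\ref{cor-basis-reduced} implies that the images of the basis $\{A({\bf k})\}$ are $\wR[P]$-linearly independent, so $\phi$ is injective.

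\textbf{Step 3.} Show that the elements $A_\alpha$, $\alpha\in\Delta$, form an Ore set in $\rdS$ and that $\rdS\subset \phi(\Ap)[A_\alpha^{-1}]$. Concretely, for every $x\in\rdS$ there is a monomial $m$ in the $A_\alpha$ with $x\,m\in\phi(\Ap)$. It suffices to check this on the basis $\WB$, and by multiplicativity, on each simple tangle diagram component. For a basis diagram $D$, multiply by suitable powers of $A_\alpha$ for the arcs $\alpha$ of $\Delta$ that $D$ crosses. Then resolve the crossings with \eqref{cross} and push the ends to boundary punctures with \eqref{hight} and \eqref{arc}. Around a boundary circle, use \eqref{RY-relation} and \eqref{unknot-RY}. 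Around an interior puncture inside a monogon, a loop encircling $p$ produces the coefficient $\alpha_p\in\wR[P]$, which is exactly why $\wR[P]$ is needed. An induction on the geometric intersection number with $\Delta$ then shows that every term is either zero, a bad arc (hence zero), or a multiarc in $\Delta$.

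Steps 1--3 give an injection $\rdS\hookrightarrow \Ap[A^{-1}]=\A$. To see this, note that $\rdS$ is a domain with no $A_\alpha$-torsion (left multiplication by $A_\alpha$ is injective on the basis, by the leading-term argument of Step 2). Hence $\rdS$ embeds into its Ore localization, and that localization equals the localization of $\Ap$, which is $\A$. This yields ${\rm tr}^A_\Delta$ together with the sandwich \eqref{eq-sandwitsh}. Uniqueness holds because $\A$ is a domain and any such map is determined on $\Ap$, whose localization contains the image. For the domain property, use a filtration by $\Delta$-intersection number, or embed into the quantum torus of $\lambda(\Delta)$.

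\textbf{Main obstacle.} The key difficulty is Step 3: the local resolutions near boundary circles and near monogons containing interior punctures. This must be checked case by case in the four region types of $\Delta$. I would first reduce to these local regions by applying the cutting homomorphism $\overline{\Theta}_c$ (Theorem~\ref{thm-splitting-map}) along the interior edges of $\Delta$, and then verify the claim by direct computation in the punctured monogon and in the pending-arc regions.
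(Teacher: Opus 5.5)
Your Steps 1 and 2 and your final assembly follow the paper's proof. The paper also defines the map on $\Ap$ via Proposition~\ref{prop:quasi-commutative} and gets injectivity from $\phi(A({\bf k}))\qe D_+(\beta_{\bf k})$ together with Corollary~\ref{cor-basis-reduced}. It then shows that every $X\in\rdS$ can be multiplied by a monomial into $\Ap$, and extends to $\A$ by the abstract extension lemma \cite[Proposition~2.2]{LY22}; your Ore-localization argument reproves that lemma, with the domain property supplied, as you say, by $\tr_\lambda$. One small correction in Step 2: the second term produced by \eqref{RY-relation} is a trivial cap with both states $+$. It vanishes by \eqref{arc} already in $\SS$, as the paper notes; it is not a bad arc.

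\textbf{The gap is in Step 3, and it concerns states.} Your induction terminates with terms that are ``multiarcs in $\Delta$''. Such a term lies in $\phi(\Ap)$ only if all its endpoints on boundary edges are stated $+$, because $\phi(\Ap)$ is the $\wR[P]$-span of the positively stated basis elements $D_+(\beta_{\bf k})$. By contrast, the diagram of an arc of $\Delta$ whose wall endpoints are all stated $-$ is never a bad arc, so it is a different element of the basis $\WB$ and is not in $\phi(\Ap)$.

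The basis $\WB$ you reduce to contains many negatively stated diagrams. None of the relations you invoke changes the state of a surviving endpoint: \eqref{cross}, \eqref{RY-relation} and \eqref{unknot-RY} do not touch states, and \eqref{hight}, \eqref{arc} only reorder endpoints or cap two of them off. So your termination claim does not give $Xm\in\phi(\Ap)$ for such $X$.

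The missing ingredient is that in $\rdS$ negatively stated endpoints can be absorbed by multiplying by positively stated boundary-edge arcs. This is Lemma~\ref{lem-positive-reduced}: $\rdS=\cS_q^+(\fS)[\mathcal M^{-1}]$, where $\mathcal M$ is generated by the $A_e$ for $e$ a boundary edge. The paper uses it first. It multiplies $X$ by a monomial in the $A_e$ to get $Y\in\cS_q^+(\fS)$. It then runs Muller's induction \cite[Corollary~4.13]{muller2016skein} only on positively stated diagrams to get $A({\bf t})Y\in\Ap$, and finally combines the two monomials. With this reduction inserted, your Step 3 becomes the paper's argument.

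Separately, the cutting map $\overline{\Theta}_c$ is useful for checking individual local skein identities. It does not, however, reduce the statement ``$Xm\in\phi(\Ap)$ for some monomial $m$'' to the regions of $\Delta$, because membership in $\phi(\Ap)$ is not detected region by region. The intersection-number induction has to be run on $\fS$ itself.
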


\begin{proof}
	Proposition~\ref{prop:quasi-commutative} implies that the map
	\begin{align*}
		\iota\colon \Ap \longrightarrow \rdS,
		\qquad 
		A_\alpha \longmapsto A_\alpha \quad (\alpha\in\Delta),
	\end{align*}
	is a well-defined $\wR[P]$-algebra homomorphism. 
	
	Moreover, Equations~\eqref{RY-relation} and \eqref{eq-same-height} imply
	\begin{align}\label{eq-square-double-plus}
		v\;   \left( \raisebox{-.15in}{
			\begin{tikzpicture}
				\filldraw[draw=white,fill=gray!20] (-0,-0) rectangle (1, 1);
				\draw [line width =1.5pt](-0,-0)--(1,0);
				\draw [line width =1pt](0.5,-0)--(0.5,0.5);
				\filldraw[draw=black,fill=white] (0.4,-0.1) rectangle (0.6, 0.1);
				\filldraw[draw=black,fill=white]  (0.5,0.5) circle (0.1);
				\node at (0.3,0.2) {\small $+$};
			\end{tikzpicture}
		}\right)^2 \;=\;
		\raisebox{-.15in}{
			\begin{tikzpicture}
				\filldraw[draw=white,fill=gray!20] (-0.2,-0) rectangle (1.2, 1);
				\draw [line width =1.5pt](-0.2,-0)--(1.2,0);
				\draw [line width =1pt]  (0.5 ,0) arc (-90:250:0.3 and 0.4);
				\filldraw[draw=black,fill=white] (0.4,-0.1) rectangle (0.6, 0.1);
				\filldraw[draw=black,fill=white]  (0.5,0.5) circle (0.1);
				\node at (-0,0.2) {\small $+$};
				\node at (1,0.2) {\small $+$};
			\end{tikzpicture}
		}
		\;\in\;\rdS,
	\end{align}
	where $v$ is the boundary circle in the equality. Note that and the equality in fact already holds in $\SS$.
	
	Combining \eqref{eq-D-X-q}, \eqref{eq-X-multi},  \eqref{def-monomial-A}, 
	and \eqref{eq-square-double-plus}, we obtain
	\begin{align*}
		\iota(A({\bf k})) \;=\; A_{\beta_{\bf k}} \;\qe\; D_+(\beta_{\bf k}) \;\in\; \WB.
	\end{align*}
	Since $D_+(\beta_{\bf k})\neq D_+(\beta_{\bf t})$ whenever ${\bf k}\neq{\bf t}\in\mathbb N^{\Delta}$, Corollary~\ref{cor-basis-reduced} shows that $\iota$ is injective.
	
	Hence we may regard $\Ap$ as an $\wR[P]$-subalgebra of $\rdS$ via~$\iota$. 
	Let $X\in\rdS$.  
	By Lemma~\ref{lem-positive-reduced}, there exists 
	${\bf k}\in\mathbb Z^{\Delta}$, with ${\bf k}(\alpha)=0$ whenever 
	$\alpha$ is an interior edge, such that
	\[
	A({\bf k}) X \;=\; Y \;\in\; \cS_q^{+}(\fS).
	\]

	Using the technique of \cite[Corollary~4.13]{muller2016skein}, one shows that there exists ${\bf t}\in\mathbb N^{\Delta}$ such that $A({\bf t}) Y\in \Ap$.  
	Set ${\bf k}'\in \mathbb N^{\Delta}$ by ${\bf k}'(\alpha) = |{\bf k}(\alpha)|$ for all $\alpha\in\Delta$, and define 
	${\bf t}'={\bf k}'+{\bf k}+{\bf t}\in\mathbb N^{\Delta}$. 
	Then
	\begin{align}\label{eq-proof-A-trace}
		A({{\bf t}'}) X \;\qe\; A({{\bf k}'}) A({\bf t}) Y \;\in\; \Ap.
	\end{align}
	
	Finally, together with \eqref{eq-proof-A-trace}, \cite[Proposition~2.2]{LY22} implies that the algebra embedding 
	$\Ap\hookrightarrow \A$ extends uniquely to an algebra embedding
	\[
	{\rm tr}_\Delta^A\colon \rdS \;\hookrightarrow\; \A.
	\]
\end{proof}

\subsection{${\mathcal {X}}$-quantum trace maps}\label{sec:xqtm}

In this section, we review the $\mathcal{X}$-quantum trace map for the reduced LRY skein algebra constructed in \cite{bloomquist2025degenerations}. We then establish the compatibility between the $\mathcal{X}$- and $\mathcal{A}$-quantum trace maps; see \eqref{eq-com-traces-AX}. In \S\ref{Sec-Fro}, we use this compatibility to prove that the Frobenius map commutes with the cutting homomorphism.

Let $\overline{Q}_\lambda\colon
\lambda\times\lambda\longrightarrow\mathbb{Z}$
be the antisymmetric function defined by
\begin{equation}\label{eq:Q-Delta}
	\overline{Q}_{\lambda}(a,b)
	=
	\#\left(
	\raisebox{-0.4\height}{%
		\begin{tikzpicture}[scale=0.42]
			\fill[gray!30]
			(0,0) -- (-1.2,1.2)
			arc[start angle=135,end angle=45,radius=1.697]
			-- cycle;
			\draw (-1.4,1.4) -- (0,0) -- (1.4,1.4);
			\filldraw[fill=white]
			(-0.12,0.1) -- (-0.12,-0.15)
			-- (0.12,-0.15) -- (0.12,0.1) -- cycle;
			\node at (-0.75,0.35) {$\scriptstyle b$};
			\node at (0.75,0.35) {$\scriptstyle a$};
		\end{tikzpicture}%
	}
	\right)
	-
	\#\left(
	\raisebox{-0.4\height}{%
		\begin{tikzpicture}[scale=0.42]
			\fill[gray!30]
			(0,0) -- (-1.2,1.2)
			arc[start angle=135,end angle=45,radius=1.697]
			-- cycle;
			\draw (-1.4,1.4) -- (0,0) -- (1.4,1.4);
			\filldraw[fill=white]
			(-0.12,0.1) -- (-0.12,-0.15)
			-- (0.12,-0.15) -- (0.12,0.1) -- cycle;
			\node at (-0.75,0.35) {$\scriptstyle a$};
			\node at (0.75,0.35) {$\scriptstyle b$};
		\end{tikzpicture}%
	}
	\right).
\end{equation}
Here, each shaded region represents a corner of an ideal triangle.
Thus, the right-hand side of \eqref{eq:Q-Delta} is the number of
corners at which $b$ lies counterclockwise from $a$, minus the
number of corners at which $a$ lies clockwise from $b$, as viewed
from the puncture.

Let $\overline{\mathcal{X}}_q(\fS,\lambda)$ be the quantum torus
associated with 
$\overline{Q}_\lambda$. More precisely,
\begin{align}\label{eq-def-X-torus}
	{\overline{\mathcal{X}}}_q(\fS,\lambda)
	=\wR
	\left\langle
	{x}_e^{\pm1},\ e\in\lambda
	\ \middle|\
	{x}_e {x}_{e'}
	=
	q^{\overline{Q}_\lambda(e,e')}
	{x}_{e'}{x}_e
	\right\rangle.
\end{align}

\def\bS{\mathbb{S}}

Let $e\in {\lambda}$ be an interior edge. Cutting $\fS$ along $e$ produces a surface
${\rm Cut}_e\fS$ with two boundary edges $e'$ and $e''$, which are identified to recover $e$. The induced triangulation of ${\rm Cut}_e\fS$ is
\[
{\rm Cut}_e\lambda
=
\bigl(\lambda\setminus\{e\}\bigr)\cup\{e',e''\}.
\]

There is an embedding
\[
\bS_e\colon
\overline{\mathcal{X}}_q(\fS,\lambda)
\hookrightarrow
\overline{\mathcal{X}}_q\bigl({\rm Cut}_e\fS,
{\rm Cut}_e\lambda\bigr)
\]
defined on the generators by
\[
\bS_e({x}_a)
=
\begin{cases}
	{x}_a,
	& a\neq e,\\[2mm]
	\bigl[\overline{x}_{e'}\overline{x}_{e''}\bigr]_{\mathrm{norm}},
	& a=e.
\end{cases}
\]

\def\tr{{\rm tr}}
\def\Cut{{\rm Cut}}

The following establishes the $\mathcal X$-quantum trace map of $\rdS$.

\begin{theorem}[\cite{bloomquist2025degenerations}]
	Suppose that $\fS$ is a triangulable punctured surface with an ideal triangulation $\lambda$. Then there exists an
	algebra embedding
	\[
	{\tr}_{\lambda}\colon
	\overline{\cS}_q(\fS)\longrightarrow \overline{\mathcal{X}}_q(\fS,\lambda)
	\]
	Moreover, the homomorphism ${\tr}_{\lambda}$ is compatible
	with the cutting homomorphism in the following sense. For every
	non-boundary edge $e\in {\lambda}$, the following diagram commutes:
	\begin{equation}\label{thm-eq-cut-trace}
		\begin{tikzcd}[column sep=large,row sep=large]
			\overline{\cS}_q(\fS)
			\arrow[r,"{\tr}_{\lambda}"]
			\arrow[d,"\overline\Theta_e"']
			&
			\overline{\mathcal{X}}_q(\fS,\lambda)
			\arrow[d,"\bS_e"]
			\\
			\rd_q({\Cut}_e\fS)
			\arrow[r,"{\tr}_{{\Cut}_e\lambda}"']
			&
			\overline{\mathcal{X}}_q	\bigl({\Cut}_e\fS,{\Cut}_e\lambda\bigr).
		\end{tikzcd}
	\end{equation}
\end{theorem}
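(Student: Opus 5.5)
This theorem is quoted from \cite{bloomquist2025degenerations}, so the plan is to reconstruct its proof by reducing to the known case of surfaces without boundary circles. First, the quantum torus $\overline{\mathcal X}_q(\fS,\lambda)$ only involves punctures and edges of $\lambda$; since every component of $\overline\fS$ contains a puncture, $\lambda$ consists of ideal arcs between punctures, and each boundary circle $v$ sits inside some face of $\lambda$. The first step is to fill or shrink each boundary circle to a new interior puncture $p_v$, giving $\fS^\circ$ with $\lambda$ still an ideal triangulation. Then tangles with endpoints on $v$ are resolved by \eqref{RY-relation}: an endpoint on $v$ is traded for $v^{-1}(q^{-1/2}(\text{strand passing one side}) + q^{1/2}(\text{other side}))$. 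Applying this to pairs of endpoints expresses every element of $\rdS$ in terms of tangles avoiding $v$, with coefficients in $\wR$. I would define $\tr_\lambda$ on the basis $\overline B(\fS)$ by this resolution, followed by the Bonahon--Wong/L\^e reduced quantum trace of $\fS^\circ$, which lands in $\overline{\mathcal X}_q(\fS^\circ,\lambda)=\overline{\mathcal X}_q(\fS,\lambda)$ after extending scalars to $\wR$. For a single endpoint one uses the square-root relation \eqref{eq-square-double-plus}, which is why $v^{1/2}$ is adjoined.

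The second step is to check well-definedness. Relations \eqref{cross}--\eqref{hight} and bad arcs are respected by the classical reduced quantum trace. Relation \eqref{unknot-RY} holds because a loop around $p_v$ has trace $q+q^{-1}$ in the reduced setting, after accounting for the sign convention. Relation \eqref{RY-relation} holds by construction. The main obstacle is showing that the resolution is independent of choices: which side, which order of endpoints on $v$, and how to pair them. I would handle this by computing locally in the face containing $v$. Equivalently, one can note that at $p_v$ the two resolutions differ by the image of a strand sliding across a puncture. One then verifies that the peripheral monomial of $p_v$ acts by the required $q$-power, using the defining formula \eqref{eq:Q-Delta}.

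The third step addresses multiplicativity and injectivity. Multiplicativity follows from multiplicativity of the classical trace together with the height-compatibility of the resolution. Injectivity is proved by a leading-term argument: each basis element of $\overline B(\fS)$ has a distinct highest monomial, determined by edge-intersection numbers with endpoints on circles counted appropriately, so the images are linearly independent.

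Finally, compatibility with cutting \eqref{thm-eq-cut-trace} reduces to the classical statement for $\fS^\circ$. This works because the cut edge $e$ is an ideal arc disjoint from the boundary circles, so the resolutions commute with $\overline\Theta_e$, and $\bS_e$ is the classical gluing map.
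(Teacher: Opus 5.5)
\textbf{There is a genuine gap: the reduction to a surface without boundary circles does not work as stated.}

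\emph{The target torus is wrong.} The face of $\lambda$ containing a boundary circle $v$ is a one-circle-monogon, i.e.\ a copy of $\mathfrak M$. When you shrink $v$ to an interior puncture $p_v$, this face becomes a once-punctured monogon. So $\lambda$ is no longer maximal in $\fS^\circ$: you must add an arc $e_{p_v}$, producing a self-folded triangle. The torus of $\fS^\circ$ then has an extra central generator $x_{e_{p_v}}$, and it is not $\overline{\mathcal X}_q(\fS,\lambda)$.

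\emph{Relation \eqref{unknot-RY} is not automatic.} In the reduced trace of $\fS^\circ$, the peripheral loop around $p_v$ goes to $x_{e_{p_v}}+x_{e_{p_v}}^{-1}$ (compare Lemma~\ref{lem-traceXA}(b)). Peripheral loops are free central elements, not scalars, and no sign convention turns this into $q+q^{-1}$. You would have to specialize $x_{e_{p_v}}\mapsto q^{\pm1}$ and then prove the specialized map is compatible with everything else.

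\emph{Odd tangles get no image.} \eqref{RY-relation} removes endpoints on $v$ only in consecutive pairs. \eqref{eq-square-double-plus} expresses $v$ times the \emph{square} of an edge--circle arc through an arc avoiding $v$, not the edge--circle arc itself. Your resolution therefore defines nothing on tangles with an odd number of endpoints on some circle. If you choose a square root in the torus, the relations \eqref{cross}, \eqref{hight} and \eqref{RY-relation} involving such arcs must be checked from scratch. That check, together with independence of the order in which consecutive pairs are resolved (which you flag but do not settle), is the real content of the theorem. The classical trace of $\fS^\circ$ supplies neither.

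\textbf{How the construction the paper relies on works.} The paper only quotes this result, but the proof of Lemma~\ref{lem-leading-term} shows the construction of \cite{bloomquist2025degenerations}, and it avoids these problems by working locally.
\begin{itemize}
\item Cut $\fS$ along all interior edges of $\lambda$. By maximality this gives ideal triangles and copies of $\mathfrak M$.
\item On triangles, use the triangle trace \cite[Theorem~5.5]{bloomquist2025degenerations}.
\item On each copy of $\mathfrak M$, use the explicit isomorphism $\rd_q(\mathfrak M)\cong\wR[x^{\pm1}]$, $u_\pm\mapsto v^{-1/2}x^{\pm1}$, of Theorem~\ref{trace-monogon}. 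This is where the square roots $v^{1/2}$ and single endpoints on circles are handled, inside a commutative algebra where the Roger--Yang relations can be verified directly.
\item Check that the composite of cutting with the tensor product of local traces lands in the subalgebra identified with $\overline{\mathcal X}_q(\fS,\lambda)$ via the gluing maps $\mathbb S_e$.
\end{itemize}
Injectivity then follows from injectivity of $\overline\Theta$ (Theorem~\ref{thm-splitting-map}(b)) and of the local traces, so no leading-term argument is needed. Compatibility with cutting, the diagram \eqref{thm-eq-cut-trace}, is built in, because cutting maps along disjoint edges commute.
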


\begin{figure}
	\centering
	\includegraphics[width=0.25\linewidth]{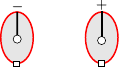}
	\caption{The left puncture is labeled $u_-$, and the right puncture is labeled $u_+$.}
	\label{fig:uplus-a}
\end{figure}

Let $\mathfrak{M}$ denote the punctured surface 
obtained from a closed annulus by removing a puncture from one of its boundary component
(see Figure~\ref{fig:uplus-a}), and let $u_-$ and $u_+$ be the two stated
$\mathfrak{M}$-tangle diagrams depicted therein. We denote the unique
boundary circle of $\mathfrak{M}$ by $v$.

\begin{theorem}{\cite[Theorem~5.6]{bloomquist2025degenerations}}\label{trace-monogon}
	The $\mathcal X$-quantum trace map of 
	$\rd_q(\mathfrak{M})$ is given by the following isomorphism
	\begin{align}\label{eq-trace-monogon}
		{\rm tr}\colon \rd_q(\mathfrak{M})
		\longrightarrow \wR[x^{\pm1}]; \qquad
		u_- &\longmapsto v^{-\frac12}x^{-1}, \quad
		u_+ \longmapsto v^{-\frac12}x.
	\end{align}
\end{theorem}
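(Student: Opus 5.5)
The plan is to first determine $\rd_q(\mathfrak M)$ completely by generators and relations, and then pin down ${\rm tr}$ on the two generators $u_\pm$. Throughout, $\lambda$ denotes the ideal triangulation of $\mathfrak M$ used to define $\overline{\mathcal X}_q(\mathfrak M,\lambda)$, which the statement identifies with the Laurent ring $\wR[x^{\pm1}]$.

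\textbf{Step 1: a basis of $\rd_q(\mathfrak M)$.} By Theorem~\ref{thm-basis-reduced}, $\rd_q(\mathfrak M)$ is free over $\wR$ with basis $\overline{B}(\mathfrak M)$. I would list the crossingless diagrams in $\mathfrak M$.
\begin{itemize}
\item Since there is at most one endpoint on the boundary circle $v$, at most one component is an edge--circle arc. Up to isotopy it is the arc $u_\pm$, where the subscript records the state.
\item The loops parallel to the core surround $v$. By \eqref{unknot-RY} each such loop equals $q+q^{-1}$, so it contributes only a scalar.
\item The remaining components are edge arcs going around the core. By \eqref{RY-relation} together with $\text{bad arc}=0$ (Lemma~\ref{lem-reduced-module}), such an arc can be rewritten as $v$ times a product of two edge--circle arcs. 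This is the same manipulation as in \eqref{eq-square-double-plus}.
\end{itemize}
Using \eqref{hight}, Lemma~\ref{lem-m-arc} and the increasing-state condition, I expect each basis element to be $\overset{q}{=}$ to one of $u_+^{n}$ or $u_-^{n}$ with $n\ge 0$.

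\textbf{Step 2: the relation $u_+u_-=v^{-1}$.} Stack $u_+$ over $u_-$ and apply \eqref{RY-relation} at the circle $v$. One resulting term contains a bad arc and vanishes in the reduced algebra. The other term is a trivial arc with states $(-,+)$, which \eqref{arc} reduces to a power of $q^{\frac12}$. After the normalization conventions, I expect exactly $u_+u_-=u_-u_+=v^{-1}$. Together with Step~1, this shows that $\rd_q(\mathfrak M)$ is the commutative Laurent ring $\wR[u_+^{\pm1}]$, with $u_-=v^{-1}u_+^{-1}$.

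\textbf{Step 3: the images under ${\rm tr}$.} The triangulation $\lambda$ consists of the boundary edge together with the edge around the core, and $\overline Q_\lambda$ gives the torus one effective variable $x$. Compute ${\rm tr}(u_+)$ from the state-sum definition of ${\rm tr}_\lambda$ of \cite{bloomquist2025degenerations}, applied triangle by triangle. Only one state assignment survives, giving a monomial $v^{-\frac12}x$ (the factor $v^{-\frac12}$ comes from the circle normalization). Then ${\rm tr}(u_-)={\rm tr}(v^{-1}u_+^{-1})=v^{-\frac12}x^{-1}$ by Step~2, which is consistent with \eqref{eq-trace-monogon}. Since ${\rm tr}$ sends the Laurent generator $u_+$ to $v^{-\frac12}x$, it maps $\wR[u_+^{\pm1}]$ onto $\wR[x^{\pm1}]$ and is bijective.

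\textbf{Main obstacle.} I expect the hardest part to be the exact bookkeeping of powers of $q^{\frac12}$ and $v^{\frac12}$ in Steps~2 and~3. This means checking that the relation is precisely $u_+u_-=v^{-1}$, with no stray $q$-power, and that the single surviving state sum term carries coefficient exactly $v^{-\frac12}$. I would settle this first, by computing $u_+u_-$ carefully with \eqref{RY-relation} and \eqref{arc}. Compatibility of the two computations ${\rm tr}(u_+)\,{\rm tr}(u_-)=v^{-1}$ then serves as a consistency check.
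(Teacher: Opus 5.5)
The paper quotes this result from \cite{bloomquist2025degenerations} without proof. Your Steps~1 and~2 are the right way to supply one: the statement amounts to $\rd_q(\mathfrak M)=\wR[u_+^{\pm1}]$ with $u_-=v^{-1}u_+^{-1}$.

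Step~3, however, would fail as written, for two reasons.
\begin{enumerate}
\item The ideal triangulation of $\mathfrak M$ has a single edge, the boundary edge $e$. An ideal arc going once around the core cobounds a bigon with $e$, so it is isotopic to $e$; consistently, \eqref{eq-rank-lambda} gives $|\lambda|=1$. Thus $\overline{\mathcal X}_q(\mathfrak M,\lambda)$ is literally $\wR[x^{\pm1}]$ with $\overline Q_\lambda=0$. There is no second generator to reduce to ``one effective variable''.
\item More importantly, $\mathfrak M$ contains no ideal triangle, so there is no triangle-by-triangle state sum computing ${\rm tr}(u_+)$. In \cite{bloomquist2025degenerations} the trace of a surface with boundary circles is assembled, after cutting along interior edges, from two local pieces: the trace of the ideal triangle and the map on $\mathfrak M$. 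This is exactly how the present paper uses the statement in the proof of Lemma~\ref{lem-leading-term}. So $\mathfrak M$ is the base case, and \eqref{eq-trace-monogon}, including the factor $v^{-1/2}$, is an assignment to be justified, not the output of a computation.
\end{enumerate}

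What has to be proved is that this assignment is a well-defined algebra isomorphism, and that follows at once from your Steps~1--2. The assignment respects $u_+u_-=u_-u_+=v^{-1}$ because $(v^{-1/2}x)(v^{-1/2}x^{-1})=v^{-1}$. It sends the $\wR$-basis $\{u_+^n\}_{n\ge0}\cup\{u_-^n\}_{n\ge1}$ to $\{v^{-n/2}x^{n}\}_{n\ge0}\cup\{v^{-n/2}x^{-n}\}_{n\ge1}$. This is a basis of $\wR[x^{\pm1}]$ because $v^{-n/2}$ is a unit. Your ``consistency check'' is therefore the entire verification.

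Two smaller points.
\begin{itemize}
\item In Step~1, make explicit why mixed states disappear. In positive order, the outermost copy of the arc around $v$ is the corner arc at the unique puncture, and it carries both the lowest and the highest endpoint on $e$. An increasing, non-constant state therefore makes it a bad arc, so the surviving diagrams are uniformly stated. They correspond bijectively to $u_+^n$ ($n\ge0$) and $u_-^n$ ($n\ge1$), and this bijection is what gives injectivity.
\item In Step~2, your mechanism (one term a bad arc, the other a $(-,+)$ trivial arc evaluated by \eqref{arc}) works for only one stacking order. Reversing the order swaps which endpoint on $e$ is lower. The corner-arc term is then not bad, and the trivial arc carries states $(+,-)$. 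You can either carry out that computation, straightening the corner arc with \eqref{hight} and evaluating the resulting loop around $v$ by \eqref{unknot-RY}, or deduce the second relation from the first. For the latter: left multiplication by $u_+$ shifts the basis of Step~1 up to units, so it is injective, and $u_+(u_-u_+-v^{-1})=0$.
\end{itemize}
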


Next we describe a basis of the MRY skein algebra $\cS_q^+(\fS)$ and its parametrization with respect to an ideal triangulation of $\fS$. We show that the leading term of the $\mathcal X$-quantum trace of each basis element is determined by its coordinates; see Lemma~\ref{lem-leading-term}. In \S\ref{sec-center}, we use this result to compute the center of $\cS_q^+(\fS)$.

\vs

Let $B'(\fS)\subset B(\fS)$ consist of basis elements that are positively stated. 
Define 
$$B^+(\fS) := \{[\alpha]_{\rm norm}\mid \alpha\in B'(\fS)\}.$$

Suppose that $\fS$ is triangulable with an ideal triangulation $\lambda$ (Definition~\ref{def-ideal-arc}).
For each $\alpha\in B^+(\fS)$, define a function
\begin{align}\label{eq-coordinate}
	\mathbf{n}_{\alpha}\colon \lambda\longrightarrow\mathbb{N};
	\qquad e\mapsto I(\alpha,e).
\end{align}
Here, $I(\alpha,e)$ denotes the geometric intersection number of $\alpha$ and $e$.

The proof of \cite[Theorem~4.5]{bloomquist2025degenerations} can be
adapted directly to establish the following result.

\begin{lemma}
	\label{lem-basis-coordinate-monoid}
	Let $\lambda$ be an ideal triangulation of a triangulable punctured
	surface $\fS$.
	\begin{enumerate}
		\item[\textnormal{(a)}]
		The map
		\[
		\kappa \colon B^+(\fS)\longrightarrow \mathbb N^\lambda,
		\qquad
		\kappa (\alpha)={\bf n}_\alpha,
		\]
		is a bijection from $B^+(\fS)$ onto the submonoid
		$\Gamma_\lambda\subset\mathbb N^\lambda$ consisting of all
		${\bf n}\in\mathbb N^\lambda$ such that, for every triangle of $\lambda$ with edges $a,b,c$,
		\begin{equation}\label{eq-tri-1}
			{\bf n}(a)+{\bf n}(b)+{\bf n}(c)\in 2\mathbb N
		\end{equation}
		and
		\begin{equation}\label{eq-tri-2}
			{\bf n}(a)\leq {\bf n}(b)+{\bf n}(c).
		\end{equation}
		
		\item[\textnormal{(b)}]
		The submonoid $\Gamma_\lambda\subset\mathbb N^\lambda$ is finitely
		generated. Moreover, the subgroup of $\mathbb Z^\lambda$ generated by
		$\Gamma_\lambda$ is a free abelian group of rank
		\begin{equation}
			\label{eq-rank-lambda}
			\lvert\lambda\rvert
			=3a+2 b + 6g + 3 d -6
		\end{equation}
		where $a$ is the number of interior punctures,
		$g$ is the genus of $\fS$, $b$ is the number of connected components of
		$\partial\fS$, and $d$ is the number of components of
		$\partial\overline{\fS}$ that contain at least one puncture.
	\end{enumerate}
\end{lemma}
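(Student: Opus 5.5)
The plan is to adapt the normal-coordinate argument of \cite[Theorem~4.5]{bloomquist2025degenerations}, using the fact that positive states remove all state data. For every $\alpha\in B'(\fS)$ all states are $+$ and the height order is the positive order. Hence $\alpha$ is determined by its underlying crossingless diagram up to isotopy, where the diagram has at most one endpoint on each boundary circle. The normalization $[\cdot]_{\rm norm}$ only rescales by a power of $q^{\frac12}$. So $B^+(\fS)$ is in bijection with isotopy classes of such crossingless diagrams, and it suffices to prove (a) for these diagrams. One point should be checked first: distinct elements of $B'(\fS)$ do not normalize to the same element of $B^+(\fS)$. This follows from freeness in \cite[Theorem~3.6]{bloomquist2025degenerations}.

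For (a), I would put a crossingless diagram $\alpha$ in minimal position with respect to $\lambda$. Boundary edges of $\lambda$ are pushed slightly into the interior, so that endpoints on a boundary edge $e$ are counted in $I(\alpha,e)$. Then $\alpha$ meets each face in disjoint normal pieces.
\begin{itemize}
\item In an ordinary triangle the pieces are corner arcs. Writing $x_a,x_b,x_c$ for the numbers of corner arcs opposite $a,b,c$, we get ${\bf n}(a)=x_b+x_c$ and its cyclic permutations. This gives the parity condition \eqref{eq-tri-1} and the triangle inequality \eqref{eq-tri-2}.
\item In a face containing a boundary circle there are additionally pieces ending on the circle. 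Since there is at most one such piece per circle, the parity and inequality conditions are unaffected.
\end{itemize}
This face-by-face analysis is the step I expect to be the main obstacle. I would follow the local models of \cite{bloomquist2025degenerations} here rather than redo them from scratch.

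Conversely, given ${\bf n}\in\Gamma_\lambda$, the conditions let me solve uniquely for nonnegative integers $x_a=\tfrac12({\bf n}(b)+{\bf n}(c)-{\bf n}(a))$. I then glue the corresponding corner arcs across edges, matching the ${\bf n}(e)$ points on each edge in the unique planar way. This produces a crossingless diagram with no trivial components, which proves surjectivity. Injectivity follows because minimal position is unique up to isotopy, so the diagram is recovered from its corner-arc counts. Minimality itself comes from the standard bigon criterion: the glued diagram has no bigons with $\lambda$, so its intersection numbers are geometric.

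For (b), $\Gamma_\lambda$ is the set of lattice points of the rational polyhedral cone cut out by \eqref{eq-tri-2} that also satisfy the congruences \eqref{eq-tri-1}. Equivalently it is the intersection of a finitely generated saturated monoid with a finite-index sublattice, so Gordan's lemma gives finite generation. For the rank, put ${\bf u}=2\sum_{e}e$. Both ${\bf u}$ and ${\bf u}+2e_a$ lie in $\Gamma_\lambda$, since all triangle sums are even and $2\le 4$, $4\le 4$. Hence $2e_a$ lies in the group generated by $\Gamma_\lambda$ for every $a\in\lambda$, so that group has full rank $|\lambda|$.

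It remains to count $|\lambda|$ with an Euler characteristic computation on $\overline{\fS}$ with the punctures and boundary circles taken into account. Boundary circles are filled by the faces that contain them, and each circle-containing face is treated as contributing like a puncture-with-monogon. Combining the face and edge counts with $\chi(\overline\fS)=2-2g-b$ should yield $3a+2b+6g+3d-6$. I would verify this by checking the once-punctured monogon with one circle, $\mathfrak M$, which should give one edge, and the triangle, which should give three.
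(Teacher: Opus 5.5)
Your strategy is the adaptation of the normal-coordinate proof of \cite[Theorem~4.5]{bloomquist2025degenerations} that the paper itself invokes. The reduction to diagrams, Gordan's lemma, and the ${\bf u}$, ${\bf u}+2e_a$ full-rank argument are all fine. Two steps, however, are wrong as written.

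\textbf{Faces containing boundary circles.} By maximality of $\lambda$, every boundary circle lies alone in a face bounded by a single edge $c$. This edge is a loop at a puncture enclosing the circle, so the face is a copy of $\mathfrak M$. A triangle containing a circle cannot occur, since it would admit one more non-trivial arc, namely such a loop. A diagram in minimal position meets this face in $k$ arcs around the corner and at most one arc ending on the circle, so ${\bf n}(c)=2k+\epsilon$ with $\epsilon\in\{0,1\}$.

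The arc to the circle \emph{does} change parity. Conditions \eqref{eq-tri-1} and \eqref{eq-tri-2} survive only because this face is not a triangle and imposes no condition, not because there is at most one such piece. Had the circle sat inside a genuine triangle, your single extra piece would make that triangle's sum odd.

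For the same reason, surjectivity needs an explicit filling of each such monogon: $\lfloor{\bf n}(c)/2\rfloor$ corner arcs, plus one arc to the circle when ${\bf n}(c)$ is odd. Solving for the $x_a$ on triangles does not produce this filling. Uniqueness of the filling, needed for injectivity, uses that sliding the endpoint around the circle is an isotopy.

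\textbf{The count in (b).} In the statement, $b$ counts the components of $\partial\fS$, that is, boundary edges plus boundary circles, not the components of $\partial\overline\fS$. So $\chi(\overline\fS)=2-2g-b$ is false in general. For the ideal triangle, $b=3$ while $\chi(\overline\fS)=1$, so your proposed check would fail.

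Here is a correct count. Let $r$ be the number of boundary punctures, which equals the number of boundary edges. Let $m$ be the number of boundary circles, so $b=r+m$, and let $t$ be the number of triangles.
\begin{itemize}
\item Counting edge sides gives $3t+m=2|\lambda|-r$.
\item Capping the circles by disks gives a genus-$g$ surface with $d$ boundary components. It is decomposed into $a+r$ vertices, $|\lambda|$ edges and $t+m$ faces, so $2-2g-d=a+r-|\lambda|+t+m$.
\end{itemize}
Eliminating $t$ gives $|\lambda|=3a+2(r+m)+6g+3d-6$, as required.
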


The following lemma illustrates the leading term of each basis element in $\alpha\in\cS_q^+(\fS).$

\begin{lemma}\label{lem-leading-term}
	For each $\alpha\in B^+(\fS)$, we have
	\[
	\tr_\lambda(\alpha)
	=
	\left(\prod_{v\in C(\alpha)} v^{-\frac{1}{2}} \right) x^{{\bf n}_\alpha}
	+
	\sum_{{\bf t}\in S} r_{\bf t}x^{\bf t},
	\]
	where $S\subset \mathbb Z^\lambda$ is a finite subset not containing
	${\bf n}_\alpha$, each $r_{\bf t}\in \wR\setminus\{0\}$, and
	\[
	{\bf n}_\alpha-{\bf t}\in (2\mathbb N)^\lambda
	\qquad\text{for every ${\bf t}\in S$.}
	\]
\end{lemma}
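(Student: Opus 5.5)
The plan is to reduce the claim to a local computation in each ideal triangle by means of the cutting compatibility \eqref{thm-eq-cut-trace}. We may assume $\alpha$ is a single crossingless diagram: after Weyl normalization, a product of disjoint components is a $q$-power times the product of their traces. Leading exponents add under multiplication, since $x^{\bf a}x^{\bf b}\qe x^{{\bf a}+{\bf b}}$. The property ``all other exponents differ from the leading one by elements of $(2\mathbb N)^\lambda$'' is also closed under products, and so is the coefficient factor $\prod_{v} v^{-1/2}$. Indeed, because $\alpha$ is crossingless, distinct components meet distinct boundary circles, so the multiset $C(\alpha)$ is the union of the components' multisets.

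Next, cut $\fS$ along all interior edges of $\lambda$. Iterating \eqref{thm-eq-cut-trace} and the injectivity of the maps $\bS_e$ reduces the computation to the disjoint union of the triangles, or of the pieces containing boundary circles. In a triangle $\tau$, the state sum \eqref{eq-splitting-map} expresses $\tr_\lambda(\alpha)$ as $\sum_{s}\prod_\tau \tr_\tau(\alpha_\tau,s)$. Here $\alpha\cap\tau$ consists of corner arcs with positive states on the original boundary edges of $\fS$ and arbitrary states on the cut edges.

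By the known formula for the triangle, a corner arc with states $\epsilon_1,\epsilon_2$ has trace $[x_a^{\epsilon_1/2}x_b^{\epsilon_2/2}]$ up to a $q$-power, possibly with a zero contribution for some state pairs. As in the classical Bonahon--Wong argument, the total exponent at an edge $e$ is $\tfrac12\sum(\text{states at the points of }\alpha\cap e)$, counted from both sides after gluing via $\bS_e$. This is maximized, with value ${\bf n}_\alpha(e)$, exactly when all states are $+$. That configuration is admissible, and its coefficient is $1$ up to the Weyl normalization built into $[\alpha]_{\rm norm}$. Flipping the states at $k$ points of $e$ lowers the exponent by $2k\cdot\tfrac12\cdot 2$, counted over the two sides; more carefully, each flipped point changes the $e'$ and $e''$ exponents by $1$ each, and after $\bS_e$ this changes the exponent of $x_e$ by $2$. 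This gives ${\bf n}_\alpha-{\bf t}\in(2\mathbb N)^\lambda$. Since the top term arises only from the all-$+$ state, it does not cancel, and the set $S$ of remaining exponents excludes ${\bf n}_\alpha$. We discard the zero coefficients to ensure $r_{\bf t}\neq0$.

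The main obstacle is the components that touch boundary circles, because the local pieces there are not triangles. The plan is to choose the cuts so that each boundary circle $v$ sits inside a copy of $\mathfrak M$, whose cut edge is its unique boundary edge. There we apply Theorem~\ref{trace-monogon}. The arc ending on $v$ with state $\pm$ is $u_\pm$, with trace $v^{-1/2}x^{\pm1}$, and the leading term comes from $u_+$, giving exponent $1$. Each endpoint on $v$ therefore contributes exactly one factor $v^{-1/2}$, which produces $\prod_{v\in C(\alpha)}v^{-1/2}$. The change from $u_+$ to $u_-$ drops the exponent by $2$, so the parity statement persists. I would check here that the edge of $\lambda$ bounding this monogon has ${\bf n}_\alpha$-value matching the exponent $1$ per endpoint, which holds because the arc crosses that edge once.
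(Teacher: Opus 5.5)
Your route is the paper's: cut along all interior edges of $\lambda$, expand by the state sum, apply the triangle formula and Theorem~\ref{trace-monogon} to the pieces, and read off the leading term from the all-$+$ state. Two steps need repair, however.

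\textbf{Exponent bookkeeping.} In the paper's torus, $\mathbb S_e(x_e)=[x_{e'}x_{e''}]_{\rm norm}$ and $u_\pm\mapsto v^{-1/2}x^{\pm1}$. There, a corner arc in a triangle with states $\epsilon_1,\epsilon_2$ on edges $a,b$ maps to a power of $q^{1/2}$ times $[x_a^{\epsilon_1}x_b^{\epsilon_2}]_{\rm norm}$, or to $0$ for a bad arc. The exponents are $\pm1$, not $\pm\frac12$; already for one $++$ corner arc in an ideal triangle, the statement you are proving predicts $x_ax_b$. Moreover $\mathbb S_e(x_e^m)=[x_{e'}^m x_{e''}^m]_{\rm norm}$, so the exponent of $x_e$ is the common exponent on $e'$ and $e''$, not their sum. ``Counting from both sides'' is therefore not legitimate. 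With your local formula you would get exponent $\frac12\sum_p\epsilon_p$, a leading term $x^{{\bf n}_\alpha/2}$, and a change of $1$ per flip. With the correct normalization the exponent at $e$ is $\sum_{p\in\alpha\cap e}\epsilon_p$. It equals ${\bf n}_\alpha(e)$ for the all-$+$ state and drops by exactly $2$ per flipped point; these are the vectors ${\bf n}_f$ in the paper's proof.

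\textbf{The pieces $\mathfrak M$.} Your treatment covers only a strand ending on the boundary circle $v$. In minimal position, $\alpha\cap\mathfrak M$ can also contain strands that cross the edge $e_v$ bounding the monogon around $v$, go around $v$, and leave again. For instance, every loop cobounding a pair of pants with $v$ and another boundary circle does this. Inside $\mathfrak M$ such a strand is the corner arc at the puncture with both ends on the boundary edge, not $u_\pm$, so its trace must be computed separately.
By \eqref{eq-square-double-plus}, the $++$ version equals $v\,u_+^2$ up to a power of $q^{1/2}$. So by Theorem~\ref{trace-monogon} its trace is a $q$-power times $x^2$, with no factor of $v$. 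Resolving $u_-^2$ and $u_\pm u_\mp$ with \eqref{RY-relation} and \eqref{arc} in the same way shows that the other state choices give multiples of $x^{\epsilon_1+\epsilon_2}$, or $0$.
This makes the exponent at $e_v$ equal to $\sum_p\epsilon_p$ there too, and ensures that the factor $\prod_{v\in C(\alpha)}v^{-1/2}$ comes only from endpoints on boundary circles. It also shows that your closing check ${\bf n}_\alpha(e_v)=1$ fails in general, since each such strand adds $2$ to ${\bf n}_\alpha(e_v)$.

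With these repairs your argument is the paper's proof. The preliminary reduction to connected components is harmless but unnecessary. No choice of cuts is needed either: cutting along all interior edges of any ideal triangulation already produces ideal triangles and copies of $\mathfrak M$.
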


\begin{proof}
	Place $\alpha$ in minimal position with respect to every  edge
	of $\lambda$.
	Let $\mathring{\lambda}$ denote the subset of $\lambda$ consisting of all non-boundary edges.
	Let $\lambda(\pm)$ denote the set of all maps
	\[
	f\colon
	\alpha\cap\left(\bigcup_{e\in\mathring{\lambda}}e\right)
	\longrightarrow \{\pm1\}.
	\]
	For each $f\in\lambda(\pm)$, define
	${\bf n}_f\in\mathbb Z^\lambda$ by
	\[
	{\bf n}_f(e)
	=
	\begin{cases}
		\#(\alpha\cap e),
		& e\in\lambda\setminus\mathring{\lambda},\\[2mm]
		\displaystyle\sum_{p\in\alpha\cap e}f(p),
		& e\in\mathring{\lambda}.
	\end{cases}
	\]
	
	Let $g\in\lambda(\pm)$ be the constant map with value $1$. Then
	${\bf n}_g={\bf n}_\alpha.$
	Moreover, for every $f\neq g$, we have
	\[
	{\bf n}_f\neq{\bf n}_\alpha
	\qquad\text{and}\qquad
	{\bf n}_\alpha-{\bf n}_f
	=
	{\bf n}_g-{\bf n}_f
	\in(2\mathbb N)^\lambda.
	\]
	
	Cutting $\fS$ along all interior edges
	$e\in\mathring{\lambda}$, the commutative diagram
	\eqref{thm-eq-cut-trace}, together with
	\cite[Theorem~5.5]{bloomquist2025degenerations} and Theorem~\ref{trace-monogon}, implies that
	every monomial occurring in $\tr_\lambda(\alpha)$ with nonzero
	coefficient is of the form $x^{{\bf n}_f}$ for some
	$f\in\lambda(\pm)$. Furthermore, \eqref{eq-trace-monogon} implies that the coefficient of
	$x^{{\bf n}_g}=x^{{\bf n}_\alpha}$ is equal to $\left(\prod_{v\in C(\alpha)} v^{-\frac{1}{2}} \right)$. The desired
	expansion follows.
\end{proof}

Combining Lemmas~\ref{lem-basis-coordinate-monoid} and
\ref{lem-leading-term} with the techniques used in
\cite[Proposition~5.16 and Corollary~5.17]{kim2024unicity}, we obtain
the following result.

\begin{lemma}\label{lem-finite-order}
	Let $\fS$ be a punctured surface. Then there exist elements
	$\beta_1,\ldots,\beta_m\in B^+(\fS)$, each represented by either an
	arc or a loop, that orderly generate ${\cS}_q^+(\fS)$; that is,
	\begin{align}\label{eq-span-finite}
		{\cS}_q^+(\fS)
		=
		{\rm Span}_{\wR}
		\left\{
		\beta_1^{k_1}\cdots\beta_m^{k_m}
		\;\middle|\;
		k_i\in\mathbb N,\ 1\leq i\leq m
		\right\}.
	\end{align}
\end{lemma}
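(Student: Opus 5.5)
The strategy is the standard "leading term" argument of \cite[Proposition~5.16 and Corollary~5.17]{kim2024unicity}, transported to $\cS_q^+(\fS)$ via Lemmas~\ref{lem-basis-coordinate-monoid} and \ref{lem-leading-term}. First I would reduce to a triangulable $\fS$. For a general punctured surface, $\cS_q^+(\fS)$ is generated as a module by the positively stated basis $B^+(\fS)$, which is indexed by crossingless diagrams. If some component of $\overline\fS$ has no puncture, I would add an auxiliary puncture or handle that component directly, since its crossingless diagrams are still described by finitely many curve types. The core case is $\fS$ triangulable with an ideal triangulation $\lambda$.

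\emph{Step 1: degree filtration.} Order $\mathbb Z^\lambda$ by a total order compatible with addition, for instance lexicographic after the total-sum degree. By Lemma~\ref{lem-leading-term}, for each $\alpha\in B^+(\fS)$ the element $\tr_\lambda(\alpha)$ has leading monomial $x^{{\bf n}_\alpha}$ with an invertible coefficient in $\wR$. All other monomials are $x^{\bf t}$ with ${\bf n}_\alpha-{\bf t}\in(2\mathbb N)^\lambda\setminus\{0\}$, so they are strictly smaller. Since $\tr_\lambda$ is an injective algebra map into a quantum torus, leading terms multiply up to powers of $q^{1/2}$. Hence for any $\beta_1,\dots,\beta_m\in B^+(\fS)$ the product $\beta_1^{k_1}\cdots\beta_m^{k_m}$ has leading exponent $\sum k_i{\bf n}_{\beta_i}$ with an invertible coefficient.

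\emph{Step 2: choice of generators.} By Lemma~\ref{lem-basis-coordinate-monoid}(b), the monoid $\Gamma_\lambda$ is finitely generated. Each element of $\Gamma_\lambda$ is ${\bf n}_\alpha$ for a crossingless diagram $\alpha$. Such an $\alpha$ is a disjoint union of simple arcs and loops, and its coordinate vector is the sum of those of its components. So $\Gamma_\lambda$ is generated by the finitely many vectors ${\bf n}_{\beta}$ with $\beta$ connected, that is, an arc or a loop, appearing among the components of a finite generating set. I would take $\beta_1,\dots,\beta_m$ to be these connected elements of $B^+(\fS)$.

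\emph{Step 3: induction on the leading term.} Take $\alpha\in B^+(\fS)$ and write ${\bf n}_\alpha=\sum k_i{\bf n}_{\beta_i}$. Then $\alpha-c\,\beta_1^{k_1}\cdots\beta_m^{k_m}$, for a suitable $c\in\wR^\times$, is a combination of basis elements $\alpha'\in B^+(\fS)$ with ${\bf n}_{\alpha'}<{\bf n}_\alpha$. This follows by expanding in the basis and using injectivity of $\kappa$ together with Step 1. The decrease is by vectors in $(2\mathbb N)^\lambda$ within the finite set $\{{\bf t}\in\mathbb N^\lambda : {\bf t}\le{\bf n}_\alpha \text{ coordinatewise}\}$, so induction terminates and yields \eqref{eq-span-finite}.

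\emph{Main obstacle.} The hardest point is making Step 3 rigorous. The expansion of a product of basis elements in $B^+(\fS)$ must involve only basis elements whose coordinates are bounded coordinatewise by the sum, so that the induction runs over a finite set. I would try to extract this from the cutting description in the proof of Lemma~\ref{lem-leading-term}: all monomials of $\tr_\lambda$ of a product are of the form ${\bf n}_f$ with $|{\bf n}_f(e)|\le$ the total intersection number with $e$. Combined with the fact that distinct basis elements have distinct leading terms, this would give the required bound. A secondary care point is the factors $v^{\pm 1/2}$, which must be units of $\wR$ and so can be absorbed into $c$.
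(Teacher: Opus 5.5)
Your proposal is correct and is essentially the paper's argument, which just combines Lemmas~\ref{lem-basis-coordinate-monoid} and \ref{lem-leading-term} with the leading-term technique of \cite[Proposition~5.16 and Corollary~5.17]{kim2024unicity}, as in your Steps 1--3. The obstacle you single out is not a real one: all basis degrees lie in $\mathbb N^\lambda$, where your degree-then-lexicographic order has finite initial segments, so induction on $\deg$ terminates without any coordinatewise bound. (Like the paper, you really only treat triangulable $\fS$; the general case follows by adding interior punctures and using the surjection on $\cS_q^+$ induced by inclusion.)
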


The following will be used in \S\ref{sec-skein=A=U}.

\begin{corollary}\label{cor:gen}
	Suppose that each component of $\fS$ contains at least one boundary puncture. Then
	the $\wR$-algebra $\overline{\cS}_q(\fS)$ is generated by finitely many stated simple essential arcs (Definition~\ref{def-arcs}).
\end{corollary}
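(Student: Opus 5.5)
We derive Corollary~\ref{cor:gen} from Lemma~\ref{lem-finite-order} together with Lemma~\ref{lem-positive-reduced}. By Lemma~\ref{lem-positive-reduced}, $\overline{\cS}_q(\fS)=\cS_q^+(\fS)[\mathcal M^{-1}]$, where $\mathcal M$ is generated by the elements $A_e$ for boundary edges $e$. Lemma~\ref{lem-finite-order} shows that $\cS_q^+(\fS)$ is generated, even as a span of ordered monomials, by finitely many elements $\beta_1,\dots,\beta_m\in B^+(\fS)$, each an arc or a loop. Hence $\overline{\cS}_q(\fS)$ is generated by the $\beta_i$ together with the finitely many $A_e^{\pm1}$. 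The inverse $A_e^{-1}$ is the only thing to handle. A boundary edge $e$ joins two boundary punctures, so by the height-exchange relation \eqref{hight} and the bad arc relation the negatively stated boundary arc is, up to a power of $q$, an inverse of $A_e$ in $\overline{\cS}_q(\fS)$, as in \cite{CLL,LY22}. This inverse is itself a stated simple essential arc.

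It therefore remains to express each $\beta_i$ that is not an essential arc through finitely many stated simple essential arcs. There are three cases.

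\textbf{Essential arcs.} If $\beta_i$ is already a simple essential arc, there is nothing to do.

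\textbf{Loops.} Let $\beta_i$ be a loop, including peripheral loops. Since the component containing it has a boundary puncture, choose a boundary edge $e$ and an arc $c$ from $e$ to a point of $\beta_i$. Resolve the product of $\beta_i$ with an essential arc running near $c$ using \eqref{cross}, in the standard manner of \cite{muller2016skein,LY22}. This writes $\beta_i$ times an invertible boundary-type element as a sum of products of essential arcs. Alternatively, cut along an ideal arc meeting $\beta_i$ via Theorem~\ref{thm-splitting-map}, and then reglue in the inverse direction by the usual ``loop $=$ sum over states of products of arcs'' identity. For instance, for a loop crossing an edge, $\sum_s$ (arcs with states $s$) times $A_e^{-1}$ factors.

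\textbf{Circle arcs.} Let $\beta_i$ be a circle arc, with both ends on boundary circles. Use \eqref{RY-relation} in reverse: multiply by a pending arc from the circle to a boundary puncture. Crossing it with the circle arc and applying \eqref{cross} and \eqref{RY-relation} expresses $v\cdot(\text{pending arc})\cdot\beta_i$ as a combination of products of edge-circle arcs. Left-multiplying by the inverse of a pending arc is not available, so instead I would arrange the identity with a boundary edge arc $A_e$, which is invertible.

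This last step, realizing circle arcs and loops as polynomials in essential arcs and the invertible $A_e^{\pm1}$ only, is the main obstacle. It requires an explicit skein identity with an invertible factor. I would try first the product of two edge-circle arcs ending on the same circle at a common puncture region: by \eqref{RY-relation} it equals $v^{-1}$ times a combination of an edge arc, bounding the monogon, and the circle-arc-like term, from which the circle arc is solved after dividing by a power of $q$ and multiplying by $A_e^{-1}$-type factors. Finally, since there are only finitely many $\beta_i$ and each is handled by finitely many arcs, the generating set obtained is finite.
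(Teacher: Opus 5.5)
Your reduction through Lemmas~\ref{lem-positive-reduced} and \ref{lem-finite-order} is correct and is also the paper's first step, including your treatment of $A_e^{-1}$ as, up to a power of $q^{\frac12}$, the negatively stated copy of the same corner arc. The gap is the step you yourself flag as the main obstacle. You never give an identity expressing a simple loop or a simple circle arc through essential arcs, and the routes you sketch do not work as stated:
\begin{itemize}
\item Resolving crossings with an essential arc via \eqref{cross} cannot produce, for instance, a peripheral loop $\alpha_p$. Every essential arc can be isotoped off the once-punctured disk bounded by $\alpha_p$, so there is nothing to resolve.
\item The splitting map $\overline\Theta_c$ takes values in $\overline{\cS}_q(\Cut_c\fS)$. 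There is no ``regluing'' that turns its output into an expression in $\overline{\cS}_q(\fS)$, and its injectivity can only verify an identity you already have.
\item Relation \eqref{RY-relation} runs the wrong way. It turns two arcs ending on a boundary circle into arcs passing around that circle, so it never isolates a circle arc. The pending arc you would need to cancel is also not invertible, as you note.
\end{itemize}

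The missing idea is that no invertible factor is needed, because the height-exchange relation \eqref{hight} can be solved for the cap. Let $D_\cap$ be a diagram containing a small subarc parallel to a boundary edge $e$. Let $D_{s_1s_2}$ be obtained by cutting that subarc open into two consecutive endpoints on $e$, carrying states $s_1,s_2$ in increasing height. Then \eqref{hight} gives
\[
D_\cap=q^{\frac12}\bigl(D_{+-}-q^{2}D_{-+}\bigr).
\]
Since the component contains a boundary puncture, it has a boundary edge $e$. Take a simple loop or simple circle arc $\gamma$, and an embedded path from $\gamma$ to $e$ whose interior is disjoint from $\gamma$. A finger move of $\gamma$ along this path puts $\gamma$ in the form $D_\cap$.
\begin{itemize}
\item If $\gamma$ is a loop, $D_{\pm\mp}$ are stated simple edge arcs.
\item If $\gamma$ is a circle arc, $D_{\pm\mp}$ is the union of two disjoint simple edge--circle arcs with consecutive endpoints on $e$, i.e.\ their product in the appropriate order.
\end{itemize}
Applying this to the finitely many non-essential $\beta_i$ finishes the proof; this is exactly the paper's argument.
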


\begin{proof}
	Lemmas~\ref{lem-positive-reduced} and \ref{lem-finite-order} imply that the $\wR$-algebra $\overline{\cS}_q(\fS)$ is generated by finitely many stated simple arcs and loops. 
	Using relation~\eqref{hight}, every simple loop or circle-arc can be generated by finitely many stated simple essential arcs.
	This proves the claim. 
\end{proof}

We then define the degree of each nonzero element of $\cS_q^+(\fS)$. In \S\ref{sec-center}, we use this degree to compute the center and the rank of $\cS_q^+(\fS)$ (Definition~\ref{def-rank}).

Fix a linear order on $\lambda$, and let $\leq_{\mathrm{lex}}$ denote
the induced lexicographic order on $\mathbb Z^\lambda$. This, in turn,
induces a linear order $\leq$ on $B^+(\fS)$ defined by
\begin{align}
	\alpha\leq\beta
	\quad\Longleftrightarrow\quad
	{\bf n}_\alpha\leq_{\mathrm{lex}}{\bf n}_\beta,
	\qquad
	\alpha,\beta\in B^+(\fS).
\end{align}

\def\lt{{\rm lt}}

\begin{definition}
	\label{def-leading-term-degree}
	Let $X\in\cS_q^+(\fS)$ be nonzero. Write
	\[
	X
	=
	c\alpha+\sum_{i=1}^{m}c_i\alpha_i,
	\]
	where $\alpha,\alpha_1,\ldots,\alpha_m\in B^+(\fS)$ are distinct,
	\[
	\alpha_i<\alpha
	\qquad\text{for }1\leq i\leq m,
	\]
	and $c,c_i\in \wR\setminus\{0\}$. The \textbf{leading term} and
	\textbf{degree} of $X$ are defined, respectively, by
	\[
	{\lt}(X):=c\alpha
	\qquad\text{and}\qquad
	\deg(X):={\bf n}_\alpha\in\Gamma_\lambda.
	\]
\end{definition}

Together with Lemma~\ref{lem-leading-term}, the argument used in the
proof of \cite[Lemma~5.14]{kim2024unicity} yields the following result.

\begin{lemma}\label{lem-leading-deg}
	For any nonzero elements $X,Y\in\cS_q^+(\fS)$, one has
	the following:
	\begin{enumerate}
		\item [\rm (a)] $\deg(XY) = \deg(X) + \deg(Y)$,
		
		\item [\rm (b)] ${\lt}(XY)
		=
		q^{\,\deg(X)\overline Q_\lambda\deg(Y)^{ T}}
		{\lt}(YX).$
	\end{enumerate}
\end{lemma}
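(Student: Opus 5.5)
We prove both parts through the $\mathcal X$-quantum trace map $\tr_\lambda$, which is an algebra embedding of $\rdS$, and hence by Lemma~\ref{lem-positive-reduced} of $\cS_q^+(\fS)$, into the quantum torus $\overline{\mathcal X}_q(\fS,\lambda)$. Lemma~\ref{lem-leading-term} says that $\tr_\lambda(\alpha)$ has top monomial $x^{{\bf n}_\alpha}$, up to a unit in $\wR$. Every other monomial is $x^{\bf t}$ with ${\bf n}_\alpha-{\bf t}\in(2\mathbb N)^\lambda$; in particular it is strictly smaller in the product partial order on $\mathbb Z^\lambda$, and hence strictly smaller for $\leq_{\rm lex}$.

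\emph{Step 1: behaviour of general elements.} Let $X=c\alpha+\sum_i c_i\alpha_i$ with $\alpha_i<\alpha$. We want the lex-largest monomial of $\tr_\lambda(X)$ to be $x^{{\bf n}_\alpha}$ with coefficient a nonzero multiple of $c$. Each monomial of $\tr_\lambda(\alpha_i)$ is $\leq_{\rm lex}{\bf n}_{\alpha_i}<_{\rm lex}{\bf n}_\alpha$. Also $\kappa$ is injective by Lemma~\ref{lem-basis-coordinate-monoid}(a), so no $\alpha_i$ has the same coordinates as $\alpha$. Therefore nothing cancels $x^{{\bf n}_\alpha}$. This gives: the lex-leading monomial of $\tr_\lambda(X)$ is $\left(\prod_{v\in C(\alpha)}v^{-\frac12}\right)c\,x^{\deg X}$.

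\emph{Step 2: multiplicativity.} In the quantum torus $x^{\bf a}x^{\bf b}=q^{\frac12{\bf a}\overline Q_\lambda{\bf b}^T}x^{{\bf a}+{\bf b}}$. The lex order is compatible with addition, and $\wR$ is a domain. So the lex-leading monomial of $\tr_\lambda(X)\tr_\lambda(Y)$ is the product of the two leading monomials. Its exponent is $\deg X+\deg Y$, and its coefficient is nonzero. Since $\tr_\lambda$ is an algebra homomorphism, $\tr_\lambda(XY)$ has this leading monomial. Applying Step 1 to $XY$ then gives $\deg(XY)=\deg X+\deg Y$, which is (a).

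\emph{Step 3: the commutation.} Write $\lt(X)=c\alpha$ and $\lt(Y)=d\beta$, and note $\deg(XY)=\deg(YX)$ by (a). The leading coefficients of $\tr_\lambda(XY)$ and $\tr_\lambda(YX)$ differ by the factor $q^{{\bf n}_\alpha\overline Q_\lambda{\bf n}_\beta^T}$, since $x^{\bf a}x^{\bf b}=q^{{\bf a}\overline Q_\lambda{\bf b}^T}x^{\bf b}x^{\bf a}$. By Step 1 these coefficients are $\left(\prod_{v\in C(\gamma)}v^{-\frac12}\right)$ times the coefficients of $\lt(XY)$ and $\lt(YX)$, respectively. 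Here $\gamma$ is the same basis element in both cases, because $\kappa$ is a bijection and the degrees agree. Cancelling this common unit gives (b).

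The main obstacle is Step 1. One must check that the lower terms of Lemma~\ref{lem-leading-term}, together with the traces of the smaller basis elements, never produce a monomial lex-larger than or equal to $x^{{\bf n}_\alpha}$. This relies on the fact that ${\bf t}\leq{\bf n}$ componentwise implies ${\bf t}\leq_{\rm lex}{\bf n}$. It also uses the unit leading coefficient in Lemma~\ref{lem-leading-term}, which depends only on $C(\alpha)$, and hence only on ${\bf n}_\alpha$ through $\kappa$.
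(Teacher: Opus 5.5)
Your proof is correct and follows essentially the route the paper takes: the paper gives no separate argument but invokes Lemma~\ref{lem-leading-term} together with the leading-term argument of \cite[Lemma~5.14]{kim2024unicity}, which is exactly your strategy of pushing $X,Y$ into the quantum torus via $\tr_\lambda$, identifying the lex-leading monomial of $\tr_\lambda(X)$ with a unit multiple of $c\,x^{\deg X}$, and using that leading monomials multiply and $q$-commute there. The only point you leave implicit is that $XY\neq 0$ before applying Step~1 to it; this follows from the nonvanishing leading coefficient of $\tr_\lambda(X)\tr_\lambda(Y)$, since $\wR$ is a domain.
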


\begin{figure}
	\centering
	\includegraphics[width=0.5\linewidth]{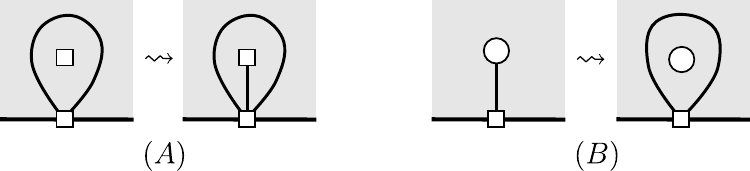}
	\caption{The steps to obtain the ideal triangulation $\lambda(\Delta)$ from a quasi-ideal triangulation $\Delta$.}
	\label{fig:triangulatio}
\end{figure}

At the end of this section, as promised, we establish the compatibility between the $\mathcal A$- and $\mathcal X$-quantum trace maps.

Let $\Delta$ be a quasi-ideal triangulation of $\fS$. We construct an ideal
triangulation $\lambda(\Delta)$ of $\fS$ from $\Delta$ as follows. For each interior puncture
$p\in{\mathcal P}$, add an ideal arc $e_p$ connecting $p$ to
a boundary puncture, as illustrated in Figure~\ref{fig:triangulatio}(A). For each
boundary circle, remove the pending arc $\alpha$ connecting the boundary
circle to a boundary puncture and replace it with an ideal arc
surrounding $\alpha$, as illustrated in Figure~\ref{fig:triangulatio}(B).

\begin{lemma}[\cite{LY22}]\label{lem-traceXA} 
	(a)   For any admissible ideal arc $\alpha\in \Delta$, there exists a vector
	${\bf k}_\alpha$ such that 
	$$\tr_\lambda(A_\alpha) = x^{{\bf k}_\alpha}\in \overline{\mathcal {X}}_q(\fS, \lambda(\Delta)) \qquad \text{(see \eqref{eq-D-X-q})}.$$
	
	(b) For each interior puncture $p$, we have 
	$$\tr_\lambda(\alpha_p) = x_{e_p} + x_{e_p}^{-1}\in \overline{\mathcal {X}}_q(\fS, \lambda(\Delta)),$$
	where $\alpha_p$ is the peripheral loop surrounding $p$.
\end{lemma}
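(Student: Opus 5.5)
The plan is to reduce both statements to local computations by means of the cutting compatibility \eqref{thm-eq-cut-trace}, applied along the interior edges of $\lambda(\Delta)$ that surround an arc of $\Delta$ not already lying in $\lambda(\Delta)$. The arcs of $\Delta$ fall into two kinds.
\begin{itemize}
\item Ideal arcs connecting boundary punctures, including all boundary edges. These belong to $\lambda(\Delta)$ itself by construction.
\item Pending arcs $\alpha$. Each of these is enclosed by the ideal arc $\alpha''\in\lambda(\Delta)$ that bounds a monogon containing $\alpha$ and one boundary circle $v$.
\end{itemize}

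For (a), consider first an arc $e\in\Delta\cap\lambda(\Delta)$. Here I use the standard fact, proved in \cite{LY22} and \cite{bloomquist2025degenerations}, that the moved-left, positively stated diagram $D_+(e)$ of an edge $e$ of the triangulation has monomial trace. The reason is that $D_+(e)$ can be isotoped off every interior edge of $\lambda(\Delta)$ except near its endpoints. In the state sum \eqref{eq-splitting-map}, the only surviving terms are then the all-$+$ ones: the other terms produce returning arcs with states $(+,-)$, which are bad arcs and vanish, or arcs with states $(-,-)$, which vanish by \eqref{arc}. The trace is therefore a product of corner-arc monomials in the triangles.

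Now take a pending arc $\alpha$ and cut along $\alpha''$. The diagram $D_+(\alpha)$ is disjoint from $\alpha''$ except at its end at the boundary puncture. After moving left, that end can be arranged on the side of the boundary edge contained in the monogon. So $\overline\Theta_{\alpha''}(D_+(\alpha))$ is a single diagram in the copy of $\mathfrak M$ cut off by $\alpha''$, namely the arc $u_+$ of Figure~\ref{fig:uplus-a} up to a power of $q^{1/2}$. Theorem~\ref{trace-monogon} gives ${\rm tr}(u_+)=v^{-1/2}x$. Two further inputs then show that $\tr_\lambda(A_\alpha)$ is a monomial:
\begin{itemize}
\item the normalization factor $v^{1/2}$ in \eqref{eq-D-X-q};
\item the injectivity of $\bS_{\alpha''}$ together with the fact that its image on monomials consists of monomials.
\end{itemize}
The exponent ${\bf k}_\alpha$ is read off from the cut computation.

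For (b), the ideal arc $e_p$ of Figure~\ref{fig:triangulatio}(A) sits inside a monogon, bounded by an arc $c\in\lambda(\Delta)$, that contains $p$. Together, $e_p$ and $c$ form a self-folded triangle. The peripheral loop $\alpha_p$ meets $e_p$ exactly once and misses $c$. I cut along $e_p$ (or pass to the triangle decomposition by cutting along all interior edges, as in the proof of Lemma~\ref{lem-leading-term}). The state sum over the single intersection point has two terms, $+$ and $-$. Using the corner-arc formulas in the triangle \cite[Theorem~5.5]{bloomquist2025degenerations}, the gluing map $\bS_{e_p}$ turns them into $x_{e_p}$ and $x_{e_p}^{-1}$ respectively. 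No other edge contributes, and the coefficients are $1$ after the Weyl normalization in $\bS_{e_p}$.

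The main obstacle is bookkeeping the powers of $q^{1/2}$ in (b), to confirm that both coefficients are exactly $1$. For this I would first compare with the known case without boundary circles \cite{LY22}. The relevant part of the surface is identical there, because no boundary circle enters the self-folded triangle, so I expect to be able to transfer the computation verbatim.
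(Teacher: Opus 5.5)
Your part (b) is correct. The single point of $\alpha_p\cap e_p$ yields only the $(+,+)$ and $(-,-)$ corner arcs at $p$ in the triangle with sides $c,e_p',e_p''$. Their traces $[x_{e_p'}x_{e_p''}]^{\pm1}$ are exactly $\mathbb S_{e_p}(x_{e_p}^{\pm1})$, and no height-exchange factors occur because each copy of $e_p$ carries a single endpoint.

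\textbf{The gap is in (a), for a pending arc $\alpha$.} It is not true that $D_+(\alpha)$ meets $\alpha''$ only at $b$ and can be placed inside the monogon. That monogon contains no boundary edge of $\fS$; after cutting, its only boundary edge is the copy of $\alpha''$. But $D_+(\alpha)$ must end on the boundary edge $E$ of $\fS$ immediately to the left of $b$. So $D_+(\alpha)$ exits through the left half-edge of $\alpha''$ and winds around $b$ to $E$, crossing every edge of $\lambda(\Delta)$ that emanates from $b$ in between. Consequently
$\overline\Theta_{\alpha''}(D_+(\alpha))=u_+\otimes c_+ + u_-\otimes c_-$,
where $c_s$ is the arc in the outer component running from the outer copy of $\alpha''$ (state $s$) around $b$ to $E$ (state $+$). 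The outer factor is not negligible. Reading ${\bf k}_\alpha$ off $u_+$ alone would make it supported on $\alpha''$, whereas ${\bf k}_\alpha$ is also nonzero on $E$ and on every edge crossed near $b$.

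\textbf{The repair} is the argument you sketched for edges of $\lambda(\Delta)$, stated correctly.
\begin{enumerate}
\item Cut along all interior edges. The pieces near $b$ are corner arcs at $b$ between consecutive half-edges.
\item With the paper's conventions (Figures~\ref{fig:badarc} and~\ref{fig:move-left}), such a piece is a bad arc exactly when its end nearer $E$ in the angular order around $b$ has state $+$ and its other end has state $-$.
\item Propagating from the $+$-stated endpoint on $E$, every state assignment other than the constant $+$ one contains a bad piece. This kills $u_-\otimes c_-$.
\item What remains is $u_+$ times a product of $(+,+)$ corner arcs. The trace ${\rm tr}(u_+)=v^{-1/2}x$ is cancelled by the factor $v^{1/2}$ in \eqref{eq-D-X-q}. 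Each $(+,+)$ corner arc has monomial trace.
\item When $D_+$ winds past a loop around another boundary circle, the corresponding piece is a corner arc at $b$ inside a copy of $\mathfrak M$. Its trace is a monomial by Theorem~\ref{trace-monogon} and \eqref{eq-square-double-plus}, a case you did not address.
\end{enumerate}

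\textbf{Two smaller points.}
\begin{itemize}
\item Your claim that $(-,-)$ pieces vanish by \eqref{arc} is wrong. That relation concerns trivial returning arcs, and $(-,-)$ corner arcs are nonzero; your own part (b) uses this. The claim is also unnecessary, since step 3 already kills all non-constant assignments.
\item To get coefficient exactly $1$ rather than a power of $q^{1/2}$, you need one more argument. Either note that $A_\alpha$ is reflection invariant and ${\rm tr}_\lambda$ is reflection equivariant (as in \cite{LY22}), or track the height-exchange factors explicitly.
\end{itemize}
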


Lemma~\ref{prop:quasi-commutative} and
Lemma~\ref{lem-traceXA}(a) imply that
\[
x^{{\bf k}_\alpha}x^{{\bf k}_\beta}
=
q^{\Lambda_{\alpha,\beta}}
x^{{\bf k}_\beta}x^{{\bf k}_\alpha}.
\]
Moreover, it is well known that $x_{e_p}$ is central in
$\overline{\mathcal {X}}_q\bigl(\fS,\lambda(\Delta)\bigr)$ \cite{LY22}. Therefore, there is a
well-defined $\wR$-algebra homomorphism
\[
\begin{aligned}
	\psi\colon \mathcal A_q(\fS,\Delta)
	&\longrightarrow \overline{\mathcal {X}}_q\bigl(\fS,\lambda(\Delta)\bigr),\\
	A_\alpha &\longmapsto x^{{\bf k}_\alpha},
	&&\alpha\in\Delta,\\
	\alpha_p &\longmapsto x_{e_p}+x_{e_p}^{-1},
	&&p \text{ is an interior puncture}.
\end{aligned}
\]

The following lemma will be used later to establish the compatibility
of the desired Frobenius map with the cutting map.

\begin{lemma}
	The algebra homomorphism $\psi$ is an embedding, and the following
	diagram commutes:
	\begin{equation}\label{eq-com-traces-AX}
		\begin{tikzcd}[column sep=1.5em, row sep=1.8em]
			\mathcal A_q(\fS,\Delta)
			\arrow[rr, "\psi"]
			&&
			\overline{\mathcal {X}}_q\bigl(\fS,\lambda(\Delta)\bigr)
			\\
			&
			\rdS
			\arrow[ul, "\tr_\Delta^A"]
			\arrow[ur, "\tr_\lambda"']
			&
		\end{tikzcd}.
	\end{equation}
\end{lemma}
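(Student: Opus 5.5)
We prove commutativity first and deduce injectivity of $\psi$ from it. Write $\lambda=\lambda(\Delta)$.

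\textbf{Commutativity.} The two maps $\psi\circ\tr_\Delta^A$ and $\tr_\lambda$ are $\wR$-algebra homomorphisms $\rdS\to\overline{\mathcal X}_q(\fS,\lambda)$. Both are $\wR[P]$-linear: by Lemma~\ref{lem-traceXA}(b) and the definition of $\psi$,
\[
\psi\bigl(\tr_\Delta^A(\alpha_p)\bigr)=\psi(\alpha_p)=x_{e_p}+x_{e_p}^{-1}=\tr_\lambda(\alpha_p).
\]
By Lemma~\ref{lem-traceXA}(a), for each $\alpha\in\Delta$,
\[
\psi\bigl(\tr_\Delta^A(A_\alpha)\bigr)=\psi(A_\alpha)=x^{{\bf k}_\alpha}=\tr_\lambda(A_\alpha).
\]
Hence the two maps agree on the subalgebra $\Ap$ of $\rdS$. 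Now take any $X\in\rdS$. By \eqref{eq-proof-A-trace} in the proof of Proposition~\ref{prop:quantumtrace}, there is ${\bf t}'\in\mathbb N^\Delta$ with $A({\bf t}')X\in\Ap$. Applying both maps gives
\[
\psi\tr_\Delta^A(A({\bf t}'))\,\psi\tr_\Delta^A(X)=\tr_\lambda(A({\bf t}'))\,\tr_\lambda(X).
\]
The common factor $\psi\tr_\Delta^A(A({\bf t}'))=\tr_\lambda(A({\bf t}'))$ is a monomial $q^{m/2}x^{\sum{\bf t}'(\alpha){\bf k}_\alpha}$, so it is invertible in the quantum torus. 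Cancelling it yields $\psi\tr_\Delta^A(X)=\tr_\lambda(X)$.

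\textbf{Injectivity of $\psi$.} $\mathcal A_q(\fS,\Delta)$ is the Ore localization of $\Ap$ at the monomials $A({\bf k})$. So any $Z\in\mathcal A_q(\fS,\Delta)$ can be written $Z=A({\bf k})^{-1}W$ with $W\in\Ap$. Since $\psi(A({\bf k}))$ is an invertible monomial, it suffices to show that $\psi$ is injective on $\Ap\subset\rdS$. There $\psi=\tr_\lambda$, and $\tr_\lambda$ is injective by the $\mathcal X$-quantum trace theorem of \cite{bloomquist2025degenerations}. Hence $\psi$ is injective.

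\textbf{Main obstacle.} The main care needed is in this injectivity step. One must check that $\Ap$ is identified with its image in $\rdS$ via $\iota$, compatibly with the $\wR[P]$-structure. This holds because the $\alpha_p$ are algebraically independent over $\wR$ on both sides, by Corollary~\ref{cor-basis-reduced}. It is also the reason we argue through $\tr_\lambda$ rather than analyse $\psi$ directly. A direct argument would need the vectors ${\bf k}_\alpha$ to be linearly independent together with the elements $x_{e_p}+x_{e_p}^{-1}$, and such an independence check is exactly what the factorization through $\tr_\lambda$ avoids.
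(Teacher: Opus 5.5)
Your proof is correct and follows the same route as the paper. You show that $\psi$ and $\tr_\lambda$ agree on $\Ap$ via Lemma~\ref{lem-traceXA}, then pass to all of $\rdS$ and deduce injectivity of $\psi$ by clearing denominators with invertible monomials, which is what the paper's appeal to the sandwich \eqref{eq-sandwitsh} amounts to; you simply write out the cancellation and localization steps that the paper leaves implicit.
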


\begin{proof}
	Via the embedding $\tr_\Delta^A$, we identify $\rdS$ with a subalgebra
	of $\mathcal A_q(\fS,\Delta)$.
	Lemma~\ref{lem-traceXA}  implies 
	$\left.\psi\right|_{\mathcal A_q^+(\fS,\Delta)}
	=
	\left.\tr_\lambda\right|_{\mathcal A_q^+(\fS,\Delta)}.$
	Then the result now follows from \eqref{eq-sandwitsh}.
\end{proof}

\section{Generalized quantum cluster algebras and Frobenius maps}
Throughout this section, let $I$ be a finite set, and let $\mathfrak R$ be a domain containing a distinguished invertible element $q^{\frac{1}{2}}$. We use the notation $\mathfrak R$ rather than $R$ because, in \S\ref{sec-skein=A=U}, we will take $\mathfrak R=\wR$ (see \eqref{eq-Rtilde}).
Define
\[
\mathcal R=\mathfrak R[p_i\mid i\in I]
\]
to be the polynomial ring over $\mathfrak R$ in the variables $p_i$, for $i\in I$.

We first recall the definition of generalized quantum cluster algebras, and then construct Frobenius maps for them (Theorem~\ref{thm-Fro-cluster}).

\subsection{Generalized quantum cluster algebras}\label{subsec-generalized}

\begin{definition}[\cite{BZ}]\label{def:compatible pair}
	Let $\widetilde{B}=(b_{ij})_{}$ be an $m\times n$ integer matrix with $m\geq n$, and let $\Lambda=(\lambda_{ij})$ be an $m\times m$ skew-symmetric integer matrix. The pair $(\Lambda,\widetilde{B})$ is said to be {\bf compatible} if
	\begin{equation}\label{eq:compatible pair}
		(\widetilde{B})^\top \Lambda=(D\; O)
	\end{equation}
	where $D=\text{diag}\{d_1,\ldots, d_n\}$ is an $n\times n$ diagonal matrix with positive integers diagonal entries $d_i$ and $O$ is the $n\times (m-n)$ zero matrix.
\end{definition}

It follows from \eqref{eq:compatible pair} that the top $n\times n$ submatrix $B$ of $\widetilde B$ is skew-symmetrizable. Precisely, $DB$ is skew-symmetric.

\begin{definition}\label{def:Lambda}
	The {\bf quantum torus} $\mathbb{T}(\Lambda)=\mathbb{T}_{\mathcal R}(\Lambda)$ is the $\mathcal R$-algebra with a distinguished $\mathcal R$-basis $\{A(c)\mid
	c\in \mathbb Z^{m}\}$ whose multiplication is given by
	\begin{equation}\label{eq-def-A-relation}
		A(c)A(d)=q^{\frac{1}{2}\Lambda(c,d)}A(c+d)
	\end{equation}
	for any $c,d\in\mathbb Z^{m}$, where $\Lambda(-,-): \mathbb Z^m\times \mathbb Z^m\to \mathbb Z$ is the skew-symmetric bilinear form induced by $\Lambda$.
\end{definition}

Let ${\rm Frac}(\mathbb T(\Lambda))$ denote the skew-field of fractions of $\mathbb T(\Lambda)$.

Fix an $m\times m$ skew-symmetric integral matrix $\Lambda_0$ and write $\textbf{F} = {\rm Frac}(\mathbb T(\Lambda_0))$.

\begin{definition}[\cite{BCDX0}]\label{def:generalizedQS}
	A \textbf{generalized quantum seed} in $\textbf{F}$ is a quadruple $(A, \mathbf{h}, \Lambda, \widetilde{B})$, where:
	\begin{itemize}
		\item $\Lambda$ is an $m \times m$ skew-symmetric integral matrix and $\widetilde{B} = (b_{ij})$ is an $m \times n$ integral matrix such that the pair $(\Lambda, \widetilde{B})$ is compatible and ${\rm Frac}(\mathbb{T}(\Lambda)) = \textbf{F}$. Then $\widetilde B$ is called the {\bf extended exchange matrix} and the top $n\times n$ submatrix $B$ is called the {\bf exchange matrix}.
		
		\item The map $A\colon \mathbb Z^m\rightarrow \textbf{F} $ satisfying \eqref{eq-def-A-relation}. Note that $A$ is uniquely determined by $A_i:= A({e_i})$ for $1\leq i\leq m$, where $e_i \in \mathbb{Z}^m$ is the $i$-th standard basis vector. So we may write $A = (A_1, \ldots, A_m)$, called the \textbf{extended cluster}.
		The elements $A_1, \ldots, A_n$ are called \textbf{exchangeable cluster variables}, and  $A_{n+1}, \ldots, A_m$ are called \textbf{frozen cluster variables}.
		
		\item $\mathbf{h} = (\mathbf{h}_1, \ldots, \mathbf{h}_n)$, where for each $i = 1, \ldots, n$, the $i$-th string is $\mathbf{h}_i = \{h_{i,0},h_{i,1}, \ldots, h_{i,\ell_i}\}$ with $h_{i,r} \in \mathcal {R}$. Here, $\ell_i$ is a fixed positive integer such that $\ell_i \mid b_{ji}$ for all $j$, and we require $h_{i,0} = h_{i,\ell_i} = 1$ and $h_{i,r} = h_{i,\ell_i - r}$ for all $r = 1, \ldots, \ell_i - 1$. We call $h_{i,1}, \ldots, h_{i,\ell_i-1}, i=1,\cdots,n$ {\bf coefficients}.
		
	\end{itemize}
	
\end{definition}

\begin{remark}
	In \cite{BCDX0,BCDX}, the authors require $p_i=1$ for all $i\in I$.
\end{remark}

Before introducing the mutation of a generalized quantum seed, we first recall the notion of matrix mutation.
Let $\widetilde B=(b_{ij})\in {\rm Mat}_{m\times n}(\mathbb Z)$ be an integral matrix.
For any $k\in \{1,2,\cdots, n\}$, we define the integral matrix $\widetilde B'=\mu_k(\widetilde B)$ by
\begin{align}\label{eq-mutation-Q}
	b'_{ij}= 
	\begin{cases}
		- b_{ij}, & \text{if } k\in\{j,j\},\\[0.4em]
		b_{ij} + \dfrac{1}{2}\!\left(b_{ij}\,|b_{kj}| + |b_{ij}|\,b_{kj}\right), & \text{if } k\notin\{i,j\}.
	\end{cases}
\end{align}

\begin{definition}\label{def:mutation}
	Let ${\bf s} = (X, \mathbf{h}, \Lambda, \widetilde{B})$ be a generalized quantum seed in $\textbf{F}$. For any $1 \leq k \leq n$, the \textbf{mutation} of ${\bf s}$ at direction $k$ is the new quadruple $\mu_k({\bf s}) := (A', \mathbf{h}, \Lambda', \widetilde{B}')$, defined as follows:
	\begin{enumerate}
		\item The new cluster variables $A' = (A'_1, \ldots, A'_m)$ are given by
		\begin{equation}\label{eq:mutation-X}
			A_i'=A'(e_i) = 
			\begin{cases}
				A(e_i) & \text{if } i \neq k, \\
				\displaystyle \sum_{r=0}^{\ell_k} h_{k,r} \cdot A\left( \sum_{j=1}^{m} \big(r[\beta_{jk}]_+ + (\ell_k - r)[-\beta_{jk}]_+\big) e_j - e_k \right) & \text{if } i = k,
			\end{cases}
		\end{equation}
		where $\beta_{jk} = b_{jk}/\ell_k \in \mathbb{Z}$, and denote $\mu_k(A_i)=A'_i$.
		
		\item The matrix $\widetilde \Lambda'$ is determined by the quasi-commutativity of $A'_1,\cdots, A'_m$. 
		
		\item The new matrix $\widetilde B'$ is obtained by matrix mutation:
		\[
		\widetilde{B}' = \mu_k(\widetilde{B}).
		\]
	\end{enumerate}
	
\end{definition}

Note that the coefficient strings ${\bf h}$ are invariant.

For a generalized quantum seed ${\bf s}=(A,{\bf h},\Lambda,\widetilde B)$, we denote 
\begin{equation}\label{eq:quantumtorus}
	\mathbb T({\bf s})=\mathbb T_{\mathfrak R[h_{i,r}\mid 1\leq i\leq n, 0\leq r\leq \ell_k]}(\Lambda),
\end{equation}
where $\mathfrak R[h_{i,r}\mid 1\leq i\leq n, 0\leq r\leq \ell_k]$ is the $\mathfrak R$-subalgebra of $\mathcal R$ generated by $h_{i,r}, 1\leq i\leq n, 0\leq r\leq \ell_k$.

The quadruple $(A', \mathbf{h}, \Lambda', \widetilde{B}')$ is a generalized quantum seed in $\textbf{F}$, and mutation is an involution, i.e., $\mu_k(\mu_k({\bf s})) = {\bf s}$. See Propositions 3.6 and 3.7 of \cite{BCDX0}.

Two generalized quantum seeds ${\bf s}$ and ${\bf s}'$ are said to be \textbf{mutation-equivalent} if they are related by a sequence of mutations; this relation is denoted by ${\bf s}\sim {\bf s'}$.

\begin{definition}\label{def:qca}
	The \textbf{generalized quantum cluster algebra} $\mathscr{A}_q(\mathbf{s})$ with ground ring $\mathfrak R$ associated with the initial seed $\mathbf{s}$ is the 
	$\mathfrak R$-subalgebra of $\textbf{F}$ generated by all exchangeable cluster variables from seeds that are mutation-equivalent to $\mathbf{s}$, coefficients $h_{i,r},\, 1\leq i\leq n, 0\leq r\leq \ell_k$, and all frozen variables with their inverses $A^{\pm 1}_i, n+1\leq i\leq m$. The \textbf{generalized upper quantum cluster algebra} $\mathscr{U}_q(\mathbf{s})$ associated with the initial seed $\mathbf{s}$ is defined as 
	$$\mathscr{U}_q(\mathbf{s})=\bigcap_{{\bf s'}\sim {\bf s}} \mathbb T({\bf s}'), \quad\text{(see \eqref{eq:quantumtorus})}$$
	where the intersection runs over all seeds ${\bf s'}$ mutation-equivalent to ${\bf s}$. 
\end{definition}

\begin{theorem}[Laurent phenomenon, {\cite[Theorem 3.1]{BCDX}}]\label{thm:laurent}
	For any generalized quantum seed ${\bf s}$, we have
	$$\mathscr{A}_q(\mathbf{s})\subset \mathscr{U}_q(\mathbf{s}).$$
\end{theorem}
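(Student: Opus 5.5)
The statement is the Laurent phenomenon $\mathscr A_q(\mathbf s)\subset\mathscr U_q(\mathbf s)$ for generalized quantum seeds. Here the coefficient strings are allowed to lie in $\mathcal R=\mathfrak R[p_i\mid i\in I]$ rather than being specialized with $p_i=1$.

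The plan is to reduce to \cite[Theorem 3.1]{BCDX} by treating the coefficients formally. All mutation formulas \eqref{eq:mutation-X} are polynomial in the $h_{i,r}$, and the matrix mutation does not depend on them. So I would first work over the universal ring $\mathfrak R_{\rm univ}=\mathfrak R[t_{i,r}]$, with indeterminates $t_{i,r}=t_{i,\ell_i-r}$. Over this ring, every cluster variable of every seed mutation-equivalent to $\mathbf s$ is an element of the skew-field, and the BCDX theorem applies verbatim, since its proof never uses specific values of the coefficients, only that they are central scalars satisfying the palindromic condition. This gives Laurent expansions with coefficients in $\mathfrak R_{\rm univ}[q^{\pm\frac12}]$. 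We then specialize $t_{i,r}\mapsto h_{i,r}\in\mathcal R$ via the ring map to $\mathfrak R[h_{i,r}]$.

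The key steps are as follows.

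First, check that for every sequence of mutations the cluster variables in the specialized seed are the images of the universal ones under specialization. This is immediate by induction, since \eqref{eq:mutation-X} commutes with the base change of quantum tori $\mathbb T_{\mathfrak R_{\rm univ}}(\Lambda)\to\mathbb T({\bf s})$.

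Second, for any seed ${\bf s}'\sim{\bf s}$ and any cluster variable $x$, the universal Laurent expansion of $x$ in ${\bf s}'$ specializes to an expression of $x$ lying in $\mathbb T({\bf s}')$. Since $\mathscr U_q$ is an intersection of these tori, each generator of $\mathscr A_q$ lies in $\mathscr U_q$. Frozen variables, their inverses, and the coefficients are trivially in every torus. Because $\mathscr U_q$ is an intersection of subalgebras, it is itself a subalgebra, which gives the inclusion.

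The main obstacle is the first step in the skew-field setting. Specialization is not a ring map on the skew-field of fractions, so I must keep every computation inside the quantum tori. The cleanest route is to invoke BCDX's argument itself. That argument shows, via the rank-two/upper-bound (caterpillar) method, that the expansion of any cluster variable of ${\bf s}'$ lies in $\mathbb T({\bf s})$ over the universal coefficient ring. The computation then genuinely lives in $\mathbb T_{\mathfrak R_{\rm univ}}$, where base change is legitimate.

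Alternatively, if one prefers not to reopen their proof, I would note that the BCDX proof only uses that $h_{i,r}$ are central elements of a commutative domain. Their argument then transfers to the domain $\mathfrak R[h_{i,r}]\subset\mathcal R$ directly, with the coprimality-type steps holding because $\mathcal R$ is a domain.
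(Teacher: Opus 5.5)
Your argument is correct. Your closing fallback is exactly what the paper does: the paper gives no proof beyond citing \cite[Theorem 3.1]{BCDX} and remarking that the BCDX proof goes through verbatim when the $h_{i,r}$ lie in $\mathcal R$ rather than in $\mathfrak R$.

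Your main route, universal coefficients $t_{i,r}$ followed by the specialization $t_{i,r}\mapsto h_{i,r}$ into $\mathfrak R[h_{i,r}]$, is genuinely different. It does not remove the reliance on a verbatim transfer, because you still need BCDX over the coefficient ring $\mathfrak R[t_{i,r}]$, which they do not literally treat either. What it buys is insulation against any genericity hypothesis on the coefficients that the BCDX argument might use, for instance coprimality of exchange polynomials: degenerate values of the $h_{i,r}$ are then reached only by specialization. If one accepts BCDX in the form the paper's remark describes (an arbitrary ground domain containing $q^{\frac12}$, with coefficients in that domain), there is a shorter reduction. Take the ground ring to be the domain $\mathfrak R[h_{i,r}]\subset\mathcal R$. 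Then the paper's $\mathscr A_q(\mathbf s)$ and $\mathscr U_q(\mathbf s)$ are literally the BCDX objects, and no specialization is needed. It matters to use $\mathfrak R[h_{i,r}]$ rather than $\mathcal R$, since $\mathbb T(\mathbf s')$ is a torus over $\mathfrak R[h_{i,r}]$; you handled this correctly by specializing into $\mathfrak R[h_{i,r}]$.

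Your first step needs tightening: it is not immediate from base change, because the specialization map is only defined on the initial universal torus. The induction should run in three moves.
\begin{enumerate}
\item Use the universal Laurent phenomenon to place the new universal cluster variable in that initial torus.
\item Apply the specialization to the universal form of the inverse-free exchange relation $A'_kA_k=\sum_{r=0}^{\ell_k} h_{k,r}\,q^{\frac12\Lambda(w_r,e_k)}A(w_r)$. Here $w_r\in\mathbb N^m$ is the exponent vector of \eqref{eq:mutation-X} with the $-e_k$ removed.
\item Cancel the nonzero $A_k$ in $\textbf{F}$.
\end{enumerate}
For your second step, rerun the same argument with $\mathbf s'$ taken as a fresh universal initial seed. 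This way the specialization is again defined on an abstract quantum torus rather than on a subalgebra of the universal skew-field.
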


\begin{remark}
	Although \cite[Theorem 3.1]{BCDX} is stated under the assumption that $p_i=1$ for all $i\in I$, their proof applies verbatim in the more general setting considered here.
\end{remark}

\subsection{Frobenius maps for generalized quantum cluster algebras}\label{sec-fro-cluster}

In this section, we construct Frobenius maps for generalized quantum cluster algebras. To this end, for any positive integers $N$ and $\ell$, we first define a sequence of generalized Chebyshev polynomials 
\[
T_{\ell,N}^{(k)} \in \mathbb Z[h_1,\dots,b_{N-1}],\qquad k=0,\dots,N,
\]
(see Definition~\ref{def:generalizedCP}) using a quantum analogue of the multinomial expansion given in Theorem \ref{thm:expan}. In particular, when $N=2$ and $k=1$, these polynomials become to the Chebyshev polynomials of the first kind $T_\ell$ (see \eqref{Chebyshev-polynomial}). We then employ these generalized Chebyshev polynomials to construct, for a given generalized quantum seed $\mathbf s$ of rank $n$, its $\ell$-th deformed generalized quantum seed 
\[
\hat {\mathbf{s}}=\bigl(\hat A,\hat{\mathbf{h}},\hat\Lambda,\hat{\widetilde{B}}\bigr), \quad \text{(see Definition \ref{def:deformedseed})}
\]
where
\begin{enumerate}[label=(\arabic*)]
	\item $\hat A_i=A_i^\ell$ for $i=1,\dots,m$;
	\item for $i=1,\dots,n$,
	\[
	\hat{\mathbf{h}}_i=\{ \hat h_{i,0},\hat h_{i,1},\dots,\hat h_{i,\ell_i}\},
	\quad
	\hat h_{i,k}:=T_{\ell,\ell_i}^{(k)}\bigl(h_{i,1},\dots,h_{i,\ell_i-1}\bigr)
	\ \ (0\le k\le \ell_i);
	\]
	\item $\hat \Lambda=\Lambda$;
	\item $\hat{\widetilde{B}}=\widetilde{B}$.
\end{enumerate}

Let $\overline{\mathcal X}$ denote the set of cluster variables in $\mathscr A_q(\mathbf s)$. Under the assumption that $q^{d_i/\ell_i}$ has order $\ell$ for every $i\in\{1,\dots,n\}$, Proposition~\ref{pro:compatible} implies that the quantum cluster variables of $\mathscr{A}_\eta(\hat{\mathbf s})$ with $\eta^{\frac 1 2}=q^{\frac{\ell^2}{2}}$ are precisely
\[
\widehat{\overline{\mathcal X}}:=\{X^\ell \mid X\in \overline{\mathcal X}\}.
\]
Consequently, the injective Frobenius map from the generalized quantum cluster algebra $\mathscr A_\eta(\hat{\mathbf s})$ to $\mathscr A_q(\mathbf s)$ follows immediately (Theorem \ref{thm-Fro-cluster}).

A particular case where $\ell_i \le 2$ for all $i \in \{1,\dots,n\}$ is addressed in Corollary~\ref{cor-Fro-cluster}. This result, together with the generalized quantum cluster algebra structures of the reduced LRY skein algebras established in Section~\ref{sec-skein=A=U}, will be employed in Section~\ref{Sec-Fro} to construct Frobenius maps for (reduced) LRY skein algebras.

The {\bf  Chebyshev polynomials of the  first kind} $T_\ell\in \mathbb Z[x]$ is defined as follows: 
\begin{align}\label{Chebyshev-polynomial}
	T_0=2, T_1=x,T_{n+1}=xT_n-T_{n-1}, n\geq 1.
\end{align}

{Lemmas~\ref{lem-root-Tl} and~\ref{lem:expan1} record well-known identities for root of unity commuting variables. They will be used later.}

\begin{lemma}[{\cite[Corollary~3.2]{le2019kauffman}; \cite[Equation~(2)]{bonahon2019miraculous}}]\label{lem-root-Tl}
	Let $x$ and $y$ be two indeterminates such that
	$xy=\omega yx,$
	where $\omega$ is a primitive $\ell$-th root of unity. Then
	\[
	T_\ell\!\left(x+x^{-1}+y\right)
	=
	x^\ell+x^{-\ell}+y^\ell,
	\qquad n\ge 1.
	\]
\end{lemma}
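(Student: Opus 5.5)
The plan is to reduce the identity to a computation with $\ell\times\ell$ matrices, using the standard cyclic representations of the quantum torus $\mathbb T:=\mathbb C\langle x^{\pm1},y^{\pm1}\mid xy=\omega yx\rangle$. It suffices to work in $\mathbb T$, because the statement is an identity between noncommutative polynomials in $x^{\pm1},y$ that involves only the relation $xy=\omega yx$. We may also adjoin $y^{-1}$ harmlessly, since $\mathbb T$ contains the algebra generated by $x^{\pm1},y$.

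\emph{Step 1 (representations).} For $\lambda,\mu\in\mathbb C^\times$, define $\rho_{\lambda,\mu}\colon\mathbb T\to \mathrm{Mat}_\ell(\mathbb C)$ by letting $x$ act diagonally, $x e_i=\lambda\omega^{i}e_i$, and letting $y$ act by a weighted cyclic shift, $y e_i=\mu e_{i-1}$ with indices mod $\ell$. One checks directly that $xy=\omega yx$ holds with this choice of direction (if not, shift the other way). I will use the standard fact that $\mathbb T$ is free over its center $\mathbb C[x^{\pm\ell},y^{\pm\ell}]$ with basis $x^ay^b$, $0\le a,b<\ell$, and that the family $\{\rho_{\lambda,\mu}\}$ separates points of $\mathbb T$. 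Concretely, a nonzero element $\sum c_{a,b}(x^\ell,y^\ell)x^ay^b$ has some nonzero coefficient $c_{a,b}$. Its image has entries $c_{a,b}(\lambda^\ell,\mu^\ell)\lambda^a\omega^{ia}\mu^b$ in the $b$-th cyclic off-diagonal. These entries are nonzero for generic $(\lambda,\mu)$, by Vandermonde in $\omega^{i}$. Hence it suffices to prove the identity after applying every $\rho_{\lambda,\mu}$.

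\emph{Step 2 (characteristic polynomial).} Under $\rho_{\lambda,\mu}$, the element $M:=x+x^{-1}+y$ becomes the matrix with diagonal entries $d_i=\lambda\omega^i+\lambda^{-1}\omega^{-i}$ and entries $\mu$ on the cyclic superdiagonal (including the corner entry). Expanding $\det(t\cdot 1-M)$ over permutations, the only permutations contributing are the identity and the full $\ell$-cycle. This gives
\[
\det(t-M)=\prod_{i=0}^{\ell-1}\bigl(t-\lambda\omega^i-\lambda^{-1}\omega^{-i}\bigr)+(-1)^{\ell-1}(-\mu)^\ell=\prod_{i}(t-d_i)-\mu^\ell .
\]
Next I use the classical factorization $\prod_{i=0}^{\ell-1}(t-z\omega^i-z^{-1}\omega^{-i})=T_\ell(t)-z^\ell-z^{-\ell}$. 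This follows by setting $t=s+s^{-1}$, using $T_\ell(s+s^{-1})=s^\ell+s^{-\ell}$ (immediate from \eqref{Chebyshev-polynomial}), and comparing roots in $s$. It yields
\[
\det(t-M)=T_\ell(t)-\bigl(\lambda^\ell+\lambda^{-\ell}+\mu^\ell\bigr).
\]

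\emph{Step 3 (conclusion).} By Cayley--Hamilton, $T_\ell(\rho(M))=(\lambda^\ell+\lambda^{-\ell}+\mu^\ell)\cdot 1$. This scalar matrix is exactly $\rho_{\lambda,\mu}(x^\ell+x^{-\ell}+y^\ell)$, since $x^\ell$ and $y^\ell$ act by the scalars $\lambda^\ell$ and $\mu^\ell$. Therefore $\rho_{\lambda,\mu}$ kills $T_\ell(x+x^{-1}+y)-x^\ell-x^{-\ell}-y^\ell$ for all $\lambda,\mu$, and Step 1 gives the identity.

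The main points needing care are the sign bookkeeping in the $\ell$-cycle term of the determinant and the separation argument of Step 1. If the sign came out wrong one would get $-\mu^\ell$ in place of $\mu^\ell$. For odd $\ell$ the computation above shows the sign is correct. For even $\ell$, I would recheck the sign carefully and, if necessary, absorb it by replacing $\mu$ with $-\mu$ in the representation, which is harmless because the representations are parametrized over all of $\mathbb C^\times$.
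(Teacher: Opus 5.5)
Your proof is correct. The paper gives no argument of its own for this lemma (it is simply quoted from L\^e and Bonahon), so yours is an independent, self-contained route.

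You evaluate on the $\ell$-dimensional cyclic representations $\rho_{\lambda,\mu}$, in which $x+x^{-1}+y$ becomes a diagonal matrix plus a weighted cyclic shift. In the determinant expansion only the identity and the full $\ell$-cycle contribute. Combined with $\prod_i(t-\lambda\omega^i-\lambda^{-1}\omega^{-i})=T_\ell(t)-\lambda^\ell-\lambda^{-\ell}$, this gives characteristic polynomial $T_\ell(t)-(\lambda^\ell+\lambda^{-\ell}+\mu^\ell)$. Cayley--Hamilton then finishes, and the Vandermonde argument shows these representations jointly detect every nonzero element of $\mathbb T$.

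Compare this with a coefficient-level expansion in the quantum torus, of the kind the paper carries out in Theorem~\ref{thm:expan}. Your argument is shorter and more conceptual: it exhibits $x^\ell+x^{-\ell}+y^\ell$ as the central constant term of a characteristic polynomial, so no combinatorial cancellations have to be tracked. The price is that it lives over $\mathbb C$ and relies on a density argument, whereas a direct expansion works verbatim over any coefficient ring.

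Since the paper applies the lemma inside reduced skein algebras over an arbitrary domain $R$, make the descent explicit. The universal algebra $\mathbb Z[\omega]\langle x^{\pm1},y\rangle/(xy-\omega yx)$, with $\mathbb Z[\omega]=\mathbb Z[t]/(\Phi_\ell)$, is free on the monomials $x^ay^b$ and hence embeds in your $\mathbb T$. Moreover, an element of exact order $\ell$ in a domain is a root of $\Phi_\ell$, so the identity specializes to every algebra where it is used.

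Three small points. First, as you suspected, $ye_i=\mu e_{i-1}$ gives $xy=\omega^{-1}yx$; use $ye_i=\mu e_{i+1}$ instead.

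Second, in Step 1 the entry of $\rho_{\lambda,\mu}\left(\sum_{a,b}c_{a,b}(x^\ell,y^\ell)\,x^ay^b\right)$ in position $(i+b,i)$ is $\mu^b\sum_{a}c_{a,b}(\lambda^\ell,\mu^\ell)\lambda^a\omega^{a(i+b)}$. This is a sum over $a$, not a single term. It is the invertibility of the Vandermonde matrix $(\omega^{ja})_{0\le j,a<\ell}$ that prevents all these entries from vanishing once some $c_{a,b}(\lambda^\ell,\mu^\ell)\neq 0$. Such $(\lambda,\mu)$ exist because $(\lambda,\mu)\mapsto(\lambda^\ell,\mu^\ell)$ is onto $(\mathbb C^\times)^2$.

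Third, the sign needs no rechecking: $(-1)^{\ell-1}(-\mu)^\ell=(-1)^{2\ell-1}\mu^\ell=-\mu^\ell$ for every $\ell$. That is fortunate, because your fallback $\mu\mapsto-\mu$ would leave $\mu^\ell$ unchanged when $\ell$ is even.
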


\begin{lemma}\label{lem:expan1}
	Let $x$ and $y$ be two indeterminates such that
	$xy=\omega yx$. Assume that the order of $\omega$ is $\ell$. Then
	\[
	(x+y)^\ell=x^\ell +y^\ell.
	\]
\end{lemma}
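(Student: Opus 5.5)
We prove the identity by the quantum binomial theorem, working in the subalgebra generated by $x$ and $y$. The relation $xy=\omega yx$ lets us move every $x$ to the left of every $y$ in any word of length $\ell$. Doing so for all $2^\ell$ words in the expansion of $(x+y)^\ell$ gives the $q$-binomial (Gaussian) expansion
\[
(x+y)^\ell=\sum_{k=0}^{\ell}\binom{\ell}{k}_{\omega}\, x^{k}y^{\ell-k}.
\]
We fix the convention $yx=\omega^{-1}xy$. The Gaussian binomial is
\[
\binom{\ell}{k}_{\omega}=\frac{(\ell)_\omega!}{(k)_\omega!\,(\ell-k)_\omega!},
\qquad
(j)_\omega=1+\omega+\dots+\omega^{j-1},
\]
with $\omega$ replaced by $\omega^{-1}$ depending on the convention. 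This is standard and is proved by induction on $\ell$ via the $q$-Pascal rule
\[
\binom{n+1}{k}_\omega=\binom{n}{k-1}_\omega+\omega^{k}\binom{n}{k}_\omega .
\]
The rule itself comes from writing $(x+y)^{n+1}=(x+y)^n(x+y)$ and commuting the last $x$ past $y^{n-k}$.

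The key step is the vanishing of the middle coefficients. The Gaussian binomials are polynomials in $\omega$ with integer coefficients, so the expansion is valid for $\omega$ itself and not only generically. For $0<k<\ell$ we use the polynomial identity
\[
(k)_\omega!\,(\ell-k)_\omega!\,\binom{\ell}{k}_\omega=(\ell)_\omega!\,.
\]
The right-hand side contains the factor $(\ell)_\omega=\frac{\omega^\ell-1}{\omega-1}$. This factor is $0$ because $\omega$ has order exactly $\ell$, so $\omega^\ell=1$ and $\omega\neq1$. On the left, each $(j)_\omega$ with $1\le j<\ell$ equals $\frac{\omega^j-1}{\omega-1}\neq 0$, since $\omega^j\neq1$. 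We work in a domain, so these factors can be cancelled, which forces $\binom{\ell}{k}_\omega=0$.

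Hence only the terms $k=0$ and $k=\ell$ survive, giving $(x+y)^\ell=x^\ell+y^\ell$. The only delicate point is torsion. One must either read the expansion in the $\mathbb Z[\omega]$-subalgebra, where the cancellation takes place in the domain $\mathbb Z[\omega]$, or note that the ambient ring is torsion-free over it, so that the coefficient is literally zero. Apart from this the argument is routine.
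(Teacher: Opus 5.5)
Your proof is correct. The paper gives no proof of this lemma and simply records it as well known; your $q$-binomial argument is the standard proof of that fact.

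Two small points of presentation:
\begin{itemize}
\item The Pascal rule you display is the one obtained from $(x+y)^{n+1}=(x+y)(x+y)^n$ by moving the new $y$ past $x^k$. Expanding $(x+y)^n(x+y)$, as you describe, instead gives the companion rule, with the power of the parameter on the $\binom{n}{k-1}$ term. Both identities hold for Gaussian binomials, so nothing breaks.
\item Your closing torsion caveat is unnecessary. $\binom{\ell}{k}_\omega$ is a scalar in the ground domain, and you have shown that this scalar is $0$. So the corresponding term vanishes in any algebra over that domain, whether or not the algebra is torsion-free. This matters because the paper later applies the lemma to arbitrary elements satisfying the relation, not just indeterminates, e.g.\ in the proof of Lemma~\ref{lem-quantum-binomial-uplus-uminus}.
\end{itemize}
What genuinely matters is the domain hypothesis, which converts ``order $\ell$'' into $(\ell)_\omega=0$, and you identified that correctly.

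For comparison, the paper's Theorem~\ref{thm:expan} with $N=1$ would recover the lemma by a different mechanism. There the coefficients of $(x_0+x_1)^\ell$ are read off from $\prod_{r=1}^{\ell}(u_0+u_1\xi_r)$, and invariance of this product under $u_1\mapsto\omega u_1$ kills every monomial with $0<m_1<\ell$. That route needs a square root $\omega^{1/2}$ in the ground ring, both for the Weyl normalization and for the $\xi_r$. Your Gaussian-binomial argument needs no such square root.
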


The following theorem provides the basis for defining the polynomial $T^{(k)}_{\ell,N}$.

\begin{theorem}\label{thm:expan}
	Let $A$ be an associative $\mathfrak R$-algebra,
	and let $\omega \in \mathfrak R$ be a primitive $\ell$-th root of unity such that
	$\omega$ has a square root $\omega^{1/2}\in\mathfrak R$.
	Assume that $x_0,\dots,x_N \in A$ satisfy
	\[
	x_i x_j = \omega^{\,j-i} x_j x_i
	\qquad (0 \le i < j \le N).
	\]
	Let $S=x_0+x_1+\cdots+x_N$.
	We have
	\[
	S^\ell
	=
	\sum_{\substack{m_0,\dots,m_N \ge 0\\ m_0+\cdots+m_N=\ell\\
			\sum_{j=0}^N j\,m_j \equiv 0 \pmod{\ell}}}
	C_{m_0,\dots,m_N}(\omega)\,
	[x_0^{m_0}\cdots x_N^{m_N}]_{\rm norm}
	\in \bigoplus \mathbb Z[\omega^{\pm \frac{1}{2}}][x_0^{m_0}\cdots x_N^{m_N}]_{\rm norm},
	\]
	where
	\begin{align}\label{def-Cmw}
		C_{m_0,\dots,m_N}(\omega)
		=
		\sum_{(i_1,\dots,i_\ell)\in \mathcal W(m_0,\dots,m_N)}
		\omega^{\frac12\sum_{1\le a<b\le \ell}(i_b-i_a)},
	\end{align}
	and $\mathcal W(m_0,\dots,m_N)$ denotes the set of all words
	$(i_1,\dots,i_\ell)$ in $\{0,\dots,N\}$ such that the letter
	$j$ occurs exactly $m_j$ times. In particular,
	$C_{m_0,\dots,m_N}(\omega)\in \mathbb Z.$
\end{theorem}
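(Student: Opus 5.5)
We expand $S^\ell$ word by word and then collapse each word to a normalized monomial. Writing
\[
S^\ell=\sum_{(i_1,\dots,i_\ell)\in\{0,\dots,N\}^\ell} x_{i_1}x_{i_2}\cdots x_{i_\ell},
\]
we group the words according to their content $(m_0,\dots,m_N)$, that is, the number of occurrences of each letter. For a fixed word, the elements $x_{i_1},\dots,x_{i_\ell}$ pairwise $\omega$-commute, with $x_{i_a}x_{i_b}=\omega^{i_b-i_a}x_{i_b}x_{i_a}$. The Weyl normalization \eqref{eq-weyl} therefore gives
\[
x_{i_1}\cdots x_{i_\ell}=\omega^{\frac12\sum_{a<b}(i_b-i_a)}\,[x_{i_1}\cdots x_{i_\ell}]_{\rm norm}.
\]
Moreover, $[\,\cdot\,]_{\rm norm}$ is invariant under reordering the factors, because the normalizing exponent exactly compensates for the commutation factors. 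Hence $[x_{i_1}\cdots x_{i_\ell}]_{\rm norm}=[x_0^{m_0}\cdots x_N^{m_N}]_{\rm norm}$. Summing over all words of a fixed content yields the coefficient $C_{m_0,\dots,m_N}(\omega)$ of \eqref{def-Cmw}. This proves the displayed formula with the sum running over all $(m_j)$ with $\sum_j m_j=\ell$, but not yet with the congruence condition or integrality.

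To handle both remaining points we use a cyclic-shift argument on words. The cyclic group $\mathbb Z/\ell$ acts on $\mathcal W(m_0,\dots,m_N)$ by rotation $(i_1,\dots,i_\ell)\mapsto(i_2,\dots,i_\ell,i_1)$. Let $E(w)=\frac12\sum_{a<b}(i_b-i_a)$. Moving the letter $i_1$ from the front to the back changes $E$ by
\[
\sum_{b\ge2}(i_1-i_b)=\ell i_1-\sum_j j\,m_j .
\]
Since $\omega^{\ell i_1}=1$, each rotation multiplies $\omega^{E}$ by $\omega^{-s}$, where $s:=\sum_j j m_j$ (up to a sign convention). Consider first the case $s\not\equiv 0\pmod \ell$. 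An orbit of size $d$, with $d\mid\ell$, consists of words fixed by rotation by $d$. For such an orbit the content is $\ell/d$ copies of a block, so $s=(\ell/d)s'$, where $s'$ is the weighted content of the block. The orbit contributes
\[
\omega^{E(w)}\sum_{t=0}^{d-1}\omega^{-ts}.
\]
Here $\omega^{-s}=\omega^{-(\ell/d)s'}$ is a $d$-th root of unity. It is nontrivial because $s\not\equiv0\pmod\ell$, so the geometric sum vanishes. Hence $C=0$ unless $\ell\mid s$, which is exactly the congruence condition.

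Now assume $\ell\mid s$. Then $\omega^E$ is constant along each orbit, so $C$ is a sum of terms of the form (orbit size)$\,\cdot\,\omega^{E(w)}$. The main obstacle is showing that this sum lies in $\mathbb Z$, rather than merely in $\mathbb Z[\omega^{\pm1/2}]$; this is the step I expect to be delicate. I would first check whether $E(w)$ is itself a multiple of $\ell$ on some orbit representative. If $E$ fails to be a multiple of $\ell$ in general, I would instead establish Galois invariance. Under $\omega\mapsto\omega^{k}$ with $\gcd(k,\ell)=1$ (compatibly on $\omega^{1/2}$), the sum $C$ should be permuted into itself, for example via the relabeling $j\mapsto kj \bmod \ell$ composed with a reindexing of positions. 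Combined with the fact that $C$ is an algebraic integer, this would place $C$ in $\mathbb Z$. A fallback is to compute $C$ by specializing: take $x_j=u^{j}v$-type elements in a quantum torus, $u v=\omega v u$, where the $\ell$-th power is computed via Lemma~\ref{lem:expan1}-style identities. This should exhibit $C$ as a count of words, for instance as a quantum multinomial $\binom{\ell}{m_0,\dots,m_N}_\omega$ evaluated at a root of unity. Such an evaluation is known to be an ordinary integer (a product of ordinary multinomials for the content divided by $\ell$), giving integrality directly.
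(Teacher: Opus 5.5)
Your first step (word expansion and Weyl normalization) is the paper's. Your rotation argument for the congruence condition is also correct. It is the word-level form of the paper's identity $F(u_0,\omega u_1,\dots,\omega^N u_N)=F(u_0,\dots,u_N)$, and you do not need orbits: rotation is a bijection of $\mathcal W(m_0,\dots,m_N)$ multiplying every term by $\omega^{-s}$, so $C=\omega^{-s}C$.

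The genuine gap is integrality, $C_{m_0,\dots,m_N}(\omega)\in\mathbb Z$. You leave it open, and none of your three suggestions works as stated:
\begin{itemize}
\item \emph{Orbit by orbit.} Take $\ell=3$, $N=2$, $(m_0,m_1,m_2)=(1,1,1)$. The rotation orbits of $(0,1,2)$ and $(0,2,1)$ contribute $3\omega^2$ and $3\omega$. Each is non-integral and only their sum $-3$ is an integer, so integrality must come from cancellation across orbits.
\item \emph{$q$-multinomial.} The same example refutes it: $\binom{3}{1,1,1}_\omega=[3]_\omega[2]_\omega=0\neq-3$. The exponent $\frac12\sum_{a<b}(i_b-i_a)$ weights a pair by the difference of its letters, not by a sign.
\item \emph{Specialization $x_j=u^jv$.} This sends every content with the same $s=\sum_j jm_j$ to a multiple of the same monomial $u^sv^\ell$, so individual coefficients cannot be read off.
\item \emph{Galois relabeling.} The map $j\mapsto kj \bmod \ell$ does not preserve the content and need not even map $\{0,\dots,N\}$ into itself, so it does not act on $\mathcal W(m_0,\dots,m_N)$.
\end{itemize}

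The missing idea is to isolate the position weights. Set $\xi=\omega^{1/2}$ and $\xi_r=\xi^{2r-\ell-1}$. Then $\omega^{\frac12\sum_{a<b}(i_b-i_a)}=\prod_{r=1}^{\ell}\xi_r^{i_r}$, so $C_{m_0,\dots,m_N}(\omega)$ is the coefficient of $u_0^{m_0}\cdots u_N^{m_N}$ in $F(u)=\prod_{r=1}^{\ell}(u_0+u_1\xi_r+\cdots+u_N\xi_r^N)$. Since $\xi_{r+1}=\omega\xi_r$ with $\omega$ primitive, the $\xi_r$ are distinct and are exactly the $\ell$ roots of $t^\ell-(-1)^{\ell+1}$. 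Hence $F$ is a symmetric function of the roots of a monic integer polynomial (a resultant) and lies in $\mathbb Z[u_0,\dots,u_N]$. The cyclic stability of $\{\xi_r\}$ gives the congruence at the same time.

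Your Galois idea becomes a proof once you see this. A Galois automorphism $\zeta\mapsto\zeta^k$ permutes the $\xi_r$, so invariance comes from permuting the \emph{positions} of a word, not its letters. You must run this in the cyclotomic ring $\mathbb Z[\zeta]$, with $\zeta$ of the same order as $\omega^{1/2}$, and then specialize, since $\mathfrak R$ is an arbitrary domain.

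Your fallback also works if you insert independent central weights: take $x_j=t_ju^jv$ with $vu=\omega uv$. Then $S^\ell=\prod_{r=0}^{\ell-1}P(\omega^r u)\,v^\ell$ with $P(z)=\sum_j t_jz^j$, which is the same norm computation.
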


\begin{proof}
	
	Expand
	\[
	S^\ell=\sum_{(i_1,\dots,i_\ell)\in\{0,\dots,N\}^\ell} x_{i_1}x_{i_2}\cdots x_{i_\ell}.
	\]
	For each $(i_1,\dots,i_\ell)$, we have $x_{i_1}x_{i_2}\cdots x_{i_\ell}=\omega^{\frac12\sum_{1\le a<b\le \ell}(i_b-i_a)}[x_{i_1}x_{i_2}\cdots x_{i_\ell}]_{\rm norm}$.
	
	Thus, we have
	\[
	S^\ell
	=
	\sum_{\substack{m_0,\dots,m_N\ge 0\\ m_0+\cdots+m_N=\ell}}
	C_{m_0,\dots,m_N}(\omega)
	[x_0^{m_0}\cdots x_N^{m_N}]_{\rm norm},
	\]
	where
	\begin{equation}\label{proof-C-mi}
		\begin{aligned}
			C_{m_0,\dots,m_N}(\omega)
			&= \sum_{(i_1,\dots,i_\ell)\in \mathcal W(m_0,\dots,m_N)}
			\omega^{\frac12\sum_{1\le a<b\le \ell}(i_b-i_a)} \\
			&= \sum_{(i_1,\dots,i_\ell)\in \mathcal W(m_0,\dots,m_N)}
			\omega^{\frac12\sum_{r=1}^{\ell}(2r-\ell-1)i_r}.
		\end{aligned}
	\end{equation}
	
	Denote $\xi:=\omega^{\frac{1}{2}}$. Then $\xi^\ell=(-1)^{\ell+1}$. Let
	\begin{equation}\label{eq:xi}
		\xi_r:=\xi^{2r-\ell-1}, (r=1,\dots,\ell).   
	\end{equation}
	Then
	\[
	\omega^{\frac12(2r-\ell-1)i}=\xi_r^{\,i}
	\qquad (i\in\mathbb Z),
	\]
	and $\xi_1,\dots,\xi_\ell$ are precisely the roots of
	\[
	t^\ell-\xi^\ell=0.
	\]
	We now introduce commuting variables $u_0,\dots,u_N$ and consider
	\begin{equation}\label{eq:F}
		F(u_0,\dots,u_N)
		:=
		\prod_{r=1}^{\ell}(u_0+u_1\xi_r+u_2\xi_r^2+\cdots+u_N\xi_r^N).
	\end{equation}
	
	Expanding this product, and choosing from the $r$-th factor the term
	$u_{i_r}\xi_r^{i_r}$, we get
	\begin{equation}\label{eq:F1}
		F(u_0,\dots,u_N)
		=
		\sum_{\substack{m_0,\dots,m_N\ge 0\\ m_0+\cdots+m_N=\ell}}
		C_{m_0,\dots,m_N}(\omega)
		u_0^{m_0}\cdots u_N^{m_N}.
	\end{equation}
	
	Denote
	\begin{equation}\label{eq:P}
		P_{\mathbf u}(t):=u_0+u_1 t+u_2 t^2+\cdots+u_N t^N,
	\end{equation}
	then
	\begin{equation}\label{eq:P1}
		F(u_0,\dots,u_N)=\prod_{r=1}^{\ell}P_{\mathbf u}(\xi_r)
		={\rm Res}_t(t^\ell-\xi^\ell,P_{\mathbf u}(t)),
	\end{equation}
	the resultant of $t^\ell-\xi^\ell$ and $P_{\mathbf u}(t)$ (see \cite[Chapter~IV, \S8, Proposition 8.3]{LangAlgebra}).
	Since $t^\ell-\xi^\ell\in\mathbb Z[t]$ and
	$P_{\mathbf u}(t)\in\mathbb Z[u_0,\dots,u_N][t]$, the resultant $F(u_0,\dots,u_N)$ belongs to
	$\mathbb Z[u_0,\dots,u_N]$ (see \cite[Chapter~IV, \S8]{LangAlgebra}). Hence every coefficient
	$C_{m_0,\dots,m_N}(\omega)$ is an integer.
	
	Finally, because the set $\{\xi_1,\dots,\xi_\ell\}$ is stable under
	multiplication by $\omega$, one has
	\[
	F(u_0,\dots,u_N)=F(u_0,\omega u_1,\omega^2 u_2,\dots,\omega^N u_N).
	\]
	Comparing the coefficient of $u_0^{m_0}\cdots u_N^{m_N}$ gives
	\[
	C_{m_0,\dots,m_N}(\omega)
	=
	\omega^{\sigma({\bf m})} C_{m_0,\dots,m_N}(\omega),
	\]
	where $\sigma({\bf m}):=\sum_{j=0}^N j\,m_j$
	
	Since $\omega$ is a primitive $\ell$-th root of unity, this implies that
	\[
	C_{m_0,\dots,m_N}(\omega)=0
	\qquad\text{unless}\qquad
	\sigma({\bf m})=\sum_{j=0}^N j\,m_j\equiv 0\pmod{\ell}.
	\]
	
	The proof is complete.
\end{proof}

\begin{remark}
	It is immediate that ${C}_{(\ell,0,0,\cdots,0)}(\omega)={C}_{(0,0,\cdots,0,\ell)}(\omega)=1$.
\end{remark}

\begin{definition}\label{def:generalizedCP}
	For any positive integers $N$ and $\ell$, we define the \emph{generalized Chebyshev polynomials} (Theorem~\ref{thm:expan})
	\[
	T_{\ell,N}^{(k)}(h_1,\dots,h_{N-1})\in\mathbb Z[h_1,\dots,h_{N-1}],\qquad k=0,1,\dots,N,
	\]
	by
	\[
	T_{\ell,N}^{(k)}(h_1,\dots,h_{N-1})
	:=
	\sum_{\substack{m_0,\dots,m_N\ge 0\\[2pt] m_0+\cdots+m_N=\ell\\[2pt] \sum_{j=0}^N j\,m_j=k\ell}}
	C_{m_0,\dots,m_N}(\omega)\,
	h_1^{m_1}h_2^{m_2}\cdots h_{N-1}^{m_{N-1}},
	\]
	where $\omega$ is a root of unity of order $\ell$ and $C_{m_0,\dots,m_N}$ is defined in \eqref{def-Cmw}.
\end{definition}

Note that $T^{(0)}_{\ell,N} = T^{(N)}_{\ell,N} = 1$.

Assume that $\omega$ is a primitive $\ell$-th root of unity, we note the following identity, which follows immediately by considering the parity of $\ell$ and will be used later:
\begin{equation}
	\label{eq-root-of-unity-sign}
	\omega^{\frac{k\ell(\ell-1)}{2}}
	=
	(-1)^{k(\ell-1)}.
\end{equation}

\def\BC{\mathbb{Z}}
\def\W{X}

\vs

Define
$\BC[\W]
=
\BC[x_1,\dots,x_N]/(x_1\cdots x_N-1).$
For each $1\leq k\leq N-1$, let
\[
e_k
=
\sum_{1\leq i_1<\cdots<i_k\leq N}
x_{i_1}\cdots x_{i_k}
\in \BC[\W]
\]
be the $k$-th elementary symmetric polynomial. It was shown in
\cite{bonahon2024central} that there exists a unique polynomial
$\overline{P}_{\ell,N}^{(k)}
\in
\mathbb{Z}[y_1,\dots,y_{N-1}]$
such that
\begin{align}
	\overline{P}_{\ell,N}^{(k)}(e_1,\dots,e_{N-1})
	=
	e_k(x_1^\ell,\dots,x_N^\ell)
	\in \BC[\W].
\end{align}
Moreover, it is well known that
$\overline{P}_{\ell,2}^{(1)}=T_\ell$.

The following proposition establishes that
\[
T_{\ell,N}^{(k)}=\overline{P}_{\ell,N}^{(k)}
\qquad\text{for all }1\leq k\leq N-1.
\]

\def\bi{\textbf{i}}

\begin{proposition}
	\label{thm-generalized-Chebyshev-Adams}
	Let $N,\ell\in\mathbb{N}_{>0}$.
	Then, for every $k=1,\ldots,N-1$,
	\[
	T_{\ell,N}^{(k)}
	(h_1,\ldots,h_{N-1})
	=
	\overline P_{\ell,N}^{(k)}
	(h_1,\ldots,h_{N-1}).
	\]
	
\end{proposition}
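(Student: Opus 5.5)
We will compare the two polynomials by evaluating them on the same elements and then using algebraic independence. The natural evaluation comes from the resultant description in the proof of Theorem~\ref{thm:expan}. There $F(u_0,\dots,u_N)=\prod_{r=1}^{\ell}P_{\mathbf u}(\xi_r)$ with $\xi_r=\xi^{2r-\ell-1}$. By Definition~\ref{def:generalizedCP}, $T^{(k)}_{\ell,N}(h_1,\dots,h_{N-1})$ is the part of $F(1,h_1,\dots,h_{N-1},1)$ in which the $\mathbb Z/\ell$-grading $\sum_j j m_j=k\ell$ holds exactly, not merely modulo $\ell$.

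To isolate this graded piece, introduce an auxiliary variable $s$ and set $u_j=h_j s^j$, with $h_0=h_N=1$. Then every monomial in $F$ acquires the factor $s^{\sum j m_j}$. Moreover $F(h_0,h_1s,\dots,h_Ns^N)=\prod_r P_{\mathbf h}(s\xi_r)$, and by Theorem~\ref{thm:expan} only powers $s^{k\ell}$ with $0\le k\le N$ occur. Hence
\[
\prod_{r=1}^{\ell}P_{\mathbf h}(s\xi_r)=\sum_{k=0}^{N}T^{(k)}_{\ell,N}(h_1,\dots,h_{N-1})\,s^{k\ell}.
\]

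Next we specialize $h_k=e_k(x_1,\dots,x_N)$ in $\BC[\W]$, where $x_1\cdots x_N=1$ gives $h_N=e_N=1$. This makes $P_{\mathbf h}(t)=\prod_{i=1}^N(1+x_it)$. The left side becomes
\[
\prod_{i=1}^N\prod_{r=1}^{\ell}(1+x_is\xi_r).
\]
Since $\{\xi_r\}$ is the set of roots of $t^\ell=\xi^\ell$, we have $\prod_r(1+y\xi_r)=1-(-y)^\ell\xi^\ell$ after pulling out signs. Using $\xi^\ell=(-1)^{\ell+1}$, this equals $1+y^\ell$. Thus the product is
\[
\prod_i(1+x_i^\ell s^\ell)=\sum_k e_k(x_1^\ell,\dots,x_N^\ell)\,s^{k\ell}.
\]
Comparing coefficients of $s^{k\ell}$ gives $T^{(k)}_{\ell,N}(e_1,\dots,e_{N-1})=e_k(x_1^\ell,\dots,x_N^\ell)$ in $\BC[\W]$ (after tensoring with $\mathbb Z[\xi]$, which is harmless since both sides are integral). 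Therefore $T^{(k)}_{\ell,N}=\overline P^{(k)}_{\ell,N}$ by the uniqueness of $\overline P^{(k)}_{\ell,N}$ from \cite{bonahon2024central}. That uniqueness amounts to the algebraic independence of $e_1,\dots,e_{N-1}$ in $\BC[\W]$.

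The main obstacle is the sign bookkeeping in $\prod_r(1+y\xi_r)=1+y^\ell$, which must be checked for both parities of $\ell$ using $\xi^\ell=(-1)^{\ell+1}$. A second point is justifying that the $s$-grading exactly recovers the condition $\sum jm_j=k\ell$ in Definition~\ref{def:generalizedCP}. This holds because the $s$-exponent of a monomial is exactly $\sum jm_j$, so no modular identification occurs.
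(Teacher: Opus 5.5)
Your argument is correct and is essentially the paper's proof. Both specialize $h_j=e_j$, multiply the generating polynomial $\prod_i(1\pm x_i z)$ over a full set of twisted $\ell$-th roots of unity to get $\prod_i(1\pm x_i^\ell z^\ell)$, compare coefficients of $z^{k\ell}$, and conclude by uniqueness of $\overline P^{(k)}_{\ell,N}$; the only difference is that your symmetric roots $\xi_r=\xi^{2r-\ell-1}$ (from the proof of Theorem~\ref{thm:expan}) give the coefficients $C_{m_0,\dots,m_N}(\omega)$ directly, avoiding the paper's conversion $C_{m_0,\dots,m_N}(\omega)=(-1)^{k(\ell-1)}A_{m_0,\dots,m_N}(\omega)$.

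One small correction to your sign check: for odd $\ell$, $\xi^\ell=1$ depends on which square root of $\omega$ you pick, but what you actually use, $\xi_r^\ell=(-1)^{\ell+1}$ for every $r$, holds for either choice, since $2r-\ell-1$ is even exactly when $\ell$ is odd and $\xi^\ell=\omega^{\ell/2}=-1$ when $\ell$ is even.
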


\begin{proof}
	Set
	\[
	e_0=e_N=1
	\]
	and consider the polynomial
	\[
	F(z)
	=
	\sum_{j=0}^{N}(-1)^j e_jz^j
	=
	\prod_{s=1}^{N}(1-x_sz).
	\]
	Since $\omega$ is a primitive $\ell$-th root of unity, we have
	\[
	\prod_{r=0}^{\ell-1}(1-u\omega^r)
	=
	1+ (-1)^{\ell}\omega^{\frac{\ell (\ell -1)}{2}} u^\ell
	\overset{\eqref{eq-root-of-unity-sign}}{=} 1 - u^\ell.
	\]
	It follows that
	\begin{align}
		\prod_{r=0}^{\ell-1}F(\omega^rz)
		&=
		\prod_{r=0}^{\ell-1}
		\prod_{s=1}^{N}(1-x_s\omega^rz)
		\nonumber
		=
		\prod_{s=1}^{N}
		\left(
		\prod_{r=0}^{\ell-1}(1-x_s\omega^rz)
		\right)
		\nonumber\\
		&=
		\prod_{s=1}^{N}(1-x_s^\ell z^\ell)
		=
		\sum_{k=0}^{N}
		(-1)^k
		e_k(x_1^\ell,\ldots,x_N^\ell)z^{k\ell}.
		\label{eq-root-of-unity-factorization}
	\end{align}
	
	We next calculate the coefficient of $z^{k\ell}$ on the
	left-hand side of
	\eqref{eq-root-of-unity-factorization}.
	For a word
	\[
	(i_1,\ldots,i_\ell)\in\{0,\ldots,N\}^{\ell},
	\]
	choosing the term indexed by $i_a$ from
	$F(\omega^{a-1}z)$ gives
	\[
	(-1)^{i_1+\cdots+i_\ell}
	\omega^{\sum_{a=1}^{\ell}(a-1)i_a}
	e_{i_1}\cdots e_{i_\ell}
	z^{i_1+\cdots+i_\ell}.
	\]
	Therefore, the coefficient of $z^{k\ell}$ is
	\begin{equation}
		\label{eq-coefficient-left-hand-side}
		(-1)^{k\ell}
		\sum_{\substack{m_0,\ldots,m_N\geq 0\\
				m_0+\cdots+m_N=\ell\\
				\sum_{j=0}^{N}jm_j=k\ell}}
		A_{m_0,\ldots,m_N}(\omega)
		e_1^{m_1}\cdots e_{N-1}^{m_{N-1}},
	\end{equation}
	where
	\[
	A_{m_0,\ldots,m_N}(\omega)
	:=
	\sum_{(i_1,\ldots,i_\ell)\in\mathcal W(m_0,\ldots,m_N)}
	\omega^{\sum_{a=1}^{\ell}(a-1)i_a}.
	\]

	We now compare $A_{m_0,\ldots,m_N}(\omega)$ with
	$ C_{m_0,\ldots,m_N}(\omega)$.
	For
	$(i_1,\ldots,i_\ell)\in
	\mathcal W(m_0,\ldots,m_N)$,
	we have
	\begin{align}\label{eq-inversion-position-statistics}
		\frac{1}{2}\sum_{a=1}^{\ell}
		\left(2a-\ell-1\right)i_a
		=
		\sum_{a=1}^{\ell}(a-1)i_a
		-
		\frac{\ell-1}{2}
		\sum_{a=1}^{\ell}i_a=\sum_{a=1}^{\ell}(a-1)i_a
		-
		\frac{k\ell(\ell-1)}{2}.
	\end{align}
	Together with \eqref{proof-C-mi},
	it follows that
	\begin{align*}
		C_{m_0,\ldots,m_N}(\omega)
		=
		\omega^{-\frac{k\ell(\ell-1)}{2}}
		A_{m_0,\ldots,m_N}(\omega)
		\overset{\eqref{eq-root-of-unity-sign}}{=}
		(-1)^{k(\ell-1)}
		A_{m_0,\ldots,m_N}(\omega).
	\end{align*}

	Comparing the coefficients of $z^{k\ell}$ in
	\eqref{eq-root-of-unity-factorization} and
	\eqref{eq-coefficient-left-hand-side}, we obtain
	\begin{align*}
		e_k(x_1^\ell,\ldots,x_N^\ell)
		&=
		(-1)^{k(\ell+1)}
		\sum_{\substack{m_0,\ldots,m_N\geq 0\\
				m_0+\cdots+m_N=\ell\\
				\sum_{j=0}^{N}jm_j=k\ell}}
		A_{m_0,\ldots,m_N}(\omega)
		e_1^{m_1}\cdots e_{N-1}^{m_{N-1}}\\
		&=
		\sum_{\substack{m_0,\ldots,m_N\geq 0\\
				m_0+\cdots+m_N=\ell\\
				\sum_{j=0}^{N}jm_j=k\ell}}
		C_{m_0,\ldots,m_N}(\omega)
		e_1^{m_1}\cdots e_{N-1}^{m_{N-1}}.
	\end{align*}
	By definition, the last expression is
	\[
	T_{\ell,N}^{(k)}
	(e_1,\ldots,e_{N-1}).
	\]
	Therefore,
	\[
	T_{\ell,N}^{(k)}
	(e_1,\ldots,e_{N-1})
	=
	e_k(x_1^\ell,\ldots,x_N^\ell)
	=
	\Psi_\ell(e_k).
	\]
	The defining property and uniqueness of
	$\overline P_{\ell,N}^{(k)}$ now imply that
	\[
	T_{\ell,N}^{(k)}
	(h_1,\ldots,h_{N-1})
	=
	\overline P_{\ell,N}^{(k)}
	(h_1,\ldots,h_{N-1}).
	\]
	The cases $k=0,N$ follow from
	\[
	e_0(x_1^\ell,\ldots,x_N^\ell)=e_N(x_1^\ell,\ldots,x_N^\ell)=1.
	\]
	
	The proof is complete.
\end{proof}

\begin{remark}
	The definition of $T_{\ell,N}^{(k)}$ is motivated by
	Theorem~\ref{thm:expan} and is tailored to the proof of
	Theorem~\ref{thm-Fro-cluster}. This is why we do not simply define
	$T_{\ell,N}^{(k)}$ to be $\overline{P}_{\ell,N}^{(k)}$.
	Moreover, $\overline{P}_{\ell,N}^{(k)}$ is defined implicitly through
	an existence result, and it is generally difficult to write down an
	explicit formula for this polynomial. Definition~\ref{def:generalizedCP}
	and Proposition~\ref{thm-generalized-Chebyshev-Adams} therefore provide
	a concrete expression for $\overline{P}_{\ell,N}^{(k)}$.
\end{remark}

Note that $e_k = e_{N-k}(x_1^{-1},\cdots,x_N^{-1})$. The following lemma follows directly from Proposition~\ref{thm-generalized-Chebyshev-Adams}. 

\begin{lemma}\label{lem:antisym}
	$T_{\ell,N}^{(k)}(h_1,\dots,h_{N-1})=T_{\ell,N}^{N-k}(h_{N-1},\dots,h_2,h_{1})$ for any $k=0,1,\cdots, N$.
\end{lemma}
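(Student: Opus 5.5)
The plan is to deduce the symmetry from Proposition~\ref{thm-generalized-Chebyshev-Adams}, which identifies $T_{\ell,N}^{(k)}$ with $\overline P_{\ell,N}^{(k)}$. The argument then uses the uniqueness of $\overline P_{\ell,N}^{(k)}$ in $\mathbb Z[y_1,\dots,y_{N-1}]$ together with the involution of $\BC[\W]$ given by inverting the variables. The boundary cases $k=0$ and $k=N$ are immediate, since $T^{(0)}_{\ell,N}=T^{(N)}_{\ell,N}=1$. So I would fix $1\le k\le N-1$.

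Consider the ring automorphism $\iota$ of $\BC[\W]$ defined by $x_i\mapsto x_i^{-1}$. It is well defined because it preserves the relation $x_1\cdots x_N=1$. Since $x_1\cdots x_N=1$, we have $e_j(x_1^{-1},\dots,x_N^{-1})=e_{N-j}(x_1,\dots,x_N)$, so $\iota(e_j)=e_{N-j}$ for all $j$. The same identity applied to the $\ell$-th powers, which also have product $1$, gives $\iota\bigl(e_k(x_1^\ell,\dots,x_N^\ell)\bigr)=e_{N-k}(x_1^\ell,\dots,x_N^\ell)$.

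Now apply $\iota$ to the defining identity $\overline P^{(k)}_{\ell,N}(e_1,\dots,e_{N-1})=e_k(x_1^\ell,\dots,x_N^\ell)$. Because $\iota$ is a ring homomorphism and the polynomial has integer coefficients, this yields
\[
\overline P^{(k)}_{\ell,N}(e_{N-1},\dots,e_1)=e_{N-k}(x_1^\ell,\dots,x_N^\ell).
\]
Thus the polynomial $Q(y_1,\dots,y_{N-1}):=\overline P^{(k)}_{\ell,N}(y_{N-1},\dots,y_1)$ satisfies the defining property of $\overline P^{(N-k)}_{\ell,N}$. By the uniqueness quoted from \cite{bonahon2024central}, $Q=\overline P^{(N-k)}_{\ell,N}$. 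Uniqueness amounts to $e_1,\dots,e_{N-1}$ being algebraically independent in $\BC[\W]$, which is the standard fact that the invariant ring is a polynomial ring.

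Translating back through Proposition~\ref{thm-generalized-Chebyshev-Adams} gives $T^{(N-k)}_{\ell,N}(h_1,\dots,h_{N-1})=T^{(k)}_{\ell,N}(h_{N-1},\dots,h_1)$. Swapping the roles of $k$ and $N-k$ gives the statement as written.

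The only step needing care is the algebraic independence underlying uniqueness; if one prefers not to cite it, a direct route is available. In Definition~\ref{def:generalizedCP}, substitute $m_j\mapsto m_{N-j}$ and reverse words via $i_a\mapsto N-i_a$. Under this change the exponent $\tfrac12\sum_{a<b}(i_b-i_a)$ changes sign, and the constraint $\sum_j j m_j=k\ell$ becomes $\sum_j j m_j=(N-k)\ell$. It then remains to show $C_{\mathbf m}(\omega)=C_{\mathbf m}(\omega^{-1})$. This holds because $C_{\mathbf m}$ is an integer, being a Galois-stable sum, so complex conjugation, which is the Galois action $\omega\mapsto\omega^{-1}$, fixes it.
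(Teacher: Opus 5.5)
Your main argument is correct and is exactly the paper's proof. The paper records $e_k=e_{N-k}(x_1^{-1},\dots,x_N^{-1})$ and appeals to Proposition~\ref{thm-generalized-Chebyshev-Adams} together with the uniqueness of $\overline{P}^{(k)}_{\ell,N}$, which is the involution-plus-uniqueness argument you spell out. Your direct fallback also works, and it simplifies if you both complement the letters and reverse the order of the word, $(i_1,\dots,i_\ell)\mapsto(N-i_\ell,\dots,N-i_1)$: this preserves $\sum_{a<b}(i_b-i_a)$, so $C_{m_0,\dots,m_N}(\omega)=C_{m_N,\dots,m_0}(\omega)$ identically and no complex-conjugation or integrality step is needed.
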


Lemma~\ref{lem:antisym} allows us to give the following definition.

\begin{definition}\label{def:deformedseed}
	For any generalized quantum seed $\mathbf{s}=(A,\mathbf{h},\Lambda,\widetilde{B})$ as in Definition~\ref{def:generalizedQS} and a given positive integer $\ell$, we define its \textbf{$\ell$-th deformed} generalized quantum seed by
	\[
	\hat {\mathbf{s}}:=\bigl(\hat A,\hat{\mathbf{h}},\hat\Lambda,\hat{\widetilde{B}}\bigr),
	\]
	where
	\begin{enumerate}[label=(\arabic*)]
		\item $\hat A_i=A_i^\ell$ for $i=1,\dots,m$;
		\item for $i=1,\dots,n$,
		\[
		\hat{\mathbf{h}}_i=\{ \hat h_{i,0},\hat h_{i,1},\dots,\hat h_{i,\ell_i}\},
		\quad
		\hat h_{i,k}:=T_{\ell,\ell_i}^{(k)}\bigl(h_{i,1},\dots,h_{i,\ell_i-1}\bigr)
		\ \ (0\le k\le \ell_i);
		\]
		\item $\hat \Lambda=\Lambda$;
		\item $\hat{\widetilde{B}}=\widetilde{B}$.
	\end{enumerate}
	We call $\mathscr{A}_{\eta}(\hat{\mathbf{s}})$ the \textbf{$\ell$-th deformed} generalized quantum cluster algebra of $\mathscr{A}_{q}(\mathbf{s})$, where $\eta^{\frac{1}{2}}=q^{\frac{\ell^2}{2}}$.
\end{definition}

The following result follows from Theorem~\ref{thm:expan} by letting $\omega=q^{d_i/\ell_i}$ and Definition \ref{def:deformedseed}.
It demonstrates that the deformation is compatible with mutation; specifically,  $\mu_i(\hat A_i)=\widehat{\mu_i(A)}$ under certain condition.

\begin{proposition}\label{pro:compatible}
	Let $\mathscr{A}_q(\mathbf{s})$ be a generalized quantum cluster algebra. For a given positive integer $\ell$, assume that $q^{d_i/\ell_i}$ has order $\ell$ for all $i\in\{1,2,\dots,n\}$. Then for any $i\in\{1,2,\dots,n\}$, we have
	\[
	\begin{aligned}
		(\mu_i A_i)^\ell
		&=\sum_{r=0}^{\ell_i} \hat h_{i,r}\,A\Bigl(\sum_{j=1}^{m}\bigl(r[\ell\beta_{ji}]_++(\ell_i-r)[-\ell\beta_{ji}]_+\bigr)e_j- \ell e_i\Bigr)\\
		&=\sum_{r=0}^{\ell_i} \hat h_{i,r}\,A^\ell\Bigl(\sum_{j=1}^{m}\bigl(r[\beta_{ji}]_++(\ell_i-r)[-\beta_{ji}]_+\bigr)e_j- e_i\Bigr).
	\end{aligned}
	\]
\end{proposition}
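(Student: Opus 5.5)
The plan is to write $\mu_i A_i$ as a sum $S=x_0+x_1+\cdots+x_{\ell_i}$ of quasi-commuting monomials with the commutation pattern of Theorem~\ref{thm:expan}, and then read off $S^\ell$. Set
\[
x_r:=h_{i,r}\,A\Bigl(\sum_{j=1}^{m}\bigl(r[\beta_{ji}]_++(\ell_i-r)[-\beta_{ji}]_+\bigr)e_j-e_i\Bigr),\qquad 0\le r\le \ell_i ,
\]
so that $\mu_iA_i=\sum_r x_r$ by \eqref{eq:mutation-X}. Write $v_r$ for the exponent vector of the $r$-th monomial. Then $v_r=v_0+r\,\beta_{\cdot i}$, where $\beta_{\cdot i}=(\beta_{1i},\dots,\beta_{mi})$; this uses $[\beta]_+-[-\beta]_+=\beta$.

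The first step is to compute the commutation. Since the $h_{i,r}$ are central, \eqref{eq-def-A-relation} gives $x_rx_s=q^{\Lambda(v_r,v_s)}x_sx_r$. By bilinearity and skew-symmetry, $\Lambda(v_r,v_s)=(s-r)\Lambda(v_0,\beta_{\cdot i})$. The compatibility condition \eqref{eq:compatible pair} says $\Lambda(e_i,b_{\cdot i})$ equals $d_i$ up to the sign convention, and $\Lambda(e_j,b_{\cdot i})=0$ for $j\neq i$. The only $e_i$-component of $v_0$ is $-e_i$, because $\beta_{ii}=0$. Hence $\Lambda(v_0,\beta_{\cdot i})=\pm d_i/\ell_i$, and so $x_rx_s=\omega^{s-r}x_sx_r$ with $\omega=q^{\pm d_i/\ell_i}$. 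By hypothesis $\omega$ has order $\ell$, and $\omega^{1/2}=q^{\pm d_i/(2\ell_i)}$ makes sense as a power of $q^{1/2}$. This identification is consistent with the intended $\omega=q^{d_i/\ell_i}$; if the sign is reversed, I would reindex $r\mapsto \ell_i-r$, which is allowed by the symmetry $h_{i,r}=h_{i,\ell_i-r}$.

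The second step is to apply Theorem~\ref{thm:expan} with $N=\ell_i$. The only surviving terms of $S^\ell$ are those with $\sum_j m_j=\ell$ and $\sum_j j\,m_j\equiv 0\pmod \ell$. Since $0\le \sum_j j\,m_j\le \ell_i\ell$, this means $\sum_j j\,m_j=k\ell$ for some $0\le k\le \ell_i$. For such $\mathbf m$,
\[
[x_0^{m_0}\cdots x_{\ell_i}^{m_{\ell_i}}]_{\rm norm}=\Bigl(\prod_j h_{i,j}^{m_j}\Bigr)A\Bigl(\sum_j m_jv_j\Bigr)=\Bigl(\prod_j h_{i,j}^{m_j}\Bigr)A(\ell v_0+k\ell\beta_{\cdot i})=\Bigl(\prod_j h_{i,j}^{m_j}\Bigr)A(\ell v_k).
\]
Here the first equality holds because $A(\cdot)$ already satisfies the Weyl-normalized rule \eqref{eq-def-A-relation}. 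The exponent depends only on $k$, and $h_{i,0}=h_{i,\ell_i}=1$. Grouping the terms by $k$ and comparing with Definition~\ref{def:generalizedCP}, the coefficient of $A(\ell v_k)$ is exactly $T^{(k)}_{\ell,\ell_i}(h_{i,1},\dots,h_{i,\ell_i-1})=\hat h_{i,k}$. This gives the first displayed equality, since $\ell v_k=\sum_j\bigl(k[\ell\beta_{ji}]_++(\ell_i-k)[-\ell\beta_{ji}]_+\bigr)e_j-\ell e_i$.

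The last step is the second equality, which amounts to $A(\ell v)=A(v)^\ell$. This holds because $\Lambda(v,v)=0$, so iterating \eqref{eq-def-A-relation} gives $A(v)^\ell=A(\ell v)$. I read $A^\ell(v)$ as this power, equivalently as the seed $\hat A$ evaluated at $v$ with the parameter $\eta^{1/2}=q^{\ell^2/2}$; it is consistent since $A(\ell c)A(\ell d)=q^{\frac{\ell^2}{2}\Lambda(c,d)}A(\ell c+\ell d)$.

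The main obstacle is the sign and normalization bookkeeping in the first step: pinning down which sign of $d_i$ appears given the convention in \eqref{eq:compatible pair}, and checking that the independence of $C_{\mathbf m}(\omega)$ from the choice of primitive root is legitimate. Theorem~\ref{thm:expan} shows the coefficients are integers obtained from a resultant, which is independent of that choice, so either sign yields the same $T^{(k)}_{\ell,\ell_i}$.
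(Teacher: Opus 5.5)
Your proposal is correct and follows the paper's own argument, which applies Theorem~\ref{thm:expan} with $\omega=q^{d_i/\ell_i}$ and reads off the coefficients via Definition~\ref{def:deformedseed}; you have supplied the bookkeeping that the paper leaves implicit. The sign you worried about is in fact $+$: the compatibility condition $(\widetilde B)^\top\Lambda=(D\;O)$ gives $\Lambda(e_j,b_{\cdot i})=-d_i\delta_{ij}$, so $\Lambda(v_0,\beta_{\cdot i})=d_i/\ell_i$, hence $x_rx_s=\omega^{s-r}x_sx_r$ with $\omega=q^{d_i/\ell_i}$ exactly and no reindexing is needed.
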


Denote by $\overline{\mathcal{X}}$ the set of cluster variables of $\mathscr{A}_q(\mathbf{s})$. Then from Proposition~\ref{pro:compatible} we see that
\[
\widehat{\overline{\mathcal{X}}}:=\{\hat X:=X^\ell\mid X\in\overline{\mathcal{X}}\}
\]
is the set of cluster variables of $\mathscr{A}_\eta(\hat{\mathbf{s}})$.

As an immediate consequence of Proposition~\ref{pro:compatible}, we obtain the following theorem, which establishes the Frobenius homomorphism for generalized quantum cluster algebras. This is the main result of this section.

\begin{theorem}\label{thm-Fro-cluster}
	Let $\mathscr{A}_q(\mathbf{s})$ be a generalized quantum cluster algebra. For a given positive integer $\ell$, assume that $q^{d_i/\ell_i}$ has order $\ell$ for all $i\in\{1,2,\dots,n\}$. Let $\eta^{1/2} = q^{\ell^2/2}$.
	There exists an injective $\mathfrak R$-algebra homomorphism 
	\[\mathscr F:\mathscr{A}_{\eta}(\hat{\bf s})\to\mathscr{A}_q(\mathbf{s})\] 
	sending $\hat X=X^\ell$ to $X^\ell$ for any cluster variable $X\in\overline{\mathcal{X}}$ and $\hat h_{i,k}$ to $\hat h_{i,k}=T_{\ell,\ell_i}^k(h_{i,1},\cdots, h_{i,\ell_i-1})$ for all $i\in\{1,\dots,n\}$ and $k\in\{0,\dots,\ell_i\}$.
\end{theorem}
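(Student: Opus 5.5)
The map will be built on quantum tori first and then restricted. Fix the initial seed $\mathbf s$ with torus $\mathbb T(\mathbf s)$ over $\mathfrak R[h_{i,r}]$, and the deformed seed $\hat{\mathbf s}$ with torus $\mathbb T_\eta(\hat{\mathbf s})$ over $\mathfrak R[\hat h_{i,r}]$ with quantum parameter $\eta$.

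\emph{Step 1: the map on the initial torus.} Define
\[
\mathscr F(\hat A(c)) := A(\ell c)\quad (c\in\mathbb Z^m),\qquad \mathscr F(\hat h_{i,k}) := T^{(k)}_{\ell,\ell_i}(h_{i,1},\dots,h_{i,\ell_i-1}).
\]
This respects multiplication, since
\[
\hat A(c)\hat A(d)=\eta^{\frac12\Lambda(c,d)}\hat A(c+d),\qquad A(\ell c)A(\ell d)=q^{\frac{\ell^2}{2}\Lambda(c,d)}A(\ell(c+d)).
\]
It is injective because the monomials $A(\ell c)$ are part of an $\mathcal R$-basis of $\mathbb T(\mathbf s)$. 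To make the coefficient part unambiguous, I regard $\hat h_{i,k}$ as the specific elements $T^{(k)}_{\ell,\ell_i}(h_{i,\bullet})$ of $\mathcal R$. Then $\mathscr F$ is simply the inclusion of the subtorus spanned by the $A(\ell c)$, and it extends to an embedding of skew fields $\mathrm{Frac}(\mathbb T_\eta(\hat{\mathbf s}))\hookrightarrow \mathbf F$.

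\emph{Step 2: compatibility with mutation.} I will show by induction on the length of a mutation sequence that for every seed $\mathbf s'\sim\mathbf s$, obtained by $\mu_{k_t}\cdots\mu_{k_1}$, the correspondingly mutated deformed seed $\mu_{k_t}\cdots\mu_{k_1}(\hat{\mathbf s})$ has cluster variables $\mathscr F(\hat A'_j)=(A'_j)^\ell$. It also has the same $\Lambda'$ and $\widetilde B'$, and the same strings $\hat{\mathbf h}$ (strings are mutation invariant). The exchange matrices agree because both sides use $\widetilde B$ and matrix mutation is the same. The $\Lambda$'s agree because $\Lambda'$ is determined by the quasi-commutation of the $A'_j$, and raising quasi-commuting elements to $\ell$-th powers multiplies exponents by $\ell^2$, matching $\eta$. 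Hence $\mathscr F(\hat{\mathbf s}')=\widehat{\mathbf s'}$ as seeds, and the inductive step reduces to a single mutation. That single mutation is exactly Proposition~\ref{pro:compatible} applied to $\mathbf s'$. Its second line is precisely the exchange relation of Definition~\ref{def:mutation} for $\widehat{\mathbf s'}$ at direction $k$, written in the torus $\mathbb T_\eta(\widehat{\mathbf s'})$ with the variables $\hat A'=(A')^\ell$ and coefficients $\hat h_{k,r}$. Thus $\mathscr F(\mu_k\hat A'_k)=(\mu_kA'_k)^\ell$.

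The hypothesis that $q^{d_i/\ell_i}$ has order $\ell$ must hold for every seed in the class. I will verify this by noting that the skew-symmetrizer $D$ is preserved under compatible-pair mutation (as in \cite{BZ,BCDX0}), and that $\ell_i$ is attached to the direction $i$ and does not change.

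\emph{Step 3: restriction.} The algebra $\mathscr A_\eta(\hat{\mathbf s})$ is generated by the $\hat X$ for $X\in\overline{\mathcal X}$, the $\hat h_{i,k}$, and the $\hat A_i^{\pm1}$ for frozen $i$. By Step 2, $\mathscr F$ sends these to $X^\ell$, to $T^{(k)}_{\ell,\ell_i}(h_{i,\bullet})\in\mathcal R$, and to $A_i^{\pm\ell}$ respectively, all of which lie in $\mathscr A_q(\mathbf s)$. So the restriction of the skew-field embedding gives the required injective $\mathfrak R$-algebra homomorphism; injectivity is inherited from Step 1.

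The main obstacle is Step 2. One must make sure that $\mathscr F$, defined once and for all on the initial torus, agrees with the ``$\ell$-th power'' map on every other cluster torus. This needs the seed-wise identification $\mathscr F(\hat{\mathbf s}')=\widehat{\mathbf s'}$, including the invariance of the order hypothesis along mutations, so that Proposition~\ref{pro:compatible} can be applied at every stage.
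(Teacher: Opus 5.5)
Your proposal is correct and takes the paper's route: the paper states the theorem as an immediate consequence of Proposition~\ref{pro:compatible}, and you supply the bookkeeping it omits — defining $\mathscr F$ on the initial torus and its skew field, propagating $\mathscr F(\hat A')=(A')^\ell$ along mutation sequences using the invariance of $D$ and the $\ell_i$, and restricting to generators. One point to tighten: since $q$ and $\eta$ are roots of unity here, quasi-commutation alone does not determine $\Lambda'$, so justify $\hat\Lambda'=\Lambda'$ by the Berenstein--Zelevinsky mutation formula for $\Lambda$, which depends only on $(\Lambda,\widetilde B)$ and is therefore the same for $\mathbf s$ and $\hat{\mathbf s}$.
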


We now consider the particular case where  $\ell_i\leq 2$ for all $i$, with $I=\{i\mid \ell_i=2\}$ and $\mathbf{h}_i=\{1,p_i,1\}$. From Definition \ref{def:deformedseed}, we have
\begin{equation}
	{\hat{\mathbf{h}}}_i =
	\begin{cases}
		\{1,1\} & \text{if } i\notin I,\\[4pt]
		\{1,\hat p_i,1\} & \text{if } i\in I,
	\end{cases}
\end{equation}
where $\hat p_i=T_{\ell,2}^{(1)}(p_i) = T_\ell (p_i)$. Consequently, Theorem \ref{thm-Fro-cluster} becomes to the following corollary.

\begin{corollary}\label{cor-Fro-cluster}
	In the special case where  $\ell_i\leq 2$ for all $i$, with $I=\{i\mid \ell_i=2\}$ and $\mathbf{h}_i=\{1,p_i,1\}$, assume further that $q^{d_i/\ell_i}$ has order $\ell$ for every $i\in\{1,\dots,n\}$. Then there exists an injective $\mathfrak R$-algebra homomorphism
	\[
	\mathscr F:\mathscr{A}_{\eta}(\hat{\bf s})\longrightarrow \mathscr{A}_q(\mathbf{s})
	\]
	which sends $\hat X=X^\ell$ to $X^\ell$ for each cluster variable $X\in\overline{\mathcal{X}}$, and sends $\hat p_i$ to $T_\ell(p_i)$ for all $i\in I$.
\end{corollary}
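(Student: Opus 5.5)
The plan is to reduce Corollary~\ref{cor-Fro-cluster} directly to Theorem~\ref{thm-Fro-cluster}. All that needs checking is that the deformed seed $\hat{\mathbf s}$ of Definition~\ref{def:deformedseed} has the stated coefficient strings in this special case, and that the image of $\hat p_i$ is $T_\ell(p_i)$. The hypothesis that $q^{d_i/\ell_i}$ has order $\ell$ is exactly the one in Theorem~\ref{thm-Fro-cluster}. So that theorem already supplies an injective $\mathfrak R$-algebra homomorphism $\mathscr F\colon\mathscr A_\eta(\hat{\mathbf s})\to\mathscr A_q(\mathbf s)$ with $\hat X\mapsto X^\ell$ for every cluster variable $X\in\overline{\mathcal X}$, and with $\hat h_{i,k}\mapsto T^{(k)}_{\ell,\ell_i}(h_{i,1},\dots,h_{i,\ell_i-1})$.

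First I treat $i\notin I$, so $\ell_i=1$. The string is $\{h_{i,0},h_{i,1}\}=\{1,1\}$, and the only polynomials involved are $T^{(0)}_{\ell,1}=T^{(1)}_{\ell,1}=1$, as noted after Definition~\ref{def:generalizedCP}. Hence $\hat{\mathbf h}_i=\{1,1\}$ and there are no coefficients to map.

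Next I treat $i\in I$, so $\ell_i=2$ and $\mathbf h_i=\{1,p_i,1\}$. Again $\hat h_{i,0}=\hat h_{i,2}=1$, and $\hat h_{i,1}=T^{(1)}_{\ell,2}(p_i)$. To identify this polynomial with $T_\ell$, I invoke Proposition~\ref{thm-generalized-Chebyshev-Adams} with $N=2$, $k=1$, which gives $T^{(1)}_{\ell,2}=\overline P^{(1)}_{\ell,2}$. Then I use the fact, recalled just before that proposition, that $\overline P^{(1)}_{\ell,2}=T_\ell$. For the reader I would add the one-line justification: in $\mathbb Z[x_1,x_2]/(x_1x_2-1)$ one has $e_1=x+x^{-1}$, and $x^\ell+x^{-\ell}=T_\ell(x+x^{-1})$ by induction from the recursion~\eqref{Chebyshev-polynomial}, since $(x+x^{-1})(x^n+x^{-n})=(x^{n+1}+x^{-n-1})+(x^{n-1}+x^{1-n})$ and $T_0=2$, $T_1=x$. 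Uniqueness of $\overline P^{(1)}_{\ell,2}$ then identifies it with $T_\ell$. Thus $\hat p_i:=\hat h_{i,1}=T_\ell(p_i)$, and $\mathscr F$ sends it to $T_\ell(p_i)$.

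Combining the two cases, the homomorphism of Theorem~\ref{thm-Fro-cluster} has exactly the properties claimed, and its injectivity is inherited. No step is a serious obstacle; the only point needing care is the identification $T^{(1)}_{\ell,2}=T_\ell$. That identification rests on Proposition~\ref{thm-generalized-Chebyshev-Adams} together with the normalization $T_0=2$ in~\eqref{Chebyshev-polynomial}, which is what makes the first-kind Chebyshev identity hold.
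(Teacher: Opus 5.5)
Your proposal is correct and follows the paper's own argument: you specialize Theorem~\ref{thm-Fro-cluster}, read off $\hat{\mathbf h}_i$ from Definition~\ref{def:deformedseed}, and identify $T^{(1)}_{\ell,2}$ with $T_\ell$ through Proposition~\ref{thm-generalized-Chebyshev-Adams} and $\overline P^{(1)}_{\ell,2}=T_\ell$. Your short induction showing $T_\ell(x+x^{-1})=x^\ell+x^{-\ell}$ fills in the ``well-known'' step that the paper only asserts.
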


\section{Generalized quantum cluster algebra structures on reduced L{\^e}-Roger-Yang skein algebras}\label{sec-skein=A=U}

Let $\fS$ be a quasi-triangulable punctured surface (Definition~\ref{def-quasitri}). In this section, we show that the reduced Lê--Roger--Yang skein algebra  $\overline {\cS}_q(\fS)$ admits a generalized quantum cluster algebra structure. To this end, we first construct, for any quasi-triangulation $\Delta$, an extended exchange matrix $B^\Delta$ in Section~\ref{sec:extendedmatrix}, which we show in Proposition~\ref{prop:compatible} to be compatible with the orientation matrix $\Lambda^\Delta$ (Definition~\ref{def:orientationmatrix}).
For each admissible ideal arc $\gamma \in \mathring\Delta$, define its {\bf weight} $w(\gamma)$ to be:
\begin{equation}
	w(\gamma)=\begin{dcases}
		\frac{1}{2} & \mbox{ if }\gamma \mbox{ is a pending arc},\\
		2 & \mbox{ if }\gamma \mbox{ is a loop enclosing an interior puncture},\\
		1 & \mbox{ otherwise.}
	\end{dcases}
\end{equation}
Set $d_\gamma = \max\{1, w(\gamma)\}$, and define the coefficient string
\begin{equation*}
	\mathbf{h}_\gamma =
	\begin{cases}
		(1, 1) & \text{if } d_\gamma = 1, \\
		(1,\alpha_p, 1) & \text{if } d_\gamma = 2 \text{ and } \gamma \text{ is the loop enclosing } p \in \mathring{\mathcal P},
	\end{cases}
\end{equation*}
where $\mathring{\mathcal P}$ is the set of interior punctures and $\alpha_p$ is the peripheral loop around $p$.
It follows that 
\[
\mathbf s(\Delta)=\bigl(A^\Delta,\mathbf h^\Delta,\Lambda^\Delta,B^\Delta\bigr),
\]
with $A^\Delta=(A_\gamma\mid \gamma\in \Delta)$ and $\mathbf h^\Delta=(\mathbf{h}_\gamma\mid \gamma\in\mathring\Delta)$, is a generalized quantum seed inside the skew-field of fractions ${\rm Frac}(\overline {\cS}_q(\fS))$ of $\overline {\cS}_q(\fS)$. We then show that seed mutations are compatible with flips of quasi-triangulations in Proposition~\ref{prop:mutation}. Finally, we prove that the reduced Lê--Roger--Yang skein algebra $\overline {\cS}_q(\fS)$ coincides with the generalized quantum cluster algebra $\mathscr A_q(\mathbf s(\Delta))$ in Theorem~\ref{thm:skein=A}.

\subsection{Extended exchange matrix and compatibility}\label{sec:extendedmatrix}

Denote the two half-edges (Definition \ref{def-quasi}) of an admissible ideal arc $\gamma$ by $\partial_1(\gamma)$ and $\partial_2(\gamma)$.

Let $\Delta$ be a quasi-ideal triangulation (Definition \ref{def-quasitri}) of $\fS$. For any $\alpha,\beta\in \Delta$, define the matrix entry $b^\Delta_{\alpha,\beta}$ as follows:

If $w(\beta)\neq \frac{1}{2}$, set
\begin{equation}\label{def1-BB}
	b^\Delta_{\alpha,\beta} =
	\sum_{i,j\in \{1,2\}} \left\{
	\begin{array}{c|c}
		0 & \text{if $\partial_i(\alpha)$ and $\partial_j(\beta)$ are not incident} \\
		\text{max} \{w(\beta),\frac{w(\beta)}{w(\alpha)}\} & \text{if $\partial_i(\alpha)$ is immediately clockwise to $\partial_j(\beta)$ in $\Delta$} \\
		-\text{max} \{w(\beta),\frac{w(\beta)}{w(\alpha)}\} & \text{if $\partial_j(\beta)$ is immediately clockwise to $\partial_i(\alpha)$ in $\Delta$} \\
	\end{array}\right\}
\end{equation}

If $w(\beta)=\frac{1}{2}$, set 
\begin{equation}\label{def2-BB}
	b^\Delta_{\alpha,\beta} = \sum_{i,j\in \{1,2\}} \left\{
	\begin{array}{c|c}
		0 & \text{if $\partial_i(\alpha)$ and $\partial_j(\beta)$ have different endpoints} \\
		\text{max}\{1, \frac{1}{w(\alpha)}\} & \text{if $\partial_i(\alpha)$ is immediately clockwise to $\partial_j(\beta)$ in $\Delta$} \\
		-\text{max}\{1, \frac{1}{w(\alpha)}\} & \text{if $\partial_j(\beta)$ is immediately clockwise to $\partial_i(\alpha)$ in $\Delta$} \\
	\end{array}\right\}
\end{equation}

We call the matrix $B^\Delta=(b^\Delta_{\alpha,\beta})_{\alpha\in \Delta, \beta\in \mathring{\Delta}}$ the {\bf skew-adjacency matrix} of $\Delta$.

\def\MDe{\mathring{\Delta}}

Let $\alpha\in\mathring\Delta$. There exists a unique admissible ideal arc $\alpha'\neq \alpha$ such that $\Delta'=(\Delta\setminus\{\alpha\})\sqcup \{\alpha'\}$ is also a quasi-triangulation (Definition \ref{def-quasitri}) of $\fS$. We say that $\Delta$ and $\Delta'$ are related by a {\bf flip} at $\alpha$.

\begin{proposition}\label{prop:compatible}
	Let $\fS$ be a triangulable marked orbifold with a triangulation $\Delta$.
	The pair of matrices $(\Lambda^\Delta, B^\Delta)$ is compatible, i.e.,  
	\begin{equation*}
		(B^\Delta)^\top\Lambda^\Delta=
		\begin{pmatrix}
			D & O\\
			* & *
		\end{pmatrix},
	\end{equation*}
	where $D=\text{diag}((d_{\alpha})_{\alpha\in \mathring\Delta})$ with $d_\alpha=4w(\alpha)$ and $O$ is the zero matrix.
	Moreover, for any $\Delta, \Delta'$ are related by a flip at $\gamma\in \mathring\Delta$, we have  
	\[B^{\Delta'}=\mu_\gamma( B^\Delta),\] 
	where $\mu_\gamma$ is defined in \eqref{eq-mutation-Q}.
\end{proposition}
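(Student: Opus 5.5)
The plan is to reduce both statements to local computations, one per quasi-triangle of $\Delta$, since $\Lambda^\Delta$ and $B^\Delta$ are both sums of contributions from pairs of incident half-edges. Cutting $\fS$ along $\mathring\Delta$ produces pieces of the four types pictured before \eqref{def--Lambda}: an ideal triangle, a monogon containing one pending arc, a monogon containing two pending arcs, and a once-punctured monogon. I will write $\Lambda^\Delta=\sum_\tau \Lambda^\tau$ and $B^\Delta=\sum_\tau B^\tau$, where $\tau$ runs over these pieces, together with the corners at boundary punctures that lie inside a piece. Incidences at a boundary puncture that are not consecutive (not ``immediately clockwise'') contribute to $\Lambda$ but not to $B$.

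For the compatibility, I need, for $\beta\in\mathring\Delta$ and any $\gamma\in\Delta$,
\[
\sum_{\alpha\in\Delta} b^\Delta_{\alpha,\beta}\Lambda_{\alpha,\gamma}=d_\beta\delta_{\beta\gamma}.
\]
Since $b^\Delta_{\alpha,\beta}\neq0$ only for $\alpha$ adjacent to $\beta$ in one of the (at most two) pieces containing $\beta$, the sum is finite and localizes to a neighbourhood of $\beta$. I will use the standard Muller/Lê--Yu argument. Fix a boundary puncture $p$ and list the half-edges at $p$ in clockwise order. Then $\Lambda_{\alpha,\gamma}$ restricted to $p$ is $\pm1$ according to the relative order. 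Moreover $b$ restricted to $p$ is, up to the weights, a difference of a ``successor'' and a ``predecessor'' indicator. Hence $\sum_\alpha b_{\alpha,\beta}\Lambda_{\alpha,\gamma}$ at $p$ telescopes: it vanishes unless $\gamma$ is one of the half-edges neighbouring the half-edge of $\beta$, and it gives $\pm2$ times the weight factor when $\gamma=\beta$.

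I then check the three cases for the weight of $\beta$.
\begin{itemize}
\item For $w(\beta)=1$ (an ordinary ideal arc), the four half-edge incidences give $4=d_\beta$, as in \cite{muller2016skein}.
\item For a pending arc, $w=\tfrac12$, there is only one half-edge at a puncture. Definition \eqref{def2-BB} doubles entries to $\max\{1,1/w(\alpha)\}$, which compensates: the contributions at the single puncture sum to $2=4\cdot\tfrac12$. The off-diagonal cancellation for $\gamma$ in the same monogon uses the fact that the two pending arcs of a two-pending monogon do not share half-edges with $b$-entries.
\item For a loop around an interior puncture, $w=2$, both half-edges sit at the same boundary puncture, and the entries $\max\{2,2/w(\alpha)\}$ double each term, giving $8=4\cdot2$.
\end{itemize}
The cancellation for $\gamma\neq\beta$ is the same telescoping identity, now weighted. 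The main check is that the weight factors $\max\{w(\beta),w(\beta)/w(\alpha)\}$ are constant along the relevant neighbours, so that telescoping still applies. I expect the two-pending-arc monogon, and a neighbour $\alpha$ that is itself a pending arc or loop, to be the delicate cases, and I will verify them directly by listing the few half-edges at the unique boundary puncture of that monogon.

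For mutation, I will use that a flip at $\gamma$ only changes the quadrilateral-like region formed by the two pieces adjacent to $\gamma$. Entries $b_{\alpha,\beta}$ with $\alpha,\beta$ both outside this region are unchanged on both sides of the identity, because \eqref{eq-mutation-Q} modifies $b_{\alpha\beta}$ only when $b_{\alpha\gamma}b_{\gamma\beta}<0$, which forces both arcs to be adjacent to $\gamma$. So it suffices to enumerate the local flip types by the types of the two pieces glued along $\gamma$: triangle--triangle, triangle with a monogon containing a pending arc, triangle with a punctured monogon, the flip of a pending arc inside a two-pending monogon, and so on. For each type I compute $B$ before and after the flip and compare with \eqref{eq-mutation-Q}, as in the orbifold setting of \cite{FST1} via Remark~\ref{rem:thesame}. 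This enumeration is the main obstacle: it has several configurations and relies on the weights reproducing the skew-symmetrizable Fomin--Shapiro--Thurston matrices. I would handle it by reducing to the orbifold-triangulation matrices of \cite{FST1}, where the mutation compatibility is known, and then verifying that \eqref{def1-BB}--\eqref{def2-BB} agree with those matrices (with weight-$\tfrac12$ points for circles and the weight-$2$ convention for loops).
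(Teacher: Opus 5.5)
Your overall route is the paper's. You evaluate $\sum_{\alpha\in\Delta} b^\Delta_{\alpha,\beta}\Lambda_{\alpha,\gamma}$ locally around each $\beta\in\mathring\Delta$, split by $w(\beta)\in\{\tfrac12,1,2\}$ to get diagonal entries $2,4,8$, and obtain $B^{\Delta'}=\mu_\gamma(B^\Delta)$ from \cite[Theorem~4.19]{FST1} via the dictionary of Remark~\ref{rem:thesame}; the mutation half is the same as in the paper. However, the uniform telescoping principle you give for the off-diagonal zeros is false as stated. So the ``direct verification'' you defer to is in fact the whole proof, as in the paper's case analysis, and it needs the right bookkeeping. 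There are three concrete problems.

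(i) The sum does not localize puncture by puncture. The coefficient $b^\Delta_{\alpha,\beta}$ is a total, while $\Lambda_{\alpha,\gamma}$ collects every half-edge of $\alpha$, including those at vertices not on $\beta$. In an ordinary triangle $(\beta,\alpha_1,\alpha_2)$, the terms at the vertex opposite $\beta$ cancel only because $b_{\alpha_1,\beta}=-b_{\alpha_2,\beta}$ and the two half-edges there are adjacent. For $\gamma=\alpha_1$, the $\pm1$ left at an endpoint of $\beta$ is cancelled by a $\mp1$ at that opposite vertex, so the cancellation runs across punctures.

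(ii) The weight factors are not constant along neighbours. For $w(\beta)=1$, a pending neighbour has $|b_{\alpha,\beta}|=2$, while ordinary and loop neighbours have $|b_{\alpha,\beta}|=1$. What is constant is $|b_{\alpha,\beta}|$ times the number of half-edges of $\alpha$ at the relevant puncture. A pending arc has one half-edge with weight $2$; a loop around an interior puncture has weight $1$ but two half-edges at the same puncture, both counted by $\Lambda$. For example, suppose $\beta$ and $\alpha_3$ bound a bigon containing a pending arc $\alpha_4$ at $p$, and $\alpha_1$ is the neighbour of $\beta$ at $p$ on the other side. For a far $\gamma$, the cancellation at $p$ is $b_{\alpha_1,\beta}+b_{\alpha_4,\beta}+b_{\alpha_3,\beta}=\mp1\pm2\mp1=0$, where $b_{\alpha_3,\beta}$ comes from the corner at the other vertex of the bigon.

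(iii) Take a monogon with two pending arcs $\beta,\beta'$ inside a loop $\delta$ at $p$, with half-edge order $\delta_1,\beta,\beta',\delta_2$. Here $\beta$ and $\beta'$ are consecutive, so $b^\Delta_{\beta',\beta}=\pm2$ by \eqref{def2-BB}, contrary to your claim. The entry $\bigl((B^\Delta)^\top\Lambda^\Delta\bigr)_{\beta,\beta'}$ vanishes for a different reason: $\Lambda_{\beta',\beta'}=0$, and $\Lambda_{\delta,\beta'}=0$ because $\delta_1,\delta_2$ straddle $\beta'$. The diagonal value $2$ comes entirely from $b_{\beta',\beta}\Lambda_{\beta',\beta}$, since $\Lambda_{\delta,\beta}=0$ as well.

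With these corrections your case check goes through and coincides with the paper's.
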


\begin{proof}
	Let $\alpha$ be an interior arc in $\Delta$. For any $\beta\in \Delta$, consider the $(\alpha,\beta)$-entry of the matrix $(B^\Delta)^\top\Lambda^\Delta$, we have 
	\begin{equation*}
		((B^\Delta)^\top\Lambda^\Delta)_{\alpha,\beta}=\sum_{\gamma\in \Delta} b^\Delta_{\gamma,\alpha} \Lambda_{\gamma,\beta}.
	\end{equation*}
	
	{\bf Case 1}. $w(\alpha)=\frac{1}{2}$. Suppose $\alpha_1,\alpha_2$ are the arcs in $\Delta$ such that $(\alpha,\alpha_1,\alpha_2)$ forms the unique clockwise triangle with $\alpha$ as one of its sides. Then we have $b_{\alpha_i,\alpha}^\Delta=(-1)^i\text{max} \{1, \frac{1}{w(\alpha_i)}\}$ for $i\in\{1,2\}$ and $Q_{\gamma,\alpha}^\Delta=0$ for any $\gamma\neq \alpha_1,\alpha_2$. 
	
	If $\beta\neq \alpha,\alpha_1,\alpha_2$, then we have 
	$$\Lambda_{\alpha_1,\beta}=
	\begin{cases}
		\Lambda_{\alpha_2,\beta} & \mbox{ if $w(\alpha_1)=w(\alpha_2)=1$ or $w(\alpha_1)w(\alpha_2)=2$},\\
		2\Lambda_{\alpha_2,\beta} & \mbox{ if $w(\alpha_1)=1, w(\alpha_2)=\frac{1}{2}$},\\
		\frac{1}{2}\Lambda_{\alpha_2,\beta} & \mbox{ if $w(\alpha_1)=\frac{1}{2}, w(\alpha_2)=1$}.
	\end{cases}$$ 
	Thus, in all the cases, we have
	\begin{equation*}      ((B^\Delta)^\top\Lambda^\Delta)_{\alpha,\beta}=\sum_{\gamma\in \Delta} b^\Delta_{\gamma,\alpha} \Lambda_{\gamma,\beta}=b^\Delta_{\alpha_1,\alpha} \Lambda_{\alpha_1,\beta}+b^\Delta_{\alpha_2,\alpha} \Lambda_{\alpha_2,\beta}=0.
	\end{equation*}
	
	If $\beta=\alpha_1$, then we have $\Lambda_{\alpha_1,\beta}=\Lambda_{\alpha_2,\beta}=0$.
	Thus, we have
	\begin{equation*}       ((B^\Delta)^\top\Lambda^\Delta)_{\alpha,\beta}=\sum_{\gamma\in \Delta} b^\Delta_{\gamma,\alpha} \Lambda_{\gamma,\beta}=b^\Delta_{\alpha_1,\alpha} \Lambda_{\alpha_1,\beta}+b^\Delta_{\alpha_2,\alpha} \Lambda_{\alpha_2,\beta}=0.
	\end{equation*}
	
	Similarly, $((B^\Delta)^\top\Lambda^\Delta)_{\alpha,\beta}=0$ in case $\beta=\alpha_2$.
	
	If $\beta=\alpha$, then we have
	
	$$\Lambda_{\alpha_1,\beta}=
	\begin{cases}
		\Lambda_{\alpha_2,\beta}-2 & \mbox{ if $w(\alpha_1)=w(\alpha_2)=1$ or $w(\alpha_1)w(\alpha_2)=2$},\\
		0=\Lambda_{\alpha_2,\beta}-1 & \mbox{ if $w(\alpha_1)=1, w(\alpha_2)=\frac{1}{2}$},\\
		-1=\Lambda_{\alpha_2,\beta}-1 & \mbox{ if $w(\alpha_1)=\frac{1}{2}, w(\alpha_2)=1$}.
	\end{cases}$$
	Thus, in all the cases, we have
	\begin{equation*}
		((B^\Delta)^\top\Lambda^\Delta)_{\alpha,\beta}=\sum_{\gamma\in \Delta} b^\Delta_{\gamma,\alpha} \Lambda_{\gamma,\alpha}=b^\Delta_{\alpha_1,\alpha} \Lambda_{\alpha_1,\alpha}+b^\Delta_{\alpha_2,\alpha} \Lambda_{\alpha_2,\alpha}=2.
	\end{equation*}
	
	{\bf Case 2}. $w(\alpha)=2$. Suppose $\alpha_1,\alpha_2$ are the arcs in $\Delta$ such that $(\alpha,\alpha_1,\alpha_2)$  forms the unique clockwise triangle with $\alpha$ as one of its sides. Then we have $b_{\alpha_i,\alpha}^\Delta=(-1)^i\text{max} \{2, \frac{2}{w(\alpha_i)}\}$ for $i\in\{1,2\}$ and $b_{\gamma,\alpha}^\Delta=0$ for any $\gamma\neq \alpha_1,\alpha_2$. 
	
	If $\beta\neq \alpha,\alpha_1,\alpha_2$, then we have 
	$$\Lambda_{\alpha_1,\beta}=
	\begin{cases}
		\Lambda_{\alpha_2,\beta} & \mbox{ if $w(\alpha_1)=w(\alpha_2)=1$ or $w(\alpha_1)w(\alpha_2)=2$},\\
		2\Lambda_{\alpha_2,\beta} & \mbox{ if $w(\alpha_1)=1, w(\alpha_2)=\frac{1}{2}$},\\
		\frac{1}{2}\Lambda_{\alpha_2,\beta} & \mbox{ if $w(\alpha_1)=\frac{1}{2}, w(\alpha_2)=1$}.
	\end{cases}$$ 
	Thus, in all the cases, we have
	\begin{equation*}
		((B^\Delta)^\top\Lambda^\Delta)_{\alpha,\beta}=\sum_{\gamma\in \Delta} b^\Delta_{\gamma,\alpha} \Lambda_{\gamma,\beta}=b^\Delta_{\alpha_1,\alpha} \Lambda_{\alpha_1,\beta}+b^\Delta_{\alpha_2,\alpha} \Lambda_{\alpha_2,\beta}=0.
	\end{equation*}
	
	If $\beta=\alpha_1$, then we have $\Lambda_{\alpha_1,\beta}=\Lambda_{\alpha_2,\beta}=0$.
	Thus, we have
	\begin{equation*}
		((B^\Delta)^\top\Lambda^\Delta)_{\alpha,\beta}=\sum_{\gamma\in \Delta} b^\Delta_{\gamma,\alpha} \Lambda_{\gamma,\beta}=b^\Delta_{\alpha_1,\alpha} \Lambda_{\alpha_1,\beta}+b^\Delta_{\alpha_2,\alpha} \Lambda_{\alpha_2,\beta}=0.
	\end{equation*}
	
	Similarly, $((B^\Delta)^\top\Lambda^\Delta)_{\alpha,\beta}=0$ in case $\beta=\alpha_2$.
	
	If $\beta=\alpha$, then we have
	
	$$\Lambda_{\alpha_1,\beta}=
	\begin{cases}
		\Lambda_{\alpha_2,\beta}-4 & \mbox{ if $w(\alpha_1)=w(\alpha_2)=1$ or $w(\alpha_1)w(\alpha_2)=2$},\\
		0=\Lambda_{\alpha_2,\beta}-2 & \mbox{ if $w(\alpha_1)=1, w(\alpha_2)=\frac{1}{2}$},\\
		-2=\Lambda_{\alpha_2,\beta}-2 & \mbox{ if $w(\alpha_1)=\frac{1}{2}, w(\alpha_2)=1$}.
	\end{cases}$$
	Thus, in all the cases, we have
	\begin{equation*}
		((B^\Delta)^\top\Lambda^\Delta)_{\alpha,\beta}=\sum_{\gamma\in \Delta} b^\Delta_{\gamma,\alpha} \Lambda_{\gamma,\alpha}=b^\Delta_{\alpha_1,\alpha} \Lambda_{\alpha_1,\alpha}+b^\Delta_{\alpha_2,\alpha} \Lambda_{\alpha_2,\alpha}=8.
	\end{equation*}
	
	{\bf Case 3}. $w(\alpha)=1$. Suppose $\alpha_1,\alpha_2,\alpha_3,\alpha_4$ are arcs in $\Delta$ such that $(\alpha,\alpha_1,\alpha_2)$ and $(\alpha,\alpha_3,\alpha_4)$ form the clockwise triangles with $\alpha$ as one of their sides. We may assume that $\alpha_1\neq \alpha_3$ and $\alpha_2\neq \alpha_4$, as the other cases follow by a similar argument. Then we have $b_{\alpha_i,\alpha}^\Delta=(-1)^i\text{max} \{1, \frac{1}{w(\alpha_i)}\}$ for $i\in\{1,2,3,4\}$ and $b_{\gamma,\alpha}^\Delta=0$ for any $\gamma\neq \alpha_1,\alpha_2,\alpha_3,\alpha_4$. 
	
	{\bf Case 3.1}. $(\alpha,\alpha_1,\alpha_2)$ and $(\alpha,\alpha_3,\alpha_4)$ are weight $1$ normal triangles, i.e., $w(\alpha_1)=w(\alpha_2)=w(\alpha_3)=w(\alpha_4)=1$. We only consider the case where the shared endpoints of the pairs $(\alpha_1,\alpha_2)$, $(\alpha_2,\alpha_3)$, $(\alpha_3,\alpha_4)$, and $(\alpha_4,\alpha_1)$ are different, as the other cases can be proved similarly.
	
	If $\beta\neq \alpha,\alpha_1,\alpha_2,\alpha_3,\alpha_4$, then we have $-\Lambda_{\alpha_1,\beta}+\Lambda_{\alpha_2,\beta}-\Lambda_{\alpha_3,\beta}+\Lambda_{\alpha_4,\beta}=0$. Thus,
	\begin{equation*}
		((B^\Delta)^\top\Lambda^\Delta)_{\alpha,\beta}=\sum_{\gamma\in \Delta} b^\Delta_{\gamma,\alpha} \Lambda_{\gamma,\beta}=-\Lambda_{\alpha_1,\beta}+\Lambda_{\alpha_2,\beta}-\Lambda_{\alpha_3,\beta}+\Lambda_{\alpha_4,\beta}=0.
	\end{equation*}
	
	If $\beta=\alpha_1$, then we have $\Lambda_{\alpha_1,\beta}=\Lambda_{\alpha_3,\beta}=0$ and $\Lambda_{\alpha_4,\beta}=-\Lambda_{\alpha_2,\beta}=1$.
	Thus, we have
	\begin{equation*}
		((B^\Delta)^\top\Lambda^\Delta)_{\alpha,\beta}=\sum_{\gamma\in \Delta} b^\Delta_{\gamma,\alpha} \Lambda_{\gamma,\beta}=\sum_{i=1}^4 b^\Delta_{\alpha_i,\alpha}\Lambda_{\alpha_i,\beta}=0
	\end{equation*}
	
	Similarly, $((B^\Delta)^\top\Lambda^\Delta)_{\alpha,\beta}=0$ in case $\beta=\alpha_2, \alpha_3$ or $\alpha_4$.
	
	If $\beta=\alpha$, then we have
	$\Lambda_{\alpha_2,\beta}=\Lambda_{\alpha_4,\beta}=-\Lambda_{\alpha_1,\beta}=-\Lambda_{\alpha_3,\beta}=1$.
	Thus, 
	\begin{equation*}
		((B^\Delta)^\top\Lambda^\Delta)_{\alpha,\beta}=\sum_{\gamma\in \Delta} b^\Delta_{\gamma,\alpha} \Lambda_{\gamma,\alpha}=\sum_{i=1}^4 b^\Delta_{\alpha_i,\alpha}\Lambda_{\alpha_i,\beta}=4.
	\end{equation*}
	
	{\bf Case 3.2}. Exactly one of $(\alpha,\alpha_1,\alpha_2)$ and $(\alpha,\alpha_3,\alpha_4)$ is a weight $1$ normal triangle; we may assume it is $(\alpha,\alpha_1,\alpha_2)$. 
	
	{\bf Case 3.2.1}. Exactly one of $\alpha_3$ and $\alpha_4$ has weight $1$; we may assume that $w(\alpha_3)=1$. We only consider the case where the shared endpoints of the pairs $(\alpha_1,\alpha_2)$, $(\alpha_2,\alpha)$, and $(\alpha,\alpha_1)$ are different, as the other cases can be proved similarly.
	
	If $\beta\neq \alpha,\alpha_1,\alpha_2,\alpha_3,\alpha_4$, then we have 
	$$\begin{cases}
		-\Lambda_{\alpha_1,\beta}+\Lambda_{\alpha_2,\beta}-\Lambda_{\alpha_3,\beta}+\Lambda_{\alpha_4,\beta}=0 & \mbox{ if $w(\alpha_4)=2$,}\\
		-\Lambda_{\alpha_1,\beta}+\Lambda_{\alpha_2,\beta}-\Lambda_{\alpha_3,\beta}+2\Lambda_{\alpha_4,\beta}=0 & \mbox{ if $w(\alpha_4)=\frac{1}{2}$.}  
	\end{cases}$$
	Thus,
	\begin{equation*}
		((B^\Delta)^\top\Lambda^\Delta)_{\alpha,\beta}=\sum_{\gamma\in \Delta} b^\Delta_{\gamma,\alpha} \Lambda_{\gamma,\beta}=-\Lambda_{\alpha_1,\beta}+\Lambda_{\alpha_2,\beta}-\Lambda_{\alpha_3,\beta}+{\rm max}\{1, \frac{1}{w(\alpha_4)}\}\Lambda_{\alpha_4,\beta}=0.
	\end{equation*}
	
	If $\beta=\alpha_1$, then we have $\Lambda_{\alpha_1,\beta}=0$, $\Lambda_{\alpha_3,\beta}=-\Lambda_{\alpha_2,\beta}=1$ and $b^\Delta_{\alpha_4,\alpha}\Lambda_{\alpha_4,\beta}=2$.
	Thus, we have
	\begin{equation*}
		((B^\Delta)^\top\Lambda^\Delta)_{\alpha,\beta}=\sum_{\gamma\in \Delta} b^\Delta_{\gamma,\alpha} \Lambda_{\gamma,\beta}=-\Lambda_{\alpha_1,\beta}+\Lambda_{\alpha_2,\beta}-\Lambda_{\alpha_3,\beta}+b^\Delta_{\alpha_4,\alpha}\Lambda_{\alpha_4,\beta}=0.
	\end{equation*}
	
	Similarly, $((B^\Delta)^\top\Lambda^\Delta)_{\alpha,\beta}=0$ in case $\beta=\alpha_2, \alpha_3$ or $\alpha_4$.
	
	If $\beta=\alpha$, then we have
	$\Lambda_{\alpha_2,\beta}=-\Lambda_{\alpha_1,\beta}=1$, $\Lambda_{\alpha_3,\beta}=0$, and $b^\Delta_{\alpha_4,\alpha}\Lambda_{\alpha_4,\beta}=2$.
	Thus, 
	\begin{equation*}
		((B^\Delta)^\top\Lambda^\Delta)_{\alpha,\beta}=\sum_{\gamma\in \Delta} b^\Delta_{\gamma,\alpha} \Lambda_{\gamma,\alpha}=-\Lambda_{\alpha_1,\beta}+\Lambda_{\alpha_2,\beta}-\Lambda_{\alpha_3,\beta}+b^\Delta_{\alpha_4,\alpha}\Lambda_{\alpha_4,\beta}=4.
	\end{equation*}
	
	{\bf Case 3.2.2}. Neither $\alpha_3$ nor $\alpha_4$ has weight $1$. Then $\alpha$ is a loop. We only consider the case where $\alpha_1$ is not a loop, as the other case can be proved similarly.
	
	If $\beta\neq \alpha,\alpha_1,\alpha_2,\alpha_3,\alpha_4$, then we have $\Lambda_{\alpha_1,\beta}=\Lambda_{\alpha_2,\beta}$ and ${\rm max}\{1, \frac{1}{w(\alpha_3)}\}\Lambda_{\alpha_3,\beta}={\rm max}\{1, \frac{1}{w(\alpha_4)}\}\Lambda_{\alpha_4,\beta}$. Thus,
	\begin{equation*}
		((B^\Delta)^\top\Lambda^\Delta)_{\alpha,\beta}=\sum_{\gamma\in \Delta} b^\Delta_{\gamma,\alpha} \Lambda_{\gamma,\beta}=-\Lambda_{\alpha_1,\beta}+\Lambda_{\alpha_2,\beta}-{\rm max}\{1, \frac{1}{w(\alpha_3)}\}\Lambda_{\alpha_3,\beta}+{\rm max}\{1, \frac{1}{w(\alpha_4)}\}\Lambda_{\alpha_4,\beta}=0.
	\end{equation*}
	
	If $\beta=\alpha_1$, then we have $\Lambda_{\alpha_1,\beta}=0$, $\Lambda_{\alpha_2,\beta}=0$, $\Lambda_{\alpha_3,\beta}={\rm max} \{1, w(\alpha_3)\}$, $\Lambda_{\alpha_4,\beta}={\rm max} \{1, w(\alpha_4)\}$.
	Thus, we have
	\begin{equation*}
		((B^\Delta)^\top\Lambda^\Delta)_{\alpha,\beta}=\sum_{\gamma\in \Delta} b^\Delta_{\gamma,\alpha} \Lambda_{\gamma,\beta}=-\Lambda_{\alpha_1,\beta}+\Lambda_{\alpha_2,\beta}-{\rm max}\{1, \frac{1}{w(\alpha_3)}\}\Lambda_{\alpha_3,\beta}+{\rm max}\{1, \frac{1}{w(\alpha_4)}\}\Lambda_{\alpha_4,\beta}=0.
	\end{equation*}
	
	Similarly, $((B^\Delta)^\top\Lambda^\Delta)_{\alpha,\beta}=0$ in case $\beta=\alpha_2, \alpha_3$ or $\alpha_4$.
	
	If $\beta=\alpha$, then we have
	$\Lambda_{\alpha_2,\beta}=-\Lambda_{\alpha_1,\beta}=2$ and $\Lambda_{\alpha_3,\beta}=\Lambda_{\alpha_4,\beta}=0$. Thus, 
	\begin{equation*}
		((B^\Delta)^\top\Lambda^\Delta)_{\alpha,\beta}=\sum_{\gamma\in \Delta} b^\Delta_{\gamma,\alpha} \Lambda_{\gamma,\alpha}=-\Lambda_{\alpha_1,\beta}+\Lambda_{\alpha_2,\beta}-{\rm max}\{1, \frac{1}{w(\alpha_3)}\}\Lambda_{\alpha_3,\beta}+{\rm max}\{1, \frac{1}{w(\alpha_4)}\}\Lambda_{\alpha_4,\beta}=4.
	\end{equation*}
	
	{\bf Case 3.3}. Neither $(\alpha,\alpha_1,\alpha_2)$ nor $(\alpha,\alpha_3,\alpha_4)$ is a weight $1$ normal triangle. In this case, we have at least one of $\alpha_1,\alpha_2,\alpha_3,\alpha_4$ has at weight $1$. We may assume that $w(\alpha_1)=1$.
	
	{\bf Case 3.3.1}. Exactly one of $\alpha_3$ and $\alpha_4$ has weight $1$; we may assume that $w(\alpha_3)=1$. We only consider the case that $\alpha$ is not a loop, as the other case can be proved similarly.
	
	If $\beta\neq \alpha,\alpha_1,\alpha_2,\alpha_3,\alpha_4$, then we have 
	$$  -\Lambda_{\alpha_1,\beta}+{\rm max} \{1, \frac{1}{w(\alpha_2)}\}\Lambda_{\alpha_2,\beta}-\Lambda_{\alpha_3,\beta}+{\rm max} \{1, \frac{1}{w(\alpha_4)}\}\Lambda_{\alpha_4,\beta}=0.$$
	
	Thus,
	\begin{equation*}
		((B^\Delta)^\top\Lambda^\Delta)_{\alpha,\beta}=\sum_{\gamma\in \Delta} b^\Delta_{\gamma,\alpha} \Lambda_{\gamma,\beta}=-\Lambda_{\alpha_1,\beta}+{\rm max}\{1, \frac{1}{w(\alpha_2)}\}\Lambda_{\alpha_2,\beta}-\Lambda_{\alpha_3,\beta}+{\rm max}\{1, \frac{1}{w(\alpha_4)}\}\Lambda_{\alpha_4,\beta}=0.
	\end{equation*}
	
	If $\beta=\alpha_1$, then we have $\Lambda_{\alpha_1,\beta}=\Lambda_{\alpha_3,\beta}=0$, $\Lambda_{\alpha_2,\beta}=-{\rm max} \{1, w(\alpha_2)\}$ and $\Lambda_{\alpha_4,\beta}={\rm max} \{1, w(\alpha_4)\}$.
	Thus, we have
	\begin{equation*}
		((B^\Delta)^\top\Lambda^\Delta)_{\alpha,\beta}=-\Lambda_{\alpha_1,\beta}+{\rm max}\{1, \frac{1}{w(\alpha_2)}\}\Lambda_{\alpha_2,\beta}-\Lambda_{\alpha_3,\beta}+{\rm max}\{1, \frac{1}{w(\alpha_4)}\}\Lambda_{\alpha_4,\beta}=0.
	\end{equation*}
	
	Similarly, $((B^\Delta)^\top\Lambda)_{\alpha,\beta}=0$ in case $\beta=\alpha_2, \alpha_3$ or $\alpha_4$.
	
	If $\beta=\alpha$, then we have
	$\Lambda_{\alpha_2,\beta}={\rm max}\{1, w(\alpha_2)\}$, $\Lambda_{\alpha_1,\beta}=\Lambda_{\alpha_3,\beta}=0$, and $\Lambda_{\alpha_4,\beta}={\rm max}\{1, w(\alpha_4)\}$.
	Thus, 
	\begin{equation*}
		((B^\Delta)^\top\Lambda^\Delta_{\alpha,\beta}=\sum_{\gamma\in \Delta} b^\Delta_{\gamma,\alpha} \Lambda_{\gamma,\alpha}=-\Lambda_{\alpha_1,\beta}+\Lambda_{\alpha_2,\beta}-\Lambda_{\alpha_3,\beta}+{\rm max}\{1, \frac{1}{w(\alpha_4)}\}\Lambda_{\alpha_4,\beta}=4.
	\end{equation*}
	
	{\bf Case 3.3.2}. Neither $\alpha_3$ nor $\alpha_4$ has weight $1$. Then $\alpha,\alpha_1$ are loops. 
	
	If $\beta\neq \alpha,\alpha_1,\alpha_2,\alpha_3,\alpha_4$, then we have 
	$$ -\Lambda_{\alpha_1,\beta}+{\rm max} \{1, \frac{1}{w(\alpha_2)}\}\Lambda_{\alpha_2,\beta}-{\rm max} \{1, \frac{1}{w(\alpha_3)}\}\Lambda_{\alpha_3,\beta}+{\rm max} \{1, \frac{1}{w(\alpha_4)}\}\Lambda_{\alpha_4,\beta}=0.$$

	Thus,
	\begin{equation*}
		((B^\Delta)^\top\Lambda^\Delta)_{\alpha,\beta}=-\Lambda_{\alpha_1,\beta}+{\rm max}\{1, \frac{1}{w(\alpha_2)}\}\Lambda_{\alpha_2,\beta}-{\rm max} \{1, \frac{1}{w(\alpha_3)}\}\Lambda_{\alpha_3,\beta}+{\rm max}\{1, \frac{1}{w(\alpha_4)}\}\Lambda_{\alpha_4,\beta}=0.
	\end{equation*}
	
	If $\beta=\alpha_1$, then we have $\Lambda_{\alpha_1,\beta}=\Lambda_{\alpha_2,\beta}=\Lambda_{\alpha_3,\beta}=\Lambda_{\alpha_4,\beta}=0$.
	Thus, we have
	\begin{equation*}
		((B^\Delta)^\top\Lambda^\Delta)_{\alpha,\beta}=-\Lambda_{\alpha_1,\beta}+{\rm max}\{1, \frac{1}{w(\alpha_2)}\}\Lambda_{\alpha_2,\beta}-{\rm max} \{1, \frac{1}{w(\alpha_3)}\}\Lambda_{\alpha_3,\beta}+{\rm max}\{1, \frac{1}{w(\alpha_4)}\}\Lambda_{\alpha_4,\beta}=0.
	\end{equation*}
	
	Similarly, $((B^\Delta)^\top\Lambda)_{\alpha,\beta}=0$ in case $\beta=\alpha_2, \alpha_3$ or $\alpha_4$.
	
	If $\beta=\alpha$, then we have
	$\Lambda_{\alpha_2,\beta}=2\; {\rm max}\{1, w(\alpha_2)\}$ and $\Lambda_{\alpha_1,\beta}=\Lambda_{\alpha_3,\beta}=\Lambda_{\alpha_4,\beta}=0$.
	Thus, 
	\begin{equation*}
		((B^\Delta)^\top\Lambda^\Delta)_{\alpha,\beta}=-\Lambda_{\alpha_1,\beta}+{\rm max}\{1, \frac{1}{w(\alpha_2)}\}\Lambda_{\alpha_2,\beta}-{\rm max} \{1, \frac{1}{w(\alpha_3)}\}\Lambda_{\alpha_3,\beta}+{\rm max}\{1, \frac{1}{w(\alpha_4)}\}\Lambda_{\alpha_4,\beta}=4.
	\end{equation*}
	
	By \cite[Theorem 4.19.]{FST1} and Remark \ref{rem:thesame}, we have $B^{\Delta'}=\mu_\gamma( B^\Delta)$.
	
	The proof is complete.
\end{proof}

\subsection{Generalized quantum seed and mutation}
For quasi-triangulable punctured surface $\fS$, recall that $V$ denotes the set of boundary circles and $\mathring{\mathcal P}$ denotes the set of interior punctures. For each $p\in \mathring{\mathcal P}$, denote by $\alpha_p$ the peripheral loops around $p$.
Let
\begin{equation}\label{eq:groundring}
	\mathfrak R = \widetilde R=R[v^{\pm \frac{1}{2}}\mid v\in V],\qquad   \mathcal R=\widetilde R\,[\alpha_p\mid p\in \mathring{\mathcal P}].
\end{equation}

For any quasi-triangulation $\Delta$ of $\fS$ and any $\gamma \in \mathring\Delta$, let $d_\gamma = \max\{1, w(\gamma)\}$, and define the coefficient string
\begin{equation}\label{eq:h}
	\mathbf{h}_\gamma =
	\begin{cases}
		(1, 1) & \text{if } d_\gamma = 1, \\
		(1,\alpha_p, 1) & \text{if } d_\gamma = 2 \text{ and } \gamma \text{ is the loop enclosing } p \in \mathring{\mathcal P}.
	\end{cases}
\end{equation}

It is immediate that $d_\gamma\mid b^{\Delta}_{\alpha,\gamma}$ for all $\alpha\in \Delta$. Denote 
\[A^\Delta=(A_\gamma\mid \gamma\in \Delta),\quad
{\bf h}^\Delta=(\mathbf{h}_\gamma\mid \gamma\in\mathring\Delta),\quad
\text{and } {\bf s}(\Delta)=(A^\Delta,{\bf h}^\Delta,\Lambda^\Delta,B^\Delta).\]

\begin{proposition}\label{prop:mutation}
	Let $\fS$ be a quasi-triangulable punctured surface with a quasi-triangulation $\Delta$.
	Then ${\bf s}(\Delta)$ is a generalized quantum seed in ${\rm Frac}(\rdS)$. Moreover, if two triangulations $\Delta$ and $\Delta'$ are related by a flip at an admissible ideal arc $\alpha \in \Delta$, then
	\begin{equation}\label{eq:seedmutation}
		{\bf s}(\Delta') = \mu_\alpha({\bf s}(\Delta)).
	\end{equation}
\end{proposition}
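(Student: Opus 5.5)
We first check that ${\bf s}(\Delta)$ is a generalized quantum seed in ${\rm Frac}(\rdS)$. By Proposition~\ref{prop:quasi-commutative}, the elements $A_\gamma$, $\gamma\in\Delta$, pairwise $q$-commute with exponents $\Lambda^\Delta$. Proposition~\ref{prop:quantumtrace} gives the sandwich $\Ap\subset \im{\rm tr}^A_\Delta\subset\A$. It follows that the monomials $A({\bf k})$ are linearly independent over $\wR[P]$, so $\gamma\mapsto A_\gamma$ extends to an embedding of the quantum torus $\mathbb T(\Lambda^\Delta)$ into ${\rm Frac}(\rdS)$. The same sandwich shows ${\rm Frac}(\mathbb T(\Lambda^\Delta))={\rm Frac}(\rdS)$: every $X\in\rdS$ satisfies $A({\bf t}')X\in\Ap$, as in the proof of Proposition~\ref{prop:quantumtrace}. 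Compatibility of $(\Lambda^\Delta,B^\Delta)$ is Proposition~\ref{prop:compatible}. The divisibility $d_\gamma\mid b^\Delta_{\alpha,\gamma}$ was noted above. The string conditions $h_{\gamma,0}=h_{\gamma,\ell_\gamma}=1$ and the palindromic symmetry hold trivially for $(1,1)$ and $(1,\alpha_p,1)$, with $\ell_\gamma=d_\gamma$.

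For the mutation statement, recall that the strings ${\bf h}$ are unchanged by $\mu_\alpha$, and that ${\bf h}^{\Delta'}={\bf h}^\Delta$ on common arcs. The new arc $\alpha'$ has the same type as $\alpha$ (pending, loop around $p$, or ordinary), so its string agrees as well. Proposition~\ref{prop:compatible} gives $B^{\Delta'}=\mu_\alpha(B^\Delta)$. Since $\Lambda'$ is determined by the quasi-commutation of the new cluster, it suffices to verify the exchange relation \eqref{eq:mutation-X}: we must show $A_{\alpha'}$ equals the right-hand side computed from ${\bf s}(\Delta)$. The $\Lambda$ for the new seed is then automatically $\Lambda^{\Delta'}$ by Proposition~\ref{prop:quasi-commutative}.

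The exchange relation is the main obstacle, and we treat it locally. Every flip happens inside the union of the (at most two) regions of $\Delta$ adjacent to $\alpha$. We use the functoriality of Lemma~\ref{lem-functoriality} together with the cutting embedding $\overline\Theta$ of Theorem~\ref{thm-splitting-map}(b), which is injective, to reduce to a small model surface. The possible models are: a quadrilateral (ordinary flip); a once-punctured or circle-containing digon, in which a pending arc is flipped to another pending arc (weight $\frac12$, $\ell_\alpha=1$); and a digon containing an interior puncture, in which the loop around $p$ is flipped ($\ell_\alpha=2$).

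In each model we write $A_\alpha A_{\alpha'}$, suitably normalized, via the skein relations. For an ordinary flip we resolve the crossing with \eqref{cross} and use \eqref{hight}, \eqref{eq-same-height} and \eqref{lem-bad-arc-zero} at the boundary punctures, which recovers the usual Ptolemy relation as in \cite{muller2016skein,LY22}. For a pending arc, the crossing is resolved using \eqref{RY-relation}. The factors $v^{\pm1}$ are absorbed by the normalization $v^{\frac12}$ in \eqref{eq-D-X-q}. Identity \eqref{eq-square-double-plus} is what makes the weight $\frac12$ exponents $\max\{1,1/w\}$ appear. For the loop around $p$, the product of the two monogon loops crosses twice. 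Resolving both crossings gives three terms. The middle term contains the peripheral loop $\alpha_p$ times a product of boundary arcs. The other two terms are the monomials with $r=0,2$, whose exponents are twice the $\beta_{j\alpha}$.

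In every case we then match the $q$-powers against the Weyl normalization $A(\cdot)$ in \eqref{eq:mutation-X}, using Lemma~\ref{lem-height-change} and the definition \eqref{def--Lambda}. This bookkeeping of $q$-exponents, especially in the interior-puncture case, is where we expect most of the work. We will cross-check it by applying ${\rm tr}_\lambda$ and the compatibility \eqref{eq-com-traces-AX}, where both sides become explicit Laurent polynomials.
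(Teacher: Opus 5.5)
Your proposal is correct and follows essentially the same route as the paper. The seed property comes from Propositions~\ref{prop:quasi-commutative}, \ref{prop:quantumtrace}, \ref{prop:compatible} and the divisibility $d_\gamma\mid b^\Delta_{\alpha,\gamma}$. The exchange relation is checked case by case by a local skein computation: \eqref{RY-relation} for pending arcs, \eqref{cross} twice for loops around interior punctures, and \eqref{cross} once for ordinary arcs, with \eqref{eq-same-height} and \eqref{eq-square-double-plus} producing the normalizations and the exponents $\max\{1,1/w\}$, followed by matching $q$-powers against $\Lambda^\Delta$.

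The differences are minor. The paper computes directly in $\rdS$ because the relations are local, so your reduction to model surfaces is unnecessary; Lemma~\ref{lem-functoriality} would not apply verbatim to a quadrilateral whose sides are interior arcs. Your list of models should also include the degenerate configurations: a neighbouring side that is a pending arc or a loop around a puncture, or a flip region that is a monogon containing two holes. The paper handles these by the same computation.
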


\begin{proof}
	That ${\bf s}(\Delta)$ is a generalized quantum seed follows from Propositions~\ref{prop:quasi-commutative}, ~\ref{prop:quantumtrace}, and~\ref{prop:compatible}, the divisibility condition $d_\gamma \mid b^{\Delta}_{\alpha,\gamma}$ for all $\alpha, \gamma \in \Delta$, and equation~\eqref{eq:h}.
	
	We then prove \eqref{eq:seedmutation}. Assume $\alpha'\in \Delta'\setminus \Delta$.
	
	In this proof, all $\fS$-tangle diagrams carry only the $+$ state. Hence we omit the state symbol ``$+$" throughout.

	{\bf Case 1}. $w(\alpha)=\frac{1}{2}$. Suppose that $(\alpha,\alpha_1,\alpha_2)$ forms the unique clockwise special triangle in $\Delta$ with $\alpha$ as one of its sides. Then one of $\alpha_1$ and $\alpha_2$ has weight $1$, we may assume that $w(\alpha_1)=1$.
	
	We first consider the case that $w(\alpha_2)=1$.
	
	If $\alpha_1$ is not a loop, then for any $i\in \{1,2\}$, we have
	\begin{equation}\label{eq:lambda1}
		\Lambda_{\alpha_i,\alpha}=(-1)^{i}. 
	\end{equation}
	By relation \eqref{RY-relation}, we have 
	\begin{equation}\label{eqqqq-1}
		\raisebox{-.4in}{
			\begin{tikzpicture}
				\filldraw[draw=white,fill=gray!20] (0,0) rectangle (2, 2);
				\node (A) at (1,1.5) [left] {$\alpha'$};
				\node (B) at (1,0.5) [left] {$\alpha$};
				\draw [line width =1pt] (1,2) -- (1, 1);
				\draw [line width =1pt] (1,0.8) -- (1, 0);
				\filldraw[draw=black,fill=white]  (1,1) circle (0.1);
				\filldraw[draw=black,fill=white] (0.9,-0.1) rectangle (1.1, 0.1);
				\filldraw[draw=black,fill=white] (0.9,1.9) rectangle (1.1, 2.1);
			\end{tikzpicture}
		}=v^{-1}\left(q^{-\frac{1}{2}}
		\raisebox{-0.4in}{
			\begin{tikzpicture}
				\filldraw[draw=white,fill=gray!20] (0,0) rectangle (2, 2);
				\draw[line width =1pt] plot[smooth] coordinates {(1,0)  (0.7, 0.4) (0.5, 1) (0.7, 1.6) (1,2)};
				\node (A) at (0.5,1) [left] {$\alpha_1$};
				\filldraw[draw=black,fill=white]  (1,1) circle (0.1);
				\filldraw[draw=black,fill=white] (0.9,-0.1) rectangle (1.1, 0.1);
				\filldraw[draw=black,fill=white] (0.9,1.9) rectangle (1.1, 2.1);
			\end{tikzpicture}
		} + q^{\frac{1}{2}}
		\raisebox{-.4in}{
			\begin{tikzpicture}
				\filldraw[draw=white,fill=gray!20] (0,0) rectangle (2, 2);
				\draw[line width =1pt] plot[smooth] coordinates {(1,0)  (1.3, 0.4) (1.5, 1) (1.3, 1.6) (1,2)};
				\node (B) at (1.5,1) [right] {$\alpha_2$};
				\filldraw[draw=black,fill=white]  (1,1) circle (0.1);
				\filldraw[draw=black,fill=white] (0.9,-0.1) rectangle (1.1, 0.1);
				\filldraw[draw=black,fill=white] (0.9,1.9) rectangle (1.1, 2.1);
			\end{tikzpicture}
		}\right),
	\end{equation}
	where $v$ is the boundary circle in the picture.

	By \eqref{eq-D-X-q}, we have 
	\begin{equation}\label{eq:mu1}
		A_{\alpha'}A_\alpha=q^{-\frac{1}{2}} A_{\alpha_1}+q^{\frac{1}{2}} A_{\alpha_2}.
	\end{equation}
	
	Then by \eqref{eq:lambda1} \eqref{eq:mu1}, we obtain
	
	\begin{equation*}
		A_{\alpha'}=[A_{\alpha_1}A_\alpha^{-1}]+[A_{\alpha_2}A_\alpha^{-1}].
	\end{equation*}
	
	If $\alpha_1$ is a loop, then one of the following two cases holds:
	\begin{equation}\label{eq:lambda2}
		\Lambda_{\alpha_1,\alpha} = 0, \quad \Lambda_{\alpha_2,\alpha} =2
	\end{equation}
	or
	\begin{equation*}
		\Lambda_{\alpha_1,\alpha} =-2, \quad \Lambda_{\alpha_2,\alpha} = 0.
	\end{equation*}
	We may assume without loss of generality that the first case holds, as the second case follows by a similar argument.
	
	By relation \eqref{RY-relation}, we have 
	\begin{equation*}
		\begin{split}
			\begin{array}{rcl}
				\raisebox{-.4in}{
					\begin{tikzpicture}
						\filldraw[draw=white,fill=gray!20] (0,0) rectangle (2, 2);
						\node (A) at (1.4,0.65) [right] {$\alpha'$};
						\node (B) at (0.6,0.65) [left] {$\alpha$};
						\draw[line width =1pt] plot[smooth] coordinates {(1,0)  (1.25, 0.2) (1.4, 0.65) (1.25, 1.1) (1,1.3)};
						\draw[line width =1pt] plot[smooth] coordinates {(0.8,0.15)  (0.75, 0.2) (0.6, 0.65) (0.75, 1.1) (0.8,1.15)};
						\filldraw[draw=black,fill=white]  (1,1.3) circle (0.1);
						\filldraw[draw=black,fill=white] (0.9,-0.1) rectangle (1.1, 0.1);
						\filldraw[draw=white,fill=white!200] (1,0.5) circle (0.2);
					\end{tikzpicture}
				}
				&=&u^{-1}\left(q^{-\frac{1}{2}}
				\raisebox{-0.4in}{
					\begin{tikzpicture}
						\filldraw[draw=white,fill=gray!20] (0,0) rectangle (2, 2);
						\draw [line width =1pt] (1,0) arc[start angle=-90, end angle=250, radius=0.8];
						\filldraw[draw=black,fill=white]  (1,1.3) circle (0.1);
						\filldraw[draw=black,fill=white] (0.9,-0.1) rectangle (1.1, 0.1);
						\filldraw[draw=white,fill=white!200] (1,0.5) circle (0.2);
					\end{tikzpicture}
				} + q^{\frac{1}{2}}
				\raisebox{-.4in}{
					\begin{tikzpicture}
						\filldraw[draw=white,fill=gray!20] (0,0) rectangle (2, 2);
						\filldraw[draw=white,fill=white!200] (1,0.5) circle (0.2);
						\draw [line width =1pt] (1,0) arc[start angle=-90, end angle=250, radius=0.5];
						\filldraw[draw=black,fill=white]  (1,1.3) circle (0.1);
						\filldraw[draw=black,fill=white] (0.9,-0.1) rectangle (1.1, 0.1);
					\end{tikzpicture}
				}\right)\vspace{1mm}\\
				&\overset{\text{by \eqref{eq-same-height}}}{=}& 
				u^{-1}\left(\raisebox{-0.4in}{
					\begin{tikzpicture}
						\filldraw[draw=white,fill=gray!20] (0,0) rectangle (2, 2);
						\draw [line width =1pt] (1,0) arc[start angle=-90, end angle=270, radius=0.8];
						\filldraw[draw=black,fill=white]  (1,1.3) circle (0.1);
						\filldraw[draw=white,fill=white!200] (1,0.5) circle (0.2);
						\filldraw[draw=black,fill=white] (0.9,-0.1) rectangle (1.1, 0.1);
						\node (A) at (0.2,1) [right] {$\alpha_1$};
					\end{tikzpicture}
				} +q
				\raisebox{-.4in}{
					\begin{tikzpicture}
						\filldraw[draw=white,fill=gray!20] (0,0) rectangle (2, 2);
						\draw [line width =1pt] (1,0) arc[start angle=-90, end angle=270, radius=0.5];
						\filldraw[draw=black,fill=white]  (1,1.3) circle (0.1);
						\node (A) at (1.9,1) [left] {$\alpha_2$};
						\filldraw[draw=white,fill=white!200] (1,0.5) circle (0.2);
						\filldraw[draw=black,fill=white] (0.9,-0.1) rectangle (1.1, 0.1);
					\end{tikzpicture}
				}\right),
			\end{array}
		\end{split}
	\end{equation*}
	where $u$ is the boundary circle in the picture.
	
	By \eqref{eq-D-X-q}, we have
	\begin{equation}\label{eq:mu2}
		A_{\alpha'}A_\alpha= A_{\alpha_1}+ qA_{\alpha_2}.
	\end{equation}
	
	Then by \eqref{eq:lambda2} \eqref{eq:mu2}, we obtain
	
	\begin{equation*}
		A_{\alpha'}=[A_{\alpha_1}A_\alpha^{-1}]+[A_{\alpha_2}A_\alpha^{-1}].
	\end{equation*}
	
	The proof for the case where $w(\alpha_2)\neq 1$ follows a similar approach to the scenario where $w(\alpha_2)=1$ and $\alpha_1$ is a loop, so we omit it.
	
	This completes the proof of \eqref{eq:seedmutation} in the case $w(\alpha)=\frac{1}{2}$.

	{\bf Case 2}. $w(\alpha)=2$. Suppose that $(\alpha,\alpha_1,\alpha_2)$ forms the unique clockwise  triangle in $\Delta$ with $\alpha$ as one of its sides. 
	Then one of $\alpha_1$ and $\alpha_2$ has weight $1$; without loss of generality, assume $w(\alpha_1)=1$. Suppose further that $\alpha$ encircles the orbifold point $v\in U_1$.
	
	We first consider the case that $w(\alpha_2)=1$.
	
	If $\alpha_1$ is not a loop, then for any $i\in \{1,2\}$, we have
	\begin{equation}\label{eq:lambda3}
		\Lambda_{\alpha_i,\alpha}=2(-1)^i. 
	\end{equation}
	By relation \eqref{cross} twice, we have 
	
	\begin{equation*}
		\raisebox{-.4in}{
			\begin{tikzpicture}
				\filldraw[draw=white,fill=gray!20] (0,0) rectangle (2, 2);
				\node (A) at (0.8,1.5) [left] {$\alpha'$};
				\node (B) at (0.8,0.5) [left] {$\alpha$};
				\draw [line width =1pt] (1,2) arc[start angle=90, end angle=-270, radius=0.7];
				\draw[draw=black,fill=white]  (1,1) circle (0.1);
				\def\circleCenter{(1,0.7)}
				\def\circleRadius{0.7}
				\draw [line width =1pt] \circleCenter ++(-170:\circleRadius) arc (-170:10:\circleRadius);
				\draw[line width =1pt] \circleCenter ++(190:\circleRadius) arc (190:170:\circleRadius);
				\draw[line width =1pt] \circleCenter ++(140:\circleRadius) arc (140:40:\circleRadius);
				\draw[dashed] \circleCenter ++(170:\circleRadius) arc (170:180:\circleRadius);
				\draw[dashed] \circleCenter ++(-10:\circleRadius) arc (-10:0:\circleRadius);
				\filldraw[draw=black,fill=white] (0.9,-0.1) rectangle (1.1, 0.1);
				\filldraw[draw=black,fill=white] (0.9,1.9) rectangle (1.1, 2.1);
			\end{tikzpicture}
		}=q^{-1}
		\raisebox{-0.4in}{
			\begin{tikzpicture}
				\filldraw[draw=white,fill=gray!20] (0,0) rectangle (2, 2);
				\draw[line width =1pt] plot[smooth] coordinates {(1,0)  (0.8, 0.4) (0.6, 1) (0.8, 1.6) (1,2)};
				\draw[line width =1pt] plot[smooth] coordinates {(1,0)  (0.6, 0.4) (0.4, 1) (0.6, 1.6) (1,2)};
				\node (A) at (0.5,1) [left] {$\alpha_1^2$};
				\draw[draw=black,fill=white]  (1,1) circle (0.1);
				\filldraw[draw=black,fill=white] (0.9,-0.1) rectangle (1.1, 0.1);
				\filldraw[draw=black,fill=white] (0.9,1.9) rectangle (1.1, 2.1);
			\end{tikzpicture}
		} +
		\raisebox{-0.4in}{
			\begin{tikzpicture}
				\filldraw[draw=white,fill=gray!20] (0,0) rectangle (2, 2);
				\node (A) at (0.5,1) [left] {$\alpha_1$};
				\draw[line width =1pt] plot[smooth] coordinates {(1,0)  (0.6, 0.4) (0.4, 1) (0.6, 1.6) (1,2)};
				\draw[line width =1pt] plot[smooth] coordinates {(1,0)  (1.4, 0.4) (1.6, 1) (1.4, 1.6) (1,2)};
				\node (B) at (1.5,1) [right] {$\alpha_2$};
				\node (C) at (1,1) [right] {$\alpha_p$};
				\draw[draw=black,fill=white]  (1,1) circle (0.1);
				\draw[line width =1pt]  (1,1) circle (0.5);
				\filldraw[draw=black,fill=white] (0.9,-0.1) rectangle (1.1, 0.1);
				\filldraw[draw=black,fill=white] (0.9,1.9) rectangle (1.1, 2.1);
			\end{tikzpicture}
		}
		+
		q
		\raisebox{-.4in}{
			\begin{tikzpicture}
				\filldraw[draw=white,fill=gray!20] (0,0) rectangle (2, 2);
				\draw[line width =1pt] plot[smooth] coordinates {(1,0)  (1.2, 0.4) (1.4, 1) (1.2, 1.6) (1,2)};
				\draw[line width =1pt] plot[smooth] coordinates {(1,0)  (1.4, 0.4) (1.6, 1) (1.4, 1.6) (1,2)};
				\node (B) at (1.5,1) [right] {$\alpha_2^2$};
				\draw[draw=black,fill=white]  (1,1) circle (0.1);
				\filldraw[draw=black,fill=white] (0.9,-0.1) rectangle (1.1, 0.1);
				\filldraw[draw=black,fill=white] (0.9,1.9) rectangle (1.1, 2.1);
			\end{tikzpicture}
		}.
	\end{equation*}
	
	That is 
	\begin{equation}\label{eq:mu3}
		A_{\alpha'}A_\alpha=q^{-1} A^2_{\alpha_1}+\alpha_p[A_{\alpha_1}A_{\alpha_2}]+q A_{\alpha_2}^2.
	\end{equation}
	
	Then by \eqref{eq:lambda3} \eqref{eq:mu3}, we obtain
	
	\begin{equation*}
		A_{\alpha'}=[A^2_{\alpha_1}A_\alpha^{-1}]+\alpha_p[A_{\alpha_1}A_{\alpha_2}A_\alpha^{-1}]+[A^2_{\alpha_2}A_\alpha^{-1}].
	\end{equation*}
	
	If $\alpha_1$ is a loop, then one of the following two cases holds:
	\begin{equation}\label{eq:lambda4}
		\Lambda_{\alpha_1,\alpha} = 0, \quad \Lambda_{\alpha_2,\alpha} = 4
	\end{equation}
	or
	\begin{equation*}
		\Lambda_{\alpha_1,\alpha} =-4, \quad \Lambda_{\alpha_2,\alpha} = 0.
	\end{equation*}
	We may assume without loss of generality that the first case holds, as the second case follows by a similar argument.
	
	By relation \eqref{cross} twice, we have 
	
	\begin{equation*}
		\begin{split}
			\begin{array}{rcl}
				\raisebox{-.4in}{
					\begin{tikzpicture}
						\filldraw[draw=white,fill=gray!20] (0,0) rectangle (2, 2);
						\node (A) at (1.4,1.8) [right] {$\alpha'$};
						\node (B) at (0.6,1.8) [left] {$\alpha$};
						\draw[line width =1pt] plot[smooth] coordinates {(1,0) (1.75, 0.5)  (1.8, 1.3) (1, 1.8) (0.6, 1.3) (1,1) (1.3, 0.5) (1,0)};
						\draw[line width =1pt] plot[smooth] coordinates {(1,0) (0.25, 0.5)  (0.2, 1.3) (0.7, 1.7)}; 
					\draw[line width =1pt] plot[smooth] coordinates {(1,1.7)  (1.4, 1.3) (1.1,1.1)};
					\draw[line width =1pt] plot[smooth] coordinates { (0.9,0.9) (0.7, 0.5) (1,0)};
					\draw[draw=black,fill=white]  (1,1.3) circle (0.1);
					\filldraw[draw=black,fill=white] (0.9,-0.1) rectangle (1.1, 0.1);
					\filldraw[draw=white,fill=white!200] (1,0.59) circle (0.2);
					\node  at (1.2,0.3) {$2$};
					\node  at (1.5,0.3) {$2$};
					\node  at (0.8,0.3) {$1$};
					\node  at (0.5,0.3) {$1$};
				\end{tikzpicture}
			}
			&=&q^{-1}
			\raisebox{-0.4in}{
				\begin{tikzpicture}
					\filldraw[draw=white,fill=gray!20] (0,0) rectangle (2, 2);
					\draw [line width =1pt] (1,0) arc[start angle=-90, end angle=270, radius=0.9];
					\draw[line width =1pt] (1,0.75) ellipse (0.5cm and 0.75cm);
					\node  at (1.35,0.3) {$2$};
					\node  at (1.65,0.3) {$2$};
					\node  at (0.65,0.3) {$1$};
					\node  at (0.35,0.3) {$1$};
					\draw[draw=black,fill=white]  (1,1.3) circle (0.1);
					\filldraw[draw=black,fill=white] (0.9,-0.1) rectangle (1.1, 0.1);
					\filldraw[draw=white,fill=white!200] (1,0.5) circle (0.2);
				\end{tikzpicture}
			} + 
			\raisebox{-.4in}{
				\begin{tikzpicture}
					\filldraw[draw=white,fill=gray!20] (0,0) rectangle (2, 2);
					\filldraw[draw=white,fill=white!200] (1,0.5) circle (0.2);
					\draw [line width =1pt] (1,0) arc[start angle=-90, end angle=270, radius=0.9];
					\draw [line width =1pt] (1,0) arc[start angle=-90, end angle=270, radius=0.45];
					\draw[draw=black,fill=white]  (1,1.3) circle (0.1);
					\draw[line width =1pt][draw=black]  (1,1.3) circle (0.3);
					\filldraw[draw=black,fill=white] (0.9,-0.1) rectangle (1.1, 0.1);
					\node  at (1.25,0.07) {$2$};
					\node  at (0.75,0.07) {$1$};
				\end{tikzpicture}
			}+q
			\raisebox{-.4in}{
				\begin{tikzpicture}
					\filldraw[draw=white,fill=gray!20] (0,0) rectangle (2, 2);
					\filldraw[draw=white,fill=white!200] (1,0.5) circle (0.2);
					\draw [line width =1pt] (1,0) arc[start angle=-90, end angle=270, radius=0.55];
					\draw [line width =1pt] (1,0) arc[start angle=-90, end angle=270, radius=0.45];
					\filldraw[draw=black,fill=white]  (1,1.3) circle (0.1);
					\filldraw[draw=black,fill=white] (0.9,-0.1) rectangle (1.1, 0.1);
					\node  at (1.25,0.07) {$2$};
					\node  at (0.75,0.07) {$1$};
				\end{tikzpicture}
			}\vspace{1mm}\\
			&\overset{\text{by \eqref{eq-same-height}}}{=}& 
			\raisebox{-0.4in}{
				\begin{tikzpicture}
					\filldraw[draw=white,fill=gray!20] (0,0) rectangle (2, 2);
					\draw [line width =1pt] (1,0) arc[start angle=-90, end angle=270, radius=0.9];
					\draw [line width =1pt] (1,0) arc[start angle=-90, end angle=270, radius=0.75];
					\draw[draw=black,fill=white]  (1,1.3) circle (0.1);
					\filldraw[draw=black,fill=white] (0.9,-0.1) rectangle (1.1, 0.1);
					\filldraw[draw=white,fill=white!200] (1,0.5) circle (0.2);
					\node (A) at (0.2,0.6) [right] {$\alpha^2_1$};
				\end{tikzpicture}
			} + q
			\raisebox{-.4in}{
				\begin{tikzpicture}
					\filldraw[draw=white,fill=gray!20] (0,0) rectangle (2, 2);
					\filldraw[draw=white,fill=white!200] (1,0.5) circle (0.2);
					\draw [line width =1pt] (1,0) arc[start angle=-90, end angle=270, radius=0.9];
					\draw [line width =1pt] (1,0) arc[start angle=-90, end angle=270, radius=0.45];
					\draw[draw=black,fill=white]  (1,1.3) circle (0.1);
					\draw[line width =1pt][draw=black]  (1,1.3) circle (0.3);
					\filldraw[draw=black,fill=white] (0.9,-0.1) rectangle (1.1, 0.1);
					\node (A) at (0.07,1) [right] {$\alpha_1$};
					\node (B) at (1.2,1.3) [right] {$\alpha_p$};
					\node (C) at (1,0.5) [right] {$\alpha_2$};
				\end{tikzpicture}
			}+q^{2}
			\raisebox{-.4in}{
				\begin{tikzpicture}
					\filldraw[draw=white,fill=gray!20] (0,0) rectangle (2, 2);
					\filldraw[draw=white,fill=white!200] (1,0.5) circle (0.2);
					\draw [line width =1pt] (1,0) arc[start angle=-90, end angle=270, radius=0.55];
					\draw [line width =1pt] (1,0) arc[start angle=-90, end angle=270, radius=0.45];
					\filldraw[draw=black,fill=white]  (1,1.3) circle (0.1);
					\filldraw[draw=black,fill=white] (0.9,-0.1) rectangle (1.1, 0.1);
					\node (C) at (1.5,0.7) [right] {$\alpha^2_2$};
				\end{tikzpicture}
			}.
		\end{array}
	\end{split}
\end{equation*}
Here the numbers on the half-edges indicate their height order: a larger number means greater height, half-edges with the same number have the same height, and all half-edges are taken to have the same height when no numbers are assigned.

That is 
\begin{equation}\label{eq:mu4}
	A_{\alpha'}A_\alpha=A^2_{\alpha_1}+q \,\alpha_p\,[A_{\alpha_1}A_{\alpha_2}]+q^{2} A_{\alpha_2}^2.
\end{equation}

Then by \eqref{eq:lambda4} \eqref{eq:mu4}, we obtain

\begin{equation*}
	A_{\alpha'}=[A^2_{\alpha_1}A_\alpha^{-1}]+\alpha_p[A_{\alpha_1}A_{\alpha_2}A_\alpha^{-1}]+[A^2_{\alpha_2}A_\alpha^{-1}].
\end{equation*}

The proof for the case where $w(\alpha_2)\neq 1$ follows a similar approach to the scenario where $w(\alpha_2)=1$ and $\alpha_1$ is a loop, so we omit it.

This completes the proof of \eqref{eq:seedmutation} in the case $w(\alpha)=2$.

{\bf Case 3}. $w(\alpha)=1$. Suppose that $\alpha_1,\alpha_2,\alpha_3,\alpha_4$ are arcs in $\Delta$ such that $(\alpha,\alpha_1,\alpha_2)$ and $(\alpha,\alpha_3,\alpha_4)$ form the clockwise triangles with $\alpha$ as one of their sides. We may assume that $\alpha_1\neq \alpha_3$ and $\alpha_2\neq \alpha_4$, as the other cases follow by a similar argument. 

{\bf Case 3.1}. $(\alpha,\alpha_1,\alpha_2)$ and $(\alpha,\alpha_3,\alpha_4)$ are weight 1 normal triangles, i.e., $w(\alpha_1)=w(\alpha_2)=w(\alpha_3)=w(\alpha_4)=1$. We only consider the case where the shared endpoints of the pairs $(\alpha_1,\alpha_2)$, $(\alpha_2,\alpha_3)$, $(\alpha_3,\alpha_4)$, and $(\alpha_4,\alpha_1)$ are different, as the other cases can be proved similarly. Then for any $i\in \{1,2,3,4\}$, we have
\begin{equation}\label{eq:lambda5}
	\Lambda_{\alpha_i,\alpha}=(-1)^i.
\end{equation}

By relation \eqref{cross}, we have

\begin{equation*}
	\raisebox{-.4in}{
		\begin{tikzpicture}
			\filldraw[draw=white,fill=gray!20] (0.2,0) rectangle (2.2, 2);
			\node (A) at (1.5,1.3) [right] {$\alpha'$};
			\node (B) at (0.8,1.3) [left] {$\alpha$};
			\draw [line width =1pt] (0.2,0) -- (2.2, 2);
			\draw [line width =1pt] (0.2,2) -- (1, 1.2);
			\draw [line width =1pt] (1.4,0.8) -- (2.2, 0);
			\filldraw[draw=black,fill=white] (0.1,-0.1) rectangle (0.3, 0.1);
			\filldraw[draw=black,fill=white] (0.1,1.9) rectangle (0.3, 2.1);
			\filldraw[draw=black,fill=white] (2.1,-0.1) rectangle (2.3, 0.1);
			\filldraw[draw=black,fill=white] (2.1,1.9) rectangle (2.3, 2.1);
		\end{tikzpicture}
	}=q
	\raisebox{-0.4in}{
		\begin{tikzpicture}
			\filldraw[draw=white,fill=gray!20] (0.2,0) rectangle (2.2, 2);
			\node (A) at (0.2,1) [right] {$\alpha_2$};
			\node (B) at (2.2,1) [left] {$\alpha_4$};
			\draw [line width =1pt] (0.2,0) -- (0.2, 2);
			\draw [line width =1pt] (2.2,0) -- (2.2, 2);
			\filldraw[draw=black,fill=white] (0.1,-0.1) rectangle (0.3, 0.1);
			\filldraw[draw=black,fill=white] (0.1,1.9) rectangle (0.3, 2.1);
			\filldraw[draw=black,fill=white] (2.1,-0.1) rectangle (2.3, 0.1);
			\filldraw[draw=black,fill=white] (2.1,1.9) rectangle (2.3, 2.1);
		\end{tikzpicture}
	} + q^{-1}
	\raisebox{-.4in}{
		\begin{tikzpicture}
			\filldraw[draw=white,fill=gray!20] (0.2,0) rectangle (2.2, 2);
			\node (A) at (1.2,0) [above] {$\alpha_3$};
			\node (B) at (1.2,2) [below] {$\alpha_1$};
			\draw [line width =1pt] (0.2,0) -- (2.2, 0);
			\draw [line width =1pt] (0.2,2) -- (2.2, 2);
			\filldraw[draw=black,fill=white] (0.1,-0.1) rectangle (0.3, 0.1);
			\filldraw[draw=black,fill=white] (0.1,1.9) rectangle (0.3, 2.1);
			\filldraw[draw=black,fill=white] (2.1,-0.1) rectangle (2.3, 0.1);
			\filldraw[draw=black,fill=white] (2.1,1.9) rectangle (2.3, 2.1);
		\end{tikzpicture}
	}.
\end{equation*}

That is 
\begin{equation}\label{eq:mu5}
	A_{\alpha'}A_\alpha=q [A_{\alpha_2} A_{\alpha_4}]+q^{-1} [A_{\alpha_1} A_{\alpha_3}].
\end{equation}

Then by \eqref{eq:lambda5} \eqref{eq:mu5}, we obtain

\begin{equation*}
	A_{\alpha'}=[A_{\alpha_1} A_{\alpha_3}A_\alpha^{-1}]+[A_{\alpha_2} A_{\alpha_4}A_\alpha^{-1}].
\end{equation*}

{\bf Case 3.2}. Exactly one of $(\alpha,\alpha_1,\alpha_2)$ and $(\alpha,\alpha_3,\alpha_4)$ is a weight 1 normal triangle; we may assume it is $(\alpha,\alpha_1,\alpha_2)$. 

{\bf Case 3.2.1}. Exactly one of $\alpha_3$ and $\alpha_4$ has weight $1$; we may assume that $w(\alpha_3)=1$. We only consider the case where the shared endpoints of the pairs $(\alpha_1,\alpha_2)$, $(\alpha_2,\alpha)$, and $(\alpha,\alpha_1)$ are different, as the other cases can be proved similarly. Then we have
\begin{equation}\label{eq:lambda6}
	\Lambda_{\alpha_1,\alpha}=-1,\quad  \Lambda_{\alpha_2,\alpha}=1, \quad \Lambda_{\alpha_3,\alpha}=0,\quad \Lambda_{\alpha_4,\alpha}={\rm max}\{1,w(\alpha_4)\}.
\end{equation}

Then by relation \ref{cross}, we have

\begin{equation*}
	\begin{split}
		\begin{array}{rcl}
			\raisebox{-.4in}{
				\begin{tikzpicture}
					\filldraw[draw=white,fill=gray!20] (0,0) rectangle (2, 2);
					\draw[line width =1pt] plot[smooth] coordinates {(0.7,0)  (1.3, 0.5) (1.5, 1) (1.2, 1.45) (1, 1.58) (0.7,1.6) (0.5,1.53) (0,1)};
					\node (B) at (0.7,0.7) [left] {$\alpha$};
					\node (A) at (1.4,0.7) [right] {$\alpha'$};
					\draw [line width =1pt] (0.7,1.7) -- (0.7, 2);
					\draw [line width =1pt] (0.7,0.3) -- (0.7, 1.5);
					\draw [line width =1pt] (0.9,0.9) -- (1.1, 1.1);
					\draw [line width =1pt] (0.9,1.1) -- (1.1, 0.9);
					\filldraw[draw=black,fill=white] (-0.1,0.9) rectangle (0.1, 1.1);
					\filldraw[draw=black,fill=white] (0.6,-0.1) rectangle (0.8, 0.1);
					\filldraw[draw=black,fill=white] (0.6,1.9) rectangle (0.8, 2.1);
				\end{tikzpicture}
			}
			&=&q
			\raisebox{-0.4in}{
				\begin{tikzpicture}
					\filldraw[draw=white,fill=gray!20] (0,0) rectangle (2, 2);
					\node (B) at (-0.1,1.7) [right] {$\alpha_2$};
					\draw [line width =1pt] (0.7,2) -- (0, 1);
					\draw[line width =1pt] plot[smooth] coordinates {(0.7,0)  (1.3, 0.5) (1.5, 1) (1.2, 1.45) (1, 1.5) (0.7,1.4) (0.5,1) (0.7,0.3)};
					\draw [line width =1pt] (0.9,0.9) -- (1.1, 1.1);
					\draw [line width =1pt] (0.9,1.1) -- (1.1, 0.9);
					\filldraw[draw=black,fill=white] (-0.1,0.9) rectangle (0.1, 1.1);
					\filldraw[draw=black,fill=white] (0.6,-0.1) rectangle (0.8, 0.1);
					\filldraw[draw=black,fill=white] (0.6,1.9) rectangle (0.8, 2.1);
				\end{tikzpicture}
			} + q^{-1}
			\raisebox{-.4in}{
				\begin{tikzpicture}
					\filldraw[draw=white,fill=gray!20] (0,0) rectangle (2, 2);
					\node (C) at (1.8,1) [left] {$\alpha_3$};
					\draw [line width =1pt] (0.9,0.9) -- (1.1, 1.1);
					\draw [line width =1pt] (0.9,1.1) -- (1.1, 0.9);
					\draw [line width =1pt] (0,1) -- (0.6, 0.2);
					\draw [line width =1pt] (0.7,0) arc[start angle=-90, end angle=90, radius=1];
					\filldraw[draw=black,fill=white] (-0.1,0.9) rectangle (0.1, 1.1);
					\filldraw[draw=black,fill=white] (0.6,-0.1) rectangle (0.8, 0.1);
					\filldraw[draw=black,fill=white] (0.6,1.9) rectangle (0.8, 2.1);
				\end{tikzpicture}
			}\vspace{1mm}\\
			&\overset{\text{by \eqref{eq-same-height}}}{=}&  q^{\frac{3}{2}}
			\raisebox{-0.4in}{
				\begin{tikzpicture}
					\filldraw[draw=white,fill=gray!20] (0,0) rectangle (2, 2);
					\node (B) at (-0.1,1.7) [right] {$\alpha_2$};
					\draw [line width =1pt] (0.7,2) -- (0, 1);
					\draw[line width =1pt] plot[smooth] coordinates {(0.7,0)  (1.3, 0.5) (1.5, 1) (1.2, 1.45) (1, 1.5) (0.7,1.4) (0.5,1) (0.7,0)};
					\draw [line width =1pt] (0.9,0.9) -- (1.1, 1.1);
					\draw [line width =1pt] (0.9,1.1) -- (1.1, 0.9);
					\filldraw[draw=black,fill=white] (-0.1,0.9) rectangle (0.1, 1.1);
					\filldraw[draw=black,fill=white] (0.6,-0.1) rectangle (0.8, 0.1);
					\filldraw[draw=black,fill=white] (0.6,1.9) rectangle (0.8, 2.1);
				\end{tikzpicture}
			} + q^{-\frac{1}{2}}
			\raisebox{-.4in}{
				\begin{tikzpicture}
					\filldraw[draw=white,fill=gray!20] (0,0) rectangle (2, 2);
					\node (B) at (-0.1,0.3) [right] {$\alpha_1$};
					\node (C) at (1.8,1) [left] {$\alpha_3$};
					\draw [line width =1pt] (0.9,0.9) -- (1.1, 1.1);
					\draw [line width =1pt] (0.9,1.1) -- (1.1, 0.9);
					\draw [line width =1pt] (0,1) -- (0.7, 0);
					\draw [line width =1pt] (0.7,0) arc[start angle=-90, end angle=90, radius=1];
					\filldraw[draw=black,fill=white] (-0.1,0.9) rectangle (0.1, 1.1);
					\filldraw[draw=black,fill=white] (0.6,-0.1) rectangle (0.8, 0.1);
					\filldraw[draw=black,fill=white] (0.6,1.9) rectangle (0.8, 2.1);
				\end{tikzpicture}
			},
		\end{array}
	\end{split}
\end{equation*}
where $\begin{tikzpicture}
	\draw [line width =1pt] (0.9,0.9) -- (1.1, 1.1);
	\draw [line width =1pt] (0.9,1.1) -- (1.1, 0.9);
\end{tikzpicture}\in \{\begin{tikzpicture}
	\filldraw[draw=black,fill=white]  (0.5,0) circle (0.1);
\end{tikzpicture}, \begin{tikzpicture}
	\draw[draw=black,fill=white] (0.4,-0.1) rectangle (0.6,0.1);
\end{tikzpicture}\}.$

That is 
\begin{equation}\label{eq:mu6}
	A_{\alpha'}A_\alpha=q^{-\frac{1}{2}}[A_{\alpha_1} A_{\alpha_3}]+q^{\frac{3}{2}} [A_{\alpha_2}A_{\alpha_4}^{b^\Delta_{\alpha_4,\alpha}}].
\end{equation}

Then by \eqref{eq:lambda6} \eqref{eq:mu6}, we obtain

\begin{equation*}
	A_{\alpha'}=[A_{\alpha_1} A_{\alpha_3}A_\alpha^{-1}]+ [A_{\alpha_2}A_{\alpha_4}^{b^\Delta_{\alpha_4,\alpha}}A_\alpha^{-1}].
\end{equation*}

{\bf Case 3.2.2}. Neither $\alpha_3$ nor $\alpha_4$ has weight $1$. Then $\alpha$ is a loop. We only consider the case where $\alpha_1$ is not a loop, as the other case can be proved similarly. Then we have
\begin{equation}\label{eq:lambda7}
	-\Lambda_{\alpha_1,\alpha}=
	\Lambda_{\alpha_2,\alpha}=2,\quad \Lambda_{\alpha_3,\alpha}= \Lambda_{\alpha_4,\alpha}=0. 
\end{equation}

By relation \eqref{cross} , we have 

\begin{equation*}
	\begin{split}
		\begin{array}{rcl}
			\raisebox{-.4in}{
				\begin{tikzpicture}
					\filldraw[draw=white,fill=gray!20] (0,0) rectangle (2, 2);
					\node (A) at (1,1.7) [left] {$\alpha'$};
					\node (B) at (0.4,0.5) [left] {$\alpha$};
					\draw[line width =1pt] (1,0) -- (1,2);
					\draw [line width =1pt] (0.3,0.7)  arc (-180:0:0.7);
					\def\circleCenter{(1,0.7)}
					\def\circleRadius{0.7}
					\draw[line width =1pt] \circleCenter ++(250:\circleRadius) arc (250:100:\circleRadius);
					\draw[line width =1pt] \circleCenter ++(80:\circleRadius) arc (80:-70:\circleRadius);
					\draw[draw=black]  (0.7,0.5) circle (0.1);
					\draw[line width =1pt] (0.6,0.4) -- (0.8,0.6);
					\draw[line width =1pt] (0.6,0.6) -- (0.8,0.4);
					\draw[line width =1pt] (1.2,0.4) -- (1.4,0.6);
					\draw[line width =1pt] (1.2,0.6) -- (1.4,0.4);
					\node (A) at (1,0.3)  {$2$};
					\node (B) at (0.75,0.07)  {$1$};
					\node (B) at (1.25,0.07)  {$1$};
					\filldraw[draw=black,fill=white] (0.9,-0.1) rectangle (1.1, 0.1);
					\filldraw[draw=black,fill=white] (0.9,1.9) rectangle (1.1, 2.1);
				\end{tikzpicture}
			}
			&=&q^{-1}
			\raisebox{-0.4in}{
				\begin{tikzpicture}
					\filldraw[draw=white,fill=gray!20] (0,0) rectangle (2, 2);
					\def\circleCenter{(1,1)}
					\def\circleRadius{1}
					\draw[line width =1pt] \circleCenter ++(250:\circleRadius) arc (250:90:\circleRadius);
					\draw[draw=black]  (0.7,0.5) circle (0.1);
					\draw[line width =1pt] (0.6,0.4) -- (0.8,0.6);
					\draw[line width =1pt] (0.6,0.6) -- (0.8,0.4);
					\draw[line width =1pt] (1.2,0.4) -- (1.4,0.6);
					\draw[line width =1pt] (1.2,0.6) -- (1.4,0.4);
					\filldraw[draw=black,fill=white] (0.9,-0.1) rectangle (1.1, 0.1);
					\filldraw[draw=black,fill=white] (0.9,1.9) rectangle (1.1, 2.1);
					\draw[line width =1pt] plot[smooth] coordinates {(1,0)   (1.6, 0.3) (1.8, 1) (1.5, 1.45) (1.3, 1.5) (1,1.4) (0.8,1) (1,0)};
					\draw [line width =1pt] (0,1)  arc (-180:-90:1);
					\node at (0.93,0.3)  {$2$};
					\node  at (0.75,0.07)  {$1$};
					\node  at (1.25,0.07)  {$1$};
					\filldraw[draw=black,fill=white] (0.9,-0.1) rectangle (1.1, 0.1);
					\filldraw[draw=black,fill=white] (0.9,1.9) rectangle (1.1, 2.1);
				\end{tikzpicture}
			} 
			+
			q
			\raisebox{-.4in}{
				\begin{tikzpicture}
					\filldraw[draw=white,fill=gray!20] (0,0) rectangle (2, 2);
					\draw[draw=black]  (0.7,0.5) circle (0.1);
					\draw[line width =1pt] (0.6,0.4) -- (0.8,0.6);
					\draw[line width =1pt] (0.6,0.6) -- (0.8,0.4);
					\draw[line width =1pt] (1.2,0.4) -- (1.4,0.6);
					\draw[line width =1pt] (1.2,0.6) -- (1.4,0.4);
					\filldraw[draw=black,fill=white] (0.9,-0.1) rectangle (1.1, 0.1);
					\filldraw[draw=black,fill=white] (0.9,1.9) rectangle (1.1, 2.1);
					\def\circleCenter{(1,1)}
					\def\circleRadius{1}
					\draw[line width =1pt] \circleCenter ++(90:\circleRadius) arc (90:-75:\circleRadius);
					\draw[line width =1pt] plot[smooth] coordinates {(1,0)  (0.4, 0.23) (0.2, 1) (0.5, 1.45) (0.7, 1.5) (1,1.4) (1.2,1) (1,0)};
					\draw [line width =1pt] (1,0)  arc (-90:0:1);
					\node at (1.07,0.3)  {$2$};
					\node  at (0.75,0.07)  {$1$};
					\node  at (1.25,0.07)  {$1$};
					\filldraw[draw=black,fill=white] (0.9,-0.1) rectangle (1.1, 0.1);
					\filldraw[draw=black,fill=white] (0.9,1.9) rectangle (1.1, 2.1);
				\end{tikzpicture}
			}\vspace{1mm}\\
			&\overset{\text{by \eqref{eq-same-height}}}{=}& 
			q^{-1}
			\raisebox{-0.4in}{
				\begin{tikzpicture}
					\filldraw[draw=white,fill=gray!20] (0,0) rectangle (2, 2);
					\def\circleCenter{(1,1)}
					\def\circleRadius{1}
					\draw[line width =1pt] \circleCenter ++(270:\circleRadius) arc (270:90:\circleRadius);
					\node (A) at (0.6,1) [left] {$\alpha_1$};
					\draw[draw=black]  (0.7,0.5) circle (0.1);
					\draw[line width =1pt] (0.6,0.4) -- (0.8,0.6);
					\draw[line width =1pt] (0.6,0.6) -- (0.8,0.4);
					\draw[line width =1pt] (1.2,0.4) -- (1.4,0.6);
					\draw[line width =1pt] (1.2,0.6) -- (1.4,0.4);
					\filldraw[draw=black,fill=white] (0.9,-0.1) rectangle (1.1, 0.1);
					\filldraw[draw=black,fill=white] (0.9,1.9) rectangle (1.1, 2.1);
					\draw[line width =1pt] plot[smooth] coordinates {(1,0)  (1.6, 0.5) (1.8, 1) (1.5, 1.45) (1.3, 1.5) (1,1.4) (0.8,1) (1,0)};
					\filldraw[draw=black,fill=white] (0.9,-0.1) rectangle (1.1, 0.1);
					\filldraw[draw=black,fill=white] (0.9,1.9) rectangle (1.1, 2.1);
				\end{tikzpicture}
			} 
			+
			q
			\raisebox{-.4in}{
				\begin{tikzpicture}
					\filldraw[draw=white,fill=gray!20] (0,0) rectangle (2, 2);
					\draw[draw=black]  (0.7,0.5) circle (0.1);
					\draw[line width =1pt] (0.6,0.4) -- (0.8,0.6);
					\draw[line width =1pt] (0.6,0.6) -- (0.8,0.4);
					\draw[line width =1pt] (1.2,0.4) -- (1.4,0.6);
					\draw[line width =1pt] (1.2,0.6) -- (1.4,0.4);
					\node (B) at (1.4,1) [right] {$\alpha_2$};
					\filldraw[draw=black,fill=white] (0.9,-0.1) rectangle (1.1, 0.1);
					\filldraw[draw=black,fill=white] (0.9,1.9) rectangle (1.1, 2.1);
					\def\circleCenter{(1,1)}
					\def\circleRadius{1}
					\draw[line width =1pt] \circleCenter ++(90:\circleRadius) arc (90:-90:\circleRadius);
					\draw[line width =1pt] plot[smooth] coordinates {(1,0)  (0.4, 0.5) (0.2, 1) (0.5, 1.45) (0.7, 1.5) (1,1.4) (1.2,1) (1,0)};
					\filldraw[draw=black,fill=white] (0.9,-0.1) rectangle (1.1, 0.1);
					\filldraw[draw=black,fill=white] (0.9,1.9) rectangle (1.1, 2.1);
				\end{tikzpicture}
			},
		\end{array}
	\end{split}
\end{equation*}
where $\begin{tikzpicture}
	\draw[draw=black]  (1,1) circle (0.1);
	\draw [line width =1pt] (0.9,0.9) -- (1.1, 1.1);
	\draw [line width =1pt] (0.9,1.1) -- (1.1, 0.9);
\end{tikzpicture}, \begin{tikzpicture}
	\draw [line width =1pt] (0.9,0.9) -- (1.1, 1.1);
	\draw [line width =1pt] (0.9,1.1) -- (1.1, 0.9);
\end{tikzpicture}\in \{\begin{tikzpicture}
	\filldraw[draw=black,fill=white]  (0.5,0) circle (0.1);
\end{tikzpicture}, \begin{tikzpicture}
	\draw[draw=black,fill=white] (0.4,-0.1) rectangle (0.6,0.1);
\end{tikzpicture}\}.$

That is 
\begin{equation}\label{eq:mu7}
	A_{\alpha'}A_\alpha=q^{-1}[A_{\alpha_1} A_{\alpha_3}^{-b^\Delta_{\alpha_3,\alpha}}]+q [A_{\alpha_2}A_{\alpha_4}^{b^\Delta_{\alpha_4,\alpha}}].
\end{equation}
Then by \eqref{eq:lambda7} and \eqref{eq:mu7}, we obtain

\begin{equation*}
	A_{\alpha'}=[A_{\alpha_1} A_{\alpha_3}^{-b^\Delta_{\alpha_3,\alpha}}A_\alpha^{-1}]+ [A_{\alpha_2}A_{\alpha_4}^{b^\Delta_{\alpha_4,\alpha}}A_\alpha^{-1}].
\end{equation*}

{\bf Case 3.3}. Neither $(\alpha,\alpha_1,\alpha_2)$ nor $(\alpha,\alpha_3,\alpha_4)$ is a weight 1 normal triangle. In this case, at least one of $\alpha_1,\alpha_2,\alpha_3,\alpha_4$ has at weight $1$. We may assume that $w(\alpha_1)=1$.

{\bf Case 3.3.1}. Exactly one of $\alpha_3$ and $\alpha_4$ has weight $1$; we may assume that $w(\alpha_3)=1$. We only consider the case that $\alpha$ is not a loop, as the other case can be proved similarly. Then we have
\begin{equation}\label{eq:lambda8}
	\Lambda_{\alpha_1,\alpha}=\Lambda_{\alpha_3,\alpha}=0,\quad \Lambda_{\alpha_2,\alpha}={\rm max}\{1,w(\alpha_2)\},\quad\Lambda_{\alpha_4,\alpha}={\rm max}\{1,w(\alpha_4)\}. 
\end{equation}

It follows that 
\begin{equation}\label{eq:lambda81}
	b^\Delta_{\alpha_2,\alpha}\Lambda_{\alpha_2,\alpha}=b^\Delta_{\alpha_4,\alpha}\Lambda_{\alpha_4,\alpha}=2. 
\end{equation}

By relation \eqref{cross} , we have

\begin{equation*}
	\begin{split}
		\begin{array}{rcl}
			\raisebox{-.4in}{
				\begin{tikzpicture}
					\filldraw[draw=white,fill=gray!20] (0,0) rectangle (2, 2);
					\node (A) at (0.7,1.7) [left] {$\alpha'$};
					\node (B) at (1,0.4) [left] {$\alpha$};
					\draw[line width =1pt] (1,0.2) -- (1,0.9);
					\draw[line width =1pt] (1,1.1) -- (1,1.8);
					\filldraw[draw=black,fill=white] (0.9,-0.1) rectangle (1.1, 0.1);
					\filldraw[draw=black,fill=white] (0.9,1.9) rectangle (1.1, 2.1);
					\def\circleCenter{(1,0.7)}
					\def\circleRadius{0.7}
					\draw[line width =1pt] (0.4,0.9) -- (0.6,1.1);
					\draw[line width =1pt] (0.6,0.9) -- (0.4,1.1);
					\draw[line width =1pt] (1.4,0.9) -- (1.6,1.1);
					\draw[line width =1pt] (1.6,0.9) -- (1.4,1.1);
					\draw[draw=black]  (0.5,1) circle (0.1);
					\draw[line width =1pt] plot[smooth] coordinates {(1,0)  (1.8, 1) (1.5, 1.4) (1,1) (0.5,0.6) (0.2,1) (1,2)};
					\filldraw[draw=black,fill=white] (0.9,-0.1) rectangle (1.1, 0.1);
					\filldraw[draw=black,fill=white] (0.9,1.9) rectangle (1.1, 2.1);
				\end{tikzpicture}
			}
			&=&q^{-1}
			\raisebox{-0.4in}{
				\begin{tikzpicture}
					\filldraw[draw=white,fill=gray!20] (0,0) rectangle (2, 2);
					\def\circleCenter{(1,1)}
					\def\circleRadius{1}
					\draw[line width =1pt] \circleCenter ++(250:\circleRadius) arc (250:90:\circleRadius);
					\draw[draw=black]  (0.5,1) circle (0.1);
					\draw[line width =1pt] (0.4,0.9) -- (0.6,1.1);
					\draw[line width =1pt] (0.6,0.9) -- (0.4,1.1);
					\draw[line width =1pt] (1.4,0.9) -- (1.6,1.1);
					\draw[line width =1pt] (1.6,0.9) -- (1.4,1.1);
					\filldraw[draw=black,fill=white] (0.9,-0.1) rectangle (1.1, 0.1);
					\filldraw[draw=black,fill=white] (0.9,1.9) rectangle (1.1, 2.1);
					\draw[line width =1pt] \circleCenter ++(75:\circleRadius) arc (75:-90:\circleRadius);
					\filldraw[draw=black,fill=white] (0.9,-0.1) rectangle (1.1, 0.1);
					\filldraw[draw=black,fill=white] (0.9,1.9) rectangle (1.1, 2.1);
				\end{tikzpicture}
			} 
			+
			q
			\raisebox{-.4in}{
				\begin{tikzpicture}
					\filldraw[draw=white,fill=gray!20] (0,0) rectangle (2, 2);
					\draw[draw=black]  (0.5,1) circle (0.1);
					\draw[line width =1pt] (0.4,0.9) -- (0.6,1.1);
					\draw[line width =1pt] (0.6,0.9) -- (0.4,1.1);
					\draw[line width =1pt] (1.4,0.9) -- (1.6,1.1);
					\draw[line width =1pt] (1.6,0.9) -- (1.4,1.1);
					\filldraw[draw=black,fill=white] (0.9,-0.1) rectangle (1.1, 0.1);
					\filldraw[draw=black,fill=white] (0.9,1.9) rectangle (1.1, 2.1);
					\def\circleCenter{(1,1)}
					\def\circleRadius{1}
					\draw[line width =1pt] plot[smooth] coordinates {(1,0)  (1.8, 1) (1.5, 1.4) (1.2, 1) (1,0.2)};
					\draw[line width =1pt] plot[smooth] coordinates {(1,2)  (0.2, 1) (0.5, 0.6) (0.8, 1) (1,1.8)};
					\filldraw[draw=black,fill=white] (0.9,-0.1) rectangle (1.1, 0.1);
					\filldraw[draw=black,fill=white] (0.9,1.9) rectangle (1.1, 2.1);
				\end{tikzpicture}
			}\vspace{1mm}\\
			&\overset{\text{by \eqref{eq-same-height}}}{=}& 
			\raisebox{-0.4in}{
				\begin{tikzpicture}
					\filldraw[draw=white,fill=gray!20] (0,0) rectangle (2, 2);
					\def\circleCenter{(1,1)}
					\def\circleRadius{1}
					\draw[line width =1pt] \circleCenter ++(270:\circleRadius) arc (270:90:\circleRadius);
					\node (A) at (0.7,1.3) [left] {$\alpha_1$};
					\node (B) at (1.3,0.7) [right] {$\alpha_3$};
					\draw[draw=black]  (0.5,1) circle (0.1);
					\draw[line width =1pt] (0.4,0.9) -- (0.6,1.1);
					\draw[line width =1pt] (0.6,0.9) -- (0.4,1.1);
					\draw[line width =1pt] (1.4,0.9) -- (1.6,1.1);
					\draw[line width =1pt] (1.6,0.9) -- (1.4,1.1);
					\filldraw[draw=black,fill=white] (0.9,-0.1) rectangle (1.1, 0.1);
					\filldraw[draw=black,fill=white] (0.9,1.9) rectangle (1.1, 2.1);
					\draw[line width =1pt] \circleCenter ++(90:\circleRadius) arc (90:-90:\circleRadius);
					\filldraw[draw=black,fill=white] (0.9,-0.1) rectangle (1.1, 0.1);
					\filldraw[draw=black,fill=white] (0.9,1.9) rectangle (1.1, 2.1);
				\end{tikzpicture}
			} 
			+
			q^2
			\raisebox{-.4in}{
				\begin{tikzpicture}
					\filldraw[draw=white,fill=gray!20] (0,0) rectangle (2, 2);
					\draw[draw=black]  (0.5,1) circle (0.1);
					\draw[line width =1pt] (0.4,0.9) -- (0.6,1.1);
					\draw[line width =1pt] (0.6,0.9) -- (0.4,1.1);
					\draw[line width =1pt] (1.4,0.9) -- (1.6,1.1);
					\draw[line width =1pt] (1.6,0.9) -- (1.4,1.1);
					\filldraw[draw=black,fill=white] (0.9,-0.1) rectangle (1.1, 0.1);
					\filldraw[draw=black,fill=white] (0.9,1.9) rectangle (1.1, 2.1);
					\def\circleCenter{(1,1)}
					\def\circleRadius{1}
					\draw[line width =1pt] plot[smooth] coordinates {(1,0)  (1.8, 1) (1.5, 1.4) (1.2, 1) (1,0)};
					\draw[line width =1pt] plot[smooth] coordinates {(1,2)  (0.2, 1) (0.5, 0.6) (0.8, 1) (1,2)};
					\filldraw[draw=black,fill=white] (0.9,-0.1) rectangle (1.1, 0.1);
					\filldraw[draw=black,fill=white] (0.9,1.9) rectangle (1.1, 2.1);
				\end{tikzpicture}
			},
		\end{array}
	\end{split}
\end{equation*}
where $\begin{tikzpicture}
	\draw[draw=black]  (1,1) circle (0.1);
	\draw [line width =1pt] (0.9,0.9) -- (1.1, 1.1);
	\draw [line width =1pt] (0.9,1.1) -- (1.1, 0.9);
\end{tikzpicture}, \begin{tikzpicture}
	\draw [line width =1pt] (0.9,0.9) -- (1.1, 1.1);
	\draw [line width =1pt] (0.9,1.1) -- (1.1, 0.9);
\end{tikzpicture}\in \{\begin{tikzpicture}
	\filldraw[draw=black,fill=white]  (0.5,0) circle (0.1);
\end{tikzpicture}, \begin{tikzpicture}
	\draw[draw=black,fill=white] (0.4,-0.1) rectangle (0.6,0.1);
\end{tikzpicture}\}.$

That is 
\begin{equation}\label{eq:mu8}
	A_{\alpha'}A_\alpha=[A_{\alpha_1} A_{\alpha_3}]+q^2 [A_{\alpha_2}^{b^\Delta_{\alpha_2,\alpha}}A_{\alpha_4}^{b^\Delta_{\alpha_4,\alpha}}].
\end{equation}
Then by \eqref{eq:lambda8} \eqref{eq:lambda81} and \eqref{eq:mu8}, we obtain

\begin{equation}
	A_{\alpha'}=[A_{\alpha_1} A_{\alpha_3}A_\alpha^{-1}]+ [A_{\alpha_2}^{b^\Delta_{\alpha_2,\alpha}}A_{\alpha_4}^{b^\Delta_{\alpha_4,\alpha}}A_\alpha^{-1}].
\end{equation}

{\bf Case 3.3.2}. Neither $\alpha_3$ nor $\alpha_4$ has weight $1$. Then $\alpha,\alpha_1$ are loops. Then we have
\begin{equation}\label{eq:lambda9}
	\Lambda_{\alpha_1,\alpha}=\Lambda_{\alpha_3,\alpha}=\Lambda_{\alpha_4,\alpha}=0,\quad \Lambda_{\alpha_2,\alpha}=2{\rm max}\{1,w(\alpha_2)\}. 
\end{equation}

It follows that 
\begin{equation}\label{eq:lambda91}
	b^\Delta_{\alpha_2,\alpha}\Lambda_{\alpha_2,\alpha}=4. 
\end{equation}
By relation \eqref{cross}, we have 

\begin{equation}\label{eqqqq-9}
	\begin{split}
		\begin{array}{rcl}
			\raisebox{-.4in}{
				\begin{tikzpicture}
					\filldraw[draw=white,fill=gray!20] (0,0) rectangle (2, 2);
					\node (A) at (1.9,0.6) [left] {$\alpha'$};
					\node (B) at (0.2,0.6) [right] {$\alpha$};
					\filldraw[draw=black,fill=white] (0.9,-0.1) rectangle (1.1, 0.1);
					\def\circleCenter{(1,0.7)}
					\def\circleRadius{0.7}
					\draw[line width =1pt] (0.2,0.9) -- (0.4,1.1);
					\draw[line width =1pt] (0.4,0.9) -- (0.2,1.1);
					\draw[line width =1pt] (0.9,0.9) -- (0.9,1.1);
					\draw[line width =1pt] (1.1,0.9) -- (0.9,0.9);
					\draw[line width =1pt] (1.1,0.9) -- (1.1,1.1);
					\draw[line width =1pt] (1.1,1.1) -- (0.9,1.1);
					\draw[line width =1pt] (1.6,0.9) -- (1.8,1.1);
					\draw[line width =1pt] (1.8,0.9) -- (1.6,1.1);
					\draw[draw=black]  (0.3,1) circle (0.1);
					\draw[line width =1pt] plot[smooth] coordinates {(1,0) (0.5,0.2) (0.3,0.4)  (0.1, 1) (0.3, 1.5) (0.65,1.5) (0.8,1.45) (0.9,1.4)};
					\draw[line width =1pt] plot[smooth] coordinates {(1.1,1.3) (1.2,1) (1.25, 0.5) (1,0)};
					\draw[line width =1pt] plot[smooth] coordinates {(1,0) (1.5,0.2) (1.7,0.4)  (1.9, 1) (1.7, 1.5) (1.35,1.5) (1.2,1.45) (1,1.35) (0.8,1) (0.75, 0.5) (1,0)};
					\filldraw[draw=black,fill=white] (0.9,-0.1) rectangle (1.1, 0.1);
				\end{tikzpicture}
			}
			&=&q^{-1}
			\raisebox{-0.4in}{
				\begin{tikzpicture}
					\filldraw[draw=white,fill=gray!20] (0,0) rectangle (2, 2);
					\draw[line width =1pt] (0.2,0.9) -- (0.4,1.1);
					\draw[line width =1pt] (0.4,0.9) -- (0.2,1.1);
					\draw[line width =1pt] (0.9,0.9) -- (0.9,1.1);
					\draw[line width =1pt] (1.1,0.9) -- (0.9,0.9);
					\draw[line width =1pt] (1.1,0.9) -- (1.1,1.1);
					\draw[line width =1pt] (1.1,1.1) -- (0.9,1.1);
					\draw[line width =1pt] (1.6,0.9) -- (1.8,1.1);
					\draw[line width =1pt] (1.8,0.9) -- (1.6,1.1);
					\draw[draw=black]  (0.3,1) circle (0.1);
					\def\circleCenter{(1,1)}
					\def\circleRadius{1}
					\draw[line width =1pt] \circleCenter ++(270:\circleRadius) arc (270:-90:\circleRadius);
					\draw[line width =1pt] plot[smooth] coordinates {(1,0) (0.75, 0.5) (0.8,1) (1,1.35)  (1.2,1) (1.25, 0.5) (1,0)};
					\filldraw[draw=black,fill=white] (0.9,-0.1) rectangle (1.1, 0.1);
					\node  at (1.2,0.3) {$1$};
					\node  at (1.5,0.15) {$2$};
					\node  at (0.8,0.3) {$2$};
					\node  at (0.5,0.15) {$1$};
					\filldraw[draw=black,fill=white] (0.9,-0.1) rectangle (1.1, 0.1);
				\end{tikzpicture}
			} 
			+
			q
			\raisebox{-.4in}{
				\begin{tikzpicture}
					\filldraw[draw=white,fill=gray!20] (0,0) rectangle (2, 2);
					\draw[line width =1pt] (0.2,0.9) -- (0.4,1.1);
					\draw[line width =1pt] (0.4,0.9) -- (0.2,1.1);
					\draw[line width =1pt] (0.9,0.9) -- (0.9,1.1);
					\draw[line width =1pt] (1.1,0.9) -- (0.9,0.9);
					\draw[line width =1pt] (1.1,0.9) -- (1.1,1.1);
					\draw[line width =1pt] (1.1,1.1) -- (0.9,1.1);
					\draw[line width =1pt] (1.6,0.9) -- (1.8,1.1);
					\draw[line width =1pt] (1.8,0.9) -- (1.6,1.1);
					\draw[draw=black]  (0.3,1) circle (0.1);
					\filldraw[draw=black,fill=white] (0.9,-0.1) rectangle (1.1, 0.1);
					\def\circleCenter{(1,1)}
					\def\circleRadius{1}
					\draw[line width =1pt] plot[smooth] coordinates {(1,0) (1.5,0.25) (1.7,0.45)  (1.9, 1) (1.7,1.3) (1.5,1.2)  (1,0)};
					\draw[line width =1pt] plot[smooth] coordinates {(1,0) (0.5,0.25) (0.3,0.45)  (0.1, 1) (0.3,1.3) (0.5,1.2)  (1,0)};
					\node  at (1.15,0.4) {$1$};
					\node  at (1.35,0.15) {$2$};
					\node  at (0.85,0.4) {$2$};
					\node  at (0.65,0.15) {$1$};
					\filldraw[draw=black,fill=white] (0.9,-0.1) rectangle (1.1, 0.1);
				\end{tikzpicture}
			}\vspace{1mm}\\
			&\overset{\text{by \eqref{eq-same-height}}}{=}& 
			\raisebox{-0.4in}{
				\begin{tikzpicture}
					\filldraw[draw=white,fill=gray!20] (0,0) rectangle (2, 2);
					\draw[line width =1pt] (0.2,0.9) -- (0.4,1.1);
					\draw[line width =1pt] (0.4,0.9) -- (0.2,1.1);
					\draw[line width =1pt] (0.9,0.9) -- (0.9,1.1);
					\draw[line width =1pt] (1.1,0.9) -- (0.9,0.9);
					\draw[line width =1pt] (1.1,0.9) -- (1.1,1.1);
					\draw[line width =1pt] (1.1,1.1) -- (0.9,1.1);
					\draw[line width =1pt] (1.6,0.9) -- (1.8,1.1);
					\draw[line width =1pt] (1.8,0.9) -- (1.6,1.1);
					\draw[draw=black]  (0.3,1) circle (0.1);
					\def\circleCenter{(1,1)}
					\def\circleRadius{1}
					\draw[line width =1pt] \circleCenter ++(270:\circleRadius) arc (270:-90:\circleRadius);
					\draw[line width =1pt] plot[smooth] coordinates {(1,0) (0.75, 0.5) (0.8,1) (1,1.35)  (1.2,1) (1.25, 0.5) (1,0)};
					\filldraw[draw=black,fill=white] (0.9,-0.1) rectangle (1.1, 0.1);
				\end{tikzpicture}
			} 
			+
			q^2
			\raisebox{-.4in}{
				\begin{tikzpicture}
					\filldraw[draw=white,fill=gray!20] (0,0) rectangle (2, 2);
					\draw[line width =1pt] (0.2,0.9) -- (0.4,1.1);
					\draw[line width =1pt] (0.4,0.9) -- (0.2,1.1);
					\draw[line width =1pt] (0.9,0.9) -- (0.9,1.1);
					\draw[line width =1pt] (1.1,0.9) -- (0.9,0.9);
					\draw[line width =1pt] (1.1,0.9) -- (1.1,1.1);
					\draw[line width =1pt] (1.1,1.1) -- (0.9,1.1);
					\draw[line width =1pt] (1.6,0.9) -- (1.8,1.1);
					\draw[line width =1pt] (1.8,0.9) -- (1.6,1.1);
					\draw[draw=black]  (0.3,1) circle (0.1);
					\filldraw[draw=black,fill=white] (0.9,-0.1) rectangle (1.1, 0.1);
					\def\circleCenter{(1,1)}
					\def\circleRadius{1}
					\draw[line width =1pt] plot[smooth] coordinates {(1,0) (1.5,0.25) (1.7,0.45)  (1.9, 1) (1.7,1.3) (1.5,1.2)  (1,0)};
					\draw[line width =1pt] plot[smooth] coordinates {(1,0) (0.5,0.25) (0.3,0.45)  (0.1, 1) (0.3,1.3) (0.5,1.2)  (1,0)};
					\filldraw[draw=black,fill=white] (0.9,-0.1) rectangle (1.1, 0.1);
				\end{tikzpicture}
			},
		\end{array}
	\end{split}
\end{equation}
where $\begin{tikzpicture}
	\draw[line width =1pt] (0.9,0.9) -- (0.9,1.1);
	\draw[line width =1pt] (1.1,0.9) -- (0.9,0.9);
	\draw[line width =1pt] (1.1,0.9) -- (1.1,1.1);
	\draw[line width =1pt] (1.1,1.1) -- (0.9,1.1);
\end{tikzpicture},\begin{tikzpicture}
	\draw[draw=black]  (1,1) circle (0.1);
	\draw [line width =1pt] (0.9,0.9) -- (1.1, 1.1);
	\draw [line width =1pt] (0.9,1.1) -- (1.1, 0.9);
\end{tikzpicture}, \begin{tikzpicture}
	\draw [line width =1pt] (0.9,0.9) -- (1.1, 1.1);
	\draw [line width =1pt] (0.9,1.1) -- (1.1, 0.9);
\end{tikzpicture}\in \{\begin{tikzpicture}
	\filldraw[draw=black,fill=white]  (0.5,0) circle (0.1);
\end{tikzpicture}, \begin{tikzpicture}
	\draw[draw=black,fill=white] (0.4,-0.1) rectangle (0.6,0.1);
\end{tikzpicture}\}.$

That is 
\begin{equation}\label{eq:mu9}
	A_{\alpha'}A_\alpha=[A_{\alpha_1} A_{\alpha_3}^{-b^\Delta_{\alpha_3,\alpha}}]+q^2 [A_{\alpha_2}^{b^\Delta_{\alpha_2,\alpha}}A_{\alpha_4}^{b^\Delta_{\alpha_4,\alpha}}].
\end{equation}
Then by \eqref{eq:lambda9} \eqref{eq:lambda91} and \eqref{eq:mu9}, we obtain

\begin{equation*}
	A_{\alpha'}=[A_{\alpha_1} A_{\alpha_3}^{-b^\Delta_{\alpha_3,\alpha}}A_\alpha^{-1}]+ [A_{\alpha_2}^{b^\Delta_{\alpha_2,\alpha}}A_{\alpha_4}^{b^\Delta_{\alpha_4,\alpha}}A_\alpha^{-1}].
\end{equation*}

This completes the proof of \eqref{eq:seedmutation} in the case $w(\alpha)=1$.

The proof is complete.
\end{proof}

\subsection{$\overline {\cS}_q(\fS)=\mathscr A_q(\fS)$}

From~\cite{FST1}, any two quasi-triangulations of $\fS$ are related by a sequence of flips. In light of Proposition~\ref{prop:mutation}, the generalized quantum cluster algebra associated with the seed ${\bf s}(\Delta)$ is independent of the choice of quasi-triangulation $\Delta$. We denote this algebra with ground ring $\mathfrak R=\widetilde R$ (see Definition \ref{def:qca} and \eqref{eq:groundring}) by $\mathscr{A}_q(\fS) = \mathscr{A}_q({\bf s}(\Delta))$ and call it the \textbf{generalized quantum cluster algebra} associated with $\fS$.

The following result is an immediate corollary.

\begin{corollary}\label{cor:bi}
Let $\fS$ be a quasi-triangulable punctured surface. There is a natural bijection between:
\begin{enumerate}[label={\rm (\alph*)}]
	\item \{\text{Exchangeable quantum cluster variables of }$\mathscr{A}_q(\fS)$\} and \{\text{Interior admissible ideal arcs in }$\fS$\};
	\item \{\text{Frozen cluster variables of }$\mathscr{A}_q(\fS)$\} and \{\text{Boundary ideal arcs in }$\fS$\};
	\item \{\text{Coefficients of }$\mathscr{A}_q(\fS)$\} and \{\text{Peripheral loops in }$\fS$\};
	\item \{\text{Quantum clusters of }$\mathscr{A}_q(\fS)$\} and \{\text{Quasi-triangulations of }$\fS$\}.
\end{enumerate}
In particular, these bijections are compatible with mutation and flip.
\end{corollary}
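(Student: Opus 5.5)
The plan is to read each of the four bijections directly off Proposition~\ref{prop:mutation}, together with the fact from \cite{FST1} that quasi-triangulations are connected by flips.

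\textbf{Clusters versus quasi-triangulations.} Fix an initial quasi-triangulation $\Delta_0$ and the seed ${\bf s}(\Delta_0)$. Every seed mutation-equivalent to ${\bf s}(\Delta_0)$ is reached by a sequence $\mu_{k_1},\dots,\mu_{k_r}$. Apply the corresponding flips to $\Delta_0$; by induction using \eqref{eq:seedmutation}, this produces a quasi-triangulation $\Delta$ with seed equal to ${\bf s}(\Delta)$. Conversely, every quasi-triangulation arises this way, since any two are related by flips. This gives a surjection $\Delta\mapsto(A_\gamma)_{\gamma\in\Delta}$ from quasi-triangulations onto clusters. The flip at $\alpha$ exchanges $A_\alpha$ for $A_{\alpha'}$, and mutation does the same, so the correspondence intertwines flips with mutations.

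\textbf{Cluster variables versus arcs.} Every admissible ideal arc lies in some quasi-triangulation, obtained by extending it to a maximal compatible family. Hence $\gamma\mapsto A_\gamma$ maps interior admissible ideal arcs onto exchangeable cluster variables. It maps boundary edges onto frozen variables, since frozen variables are never mutated and correspond to the boundary edges contained in every $\Delta$. The coefficient strings are constant under mutation. Their only nontrivial entries are the $\alpha_p$, one for each interior puncture, which gives (c). Note that a loop enclosing $p$ is the unique kind of arc with $d_\gamma=2$, and every such loop has string $(1,\alpha_p,1)$.

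\textbf{Injectivity, the main obstacle.} It remains to show that distinct arcs give distinct elements, and that distinct quasi-triangulations give distinct clusters. The first claim implies the second, because a cluster is determined by its set of variables.

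For the first claim, use the basis. By \eqref{eq-D-X-q}, $A_\gamma\qe D_+(\gamma)$. Moreover $D_+(\gamma)\in\widetilde B(\fS)$: a simple arc is not a bad arc and is not a peripheral loop. Non-isotopic admissible arcs give distinct crossingless diagrams, so Corollary~\ref{cor-basis-reduced} shows that $A_\gamma$ and $A_{\gamma'}$ are distinct basis elements up to units of $\wR$. In particular they are not equal.

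Similarly, the $\alpha_p$ are algebraically independent over $\wR$, which gives injectivity in (c). With these points, all four maps are bijections compatible with mutation and flip.
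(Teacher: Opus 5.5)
Your proposal is correct and follows the paper's route. The paper presents this result as an immediate consequence of Proposition~\ref{prop:mutation} together with the flip-connectivity of quasi-triangulations from \cite{FST1}, exactly as in your first two steps. The paper leaves two details implicit, and you handle both correctly: the injectivity argument via the basis $\widetilde{B}(\fS)$ of Corollary~\ref{cor-basis-reduced}, and the fact, visible from the list of region types, that every interior puncture is enclosed by a loop of each quasi-triangulation, which is what you need for surjectivity in (c).
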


\begin{lemma}\label{lem-arc-in-A}
Let $\fS$ be a quasi-triangulable punctured surface, and let
$\alpha\in\rdS$ be a stated simple essential arc (Definition~\ref{def-arcs}). Then
$\alpha\in\mathscr A_q(\fS).$
\end{lemma}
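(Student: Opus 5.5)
We reduce to positively stated arcs, then show that every such arc becomes a cluster variable after a good choice of quasi-triangulation. First observe that in $\rdS$ a stated simple essential arc $\alpha$ with some negative states can be expressed through positively stated ones. We apply relation \eqref{hight} and its consequence \eqref{arc}, or equivalently cut along a boundary edge via Theorem~\ref{thm-splitting-map}. In the reduced algebra, a negative state at an endpoint on a boundary edge $e$ near a boundary puncture can be traded, using the bad-arc relation \eqref{lem-bad-arc-zero} and \eqref{hight}, for products of positively stated arcs and inverses of the boundary arcs $A_e$. Concretely, we follow the standard argument of \cite{LY22}: a negatively stated endpoint equals, up to a power of $q$, the positively stated arc obtained by sliding the endpoint past the adjacent puncture, times $A_e^{-1}$ (the arc passes to the neighbouring boundary edge). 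Since $A_e^{\pm1}$ are frozen variables, and frozen variables together with their inverses lie in $\mathscr A_q(\fS)$, this reduces the lemma to positively stated simple essential arcs.

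Next, let $\alpha$ be a positively stated simple essential arc, so it is either an edge arc or an edge--circle arc. We isotope its endpoints on boundary edges toward the adjacent boundary puncture in the direction used to define $D(\cdot)$. This identifies $\alpha$, up to $\overset{q}{=}$ and a factor $v^{\pm\frac12}$ (both units in $\widetilde R$), with $D_+(\gamma)$ for an ideal arc or pending arc $\gamma$. Several cases then arise. If $\gamma$ is non-trivial and admissible, we extend it to a quasi-triangulation $\Delta\ni\gamma$. Since all quasi-triangulations are connected by flips \cite{FST1}, Proposition~\ref{prop:mutation} shows that $A_\gamma$ is a cluster variable, and hence $\alpha\in\mathscr A_q(\fS)$. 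If $\gamma$ is a boundary arc, $A_\gamma$ is frozen. If $\gamma$ bounds a disk (a trivial arc), then $\alpha$ is a scalar or zero by \eqref{arc}.

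The remaining possibility is that $\gamma$ is an ideal arc enclosing exactly one boundary circle, or a loop around an interior puncture based at a boundary puncture. In this case we use the relation \eqref{eq-square-double-plus}, which gives $v A_{\gamma'}^2 \overset{q}{=} D_+(\gamma)$ for the pending arc $\gamma'$ inside the monogon. Alternatively we note that such loops already appear in quasi-triangulations: $w=2$ loops enclosing interior punctures are admissible edges, and are therefore cluster variables.

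The main obstacle is the first step, namely making the state-change reduction precise when the endpoint lies on a boundary edge whose neighbouring puncture is shared with other endpoints, or when the arc has both ends on the same edge. I would handle this by cutting out a small bigon/triangle neighbourhood and using the explicit $2\times 2$ computation of the reduced bigon, where the negative state $\alpha_{-}$ equals $q^{\pm\frac12}$ times $\alpha_+'A_e^{-1}$ in the localisation of Lemma~\ref{lem-positive-reduced}. One must also check that $\alpha_+'$ is again simple essential. If it is a circle arc or a boundary arc, it is handled by the cases above.
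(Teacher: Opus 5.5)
Your proposal takes the same route as the paper. The paper's proof says only that a positively stated simple essential arc is (up to a unit) a cluster variable, and reduces negative states to positive ones via Lemma~\ref{lem-m-arc}. That lemma is the multi-strand version of the puncture-sliding relation coming from \eqref{hight}, so it settles the configurations you flag as your main obstacle (for example, both ends on one edge) without a separate bigon or triangle computation.

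Your positive case is more careful than the paper's. Trivial arcs, arcs isotopic to boundary edges, and the monogon around a single boundary circle are not literally exchangeable cluster variables; the monogon is handled via \eqref{eq-square-double-plus}, since it lies in no quasi-triangulation. Note that this subcase and the weight-$2$ loop around an interior puncture are two separate cases, not alternatives.
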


\begin{proof}
If $\alpha$ is positively stated, then it is a cluster variable of
$\mathscr A_q(\fS)$ and hence belongs to $\mathscr A_q(\fS)$.

If at least one endpoint of $\alpha$ is negatively stated, the result
follows from Lemma~\ref{lem-m-arc} together with the positively stated
case.
\end{proof}

\begin{theorem}\label{thm:skein=A}
Let $\fS$ be a quasi-triangulable punctured surface. Then we have 
\begin{equation*}
	\overline {\cS}_q(\fS)=\mathscr A_q(\fS). 
\end{equation*}

\end{theorem}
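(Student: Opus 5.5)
We prove the two inclusions separately, working inside ${\rm Frac}(\rdS)$, where the seed ${\bf s}(\Delta)$ lives by Proposition~\ref{prop:mutation}.

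For $\mathscr A_q(\fS)\subseteq\rdS$ we check each type of generator of $\mathscr A_q(\fS)$ listed in Definition~\ref{def:qca}.
\begin{itemize}
\item Every exchangeable cluster variable lies in $\rdS$. By Proposition~\ref{prop:mutation} and Corollary~\ref{cor:bi}, it has the form $A_\gamma$ for an admissible ideal arc $\gamma$. By \eqref{eq-D-X-q}, $A_\gamma$ is an element of $\rdS$.
\item The coefficients lie in $\rdS$. They are $1$ and the peripheral loops $\alpha_p$, which are central elements of $\rdS$.
\item The ground ring $\widetilde R$ lies in $\rdS$ by definition.
\item The frozen variables and their inverses lie in $\rdS$. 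The frozen variables are $A_e$ for boundary edges $e$. Their inverses lie in $\rdS$ by Lemma~\ref{lem-positive-reduced}, since $\rdS=\cS_q^+(\fS)[\mathcal M^{-1}]$. Concretely, $A_e^{-1}$ is, up to a power of $q$, the same boundary edge with the opposite pair of states, by \eqref{hight} and the bad-arc relation.
\end{itemize}
Since the mutations take place in ${\rm Frac}(\rdS)$ and all generators land in $\rdS$, this inclusion follows.

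For $\rdS\subseteq\mathscr A_q(\fS)$ we use generation by arcs. Each component of a quasi-triangulable surface has a boundary puncture, so Corollary~\ref{cor:gen} says that $\rdS$ is generated as an $\widetilde R$-algebra by finitely many stated simple essential arcs. By Lemma~\ref{lem-arc-in-A}, each such arc lies in $\mathscr A_q(\fS)$.

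The remaining point is that every positively stated simple essential arc is actually a cluster variable, which Lemma~\ref{lem-arc-in-A} uses. There are two kinds of such arcs.
\begin{itemize}
\item \emph{Edge-circle arcs.} Such an arc is, up to isotopy, a pending arc. Every pending arc extends to a quasi-triangulation, because compatible admissible arcs can always be completed to a maximal collection.
\item \emph{Edge arcs.} A simple edge arc is isotopic to an ideal arc connecting two boundary punctures. It is non-trivial unless it cuts off a disk, and a trivial one is a bad arc or zero. Such an ideal arc also extends to a quasi-triangulation.
\end{itemize}
In both cases, Proposition~\ref{prop:mutation} together with the flip-connectivity of quasi-triangulations from \cite{FST1} shows that $A_\gamma$, and hence $[D_+(\gamma)]_{\rm norm}$ up to the factor $v^{1/2}$ in $\widetilde R$, is a cluster variable.

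The main obstacle is this identification, in particular for arcs whose ends sit at the same puncture, which acquire height-order normalizations. We handle those normalizations by Lemma~\ref{lem-height-change}. A second point to check is that the negatively stated arcs in Lemma~\ref{lem-m-arc} only involve inverses of frozen boundary-edge variables, which are allowed generators of $\mathscr A_q(\fS)$. With both inclusions in hand, $\rdS=\mathscr A_q(\fS)$.
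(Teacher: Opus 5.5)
Your proposal is correct and follows the paper's proof. The paper gets $\mathscr A_q(\fS)\subseteq\rdS$ from Corollary~\ref{cor:bi}, which you make explicit by listing the generator types and handling the frozen inverses via Lemma~\ref{lem-positive-reduced}; it gets the reverse inclusion from Corollary~\ref{cor:gen} and Lemma~\ref{lem-arc-in-A}, as you do.

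Two small corrections to your elaboration of why positively stated simple essential arcs are cluster variables. First, positively stated corner arcs give the frozen variables $A_e$, not trivial arcs. Second, the edge arc cutting out a monogon around a single boundary circle is, under the orbifold conventions of Remark~\ref{rem:thesame}, not an arc of any quasi-triangulation. It nevertheless lies in $\mathscr A_q(\fS)$ by \eqref{eq-square-double-plus}.
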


\begin{proof}
By Corollary \ref{cor:bi}, we have $\mathscr A_q(\fS)\subset \overline {\cS}_q(\fS)$. Conversely, by Corollary \ref{cor:gen} and Lemma~\ref{lem-arc-in-A}, we see that $\overline {\cS}_q(\fS)\subset \mathscr A_q(\fS)$. Thus we have  
\begin{equation*}
	\overline {\cS}_q(\fS)= \mathscr A_q(\fS),
\end{equation*}
which establishes the proof.

\end{proof}

\begin{remark}
In view of Lemma \ref{lem-positive-reduced}, for any quasi-triangulable punctured surface $\fS$, the Muller--Roger--Yang skein algebra $\cS_q^+(\fS)$ admits a generalized quantum cluster algebra structure whose frozen variables are not invertible.
\end{remark}

Determining whether a given (generalized) cluster algebra is finitely generated is generally a nontrivial task.

The following result, however, provides a clear answer in the case of quasi-triangulable punctured surfaces. It is an immediate consequence of Theorem~\ref{thm:skein=A} and Corollary~\ref{cor:gen}.

\begin{corollary}\label{cor:fg}
The generalized quantum cluster algebra associated with a quasi-triangulable punctured surface is always finitely generated.
\end{corollary}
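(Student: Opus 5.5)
The plan is to combine Theorem~\ref{thm:skein=A} with Corollary~\ref{cor:gen}. Let $\fS$ be quasi-triangulable. By Definition~\ref{def-quasitri}(a), every connected component of $\overline\fS$ contains at least one boundary puncture. This is exactly the hypothesis of Corollary~\ref{cor:gen}, so $\overline{\cS}_q(\fS)$ is generated as a $\wR$-algebra by finitely many stated simple essential arcs $\gamma_1,\dots,\gamma_r$.

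Next I would transport this through the identification $\overline{\cS}_q(\fS)=\mathscr A_q(\fS)$ of Theorem~\ref{thm:skein=A}. Here $\mathscr A_q(\fS)$ is taken over the ground ring $\mathfrak R=\wR$, which is the same ring over which Corollary~\ref{cor:gen} gives generation. Hence $\gamma_1,\dots,\gamma_r$ generate $\mathscr A_q(\fS)$ as an $\mathfrak R$-algebra. To get a statement in the intrinsic cluster language, I would invoke Lemma~\ref{lem-arc-in-A} and its proof. A positively stated $\gamma_i$ is, up to a unit $\prod v^{\pm\frac12}$ and a power of $q^{\frac12}$, a cluster variable $A_{\gamma_i}$. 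A $\gamma_i$ with negative states is obtained from positively stated arcs via Lemma~\ref{lem-m-arc}, which may involve inverses of frozen variables; these inverses are allowed in $\mathscr A_q$.

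It follows that $\mathscr A_q(\fS)$ is generated over $\mathfrak R$ by finitely many cluster variables, frozen variables with their inverses, and the coefficients. The coefficients are the finitely many peripheral loops $\alpha_p$, one for each interior puncture.

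The only step needing care is matching ground rings and making sure that \emph{finitely generated} is meant over $\mathfrak R=\wR$, as in Definition~\ref{def:qca}. Everything else is immediate from Corollary~\ref{cor:gen} and Theorem~\ref{thm:skein=A}.
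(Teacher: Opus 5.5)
Your proposal is correct and matches the paper's argument: the paper states this corollary as an immediate consequence of Theorem~\ref{thm:skein=A} and Corollary~\ref{cor:gen}, and quasi-triangulability supplies the boundary-puncture hypothesis that Corollary~\ref{cor:gen} requires. Your extra step of rewriting the generators in cluster language is not needed for finite generation; if you keep it, a positively stated arc cutting out a monogon around a single boundary circle is better handled by \eqref{eq-square-double-plus}, which writes it (up to a unit) as the square of a pending-arc variable, rather than by identifying it directly with a cluster variable.
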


\begin{remark}
In light of Remark \ref{rem:thesame}, the generalized quantum cluster algebra associated with an unpunctured orbifold is always finitely generated.
\end{remark}

Let $S\subset \mathring{\mathcal P}$, and let $w_p\in R$ for each $p\in S$.
Following Frohman, Kania-Bartoszynska and L{\^e} \cite{FKL}, for a punctured surface $\fS$, we define the \textbf{partially sliced reduced L{\^e}-Roger-Yang skein algebra} $\overline {\cS}^{\bf w}_q(\fS)$ to be 
\begin{equation}\label{eq-sliced-skein}
\overline {\cS}^{{\bf w}}_q(\fS):=\overline {\cS}_q(\fS)/(\alpha_p-w_p\mid p\in S),  
\end{equation} 
where ${\bf w}=(w_p)_{p\in S}$.

The following result is an immediate corollary of Theorem~\ref{thm:skein=A}.

\begin{corollary}\label{thm-sliced-cluster}
The partially sliced reduced L{\^e}-Roger-Yang skein algebra $\overline {\cS}^{\bf w}_q(\fS)$ associated with a quasi-triangulable punctured surface $\fS$ is a generalized quantum cluster algebra.
\end{corollary}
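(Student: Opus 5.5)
The plan is to specialize the peripheral loops of $S$ inside the cluster structure of Theorem~\ref{thm:skein=A}. By that theorem we have $\overline{\cS}_q(\fS)=\mathscr A_q({\bf s}(\Delta))$ for any quasi-triangulation $\Delta$. Here the peripheral loops $\alpha_p$ are exactly the coefficients $h_{\gamma,1}$ of the strings ${\bf h}_\gamma=(1,\alpha_p,1)$. They are central in $\rdS$ and algebraically independent over $\wR$ (Corollary~\ref{cor-basis-reduced}). The natural candidate seed ${\bf s}^{\bf w}(\Delta)$ keeps $\Lambda^\Delta$ and $B^\Delta$. It replaces each string $(1,\alpha_p,1)$ with $p\in S$ by $(1,w_p,1)$, and the cluster variables become the images $\overline{A}_\gamma$ of $A_\gamma$ in $\overline{\cS}^{\bf w}_q(\fS)$. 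The ground ring stays $\wR$, and $\mathcal R$ becomes $\wR[\alpha_p\mid p\in\mathring{\mathcal P}\setminus S]$, with $w_p\in R\subset\wR$. The goal is to show $\overline{\cS}^{\bf w}_q(\fS)=\mathscr A_q({\bf s}^{\bf w}(\Delta))$.

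The steps are as follows.

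\begin{enumerate}
\item Show that ${\bf s}^{\bf w}(\Delta)$ is a generalized quantum seed in a skew field containing $\overline{\cS}^{\bf w}_q(\fS)$.
\begin{itemize}
\item By Corollary~\ref{cor-basis-reduced}, $\rdS$ is free over $\wR[P]$ with basis $\widetilde B(\fS)$. Hence $\overline{\cS}^{\bf w}_q(\fS)=\rdS\otimes_{\wR[P]}\wR[P]/(\alpha_p-w_p)$ is free over $\wR[\alpha_p\mid p\notin S]$ with the same basis.
\item Specialize the $\mathcal A$-quantum trace of Proposition~\ref{prop:quantumtrace} along $\alpha_p\mapsto w_p$. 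This gives a map into the quantum torus $\mathcal A_q(\fS,\Delta)\otimes_{\wR[P]}\wR[P]/(\alpha_p-w_p)$.
\item Injectivity should follow from the sandwich \eqref{eq-sandwitsh}. The basis elements $\iota(A({\bf k}))\qe D_+(\beta_{\bf k})$ are distinct elements of $\widetilde B(\fS)$, and freeness is preserved by base change.
\item Consequently $\overline{\cS}^{\bf w}_q(\fS)$ is an Ore domain whose fraction field equals that of this quantum torus. The quasi-commutations (Proposition~\ref{prop:quasi-commutative}) and compatibility (Proposition~\ref{prop:compatible}) pass to the quotient unchanged.
\end{itemize}

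\item Identify the mutations. The exchange relations in the proof of Proposition~\ref{prop:mutation} hold in $\rdS$. The only coefficient appearing is $\alpha_p$, in \eqref{eq:mu3} and \eqref{eq:mu4}. Applying the quotient map turns $\alpha_p$ into $w_p$ for $p\in S$. Therefore flips correspond to mutations of ${\bf s}^{\bf w}(\Delta)$, and every cluster variable of the specialized seed is the image of the corresponding cluster variable of ${\bf s}(\Delta)$.

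\item Prove the equality. The projection $\rdS\to\overline{\cS}^{\bf w}_q(\fS)$ is surjective and maps the generators of $\mathscr A_q({\bf s}(\Delta))$ onto the generators of $\mathscr A_q({\bf s}^{\bf w}(\Delta))$:
\begin{itemize}
\item cluster variables map to cluster variables;
\item frozen variables and their inverses map to frozen variables and their inverses;
\item $\alpha_p$ maps to $w_p$ for $p\in S$;
\item $\alpha_p$ maps to $\alpha_p$ for $p\notin S$.
\end{itemize}
Theorem~\ref{thm:skein=A} then gives $\overline{\cS}^{\bf w}_q(\fS)=\mathscr A_q({\bf s}^{\bf w}(\Delta))$.
\end{enumerate}

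The main obstacle is step~1. For Definition~\ref{def:generalizedQS} to make sense, the quotient must embed in the fraction field of a quantum torus, and this needs the quotient to be a domain. The risk is that specializing the $\alpha_p$ creates torsion or zero divisors. The plan is to avoid this through freeness over $\wR[P]$, which lets the specialized $\mathcal A$-trace remain injective. A secondary point is that $w_p$ lies in $R$, not in the polynomial ring $\mathcal R$. If Definition~\ref{def:generalizedQS} is read strictly, this is handled by taking the relevant coefficient ring to be $\wR[\alpha_p\mid p\notin S]$.
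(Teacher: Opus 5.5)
Your route is the one the paper intends: it gives no argument beyond calling the statement an immediate consequence of Theorem~\ref{thm:skein=A}. Your Steps~2 and~3 are fine, but they only produce a surjection $\overline{\cS}^{\mathbf w}_q(\fS)\twoheadrightarrow\mathscr A_q({\bf s}^{\mathbf w}(\Delta))$. The statement needs this surjection to be injective, and your Step~1 justification for injectivity of the specialized $\mathcal A$-quantum trace does not work.

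Put $K=\wR[P]/(\alpha_p-w_p\mid p\in S)$ and $C=\mathcal A_q(\fS,\Delta)/{\rm tr}^A_\Delta(\rdS)$. Since $\mathcal A_q(\fS,\Delta)$ is free over $\wR[P]$, the kernel of $\rdS\otimes_{\wR[P]}K\to\mathcal A_q(\fS,\Delta)\otimes_{\wR[P]}K$ is exactly $\mathrm{Tor}_1^{\wR[P]}(C,K)$. For a single $p$, this is the $(\alpha_p-w_p)$-torsion of $C$. Freeness of $\rdS$ and of $\mathcal A_q(\fS,\Delta)$ says nothing about $C$. The distinctness of the $D_+(\beta_{\mathbf k})$ in $\widetilde B(\fS)$ only shows that $\mathcal A^+_q(\fS,\Delta)\otimes K$ injects into $\rdS\otimes K$ and into $\mathcal A_q(\fS,\Delta)\otimes K$.

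Combining this with the Ore step in the proof of Proposition~\ref{prop:quantumtrace} ($A(\mathbf t)X\in\mathcal A^+_q(\fS,\Delta)$) gives only the following: if $\bar X$ has zero specialized trace, then $A(\mathbf t)\bar X=0$ in $\overline{\cS}^{\mathbf w}_q(\fS)$. To conclude $\bar X=0$, you would need left multiplication by $A_\gamma$, $\gamma\in\mathring\Delta$, to be injective on the sliced algebra. That is part of the domain property you are trying to establish, so the argument is circular and the obstacle you correctly singled out is still open.

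Closing it requires a genuinely new input. One sufficient statement is unitriangularity of ${\rm tr}^A_\Delta$ on the basis $\widetilde B(\fS)$, with the following properties:
\begin{itemize}
\item each ${\rm tr}^A_\Delta(b)$ equals a unit of $\wR$, not involving the $\alpha_p$, times a monomial $A(\mathbf g_b)$;
\item all other terms are strictly smaller in a dominance order built from $B^\Delta$, which has full rank by Proposition~\ref{prop:compatible};
\item the map $b\mapsto\mathbf g_b$ is injective.
\end{itemize}
Such leading terms survive every specialization $\alpha_p\mapsto w_p$. This gives linear independence of the specialized traces, hence injectivity. The sandwich and the localization argument at the end of the proof of Proposition~\ref{prop:quantumtrace} then yield the Ore-domain statement you want.

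Lemma~\ref{lem-leading-term} cannot replace this. The leading basis element of a product of peripheral-loop-free diagrams may contain peripheral loops, and these become the scalars $w_p$ (possibly $0$) after slicing.

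Your ``secondary point'' is a non-issue, since $w_p\in R\subset\wR\subset\mathcal R$.
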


\section{Frobenius maps for (reduced) L{\^e}--Roger--Yang skein algebras}\label{Sec-Fro}

As shown in \S\ref{sec-fro-cluster}, if $q^2$ is a root of unity of odd order $\ell$, then the generalized quantum cluster algebra $\mathscr A_q(\fS)$ admits a Frobenius map. Combining this construction with Theorem~\ref{thm:skein=A}, we obtain a Frobenius map
\[
\cF\colon \rd_\eta(\fS)\longrightarrow \rd_q(\fS),
\qquad \eta^{\frac 1 2}=q^{\frac{\ell^2}{2}},
\]
for every quasi-triangulable punctured surface $\fS$; see Corollary~\ref{Cor-Fro-first}.
We then extend this construction to arbitrary punctured surfaces in Theorem~\ref{thm-Fro-main}. We also give an explicit description of the action of $\cF$ on stated $\fS$-tangles in terms of threading: each stated essential arc (Definition~\ref{def-arcs}) is threaded by the monomial $x^\ell$, while each loop or circle arc is threaded by the Chebyshev polynomial $T_\ell$ defined in \eqref{Chebyshev-polynomial} (see \eqref{eq-image-all-diagrams}). Finally, we prove that the Frobenius homomorphism is compatible with the cutting homomorphisms and the quantum trace maps in Theorem~\ref{thm-Fro-main}.

\vs

We state the following root of unity assumption, which will be used repeatedly.

\begin{itemize}
\item[\Rlabel{A}]:  
$q^{\frac{1}{2}}\in R$ is a root of unity such that the order of $q^2$ has odd order $\ell$.  
Set $ \eta^{\frac{1}{2}} = (q^{\frac{1}{2}})^{\ell^2}$. 
\label{RA}
\end{itemize}

\subsection{Threading of stated $\partial \fS$-tangles}

Let $\alpha$ be a connected (stated) $\fS$-tangle. For each
positive integer $m$, let $\alpha^{[m]}$ denote the disjoint union of
$m$ parallel copies of $\alpha$, taken in the direction of its framing.
We require that $\alpha^{[m]}$ be contained in a sufficiently small open
tubular neighborhood of $\alpha$. Define
\begin{align}
\alpha^{(m)}
:= \left(\prod_{v\in C(\alpha)} v^{\frac{m-1}{2}} \right) \alpha^{[m]},\qquad
\text{(see Definition~\ref{def-arcs} for $C(\alpha)$).}
\end{align}
For a polynomial
$P(x)=\sum_{i=1}^{n}c_i x^i\in \mathbb Z[x],$
define
\begin{align}
\alpha^{(P)}
:=
\sum_{i=1}^{n}c_i\alpha^{(i)}.
\end{align}

We use $\begin{array}{c}
\includegraphics[scale=0.7]{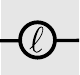}
\end{array}$ (resp. $\begin{array}{c}
\includegraphics[scale=0.7]{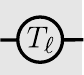}
\end{array}$) to denote 
$\left(\begin{array}{c}
\includegraphics[scale=0.7]{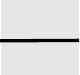}
\end{array} \right)^{(\ell)}$ (resp. $\left(\begin{array}{c}
\includegraphics[scale=0.7]{line.pdf}
\end{array} \right)^{(T_\ell)}$).
In lemmas~\ref{lem-relation-51}-\ref{lem-relation-53}, we state relations regarding these threadings.
These relations will be later to construct Frobenius map and to compute the center.

\begin{lemma}{\cite[Equations~(10) and (41)]{yu2023center}}\label{lem-relation-51}
Under Assumption~\Rlabel{A},
in $\cS_q(\fS)$, for any $\mu,\nu\in\{\pm1\}$, we have
\begin{align}\label{eq-height-l}
	\begin{array}{c}
		\includegraphics[scale=0.7]{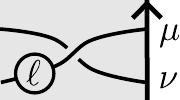}
	\end{array}
	=
	\eta
	\begin{array}{c}
		\includegraphics[scale=0.7]{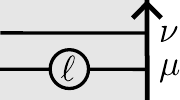}
	\end{array}.
\end{align}
\end{lemma}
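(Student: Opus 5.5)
The plan is to reduce the identity to a computation near a single boundary edge, and then transport a known root-of-unity identity from the bigon. All the strands in the picture end on the same boundary edge $e$, and the two bundles differ only in the height order of their endpoints on $e$. So I first isolate a small neighbourhood of $e$. Cutting $\fS$ along an ideal arc parallel to $e$ and using the cutting homomorphism $\Theta_c$ of Theorem~\ref{thm-splitting-map}(a), I expect to write both sides as images of an identity in the stated skein algebra of the bigon $\mathcal B$. That algebra is a model of $\mathcal O_q({\rm SL}_2)$ \cite{CLL}, and this step is only functorial bookkeeping. Concretely, each bundle of $\ell$ parallel strands with states $\mu$ at $e$ becomes a sum over states on the cut arc. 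Each summand is a product of a fixed tangle away from $e$ and a product of $\ell$ bigon generators near $e$.

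Inside $\mathcal O_q({\rm SL}_2)$ the bundle of $\ell$ parallel copies of the arc with state $\mu$ at $e$ and state $s$ at the cut corresponds to $\ell$-fold products of one matrix entry $u_{\mu s}$. The crucial input is the Frobenius-type fact, valid under Assumption~\Rlabel{A}, that $\ell$-th powers of the matrix entries $q$-commute with the factor obtained by raising the $q$-commutation factors to the $\ell^2$ power, that is, with $\eta$ in place of $q$. There are two kinds of pairs to handle:
\begin{enumerate}
\item Pairs of entries that genuinely $q$-commute, such as $a$ and $b$, or $b$ and $c$. Here this is immediate: if $xy=q^{k}yx$ then $x^\ell y^\ell=q^{k\ell^2}y^\ell x^\ell$.
\item The pairs $a,d$ and $b,c$, whose relation has the correction term $(q-q^{-1})bc$. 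Here I will use the standard fact that $a^\ell,b^\ell,c^\ell,d^\ell$ satisfy the $\mathcal O_\eta({\rm SL}_2)$ relations, so in particular $a^\ell d^\ell-d^\ell a^\ell=(\eta-\eta^{-1})b^\ell c^\ell$.
\end{enumerate}
The second fact follows from $q$-binomial vanishing: $\binom{\ell}{k}_{q^2}=0$ for $0<k<\ell$ because $q^2$ has odd order $\ell$. Alternatively, it can be checked on the faithful quantum-torus picture of the bigon, where it reduces to Lemma~\ref{lem:expan1}.

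Summing over the states at the cut then produces the required factor $\eta$ uniformly in $\mu,\nu$. The correction terms are the analogue of the second summand in \eqref{hight}. In the bundled computation they pair up into $q$-binomial coefficients $\binom{\ell}{k}_{q^2}$ with $0<k<\ell$, and these vanish, which is why no extra term survives in \eqref{eq-height-l}. Signs and the factor of $\eta$ itself are controlled by \eqref{eq-root-of-unity-sign} and by Lemma~\ref{lem-height-change} applied $\ell^2$ times, which gives $q^{\ell^2}=\eta$ in the case of equal states.

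I expect the main obstacle to be the mixed-state case $\mu\neq\nu$. There a naive strand-by-strand application of \eqref{hight} produces a large sum of tangles with returning arcs. The bigon reduction is designed precisely to convert that sum into the algebraic vanishing of $q$-binomials, so the care goes into checking that the cutting map sends the $\ell$-parallel bundle to the $\ell$-th power of the generator, compatibly with the chosen height orders. Since the statement is quoted from \cite[Equations~(10) and (41)]{yu2023center}, I would follow that computation, noting that no boundary circles are involved. Relations \eqref{RY-relation} and \eqref{unknot-RY} play no role, so the argument for the ordinary stated skein algebra applies verbatim in $\cS_q(\fS)$.
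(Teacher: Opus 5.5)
\textbf{There is a genuine gap: the bigon reduction cannot produce the one-edge height exchange.}

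The paper gives no argument of its own here; it imports the relation from \cite{yu2023center}. Your proposal therefore has to stand by itself, and its central step does not work. The two sides of \eqref{eq-height-l} differ only in the height order at the single boundary edge $e$. Cutting along an arc $c$ parallel to $e$ does not remove this, since $e$ is still a boundary edge of the bigon piece.

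Theorem~\ref{thm-splitting-map} lets you choose the order $h$ on $c$, but only the total state sum is independent of $h$; the individual summands are not. There are two options, and both fail.
\begin{itemize}
\item If you use the same $h$ on both sides, the pieces in $\Cut_c\fS$ agree term by term. The bigon pieces are then the product $X_\sigma Y_\tau$ on one side. On the other side you get an element with $X$ above $Y$ on $c$ but $Y$ above $X$ on $e$. That element is not $Y_\tau X_\sigma$, nor any reordered product.
\item If you reverse $h$ to obtain $Y_\tau X_\sigma$, the pieces in $\Cut_c\fS$ now differ by a height exchange at $c$. That is the original problem again.
\end{itemize}
The underlying point is that in the bigon algebra a product $xy$ puts $x$ above $y$ at \emph{both} edges. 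So every relation in your items (1)--(2), that the $\ell$-th powers $a^\ell,b^\ell,c^\ell,d^\ell$ $\eta$-commute, only controls the composite of an exchange at $c$ with an exchange at $e$. Such relations never isolate the exchange at one edge. As a result, the sentences ``summing over the states at the cut produces the factor $\eta$'' and ``the correction terms pair up into $\binom{\ell}{k}_{q^2}$'' carry the entire proof, and nothing in the proposal produces them.

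\textbf{What is missing.} You need an input that acts at a single edge. One option is the height-exchange moves of \cite{le2018triangular}; Lemma~\ref{lem-height-change} is the equal-state case, and for $\mu\neq\nu$ one of the two exchanges produces an extra term in which the two states are swapped. Equivalently, you can use the co-$R$-matrix of $\mathcal O_q({\rm SL}_2)$ acting through the coaction at $e$. You would then still have to show that the correction terms can be compared with one another. These are the terms in which some strands of the bundle have flipped state. The comparison must use that the $\ell$ strands are parallel copies of one arc with a common state, for instance by sliding the resulting half twists along the bundle to its other end, or by multiplicativity of the co-$R$-matrix. The coefficients should then assemble into quantum integers or binomials that vanish under \Rlabel{A}. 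That computation is the actual content of the lemma.

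\textbf{Two further points.}
\begin{itemize}
\item The fact you defer as needing ``care'' is that cutting an $\ell$-bundle involves only constant states along the bundle. This is itself the compatibility of threading with splitting from \cite{BL22}, a substantial result rather than bookkeeping.
\item In this paper the bundle may belong to an edge--circle arc. Any argument that uses such global structure must then address the circle end, so ``boundary circles play no role'' is not automatic.
\end{itemize}
A minor slip: $b$ and $c$ commute in $\mathcal O_q({\rm SL}_2)$; only the pair $a,d$ has a correction term.
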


\begin{lemma}
Under Assumption~\Rlabel{A},
the following relations hold in $\cS_q(\fS)$.

\begin{enumerate}
	\item [\rm (a)] We have 
	\begin{align}\label{eq-height-3}
		\begin{array}{c}
			\includegraphics[scale=0.7]{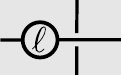}
		\end{array}
		=
		\begin{array}{c}
			\includegraphics[scale=0.7]{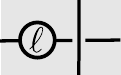}
		\end{array},
	\end{align}
	where the horizontal strand is a part of an essential arc (Definition~\ref{def-arcs}).
	
	\item [\rm (b)] We have \begin{align}\label{eq-height-33}
		\begin{array}{c}
			\includegraphics[scale=0.7]{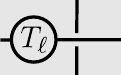}
		\end{array}
		=
		\begin{array}{c}
			\includegraphics[scale=0.7]{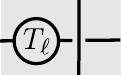}
		\end{array},
	\end{align}
	where the horizontal strand is a part of a loop or a circle arc (Definition~\ref{def-arcs}).
\end{enumerate}

\end{lemma}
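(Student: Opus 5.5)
The plan is to prove both parts locally. Each identity only concerns a neighbourhood of one crossing between a threaded strand and an arbitrary other strand. So it suffices to show that changing every crossing between the other strand and the $\ell$ parallel copies (in (a)), or the $T_\ell$-combination of parallel copies (in (b)), does not change the element. Both sides agree outside a small disk, so in each case I will isotope the crossing to a convenient place and compare there.

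For (a), the horizontal strand is part of an essential arc, so at least one of its endpoints lies on a boundary edge $e$. First slide the crossing strand along the $\ell$ parallel copies, by an isotopy in $\fS\times(-1,1)$ that keeps it either entirely above or entirely below the band, until it reaches $e$. There, passing over the band versus passing under it becomes a change of height order: the endpoint(s) of the crossing strand on $e$ are moved past the block of $\ell$ consecutive endpoints of the threaded arc. Lemma~\ref{lem-relation-51}, i.e.\ \eqref{eq-height-l}, together with \eqref{hight} and Lemma~\ref{lem-height-change}, computes the cost of moving one endpoint past the whole block. The extra terms coming from \eqref{hight} organise into $q$-binomial sums that vanish because $q^2$ has order $\ell$. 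The remaining scalar is $q^{\pm 2\ell}$, or $q^{\pm\ell}$ paired with its inverse from the other side. It therefore equals $1$ under Assumption~\Rlabel{A}. If the crossing strand is itself an arc ending on $e$, one does this for each of its endpoints. If it is a loop, one first pushes it to a parallel arc near $e$ using the same sliding.

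For (b), first take the case where the horizontal strand belongs to a loop. Relations \eqref{cross} and \eqref{unknot} are exactly the Kauffman bracket relations with variable $q$, and Assumption~\Rlabel{A} implies that $q^4$ is a primitive $\ell$-th root of unity. Hence Lê's transparency theorem for $T_\ell$-threaded framed knots applies verbatim inside a small handlebody-neighbourhood avoiding the boundary circles, and the two sides are equal.

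For a circle arc, I will use functoriality and the quantum trace. Take a regular neighbourhood $N$ of the circle arc together with the crossing strand, add enough boundary punctures to make it triangulable, and map it to $\fS$ via Lemma~\ref{lem-functoriality}. In $N$, the injective $\mathcal X$-quantum trace $\tr_\lambda$ sends the circle arc to an expression of the form $x+x^{-1}+y$ in $q$-commuting monomials; this is computed by cutting and using Theorem~\ref{trace-monogon}. By Lemma~\ref{lem-root-Tl}, its $T_\ell$-threading then maps to $x^\ell+x^{-\ell}+y^\ell$. I then check that these $\ell$-th powers commute with the traces of both the over- and under-crossing versions. After cancelling the trace of the crossing strand, this reduces to an identity in a quantum torus where exponents of $q$ become multiples of $2\ell$. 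Injectivity of $\tr_\lambda$ then gives \eqref{eq-height-33}.

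The main obstacle is this circle-arc case. The crossing strand cannot simply be isotoped across the boundary circle, since that would involve \eqref{RY-relation}. Also, the $v^{\frac{m-1}{2}}$ normalisations in $\alpha^{(m)}$ must match the $v^{-\frac12}$ factors produced by \eqref{eq-trace-monogon} so that Lemma~\ref{lem-root-Tl} applies with the correct coefficients. I would first verify this on the annulus $\mathfrak M$ with one extra puncture.
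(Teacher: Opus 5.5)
Your part (a) works, by a more hands-on route than the paper, which simply cites \cite[Theorem~4.10]{BL22} and \cite[Lemma~3.4]{karuo2026center}. It does need the mechanism stated correctly. The crossing strand has no endpoint on $e$ in general. So slide a finger of it just above (resp.\ just below) the band to $e$, and resolve that finger with \eqref{hight} into two stated endpoints height-adjacent to the block of $\ell$ endpoints. Applying \eqref{eq-height-l} to each endpoint gives $\eta^2=1$; re-deriving $q$-binomial cancellations is unnecessary. The loop case of (b) via Lê's transparency theorem is also fine. The sign-free version uses $q^{2\ell}=1$, which Assumption~\Rlabel{A} gives, and the relative form you need is what \cite[Theorem~4.10]{BL22} supplies. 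The genuine gap is the circle-arc case: the paper takes it from \cite[Theorem~5.1]{karuo2026center}, and your quantum-trace argument does not establish it.

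\emph{No valid local model.} A neighbourhood $N$ of $\alpha$ and a segment of the crossing strand does not contain that strand as an $N$-tangle, because its ends lie on the frontier of $N$. Adding boundary punctures only creates boundary edges that would have to be carried to stated walls of $\fS$. Lemma~\ref{lem-functoriality} even requires a diffeomorphism onto a neighbourhood of a punctured boundary component, whereas $v,w$ are boundary circles. So no map $\rd_q(N)\to\cS_q(\fS)$ transports the identity. Since the identity lives in the unreduced $\cS_q(\fS)$, you would also need something like $\iota_*\colon\cS_q(\fS)\hookrightarrow\rd_q(\fS^\ast)$ before $\tr_\lambda$ applies. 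What is missing is a device such as the auxiliary punctures and injective $\overline\Theta_c$ of Lemma~\ref{lem-arc-resolution}.

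\emph{The trace only sees products.} $\tr_\lambda$ is an algebra map, so it compares the two diagrams only if they are literally $\beta\,\alpha^{(T_\ell)}$ and $\alpha^{(T_\ell)}\beta$, with $\beta$ a separate component meeting $\alpha$ once. Otherwise ``cancelling the trace of the crossing strand'' has no meaning.

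\emph{The parity claim is unproved.} That the exponents become multiples of $2\ell$ is exactly what needs proof. Since $q^{\ell}=\eta$ may be $-1$, $x^{\ell\mathbf k}$ commutes with $x^{\mathbf m}$ only up to $\eta^{\mathbf k\overline Q_\lambda\mathbf m^T}$, and this sign is nontrivial in general. It is why $\cF(\alpha)$ is central only for $\alpha\in\cS^{\rm ev}_\eta(\fS)$ in Theorem~\ref{thm-center}. You must show $\mathbf n_\alpha\overline Q_\lambda\mathbf n_\beta^T$ is even in your model, and this is where the fact that $\beta$ has no endpoints on $v,w$ has to enter.

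\emph{Coverage.} The $x+x^{-1}+y$ decomposition, as in Proposition~\ref{prop-Fro-arc-T}, concerns arcs joining two distinct circles. The statement also covers circle arcs with both ends on one circle. Your proposed test surface $\mathfrak M$ has only one boundary circle, so it contains no arc of the kind you analyse.
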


\begin{proof}
$({\rm a})$ This follows from \cite[Theorem~4.10]{BL22} and \cite[Lemma~3.4]{karuo2026center}.

$({\rm b})$ This follows from \cite[Theorem 4.10]{BL22} and \cite[Theorem~5.1]{karuo2026center}.

\end{proof}

\begin{lemma}\label{lem-relation-53}
Under Assumption~\Rlabel{A},
the following relations hold in $\cS_q(\fS)$.

\begin{enumerate}
	\item [\rm(a)] We have
	\begin{align}
		\label{eq-height-2}
		\begin{array}{c}
			\includegraphics[scale=0.7]{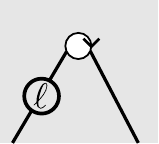}
		\end{array}
		=
		\eta
		\begin{array}{c}
			\includegraphics[scale=0.7]{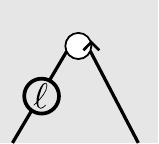}
		\end{array},
	\end{align}
	where the left strand is part of an edge-circle arc (Definition~\ref{def-arcs}).
	
	\item [\rm(b)] We have
	\begin{align}
		\label{eq-height-21}
		\begin{array}{c}
			\includegraphics[scale=0.7]{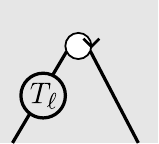}
		\end{array}
		=
		\eta
		\begin{array}{c}
			\includegraphics[scale=0.7]{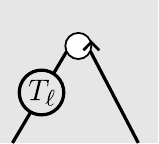}
		\end{array},
	\end{align}
	where the left strand is part of a circle arc (Definition~\ref{def-arcs}).
\end{enumerate}
\end{lemma}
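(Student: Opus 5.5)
The plan is to reduce both identities to a local computation in a small annular neighbourhood $U$ of the boundary circle $v$ on which the left strand ends. Outside $U$ the two sides of \eqref{eq-height-2} (resp. \eqref{eq-height-21}) agree, so by Lemma~\ref{lem-functoriality} it suffices to prove the identity for the diagrams drawn in $U$. Throughout I read the statement as saying that the $\ell$ parallel endpoints of $\alpha^{(\ell)}$ on $v$ are moved past the endpoint of the other strand there, and that this change of height order costs exactly the factor $\eta$. I will also use that, inside $U$, a circle arc looks locally like an edge--circle arc near $v$. This should let (b) follow from (a), except for bookkeeping at the second endpoint, which I would treat by the same local argument at the other circle.

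\textbf{Step 1 (one exchange at a time).} Write the threaded strand as $\alpha^{(\ell)}=v^{\frac{\ell-1}{2}}\alpha^{[\ell]}$. Change the height order at $v$ one endpoint at a time. Each exchange of the other strand's endpoint with one copy of $\alpha$ is computed with relation \eqref{RY-relation}, applied on both sides, followed by \eqref{eq-same-height}. This should give a two-term identity of the form
\[
(\text{swapped}) = q^{c}(\text{original}) + (\text{error term}),
\]
where the error term is a diagram in which the two strands are joined into one strand that runs around $v$. The factor $v^{-1}$ from \eqref{RY-relation} is paired with one of the $v^{\frac12}$ prefactors.

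\textbf{Step 2 (iterate and regroup).} Iterating Step 1 over all $\ell$ copies gives a main term $q^{\ell c}(\text{original})$ and a sum of error terms. To handle the error terms, I would push each strand running around $v$ into the boundary edge where $\alpha$ starts, using \eqref{eq-square-double-plus}. This rewrites everything in terms of strands ending on boundary edges. The whole expression then has the shape $(x+y)^{\ell}$ in two $q^{2}$-commuting pieces: $x$ comes from the copies of $\alpha$ that stay attached to $v$, and $y$ from the copies rerouted around $v$.

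\textbf{Step 3 (root of unity cancellation).} Under Assumption~\Rlabel{A}, $q^{2}$ has odd order $\ell$. So Lemma~\ref{lem:expan1}, or equivalently the $N=1$ case of Theorem~\ref{thm:expan}, kills all mixed terms, and $(x+y)^{\ell}=x^{\ell}+y^{\ell}$. The surviving pure term $y^{\ell}$ is an $\ell$-fold threaded strand on a boundary edge. By Lemma~\ref{lem-relation-51}, together with the transparency relations \eqref{eq-height-3} and \eqref{eq-height-33} to move the other strand across it freely, this term either vanishes or recombines into $\alpha^{(\ell)}$ with the swapped order. Collecting the powers of $q$ should give exponent $\ell^{2}$ up to a multiple of the order, that is, the factor $\eta$.

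The main obstacle is Step 2: organizing the error terms of the iterated \eqref{RY-relation} expansion so that they really form a $q$-binomial-type sum to which Lemma~\ref{lem:expan1} applies. If this regrouping fails, the fallback is to compute instead inside $\rd_q(\mathfrak M)$ for the relevant local piece via the trace isomorphism of Theorem~\ref{trace-monogon}. There $u_\pm\mapsto v^{-\frac12}x^{\pm1}$, and the identity becomes a check in a quantum torus. I would then transport the result back using the cutting map of Theorem~\ref{thm-splitting-map} along an ideal arc separating off $v$.
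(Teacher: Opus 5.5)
There is a genuine gap, and it sits in your Steps 2--3, which carry the entire content of the lemma.

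Step 1 is fine. Applying \eqref{RY-relation} to both height orders of two consecutive endpoints at $v$ gives $D'=q^{\pm1}D+c\,Q$. Here $c$ is a scalar, nonzero once $\ell>1$, and $Q$ is the resolution in which the two strands are joined around $v$. After $\ell$ exchanges the main term is already $q^{\pm\ell}D=\eta D$ by \eqref{eq-eta-ql}. So the lemma is \emph{equivalent} to the exact vanishing of the accumulated error $\sum_{j=1}^{\ell}q^{\pm(\ell-j)}Q^{(j)}$. Here $Q^{(j)}$ joins the other strand to the $j$-th copy of $\alpha$; running around $v$, it generally crosses the remaining copies.

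Nothing in your outline proves this vanishing:
\begin{itemize}
\item \eqref{eq-square-double-plus} concerns two positively stated copies of a pending arc at one boundary puncture. It says nothing about an arc running from the boundary edge of $\alpha$ around $v$ into an arbitrary other strand.
\item The shape $(x+y)^\ell$ is asserted, not derived.
\item The alternative that the pure term ``recombines into $\alpha^{(\ell)}$ with the swapped order'' would move the coefficient away from $\eta$.
\end{itemize}
The missing ingredient is what the paper imports from the computation in \cite[Lemma~3.4]{karuo2026center}. Both height orders expand in the \emph{same} two diagrams, as $q^{\ell/2}X_1+q^{-\ell/2}X_2$ and $q^{-\ell/2}X_1+q^{\ell/2}X_2$. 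Hence the two sides differ by $q^{-\ell/2}(1-q^{2\ell})$ times a single diagram, which vanishes because $q^{2\ell}=1$. In your terms, the $\ell$ error diagrams must collapse to one diagram with coefficient divisible by $1-q^{2\ell}$. That is a geometric-series cancellation, not the $q$-binomial one of Lemma~\ref{lem:expan1}. It is also exactly where the edge--circle hypothesis must enter, and your outline never says how.

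The opening reduction is also not legitimate. An annular neighbourhood $U$ of $v$ has a boundary component lying in the interior of $\mathfrak S$, so Lemma~\ref{lem-functoriality} does not apply. That lemma is also stated for reduced algebras, whereas this one lives in $\mathscr S_q(\mathfrak S)$. More importantly, the identity is not a consequence of relations supported near $v$: it depends on the global type of the left strand. That is why (a) and (b) are separate statements with different threadings, and your own Step 2 uses the boundary edge where $\alpha$ starts, contradicting the reduction.

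For the same reason, (b) does not follow by rerunning (a) at the second circle. There the circle arc carries the $T_\ell$-threading; this is what Theorem~\ref{thm-center} needs for $\mathrm{Im}_q\mathcal F$ to be central, compare \eqref{eq-height-33}. The Chebyshev structure is essential: in Lemma~\ref{lem-root-Tl} and the proof of Proposition~\ref{prop-Fro-arc-T}, it is precisely $T_\ell$ that removes the mixed terms. The paper treats (b) by the same two-term argument, with the expansion taken from \cite[Equation~(4.4)]{karuo2024positivity}.

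Your fallback through $\overline{\mathscr S}_q(\mathfrak M)$ does not work as stated either. The lemma is in the unreduced algebra, so passing to a reduced quotient loses information unless you route through an injective map such as $\iota_\ast\colon\mathscr S_q(\mathfrak S)\to\overline{\mathscr S}_q(\mathfrak S^\ast)$. After cutting, you would still have to control the sum over states on the cut arc.
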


\begin{proof}

$({\rm a})$  The proof of \cite[Lemma~3.4]{karuo2026center} 
implies that
\begin{align*}
	\begin{array}{c}
		\includegraphics[scale=0.7]{arc-circle1.pdf}
	\end{array}
	&=
	q^{\frac{n}{2}}X_1+q^{-\frac{n}{2}}X_2,
	&
	\begin{array}{c}
		\includegraphics[scale=0.7]{arc-circle2.pdf}
	\end{array}
	&=
	q^{-\frac{n}{2}}X_1+q^{\frac{n}{2}}X_2
\end{align*}
for some $X_1,X_2\in\cS_q(\fS)$.

Since $q^{2n}=1$, this yields
\[
\begin{array}{c}
	\includegraphics[scale=0.7]{arc-circle1.pdf}
\end{array}
=
q^n
\begin{array}{c}
	\includegraphics[scale=0.7]{arc-circle2.pdf}
\end{array}
=
\eta
\begin{array}{c}
	\includegraphics[scale=0.7]{arc-circle2.pdf}
\end{array},
\]
as desired.

$({\rm b})$ This follows from
\cite[Equation~(4.4)]{karuo2024positivity} by the same argument
as in part $({\rm a})$.
\end{proof}

Note that $q^\ell = \pm 1$. Since $\ell$ is odd, we have 
\begin{align}\label{eq-eta-ql}
\eta = q^{\ell^2} = q^\ell=\eta^\ell\in\{\pm 1\}.
\end{align}

\begin{remark}\label{rem-com-height1}
Under Assumption~\Rlabel{A},
Relations~\eqref{eq-height-l}--\eqref{eq-height-21} also hold in
$\cS_\eta(\fS)$ after removing the circle
$\begin{array}{c}
	\includegraphics[scale=0.7]{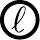}
\end{array}$ or 
$ \begin{array}{c}
	\includegraphics[scale=0.7]{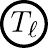}
\end{array}$
from the corresponding relations. 
\end{remark}

\begin{remark}\label{rem-com-height}
Under Assumption~\Rlabel{A}, we have
$\eta^\ell=\eta$. Hence, the height exchange relations in
$\cS_\eta(\fS)$ are compatible with those in $\cS_q(\fS)$ for
$\ell$ parallel copies of stated essential arcs and for
$T_\ell$-threadings of loops and circle arcs.
\end{remark}

\subsection{Construction of the Frobenius maps}
In this subsection, we establish the Frobenius map of the reduced L{\^e}–Roger–Yang skein algebra.
The following Corollary follows directly from
Corollary~\ref{cor-Fro-cluster} and
Theorem~\ref{thm:skein=A}.

\begin{corollary}\label{Cor-Fro-first}
Let $\fS$ be a quasi-triangulable punctured surface (Definition~\ref{def-quasitri}). Under Assumption~\Rlabel{A}, 
there exists a unique $\wR$-algebra (see \eqref{eq-Rtilde}) embedding 
$$\cF\colon \rd_\eta (\fS)\rightarrow \rdS$$
such that the following diagram commutes:
\begin{equation}\label{diag-Fro-cluster}
	\begin{tikzcd}[column sep=large, row sep=large]
		\rd_\eta(\fS)
		\arrow[r, "\cF"]
		\arrow[d, "\cong"']
		&
		\rd_q(\fS)
		\arrow[d, "\cong"]
		\\
		\mathscr A_\eta(\fS)
		\arrow[r, "\mathscr F"']
		&
		\mathscr A_q(\fS)
	\end{tikzcd}.
\end{equation}

\end{corollary}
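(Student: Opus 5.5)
The plan is to define $\cF$ as the composite of the vertical isomorphisms in \eqref{diag-Fro-cluster} with the cluster Frobenius map $\mathscr F$ of Corollary~\ref{cor-Fro-cluster}. Injectivity then follows from that of $\mathscr F$. Uniqueness is automatic once the diagram is required to commute, since both vertical maps are isomorphisms by Theorem~\ref{thm:skein=A}.

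First, fix a quasi-triangulation $\Delta$ and the seed ${\bf s}(\Delta)$ of Proposition~\ref{prop:mutation}, with $\mathfrak R=\wR$. By Proposition~\ref{prop:compatible}, $d_\gamma=4w(\gamma)$ and $\ell_\gamma=d_\gamma=\max\{1,w(\gamma)\}$ with respect to the string \eqref{eq:h}, so $\ell_\gamma\in\{1,2\}$. This means we are in the setting of Corollary~\ref{cor-Fro-cluster}, with $I$ the set of loops enclosing interior punctures and $p_\gamma=\alpha_p$. The ratio $d_\gamma/\ell_\gamma$ equals $2$ for a pending arc ($d=2,\ell=1$), $4$ for an ordinary arc ($d=4,\ell=1$), and $4$ for a loop around an interior puncture ($d=8,\ell=2$). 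Hence the hypothesis reduces to $q^2$ and $q^4$ both having order $\ell$. Under Assumption~\Rlabel{A}, $q^2$ has odd order $\ell$, so $q^4=(q^2)^2$ also has order $\ell$, since $\gcd(2,\ell)=1$. The corollary then gives an injective $\wR$-algebra map $\mathscr F\colon\mathscr A_\eta(\hat{\bf s})\to\mathscr A_q({\bf s})$.

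Second, and this is the main point to check, I must identify $\mathscr A_\eta(\hat{\bf s}(\Delta))$ with $\mathscr A_\eta(\fS)=\mathscr A_\eta({\bf s}(\Delta))$, which Theorem~\ref{thm:skein=A} (applied with $q^{1/2}$ replaced by $\eta^{1/2}$) identifies with $\rd_\eta(\fS)$. By Definition~\ref{def:deformedseed}, $\hat{\bf s}$ has the same $\Lambda$ and $\widetilde B$ as ${\bf s}$, cluster variables $A_\gamma^\ell$, and coefficients $T_\ell(\alpha_p)$. Its quantum torus over $\eta$ therefore has the same shape as the torus of ${\bf s}(\Delta)$ over $\eta$. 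I would define the identification by sending $A_\gamma\in\rd_\eta(\fS)$ to $\hat A_\gamma$ and the peripheral loop $\alpha_p\in\rd_\eta(\fS)$ to the formal coefficient $\hat p_\gamma$. The formal coefficient ring $\mathcal R$ is polynomial in the $\alpha_p$, so $\alpha_p\mapsto\hat p$ is a ring isomorphism. The two seeds then have identical mutation formulas: the exchange relation for $\hat{\bf s}$ is the one for ${\bf s}$ with $q\to\eta$, $A\to\hat A$, $h\to\hat h$, which is exactly what Proposition~\ref{pro:compatible} guarantees. Since the mutation formulas agree, the cluster algebras over $\eta$ are isomorphic. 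This uses that $\mathscr F$ acts on formal symbols, and that the reduced skein algebra at $\eta$ is a generalized quantum cluster algebra with independent coefficients $\alpha_p$, a fact coming from Corollary~\ref{cor-basis-reduced}.

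Finally, I would set $\cF$ to be the composite $\rd_\eta(\fS)\cong\mathscr A_\eta(\fS)\xrightarrow{\mathscr F}\mathscr A_q(\fS)\cong\rd_q(\fS)$. The diagram commutes by construction, and $\cF$ sends $A_\gamma\mapsto A_\gamma^\ell$ and $\alpha_p\mapsto T_\ell(\alpha_p)$. It is $\wR$-linear because $\mathscr F$ is $\mathfrak R$-linear. Uniqueness follows from commutativity of \eqref{diag-Fro-cluster}, since the vertical maps are isomorphisms.
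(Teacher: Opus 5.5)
Your proposal is correct and is essentially the paper's argument, which simply cites Corollary~\ref{cor-Fro-cluster} and Theorem~\ref{thm:skein=A}. You additionally make explicit two checks the paper leaves implicit: that the ratios $d_\gamma/\ell_\gamma\in\{2,4\}$ reduce the order hypothesis to Assumption~\Rlabel{A}, and that $\mathscr A_\eta(\hat{\bf s}(\Delta))$ must be identified with $\mathscr A_\eta(\fS)\cong\rd_\eta(\fS)$. The cleanest way to get that identification is from the injective map of $\eta$-quantum tori $A(c)\mapsto A(\ell c)$, $\alpha_p\mapsto T_\ell(\alpha_p)$, extended to skew fields of fractions; this map intertwines mutations by definition, whereas Proposition~\ref{pro:compatible} is what makes $\mathscr F$ land in $\mathscr A_q(\fS)$ rather than what drives this step.
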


\def\bF{\mathbb{F}}

The commutative diagram~\eqref{diag-Fro-cluster} implies that
\begin{align}\label{eq-image-fro-arc}
\cF(A_\alpha)=A_\alpha^\ell\qquad
\text{(see \eqref{eq-D-X-q})}
\end{align}
for every admissible ideal arc $\alpha$. We next determine the image of an arbitrary stated $\fS$-tangle under the Frobenius map.
The following lemma shows that the image of a
stated simple edge-circle arc $\alpha$ under the Frobenius map is given by $\alpha^{(\ell)}$.

\begin{lemma}\label{lem-Fro-arc-circle}
Let $\alpha$ be a stated simple edge-circle arc in $\fS$ (Definition~\ref{def-arcs}). Then
\[
\cF(\alpha)=\alpha^{(\ell)}. 
\]
\end{lemma}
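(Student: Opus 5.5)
The plan is to reduce to the positively stated case and then use the cluster variable identity \eqref{eq-image-fro-arc}. First take $\alpha$ positively stated. A simple edge-circle arc goes from a boundary edge to a boundary circle $v$. Push its edge endpoint into the boundary puncture at one end of that edge. The result is a pending arc $\beta$, i.e.\ an admissible ideal arc, and $D_+(\beta)=\alpha$ as stated diagrams.

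By \eqref{eq-D-X-q} we have $A_\beta = v^{\frac12}[\alpha]_{\rm norm}$. Since $\alpha$ has a single endpoint on a boundary edge, no height reordering is needed, so $[\alpha]_{\rm norm}=\alpha$. Hence $A_\beta=v^{\frac12}\alpha$ in both $\rd_\eta(\fS)$ and $\rdS$. By \eqref{eq-image-fro-arc}, $\cF(A_\beta)=A_\beta^\ell$, and since $\cF$ is $\wR$-linear,
\[
\cF(\alpha)=v^{-\frac12}\,v^{\frac{\ell}{2}}\alpha^\ell = v^{\frac{\ell-1}{2}}\alpha^\ell .
\]
It remains to identify $\alpha^\ell$ with $\alpha^{[\ell]}$. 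Stacking $\ell$ copies of $\alpha$ gives $\ell$ parallel strands with distinct heights on the edge, all ending on $v$. The strands meeting $v$ are separated using the height-exchange behaviour at circles: each product of two copies differs from the parallel pushoff by a power of $q$ coming from \eqref{eq-same-height}. Keeping track of these powers over the $\ell$ copies, I expect the total factor to collapse to $1$ (possibly after using $\eta=q^\ell$ from \eqref{eq-eta-ql}). This would give $\alpha^\ell=\alpha^{[\ell]}$, so $\cF(\alpha)=v^{\frac{\ell-1}{2}}\alpha^{[\ell]}=\alpha^{(\ell)}$, because $C(\alpha)=\{v\}$.

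Now let the edge endpoint be negatively stated. Use the height relation \eqref{hight}, or Lemma~\ref{lem-m-arc}, to express $\alpha$ through positively stated arcs and boundary arcs that are cluster variables. Alternatively, write $\alpha$ as a $q$-multiple of a product of the positive arc with an inverse frozen variable $A_e^{-1}$, using the reduced relations. Since $\cF(A_e^{\pm1})=A_e^{\pm\ell}$, applying $\cF$ gives an $\ell$-th power expression. The same manipulation applied to $\ell$ parallel copies turns it into $\alpha^{(\ell)}$; this uses the compatibility of height exchanges noted in Remark~\ref{rem-com-height} together with Lemma~\ref{lem-relation-51}.

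The main obstacle is the identification $\alpha^\ell \qe \alpha^{[\ell]}$ with the exact scalar. This requires careful bookkeeping of the $q$-powers from reordering endpoints at the puncture and at the circle, and from the factor $v^{-1}$ in \eqref{RY-relation}. I would first check the count on the annulus $\mathfrak M$ using the trace \eqref{eq-trace-monogon}, where $u_\pm\mapsto v^{-\frac12}x^{\pm1}$. There the computation is monomial, and it confirms that the $v^{\frac{\ell-1}{2}}$ normalization is correct.
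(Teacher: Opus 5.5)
Your route is the paper's. Case~1 is read off from \eqref{eq-image-fro-arc} via $A_\beta=v^{\frac12}\alpha$. The negatively stated case is reduced to it by Lemma~\ref{lem-m-arc}, which is formulated for $m$ parallel strands so that it applies to $\alpha$ and to $\alpha^{[\ell]}$ alike. A small precision: when you push the edge endpoint into a puncture, take the end of the edge from which the move-left of Figure~\ref{fig:move-left} brings it back onto that same edge.

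The step you single out as the main obstacle is not actually an obstacle, and the mechanism you propose for it is mistaken.
\begin{itemize}
\item Because $\alpha$ is crossingless with exactly one endpoint on the boundary edge and one on $v$, it can be placed in a single level $\fS\times\{t\}$ with vertical framing.
\item The stacked product $\alpha^\ell$ is then literally the union of $\ell$ vertical translates of $\alpha$. This is the framing pushoff $\alpha^{[\ell]}$, with the same copy highest at the edge and at $v$.
\item So $\alpha^\ell=\alpha^{[\ell]}$ as tangles, and there is no scalar to track.
\end{itemize}
In particular, nothing needs to be ``separated'' at $v$. The definition of $\alpha^{[\ell]}$ does not ask you to rewrite it in the crossingless basis, so \eqref{RY-relation} never enters. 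Likewise, \eqref{eq-same-height} concerns half-edges at a boundary puncture, not at a circle, so it produces no discrepancy between stacked and parallel copies. Consequently $\cF(\alpha)=v^{-\frac12}A_\beta^\ell=v^{\frac{\ell-1}{2}}\alpha^\ell=\alpha^{(\ell)}$ at once. Neither \eqref{eq-eta-ql} nor a test computation on $\mathfrak M$ is needed.
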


\begin{proof}
We consider the two possible states of $\alpha$.

\noindent\textbf{Case 1: The state of $\alpha$ is $+$.}
The result follows directly from \eqref{eq-image-fro-arc}.

\noindent\textbf{Case 2: The state of $\alpha$ is $-$.}
The result follows from Case~1 together with
Lemma~\ref{lem-m-arc}.
\end{proof}

Before we continue to determine the images of general stated $\fS$-tangles under the Frobenius map, we establish its compatibility with the cutting homomorphisms. We then use this compatibility to compute $\cF(\alpha)$ when $\alpha$ is a simple circle arc; see Proposition~\ref{prop-Fro-arc-T}.

\vs

Let $\lambda$ be an ideal triangulation of $\fS$. The Frobenius
map for the $\overline{\mathcal {X}}$-quantum torus is defined by
\[
\mathbb F\colon
\overline{\mathcal {X}}_\eta(\fS,\lambda)
\longrightarrow
\overline{\mathcal {X}}_q(\fS,\lambda),
\qquad
x_e\longmapsto x_e^\ell,
\quad e\in\lambda.
\]
Similarly, the $\mathcal A$-quantum torus admits a Frobenius
map defined on its generators by taking their $\ell$-th
powers.

The construction of the Frobenius map $\mathscr F$, together with
the commutative diagram \eqref{diag-Fro-cluster}, shows that $\cF$ is
compatible with the $\mathcal A$-quantum trace map. Combining this
compatibility with \eqref{eq-com-traces-AX}, we obtain the following
result.

\begin{lemma}\label{lem-Fro-trace}
Under the assumption of Corollary~\ref{Cor-Fro-first},
the following
diagram commutes:
\begin{equation}
	\label{diag-Fro-trace}
	\begin{tikzcd}
		\rd_\eta(\fS)
		\arrow[r, "\cF"]
		\arrow[d, "\tr_\lambda"']
		&
		\rd_q(\fS)
		\arrow[d, "\tr_\lambda"]
		\\
		\overline{\mathcal {X}}_\eta(\fS,\lambda)
		\arrow[r, "\mathbb F"']
		&
		\overline{\mathcal {X}}_q(\fS,\lambda).
	\end{tikzcd}
\end{equation}
\end{lemma}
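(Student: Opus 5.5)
We need to prove $\tr_\lambda\circ\cF=\mathbb F\circ\tr_\lambda$ on $\rd_\eta(\fS)$, for $\fS$ quasi-triangulable with ideal triangulation $\lambda$. The plan is to reduce to the $\mathcal A$-side, where $\cF$ is explicit, and then transport through the embedding $\psi$ of \eqref{eq-com-traces-AX}.

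We first treat $\lambda=\lambda(\Delta)$ for a quasi-triangulation $\Delta$. By \eqref{eq-com-traces-AX} we have $\tr_\lambda=\psi\circ\tr^A_\Delta$ at both parameters $q$ and $\eta$. By Proposition~\ref{prop:quantumtrace}, $\tr^A_\Delta$ identifies $\rdS$ with a subalgebra of $\mathcal A_q(\fS,\Delta)$ containing $\Ap$.

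The Frobenius map $\mathscr F$ of Corollary~\ref{cor-Fro-cluster} is the restriction of the quantum torus map
\[
\Phi\colon \mathcal A_\eta(\fS,\Delta)\to\mathcal A_q(\fS,\Delta),\qquad A_\alpha\mapsto A_\alpha^\ell,\qquad \alpha_p\mapsto T_\ell(\alpha_p).
\]
This map is well defined because $\eta^{\Lambda_{\alpha,\beta}}=q^{\ell^2\Lambda_{\alpha,\beta}}$ is exactly the commutation constant of $A_\alpha^\ell$ and $A_\beta^\ell$. It is compatible with $\cF$ through the diagram \eqref{diag-Fro-cluster}, since both maps agree on the generators $A_\alpha$, $\alpha\in\Delta$, and on the coefficients. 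It therefore suffices to check that
\[
\psi\circ\Phi=\mathbb F\circ\psi \quad\text{on } \mathcal A_\eta(\fS,\Delta).
\]
Both sides are algebra homomorphisms, so it is enough to check this on generators.

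On $A_\alpha$ we have $\psi(A_\alpha^\ell)=(x^{{\bf k}_\alpha})^\ell$. We also have $\mathbb F(x^{{\bf k}_\alpha})=x^{\ell{\bf k}_\alpha}$, because Weyl-normalized monomials map to Weyl-normalized monomials. These agree, since $(x^{\bf k})^\ell=x^{\ell{\bf k}}$ for Weyl-ordered monomials.

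On $\alpha_p$ we have $\psi(T_\ell(\alpha_p))=T_\ell(x_{e_p}+x_{e_p}^{-1})$. Since $x_{e_p}$ is a commuting variable, this equals $x_{e_p}^\ell+x_{e_p}^{-\ell}$ by the classical identity $T_\ell(y+y^{-1})=y^\ell+y^{-\ell}$. This is $\mathbb F(x_{e_p}+x_{e_p}^{-1})$.

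Extending the identity to inverses $A_\alpha^{-1}$ is automatic. Composing with $\tr^A_\Delta$ then gives \eqref{diag-Fro-trace} for $\lambda=\lambda(\Delta)$.

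For a general ideal triangulation $\lambda$, we connect it to some $\lambda(\Delta)$ by a sequence of flips. We use the naturality of $\tr_\lambda$ under the quantum coordinate change $\Phi^q_{\lambda\lambda'}$ between the skew fields of $\overline{\mathcal X}_q(\fS,\lambda)$ and $\overline{\mathcal X}_q(\fS,\lambda')$, as established in \cite{bloomquist2025degenerations}. We also need the standard fact that the Frobenius map of quantum tori commutes with these coordinate changes at roots of unity: $\mathbb F\circ\Phi^\eta=\Phi^q\circ\mathbb F$, as in \cite{BW16,BL22}. This follows from Lemma~\ref{lem:expan1} applied to the factors $(1+q x_e)$ in the flip formula, using that the relevant commutation constant $q^{2}$ has order $\ell$.

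\textbf{Main obstacle.} The expected main difficulty is this last step. The $\overline{\mathcal X}$-torus here is reduced and may involve self-folded or boundary-circle monogon edges, so we must check that the flip coordinate changes in the reduced setting still have the form where Lemma~\ref{lem:expan1} applies. If this turns out to be delicate, the fallback is to read the statement as concerning $\lambda=\lambda(\Delta)$ only, in which case the first paragraph suffices.
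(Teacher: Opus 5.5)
Your argument for $\lambda=\lambda(\Delta)$ is correct and is essentially the paper's. The paper's one-line proof says that $\cF$ is, by the construction of $\mathscr F$ and \eqref{diag-Fro-cluster}, the restriction of the $\ell$-th power Frobenius on the $\mathcal A$-torus, and then transports this through \eqref{eq-com-traces-AX}. You additionally make explicit the check that $\psi$ intertwines the two Frobenius maps, including $T_\ell(x_{e_p}+x_{e_p}^{-1})=x_{e_p}^\ell+x_{e_p}^{-\ell}$, which the paper leaves implicit.

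One small point: $\Phi\circ\tr^A_\Delta$ and $\tr^A_\Delta\circ\cF$ agreeing on the initial cluster and on the $\alpha_p$ is not agreement on algebra generators of $\rd_\eta(\fS)$. You need the Ore-type step \eqref{eq-proof-A-trace}. Both maps agree on $\mathcal A^+_\eta(\fS,\Delta)$, and every $X$ has some $A({\bf t}')X$ in it. Since $A({\bf t}')$ maps to a unit of a domain, the two maps agree on $X$.

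Your second step, extending to an arbitrary $\lambda$ through flips, goes beyond the paper's proof of this lemma. Through \eqref{eq-com-traces-AX}, that proof only covers triangulations of the form $\lambda(\Delta)$. As written, the extension is unsupported, because it needs two inputs that the paper neither proves nor cites:
\begin{itemize}
\item naturality of the reduced LRY trace $\tr_\lambda$ under flips, which must be set up on the balanced part of the square-root torus and must include flips of the monogon edges around boundary circles;
\item compatibility of $\mathbb F$ with those coordinate changes.
\end{itemize}
The paper handles arbitrary $\lambda$ only later, in Theorem~\ref{thm-Fro-main}(b), and by a different mechanism that avoids coordinate changes entirely: it cuts along all interior edges of $\lambda$ and combines \eqref{diag-Fro-cut} with \eqref{thm-eq-cut-trace}. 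So your stated fallback, reading the lemma as concerning $\lambda=\lambda(\Delta)$, is exactly what the paper's proof establishes.
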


Since all the maps appearing in
\eqref{thm-eq-cut-trace} and \eqref{diag-Fro-trace} are injective,
the commutativity of these two diagrams yields the following result.

\begin{lemma}\label{lem-Fro-Cut}
Let $e$ be an ideal arc which is not a boundary edge. Under the assumption of Corollary~\ref{Cor-Fro-first}, the following
diagram commutes:
\begin{equation}
	\label{diag-Fro-cut}
	\begin{tikzcd}
		\rd_\eta(\fS)
		\arrow[r, "\cF"]
		\arrow[d, "\overline \Theta_e"']
		&
		\rd_q(\fS)
		\arrow[d, "\overline \Theta_e"]
		\\
		\rd_\eta(\Cut_e\fS)
		\arrow[r, "\cF"']
		&
		\rd_q(\Cut_e\fS).
	\end{tikzcd}
\end{equation} 
\end{lemma}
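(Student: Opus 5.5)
The plan is to deduce the commutativity of \eqref{diag-Fro-cut} from two facts: the Frobenius map is compatible with the $\mathcal X$-quantum trace (Lemma~\ref{lem-Fro-trace}), and the quantum trace is compatible with cutting (diagram \eqref{thm-eq-cut-trace}). The idea is to check the identity after composing with the injective map $\tr_{\Cut_e\lambda}$ on $\rd_q(\Cut_e\fS)$.

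First, choose an ideal triangulation $\lambda$ of $\fS$ containing $e$. Since $\fS$ is quasi-triangulable, we can take $\lambda=\lambda(\Delta)$ for a quasi-triangulation $\Delta$ containing $e$, after flipping if necessary so that $e$ appears. Here one has to make sure Lemma~\ref{lem-Fro-trace} applies both to $\lambda$ on $\fS$ and to $\Cut_e\lambda$ on $\Cut_e\fS$. The latter requires that $\Cut_e\fS$ be quasi-triangulable and that $\Cut_e\lambda$ be of the form $\lambda(\Delta')$. Alternatively, since $\overline\Theta_e$ is local, one can compare traces built from any triangulation for which Lemma~\ref{lem-Fro-trace} has been verified.

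Next, for $X\in\rd_\eta(\fS)$, compute
\[
\tr_{\Cut_e\lambda}\bigl(\overline\Theta_e(\cF(X))\bigr)
=\bS_e\bigl(\tr_\lambda(\cF(X))\bigr)
=\bS_e\bigl(\mathbb F(\tr_\lambda(X))\bigr).
\]
Then check the elementary identity $\bS_e\circ\mathbb F=\mathbb F\circ\bS_e$ on quantum tori. On generators $x_a$ with $a\ne e$ it is trivial. For $a=e$ one needs $[x_{e'}x_{e''}]_{\rm norm}^\ell=[x_{e'}^\ell x_{e''}^\ell]_{\rm norm}$, which holds because normalized monomials satisfy $[x^{\bf k}]^\ell=[x^{\ell{\bf k}}]$. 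Continuing the computation,
\[
\mathbb F\bigl(\bS_e(\tr_\lambda(X))\bigr)
=\mathbb F\bigl(\tr_{\Cut_e\lambda}(\overline\Theta_e(X))\bigr)
=\tr_{\Cut_e\lambda}\bigl(\cF(\overline\Theta_e(X))\bigr).
\]
Injectivity of $\tr_{\Cut_e\lambda}$ then gives the result.

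The main obstacle is the first step: ensuring that both $\fS$ and $\Cut_e\fS$ fall under the hypotheses of Corollary~\ref{Cor-Fro-first}, so that $\cF$ is defined on $\Cut_e\fS$, and that Lemma~\ref{lem-Fro-trace} is available for the triangulations used. If $\Cut_e\fS$ has a component that is not quasi-triangulable, for example a triangle or a monogon with a circle, I would treat such components directly. I would verify commutativity on the generating essential arcs of Corollary~\ref{cor:gen}, using \eqref{eq-image-fro-arc} together with Lemma~\ref{lem-m-arc}.
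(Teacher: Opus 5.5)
Your proposal is correct and follows the paper's route: the paper proves Lemma~\ref{lem-Fro-Cut} in one line by combining the cut--trace compatibility \eqref{thm-eq-cut-trace} with the Frobenius--trace compatibility \eqref{diag-Fro-trace} and using injectivity of $\tr_{\Cut_e\lambda}$, which is your chain of equalities. Your explicit check that $\mathbb S_e\circ\mathbb F=\mathbb F\circ\mathbb S_e$ on the quantum tori, and your attention to whether Lemma~\ref{lem-Fro-trace} is available on $\Cut_e\fS$ (for instance when a one-circle-monogon component appears), supply details the paper leaves implicit. One small correction: $e$ only needs to lie in some triangulation $\lambda$, not in a quasi-triangulation $\Delta$.
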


A direct computation yields
\begin{equation}
\label{eq-uminusplus}
\begin{array}{c}
	\includegraphics[scale=1]{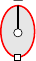}
\end{array}
\begin{array}{c}
	\includegraphics[scale=1]{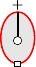}
\end{array}
=
v^{-1}
\in \rd_q(\mathfrak M),
\end{equation}
where $\mathfrak M$ is the punctured surface depicted in the figures
and $v$ denotes its boundary component shown therein.
The following Proposition shows that the image of a simple circle arc under the Frobenius map is given by threading it along the Chebyshev polynomial $T_\ell$.

\begin{proposition}\label{prop-Fro-arc-T}
Let $\alpha$ be a simple arc connecting two distinct boundary circles $v$ and $w$. Under the assumption of Corollary~\ref{Cor-Fro-first}, we have
\begin{align}\label{eq-Fro-arc-T}
	\cF(\alpha)
	= \alpha^{(T_\ell)}.
\end{align}
\end{proposition}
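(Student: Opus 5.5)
We prove \eqref{eq-Fro-arc-T} by cutting $\alpha$ into two edge--circle arcs along an ideal arc, using Lemma~\ref{lem-Fro-arc-circle} on each piece and Lemma~\ref{lem-Fro-Cut} to glue. We work on a quasi-triangulable surface, since $\cF$ is so far only defined there. Choose an interior ideal arc $e$ crossing $\alpha$ exactly once, transversally; for instance, take the boundary of a small neighborhood of the union of $v$, $w$ and a boundary puncture, arranged so that it separates $v$ from $w$. We may take $e$ to be part of an ideal triangulation, since Lemma~\ref{lem-Fro-Cut} holds for any non-boundary ideal arc. By Theorem~\ref{thm-splitting-map},
\[
\overline\Theta_e(\alpha)=\alpha_v^{+}\alpha_w^{+}+\alpha_v^{-}\alpha_w^{-},
\]
where $\alpha_v^{\pm}$ and $\alpha_w^{\pm}$ are the two edge--circle arcs in $\Cut_e\fS$ with state $\pm$ on $e$. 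After cutting, these pieces lie in disjoint regions near the two copies of $e$, so they commute up to the factors recorded in Lemma~\ref{lem-height-change}. Placing them at the same height, we may write $X=\alpha_v^+\alpha_w^+$ and $Y=\alpha_v^-\alpha_w^-$, each well defined up to a power of $q$ that we track.

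By Lemma~\ref{lem-Fro-Cut} and Lemma~\ref{lem-Fro-arc-circle},
\[
\overline\Theta_e(\cF(\alpha))=\cF(X+Y)=\cF(\alpha_v^+)\cF(\alpha_w^+)+\cF(\alpha_v^-)\cF(\alpha_w^-)=(\alpha_v^+)^{(\ell)}(\alpha_w^+)^{(\ell)}+(\alpha_v^-)^{(\ell)}(\alpha_w^-)^{(\ell)}.
\]
Since $\overline\Theta_e$ is injective, it suffices to show that $\overline\Theta_e(\alpha^{(T_\ell)})$ equals this expression. We have $\overline\Theta_e(\alpha^{[k]})=(X+Y)^k$ with suitable height conventions, so
\[
\overline\Theta_e(\alpha^{(T_\ell)})=\sum_k c_k (vw)^{\frac{k-1}{2}}(X+Y)^k .
\]

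The key algebraic input is the local model in $\Cut_e\fS$. Near $v$ the pieces $\alpha_v^{\pm}$ behave like $u_\pm$ in $\mathfrak M$, with $u_+u_-=v^{-1}$ by \eqref{eq-uminusplus}, and similarly near $w$. Thus $XY\qe (vw)^{-1}$, and after rescaling $x:=(vw)^{1/2}X$ one gets $(vw)^{1/2}Y=x^{-1}$ up to a power of $q$. The contributions therefore reduce to Chebyshev identities in a single variable:
\[
\sum_k c_k (vw)^{\frac{k-1}{2}}(X+Y)^k=(vw)^{-\frac12}\,T_\ell\bigl(x+x^{-1}\bigr)=(vw)^{-\frac12}\bigl(x^\ell+x^{-\ell}\bigr)=(vw)^{\frac{\ell-1}{2}}\bigl(X^\ell+Y^\ell\bigr).
\]
The second equality is Lemma~\ref{lem-root-Tl} with $y=0$; if $x$ and $x^{-1}$ $q$-commute nontrivially one uses the version for $xy=\omega yx$. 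The right-hand side matches the threading of the cut pieces, namely $(\alpha_v^{\pm})^{(\ell)}(\alpha_w^{\pm})^{(\ell)}$ together with the $v^{\frac{\ell-1}{2}}w^{\frac{\ell-1}{2}}$ factors coming from $C(\cdot)$.

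The main obstacle is the bookkeeping of $q$-powers from height orders and from normalization. Two things must be checked: that $X$ and $Y$ commute (so that the identity is a genuine single-variable one), and that $X^\ell$ agrees with $(\alpha_v^+)^{(\ell)}(\alpha_w^+)^{(\ell)}$ exactly, not merely up to a sign $\eta\in\{\pm1\}$. For the second point we would use Lemma~\ref{lem-m-arc}, Lemma~\ref{lem-relation-51}, and \eqref{eq-eta-ql}, together with Remark~\ref{rem-com-height}. For the first, a cleaner alternative would be to pass through $\tr_\lambda$ and Lemma~\ref{lem-Fro-trace}, where $\alpha$ maps to a sum $x^{\bf a}+x^{\bf b}$ in the quantum torus, and then apply Lemma~\ref{lem-root-Tl} there.
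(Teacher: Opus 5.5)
Your reduction depends on the claim $XY\qe(vw)^{-1}$ together with $XY=YX$, and this fails for a single cut. An ideal arc $e$ that meets $\alpha$ once and separates $v$ from $w$ can cut off a copy of $\mathfrak M$ on at most one side, say around $v$. The other component $\fS_1$ of $\Cut_e\fS$ contains $w$ and the rest of the surface.

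The identity \eqref{eq-uminusplus} is not local near the circle. Resolving $u_+u_-$ by \eqref{RY-relation} gives a trivial arc and an arc encircling $v$. The second one dies only because in $\mathfrak M$ it is a corner arc at the unique puncture, i.e.\ a bad arc. In $\fS_1$, resolving $\alpha_w^+\alpha_w^-$ at $w$ gives $w^{-1}$ times a scalar, plus a nonzero multiple of $w^{-1}\gamma$. Here $\gamma$ is an edge arc from $e''$ to $e''$ encircling $w$. Its other side contains both endpoints of $e''$, so $\gamma$ is neither trivial nor a corner arc, and it is not a scalar in $\rd_q(\fS_1)$ by Theorem~\ref{thm-basis-reduced}.

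Hence $XY=v^{-1}\otimes\alpha_w^+\alpha_w^-$ is not a scalar, $X$ and $Y$ need not commute, and nothing reduces to the one-variable identity $T_\ell(x+x^{-1})=x^\ell+x^{-\ell}$. Note also that $x$ and $x^{-1}$ always commute. The $q$-commuting variable in Lemma~\ref{lem-root-Tl} is the third summand $y$, which your two-term decomposition does not have. Your fallback has the same flaw: $\tr_\lambda(\alpha)$ is not a two-term sum.

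The missing idea is to cut twice.
\begin{enumerate}
\item Take ideal arcs $e_1,e_2$ based at a common boundary puncture, cutting off monogons around $v$ and $w$ respectively (Figure~\ref{fig:alpha-cut}). Set $\Theta=\overline\Theta_{e_1}\circ\overline\Theta_{e_2}$.
\item Both ends of $\alpha$ now lie in copies of $\mathfrak M$, where \eqref{eq-uminusplus} genuinely holds, and the middle piece is a corner arc. So one of the four state assignments is a bad arc, and $\Theta(\alpha)=A+B+C$ with $BC=CB=v^{-1}w^{-1}$ and $AB=q^{-2}BA$.
\item Apply Lemma~\ref{lem-root-Tl} with $\omega=q^2$ to $x=\sqrt{vw}\,B$, $x^{-1}=\sqrt{vw}\,C$ and $y=\sqrt{vw}\,A\neq 0$. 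This gives $\Theta(\alpha^{(T_\ell)})=(vw)^{\frac{\ell-1}{2}}(A^\ell+B^\ell+C^\ell)$.
\item By \eqref{diag-Fro-cluster} and \eqref{diag-Fro-cut}, this equals $\cF(\Theta(\alpha))=\Theta(\cF(\alpha))$, and injectivity of $\Theta$ concludes.
\end{enumerate}
This is the paper's proof. A quantum-trace version using a triangulation that contains $e_1,e_2$ would yield the same three-term structure and need the same three-variable identity.
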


\begin{proof}
Let $e_1$ and $e_2$ be ideal arcs surrounding the circular boundary
components $v$ and $w$, respectively; see
Figure~\ref{fig:alpha-cut}. Set
\[
\Theta:=\overline\Theta_{e_1}\circ\overline\Theta_{e_2}
\qquad\text{and}\qquad
\fS':=\Cut_{e_1}\bigl(\Cut_{e_2}(\fS)\bigr).
\]
Then
\[
\Theta(\alpha)=A+B+C,
\]
where
\[
A=
\begin{array}{c}
	\includegraphics[scale=1]{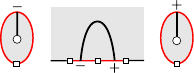}
\end{array},
\qquad
B=
\begin{array}{c}
	\includegraphics[scale=1]{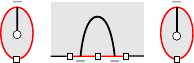}
\end{array},
\qquad
C=
\begin{array}{c}
	\includegraphics[scale=1]{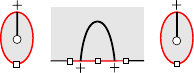}
\end{array}.
\]

Equation~\eqref{eq-uminusplus} implies that
\begin{equation}\label{eq-ABC}
	AB=q^{-2}BA
	\qquad\text{and}\qquad
	BC=CB=v^{-1}w^{-1}\in\rd_q(\fS').
\end{equation}
Regarding $T_l\!\left(\sqrt{vw}\,\alpha\right)$ as an element of
$\rd_q(\fS)$, we obtain
\begin{align}
	\label{eq-Theta-Tl}
	\Theta\left(T_l\!\left(\sqrt{vw}\,\alpha\right)\right)
	&=
	T_l\!\left(\sqrt{vw}\,\Theta(\alpha)\right)
	\nonumber\\
	&=
	T_l\!\left(
	\sqrt{vw}\,A+\sqrt{vw}\,B+\sqrt{vw}\,C
	\right) 
	\nonumber\\
	&=
	(vw)^{\frac{l}{2}}
	\left(A^l+B^l+C^l\right) \qquad \text{(by Lemma~\ref{lem-root-Tl} and \eqref{eq-ABC})}.
\end{align}

On the other hand,
\begin{align*}
	\Theta\bigl(\cF(\alpha)\bigr)
	&\overset{\eqref{diag-Fro-cut}}{=}
	\cF\bigl(\Theta(\alpha)\bigr)\\
	&=
	\cF(A+B+C)\\
	&\overset{\eqref{diag-Fro-cluster}}{=}
	(vw)^{\frac{l-1}{2}}
	\left(A^l+B^l+C^l\right)\\
	&\overset{\eqref{eq-Theta-Tl}}{=}
	\Theta\left(
	\frac{1}{\sqrt{vw}}\,
	T_l\!\left(\sqrt{vw}\,\alpha\right)
	\right) = \Theta\left(\alpha^{(T_\ell)}\right).
\end{align*}
Since $\Theta$ is injective, it follows that
$\cF(\alpha)
=\alpha^{(T_\ell)},$
which completes the proof.
\end{proof}

\begin{figure}
\centering
\includegraphics[width=0.2\linewidth]{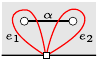}
\caption{The ideal arcs $e_1$ and $e_2$.}
\label{fig:alpha-cut}
\end{figure}

When $\fS$ has no boundary circle, \cite{BL22}
constructed the Frobenius map
\begin{align}\label{eq-Fro-stated}
\Phi\colon \rd_\eta(\fS) \rightarrow \rdS
\end{align}
such that, for any stated tangle $\left(\cup_{1\leq i\leq n}\alpha_i\right)\cup \left(\cup_{1\leq j\leq m}\beta_j\right)$
in $\widetilde{\fS}$ where each $\alpha_i$ is an arc and each $\beta_j$ a loop,
\begin{align}\label{eq-Phi-image}
\Phi\left(\left(\cup_{1\leq i\leq n}\alpha_i\right)\cup \left(\cup_{1\leq j\leq m}\beta_j\right) \right) = \left(\cup_{1\leq i\leq n}\alpha_i^{(\ell)}\right)\cup \left(\cup_{1\leq j\leq m}\beta_j^{(T_\ell)}\right).
\end{align}
Let $S$ be a subset of the set of interior punctures of $\fS$.
Define 
$$\rd_q^S(\fS):= \rd_q^{\bf w}(\fS),\qquad
\text{(see \eqref{eq-sliced-skein})},$$
where ${\bf w}=(w_v = q+q^{-1})_{v\in S}$.
Note that $T_l(q+q^{-1})=q^\ell + q^{-\ell}=\eta + \eta^{-1}$.
Then the Frobenius map in \eqref{eq-Fro-stated} induces the following Frobenius map
$$\Phi\colon \rd_\eta^S(\fS) \rightarrow \rd_q^S(\fS).$$

The following lemma establishes the compatibility between the two Frobenius maps $\cF$ and $\Phi$. In particular, it determines $\cF(\alpha)$ whenever $\alpha$ consists only of loops and stated edge arcs.

\begin{lemma}
Let $\fS$ be a punctured surface, and let
$C_1,\dots,C_n$ be all the boundary circles of $\fS$. Define
\[
\fS':=\fS\setminus\bigl(C_1\cup\cdots\cup C_n\bigr).
\]
For each $i$, let $u_i$ denote the interior puncture of $\fS'$ obtained
by removing the boundary circle $C_i$, and set
\[
S:=\{u_1,\dots,u_n\}.
\]
Under the assumption of Corollary~\ref{Cor-Fro-first},
then the following diagram commutes:
\begin{equation}\label{diag-Fro-sliced}
	\begin{tikzcd}
		\rd_\eta^S(\fS')
		\arrow[r, "\Phi"]
		\arrow[d, "P_S"']
		&
		\rd_q^S(\fS')
		\arrow[d, "P_S"]
		\\
		\rd_\eta(\fS)
		\arrow[r, "\cF"']
		&
		\rd_q(\fS),
	\end{tikzcd}
\end{equation}
where $P_S$ is induced by the embedding $\fS'\hookrightarrow \fS$.
\end{lemma}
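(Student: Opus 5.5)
The plan is to show that the two algebra homomorphisms $\cF\circ P_S$ and $P_S\circ\Phi$ from $\rd_\eta^S(\fS')$ to $\rd_q(\fS)$ agree on a generating set. First we check that $P_S$ is well defined on the sliced algebras. The embedding $\fS'\hookrightarrow\fS$ sends the peripheral loop around $u_i$ to the loop parallel to $C_i$, and by \eqref{unknot-RY} this loop equals $q+q^{-1}$ in $\rd_q(\fS)$. Hence the relations $\alpha_{u_i}=q+q^{-1}$ (and $\eta+\eta^{-1}$ on the $\eta$ side) are respected. By Lemma~\ref{lem-functoriality} the map $P_S$ is then a well-defined algebra homomorphism, since stacking is preserved.

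Next we choose generators. By Lemma~\ref{lem-finite-order} and Lemma~\ref{lem-positive-reduced} (or by the basis $\overline B(\fS')$ together with Theorem~\ref{thm-basis-reduced}), $\rd_\eta(\fS')$ is generated as an algebra by stated simple edge arcs, simple loops, and inverses of boundary-edge arcs. The surface $\fS'$ has no boundary circles, so no other arcs occur. Both composites are algebra maps, so it suffices to compare them on these generators. Inverses are automatic once the arcs themselves agree.

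For a stated edge arc or a loop $\beta$, formula \eqref{eq-Phi-image} gives $P_S\Phi(\beta)=P_S(\beta^{(\ell)})$ or $P_S(\beta^{(T_\ell)})$, which is $\beta^{(\ell)}$ or $\beta^{(T_\ell)}$ viewed in $\fS$. Since $C(\beta)=\emptyset$, no $v$-factors appear, so it remains to show that $\cF(\beta)$ equals this. Here lies the main difficulty: so far $\cF$ is only known on admissible ideal arcs \eqref{eq-image-fro-arc}, on edge-circle arcs (Lemma~\ref{lem-Fro-arc-circle}), and on circle arcs between distinct circles (Proposition~\ref{prop-Fro-arc-T}).

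\textbf{Edge arcs.} A positively stated simple edge arc is a cluster variable $A_\beta$ up to a power of $q^{1/2}$, so $\cF(\beta)=\beta^{(\ell)}$ by \eqref{eq-image-fro-arc}. Note that $[\beta^{[\ell]}]_{\rm norm}$ matches $A_\beta^\ell$ by Lemma~\ref{lem-height-change} and \eqref{eq-height-l}. Negative states are handled by Lemma~\ref{lem-m-arc}, as in Lemma~\ref{lem-Fro-arc-circle}.

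\textbf{Loops.} We cut along an ideal arc $e$ crossing $\beta$ and use the cutting compatibility of Lemma~\ref{lem-Fro-Cut}. This gives $\overline\Theta_e(\cF(\beta))=\cF(\overline\Theta_e(\beta))$. The right side is a sum of stated arcs in $\Cut_e\fS$, on which $\cF$ is already known. Then, following the argument of Proposition~\ref{prop-Fro-arc-T} with Lemma~\ref{lem-root-Tl} (or the classical computation of \cite{BL22} identifying $\Theta(T_\ell(\beta))$ with a sum of $\ell$-th powers of quasi-commuting terms), we get $\overline\Theta_e(\cF(\beta))=\overline\Theta_e(\beta^{(T_\ell)})$. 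Injectivity of $\overline\Theta_e$ finishes the loop case.

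If $\fS$ is not quasi-triangulable, the same argument applies wherever $\cF$ has been defined under the hypothesis of Corollary~\ref{Cor-Fro-first}. The expected obstacle is the loop case: producing an ideal arc $e$ such that the cut pieces are arcs whose Frobenius images are already known, and verifying the quasi-commutation needed for Lemma~\ref{lem-root-Tl}.
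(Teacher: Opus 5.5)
Your reduction to algebra generators matches the paper's strategy. Your treatment of edge arcs is also fine: positively stated ones are cluster variables, so \eqref{eq-image-fro-arc} applies, and negative states are handled via Lemma~\ref{lem-m-arc}. The gap is the loop case, which you flag as unresolved, and the route you sketch for it does not work as stated. Cutting a loop $\beta$ along an ideal arc $e$ that it crosses once gives $\overline\Theta_e(\beta)=\beta_{++}+\beta_{--}$ in $\rd_q(\Cut_e\fS)$. These two terms do not in general have the shape $x+x^{-1}+y$ with $xy=\omega yx$ that Lemma~\ref{lem-root-Tl} needs. In Proposition~\ref{prop-Fro-arc-T} that shape came specifically from the pieces $\mathfrak M$ around the boundary circles and relation \eqref{eq-uminusplus}, which a general loop does not provide.

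Establishing $\overline\Theta_e(\beta^{(T_\ell)})=\beta_{++}^{(\ell)}+\beta_{--}^{(\ell)}$ for an arbitrary loop is essentially the splitting-compatibility theorem of \cite{BL22}. You could import it from $\fS'$ and transport it by $P_S$. You would then also need $\cF$ to be available on $\Cut_e\fS$ in order to use Lemma~\ref{lem-Fro-Cut}. None of this is carried out in your proposal.

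The fix is to keep loops out of the generating set altogether, since they are redundant there. The paper applies Corollary~\ref{thm-sliced-cluster} to $\fS'$, which is quasi-triangulable because $\fS$ is. This gives three kinds of generators:
\begin{itemize}
\item the peripheral loops $\alpha_p$ for interior punctures $p$ of $\fS$;
\item the cluster variables $A_e$ for admissible ideal arcs $e$ of $\fS'$;
\item the inverses of the $A_e$ for boundary edges $e$.
\end{itemize}
The loops around the $u_i$ do not appear, because they are scalars in the sliced algebra. On these generators both composites are known by construction. For the remaining peripheral loops, $\cF(\alpha_p)=T_\ell(\alpha_p)=\Phi(\alpha_p)$ is built into the deformed seed via Corollary~\ref{cor-Fro-cluster}. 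For the arcs, $\cF(A_e)=A_e^\ell=\Phi(A_e)$ by \eqref{eq-image-fro-arc} and \eqref{eq-Phi-image}.

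Within your own setup there is an equivalent fix. Corollary~\ref{cor:gen}, applied to $\fS'$, shows that stated simple edge arcs alone generate, because relation \eqref{hight} expresses every simple loop, peripheral ones included, in terms of stated arcs. So your edge-arc paragraph already completes the proof, and the ``main difficulty'' disappears. Your closing remark about non-quasi-triangulable $\fS$ is also unnecessary, since the lemma assumes the hypothesis of Corollary~\ref{Cor-Fro-first}.
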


\begin{proof}
Let $U$ denote the set of interior punctures of $\fS$, let
$\mathcal Q$ denote the set of all admissible ideal arcs of $\fS'$, and let
$\partial_E\fS'$ denote the set of boundary edges of $\fS'$.

By Corollary~\ref{thm-sliced-cluster}, the $\wR$-algebra
$\rd_\eta^S(\fS')$ is generated by
\[
Y
:=
\{\alpha_p\mid p\in U\}
\cup
\{A_e\mid e\in\mathcal Q\}
\cup
\{A_e^{-1}\mid e\in\partial_E\fS'\}.
\]
Under the natural identification, every admissible ideal arc of $\fS'$ determines
an admissible ideal arc of $\fS$. Consequently,
\[
P_S\bigl(\Phi(y)\bigr)
=
\cF\bigl(P_S(y)\bigr)
\qquad\text{for every }y\in Y.
\]
Since both compositions in \eqref{diag-Fro-sliced} are $\wR$-algebra
homomorphisms and agree on the generating set $Y$, they agree on
$\rd_\eta^S(\fS')$. Hence \eqref{diag-Fro-sliced} commutes.
\end{proof}

Next, we compute $\cF(\alpha)$ when $\alpha$ is a stated edge-circle arc (Definition~\ref{def-arcs}). We first establish Lemmas~\ref{lem-arc-sum-Tl}-\ref{lem-arc-resolution}, which will be used in the computation.

\begin{lemma}[\cite{BL22}]\label{lem-arc-sum-Tl}
Under Assumption~\Rlabel{A},
in $\rdS$, the following relations hold:
\begin{align*}
	\begin{array}{c}
		\includegraphics[scale=0.4]{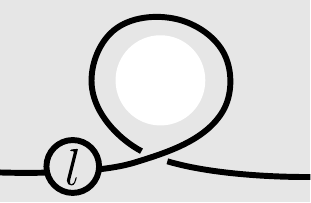}
	\end{array}
	&=
	\eta^{-1}
	\begin{array}{c}
		\includegraphics[scale=0.4]{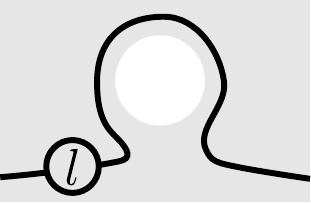}
	\end{array}
	+
	\eta
	\begin{array}{c}
		\includegraphics[scale=0.4]{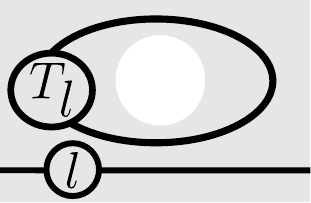}
	\end{array},\\
	\begin{array}{c}
		\includegraphics[scale=0.4]{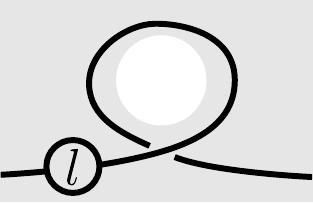}
	\end{array}
	&=
	\eta
	\begin{array}{c}
		\includegraphics[scale=0.4]{arc-12.pdf}
	\end{array}
	+
	\eta^{-1}
	\begin{array}{c}
		\includegraphics[scale=0.4]{arc-13.pdf}
	\end{array}.
\end{align*}
In each relation, the diagram on the left-hand side represents a portion
of a stated edge arc.
\end{lemma}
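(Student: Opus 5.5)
The plan is to expand the $\ell$ parallel copies of a crossing using the Kauffman relation, and then use root-of-unity cancellation to kill all the mixed terms. The pictures are not visible, but the statement has the standard form from \cite{BL22}. On the left, a threaded strand ($\ell$ parallel copies of a portion of a stated edge arc) crosses a single strand; the two terms on the right are its two resolutions, weighted by $\eta^{\mp1}$. This is the threaded analogue of \eqref{cross}, with $q$ replaced by $\eta=q^{\ell^2}$.

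First, I would resolve each of the $\ell$ crossings between the single strand and the parallel copies using \eqref{cross}, working locally in a disk in which the $\ell$ copies are parallel. This produces $2^\ell$ states. The two extreme states, where all resolutions have the same type, give diagrams isotopic to the two right-hand diagrams with coefficient $q^{\pm\ell}$.

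For the intermediate states, I would use a standard trick rather than tracking each state. Cut the disk so that the crossing region becomes a triangle or bigon adjacent to a boundary edge; since the strand belongs to an edge arc, the cutting homomorphism $\Theta_c$ of Theorem~\ref{thm-splitting-map} can be applied. Alternatively, work in the local bigon picture of \cite{le2018triangular}. In that picture, the single strand crossing the $\ell$ copies is the image of a sum $X_1+X_2$ of two $q$-commuting pieces with $X_1X_2=q^{c}X_2X_1$, where $q^{c}$ is a power of $q^2$ and hence has order $\ell$.

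Lemma~\ref{lem:expan1} then gives $(X_1+X_2)^\ell=X_1^\ell+X_2^\ell$, so all mixed terms cancel. Equivalently, the Gaussian binomial coefficients $\binom{\ell}{k}_{q^{c}}$ vanish for $0<k<\ell$.

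What remains are the two extreme terms with coefficients $q^{\pm\ell}$. By \eqref{eq-eta-ql}, $q^{\ell}=\eta$, and since $\eta=\pm1$ we also have $q^{-\ell}=\eta^{-1}$. This gives the first relation. The second relation is its mirror image, obtained by applying the reflection ${\bf R}$ of \S\ref{sub-reflection}, which switches the crossing and sends $\eta\mapsto\eta^{-1}$.

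The main obstacle will be justifying the reduction of the local crossing of $\ell$ copies to a sum of two $q^2$-commuting pieces. The two resolutions of each crossing do not literally commute in $\rdS$, so I would follow the argument of \cite{BL22}. There, one uses that the threaded strand is an edge arc, slides the $\ell$ copies toward the boundary edge, and expands the arc in the stated bigon (an $\mathcal O_q({\rm SL}_2)$-type computation). In that setting the $\ell$-th power of the relevant matrix entries is computed by the $q$-binomial vanishing.

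Because $\ell$ is odd and $q^2$ has order $\ell$, the fourth-root sign factors from \eqref{arc} and \eqref{hight} combine to $\eta^{\pm1}$ rather than $-\eta^{\pm1}$. Checking this sign carefully is the delicate point.
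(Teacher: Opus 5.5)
Your proposal has a genuine gap, and it begins with how you read the statement. The left-hand side is not an $\ell$-threaded strand crossing a single strand. It is the threading of a \emph{self-crossing} of the edge arc, so both strands through the crossing carry $\ell$ parallel copies and there are $\ell^2$ elementary crossings. The right-hand side consists of the threadings of the two smoothings, and the closed component created by one of the smoothings is threaded by $T_\ell$, not by $\ell$ parallel copies. This is the edge-arc case of the self-crossing relations \eqref{eq-self-crossing1}--\eqref{eq-self-crossing2}, and it is used, after cutting along $c$, in the proof of Lemma~\ref{lem-arc-resolution}. The purpose of these relations is to carry $\cF(\alpha)=\alpha^{(\ell)}$ through the skein relation $\alpha=\eta^{\pm1}\alpha_0+\eta^{\mp1}\alpha_\infty$ in $\rd_\eta(\fS)$ at a self-crossing. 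Under your reading there is no two-term relation to prove. A single strand crossing $\ell$ parallel strands has no pair of ``resolutions''. The two all-same states of your $2^\ell$ expansion give staircase matchings (top endpoint to the first left endpoint, the $i$-th right endpoint to the $(i+1)$-st left endpoint, the last right endpoint to the bottom endpoint), which are threadings of nothing. The local rule for one strand crossing an $\ell$-threaded essential arc is the transparency relation \eqref{eq-height-3}, a crossing change, not a resolution.

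Even with the correct reading, your mechanism fails. The two all-same states of the $\ell\times\ell$ grid contribute $q^{\pm\ell^2}=\eta^{\pm1}$ times the $\ell$-cables of the two smoothings. For the smoothing that closes off a loop, this gives $\ell$ parallel copies of the loop, whereas the right-hand side needs its $T_\ell$-threading. So the mixed states must produce exactly the lower-order terms of $T_\ell$ on that loop; they cannot all cancel, and no $q$-binomial vanishing will show this. In addition, Lemma~\ref{lem:expan1} computes a stacked power $(X_1+X_2)^\ell$, while the left-hand diagram is a cable in the framing direction. The two differ at every crossing, and relating them is itself the transparency input rather than a formality.

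The paper does not prove the lemma; it imports it from \cite{BL22}. There it is an instance of the theorem that threading stated arcs by $\ell$ parallel copies and knots by $T_\ell$ gives a well-defined map $\cS_\eta\to\cS_q$, i.e.\ a map compatible with \eqref{cross}. One applies that theorem in a regular neighborhood of the edge arc together with the boundary edges it meets, which is a surface without boundary circles, and pushes the resulting relation into $\fS$. Your sketch also defers to \cite{BL22} in the end, but the step you attribute to it is not the step that is needed. The reflection argument for the second relation and the identity $q^{\pm\ell}=\eta^{\pm1}$ are fine, but they rest on the missing argument.
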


Lemmas \ref{lem-Fro-arc-circle} and \ref{lem-Fro-Cut} immediately yield the following result.

\begin{lemma}\label{lem-cut-N}
Under the assumption of Corollary~\ref{Cor-Fro-first},
we have
\[
\overline{\Theta}_c\left(
\begin{array}{c}
	\includegraphics[scale=0.4]{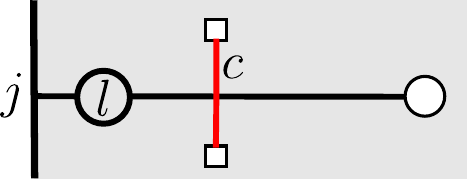}
\end{array}
\right)
=
\sum_{i\in\{-,+\}}
\begin{array}{c}
	\includegraphics[scale=0.4]{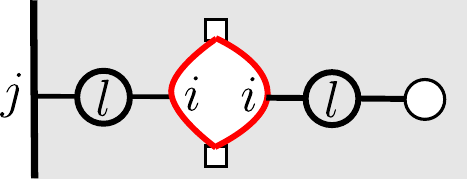}
\end{array}.
\]
\end{lemma}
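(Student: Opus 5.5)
The identity should follow by pushing $\cF$ through the cutting homomorphism using Lemma~\ref{lem-Fro-Cut}, and then evaluating $\cF$ on the two pieces produced by the cut. I read the left-hand side as the $\ell$-threading $\alpha^{(\ell)}$ of a portion of a stated simple edge-circle arc $\alpha$ crossing the interior ideal arc $c$ once. By Lemma~\ref{lem-Fro-arc-circle} this element equals $\cF(\alpha)$. Applying Lemma~\ref{lem-Fro-Cut} gives
\[
\overline\Theta_c\bigl(\alpha^{(\ell)}\bigr)=\overline\Theta_c\bigl(\cF(\alpha)\bigr)=\cF\bigl(\overline\Theta_c(\alpha)\bigr).
\]
Since $\alpha$ meets $c$ in a single point, Theorem~\ref{thm-splitting-map} gives
\[
\overline\Theta_c(\alpha)=\sum_{i\in\{-,+\}}\alpha'_i\,\alpha''_i .
\]
Here $\alpha'_i$ is the piece ending on one copy of $c$ with state $i$. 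The piece $\alpha''_i$ ends on the other copy of $c$ with state $i$ and keeps the circle endpoint of $\alpha$. The two pieces have their new endpoints on different boundary edges $c_1\neq c_2$, so their product is just their disjoint union and no height-exchange factor appears. Because $\cF$ is an algebra map, it remains to show that $\cF(\alpha'_i)$ and $\cF(\alpha''_i)$ are the $\ell$-threadings of the respective pieces.

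The piece $\alpha''_i$ is a stated simple edge-circle arc in $\Cut_c\fS$. Lemma~\ref{lem-Fro-arc-circle}, applied on the cut surface, gives $\cF(\alpha''_i)=(\alpha''_i)^{(\ell)}$. This step needs $\Cut_c\fS$ to satisfy the hypothesis of Corollary~\ref{Cor-Fro-first}, which I would check or arrange by choosing $c$ appropriately, as is implicit in Lemma~\ref{lem-Fro-Cut}. The piece $\alpha'_i$ is a stated simple edge arc. When all its states are $+$, it equals $A_{\alpha'_i}$ up to the normalization in \eqref{eq-D-X-q}, and \eqref{eq-image-fro-arc} gives $\cF(A_{\alpha'_i})=A_{\alpha'_i}^\ell$. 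The product $A_{\alpha'_i}^\ell$ of $\ell$ stacked copies is then rewritten as the parallel family $(\alpha'_i)^{[\ell]}$ with all copies at the same height, using Lemma~\ref{lem-height-change} and \eqref{eq-same-height}. For negative states I would reduce to the positive case exactly as in the proof of Lemma~\ref{lem-Fro-arc-circle}, via Lemma~\ref{lem-m-arc}. No $v^{\frac{m-1}{2}}$ factors arise for $\alpha'_i$, since it has no circle endpoints. The corresponding factor for $\alpha''_i$ is already accounted for by Lemma~\ref{lem-Fro-arc-circle}.

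Putting these together gives
\[
\cF\bigl(\overline\Theta_c(\alpha)\bigr)=\sum_i(\alpha'_i)^{(\ell)}(\alpha''_i)^{(\ell)} ,
\]
which is the right-hand side of the lemma.

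The main obstacle I expect is the scalar bookkeeping in the edge-arc step. The Weyl normalization of $A_{\alpha'_i}$ is raised to the $\ell$-th power, and the powers $q^{2m}q^{m(m-1)/2}$ from Lemma~\ref{lem-m-arc} must be compared with the corresponding $\eta$-powers in $\rd_\eta$. I would verify that these scalars agree using $\eta=q^{\ell}=\eta^\ell$ from \eqref{eq-eta-ql} and Remark~\ref{rem-com-height}. Once that matches, the rest is formal.
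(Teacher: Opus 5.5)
Your proposal is correct and follows the paper's argument. The paper just says that Lemma~\ref{lem-Fro-arc-circle}, which identifies the threaded edge-circle arc with $\cF(\alpha)$, and Lemma~\ref{lem-Fro-Cut}, which lets you move $\cF$ past $\overline\Theta_c$, immediately give the identity. Your separate treatment of the edge-arc piece $\alpha'_i$ via \eqref{eq-image-fro-arc} and Lemma~\ref{lem-m-arc} fills in what the paper leaves implicit. Because the endpoints of $\alpha'_i$ lie on distinct boundary edges, no normalization factor appears for positive states, and the scalar bookkeeping you flag is the same one already used in the proof of Lemma~\ref{lem-Fro-arc-circle}.
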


\begin{lemma}\label{lem-arc-resolution}
Under the assumption of Corollary~\ref{Cor-Fro-first},
in $\rdS$, the following relations hold:
\begin{align}
	\label{eq-self-crossing1}
	\begin{array}{c}
		\includegraphics[scale=0.4]{arc-21.pdf}
	\end{array}
	&=
	\eta^{-1}
	\begin{array}{c}
		\includegraphics[scale=0.4]{arc-12.pdf}
	\end{array}
	+
	\eta
	\begin{array}{c}
		\includegraphics[scale=0.4]{arc-13.pdf}
	\end{array},\\
	\label{eq-self-crossing2}
	\begin{array}{c}
		\includegraphics[scale=0.4]{arc-11.pdf}
	\end{array}
	&=
	\eta
	\begin{array}{c}
		\includegraphics[scale=0.4]{arc-12.pdf}
	\end{array}
	+
	\eta^{-1}
	\begin{array}{c}
		\includegraphics[scale=0.4]{arc-13.pdf}
	\end{array}.
\end{align}
In each relation, the diagram on the left-hand side represents a portion
of a stated edge-circle arc (Definition~\ref{def-arcs}).
\end{lemma}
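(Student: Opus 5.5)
We reduce Lemma~\ref{lem-arc-resolution} to the edge-arc relations of Lemma~\ref{lem-arc-sum-Tl} by cutting along an ideal arc that separates the boundary circle from the crossing. The images of pictures we cannot see are presumably $\ell$-threaded strands, and the left-hand side is a self-crossing of the threaded edge--circle arc $\alpha^{(\ell)}$ near the portion shown. Let $v$ be the boundary circle on which the edge--circle arc ends. Choose an ideal arc $c$ (not a boundary edge, after adding a puncture if necessary via Remarks~\ref{rem-blow-map} and \ref{rem-adding-puncture}) surrounding $v$, as in Figure~\ref{fig:alpha-cut}. The crossing region of the left-hand side lies outside the monogon bounded by $c$, and the threaded strand crosses $c$ exactly once on its way to $v$.

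First, apply $\overline\Theta_c$ to both sides of \eqref{eq-self-crossing1} and \eqref{eq-self-crossing2}. Since $\overline\Theta_c$ is injective (Theorem~\ref{thm-splitting-map}(b)), it suffices to verify the relations after cutting. By Lemma~\ref{lem-cut-N}, cutting an $\ell$-threaded edge--circle strand along $c$ gives
\[
\sum_{i\in\{-,+\}}\bigl(\text{$\ell$-threaded edge arc with state $i$ on $c_1$}\bigr)\cdot\bigl(\text{$\ell$-threaded stated edge--circle arc in the monogon piece with state $i$ on $c_2$}\bigr).
\]
The same decomposition, with the same two monogon factors, applies to every diagram in the relation. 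This holds because the three diagrams agree near $v$, and the modification happens away from $c$.

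Second, on the component containing the crossing, each summand is now a portion of a stated edge arc, with endpoints on a genuine boundary edge $c_1$. Lemma~\ref{lem-arc-sum-Tl} therefore resolves the crossing with coefficients $\eta^{\mp1}$ and $\eta^{\pm1}$, in exactly the pattern of \eqref{eq-self-crossing1} and \eqref{eq-self-crossing2}. We then reassemble. The monogon factors are untouched, so summing over $i$ and applying Lemma~\ref{lem-cut-N} in reverse identifies the result with $\overline\Theta_c$ of the right-hand sides.

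The main obstacle is bookkeeping of heights at $c$. The cutting formula requires a chosen linear order on $D\cap c$. After resolving via Lemma~\ref{lem-arc-sum-Tl}, the diagrams may have their endpoints on $c_1$ in a different height order than the one prescribed for reassembly. These orders must be matched using the height-exchange relations \eqref{eq-height-l} and \eqref{eq-height-3}, whose coefficients are $1$ or $\eta$. We also need Remark~\ref{rem-com-height}, which ensures that these height-exchange coefficients for $\ell$-threaded edge arcs are compatible with the order used in the original diagrams.

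I would first check that all such reorderings cost the same power of $\eta$ on both sides, so that they cancel. If they do not cancel, the discrepancy must be absorbed into the stated $\eta^{\pm1}$ via \eqref{eq-eta-ql}, which says $\eta=\eta^{-1}=\pm1$ and so makes sign bookkeeping harmless. Finally, the prefactors $v^{\frac{\ell-1}{2}}$ appearing in $\alpha^{(\ell)}$ occur identically on every term, so they play no role.
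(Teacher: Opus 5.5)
Your skeleton matches the paper's: cut along an ideal arc $c$ separating the crossing from the boundary circle, resolve the crossing on the resulting edge-arc piece by Lemma~\ref{lem-arc-sum-Tl}, reassemble, conclude by injectivity of $\overline\Theta_c$, and forget the auxiliary punctures. The gap is where you invoke Lemma~\ref{lem-cut-N}, once for the left-hand side and again, in reverse, for the smoothed diagrams on the right.

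That lemma is not a local skein relation. It comes from Lemmas~\ref{lem-Fro-arc-circle} and~\ref{lem-Fro-Cut}, i.e.\ from $\overline\Theta_c(\cF(\alpha))=\cF(\overline\Theta_c(\alpha))$ combined with $\cF(\alpha)=\alpha^{(\ell)}$. The latter is known only for a \emph{simple} edge--circle arc on a quasi-triangulable surface. Your left-hand side is the threading of a self-crossing edge--circle arc, for which $\cF(\alpha)=\alpha^{(\ell)}$ is not yet available; the present lemma is exactly what is used to prove Lemma~\ref{lem-image-arc}.

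A priori $\overline\Theta_c$ of $\ell$ parallel strands is a sum over $2^\ell$ states. The vanishing of the mixed-state terms is a root-of-unity cancellation involving the entire threaded strand in the cut surface, not a relation supported near $c$ or near $v$. So ``the three diagrams agree near $v$ and the modification happens away from $c$'' does not justify the two-term decomposition.

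The missing idea is the paper's regular-neighbourhood embedding. Let $\fS'$ be $\fS$ with the auxiliary punctures added, and let $C_1,C_2$ be the boundary components of $\overline{\fS'}$ on which the arc ends.
\begin{enumerate}
\item Take an open regular neighbourhood $U$ of $\alpha\cup(C_1\times(-1,1))\cup(C_2\times(-1,1))\cup(c\times(-1,1))$ in $\widetilde{\fS'}$, and identify it with $\widetilde{\fS}_1$ for a punctured surface $\fS_1$.
\item Inside $\fS_1$ the arc is simple: its self-crossing is absorbed into the embedding $f\colon\widetilde{\fS}_1\to\widetilde{\fS'}$.
\item The arc $c$ comes from an ideal arc $c_1$ of $\fS_1$, and $f$ induces a map $g$ of cut surfaces with $\overline\Theta_c\circ f_*=g_*\circ\overline\Theta_{c_1}$.
\item Lemma~\ref{lem-cut-N} is applied in $\fS_1$, where the arc is simple, and pushed forward to $\fS'$; the same is done for the right-hand sides.
\end{enumerate}
In the paper, $c$ joins two newly added interior punctures, and the result returns to $\fS$ through the map induced by $\fS'\hookrightarrow\fS$.

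With this device no height reordering along $c$ is needed. Lemma~\ref{lem-arc-sum-Tl} acts away from $c$, and both cut expressions carry the same height data. Your fallback, that a leftover power of $\eta$ is harmless because $\eta=\pm1$, is not valid: when $\eta=-1$, a stray factor $\eta$ changes the sign of a term.
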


\begin{proof}
We prove only \eqref{eq-self-crossing1}, since the same argument applies
to \eqref{eq-self-crossing2}. Without loss of generality, assume that the
left endpoint of the diagram on the left-hand side lies on a boundary edge.

Add two punctures to $\fS$ as shown in
Figure~\ref{fig:adding-puncture}, and denote the resulting punctured surface
by $\fS'$. Let $c$ be the ideal arc in $\fS'$ connecting these two newly
added punctures, as depicted in Figure~\ref{fig:adding-puncture}.

Let $\alpha$ denote the stated tangle appearing on the left-hand side of
\eqref{eq-self-crossing1}. Let $C_1$ and $C_2$ be the boundary components
of $\overline{\fS'}$ such that $\alpha$ connects
$C_1\times(-1,1)$ to $C_2\times(-1,1)$. Let $U$ be an open regular
neighborhood of
\[
\alpha\cup\bigl(C_1\times(-1,1)\bigr)
\cup\bigl(C_2\times(-1,1)\bigr)
\cup\bigl(c\times(-1,1)\bigr)
\]
in $\widetilde{\fS'}=\fS'\times(-1,1)$. Then $U$ may be identified with
the thickening $\widetilde{\fS}_1$ of a punctured surface $\fS_1$.
Let
\[
f\colon\widetilde{\fS}_1\longrightarrow\widetilde{\fS'}
\]
be the resulting embedding.

There is an ideal arc $c_1$ in $\fS_1$ such that
\[
f\bigl(c_1\times\{t\}\bigr)=c\times\{t\},
\qquad t\in(-1,1).
\]
Moreover, $f$ induces an embedding
\[
g\colon
\widetilde{\Cut_{c_1}\fS_1}
\longrightarrow
\widetilde{\Cut_c\fS'}
\]
for which the following diagram commutes:
\begin{equation}\label{eq-fg-Theta}
	\begin{tikzcd}
		\rd_\eta(\fS_1)
		\arrow[r, "\overline\Theta_{c_1}"]
		\arrow[d, "f_*"']
		&
		\rd_\eta(\Cut_{c_1}\fS_1)
		\arrow[d, "g_*"]
		\\
		\rd_\eta(\fS')
		\arrow[r, "\overline\Theta_c"']
		&
		\rd_\eta(\Cut_c\fS').
	\end{tikzcd}
\end{equation}

We then have
\begin{align*}
	&\overline\Theta_c\left(
	\begin{array}{c}
		\includegraphics[scale=0.4]{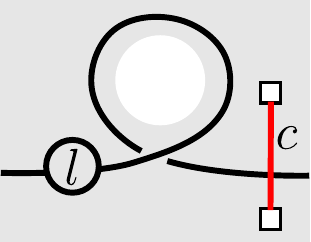}
	\end{array}
	\right)\\
	=&
	\sum_{i\in\{-,+\}}
	\begin{array}{c}
		\includegraphics[scale=0.4]{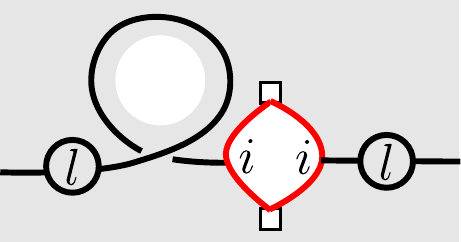}
	\end{array}
	&&\text{by Lemma~\ref{lem-cut-N} and \eqref{eq-fg-Theta}}
	\\
	=&
	\eta^{-1}
	\sum_{i\in\{-,+\}}
	\begin{array}{c}
		\includegraphics[scale=0.4]{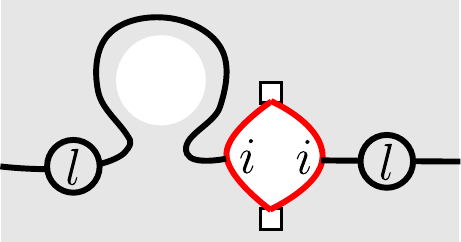}
	\end{array}
	+
	\eta
	\sum_{i\in\{-,+\}}
	\begin{array}{c}
		\includegraphics[scale=0.4]{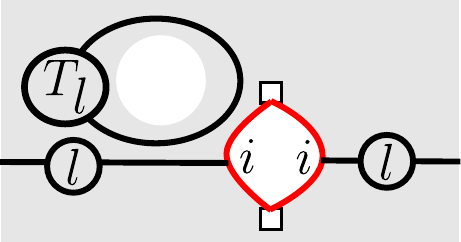}
	\end{array}
	&&\text{by Lemma~\ref{lem-arc-sum-Tl}}
	\\
	=&
	\overline\Theta_c\left(
	\eta^{-1}
	\begin{array}{c}
		\includegraphics[scale=0.4]{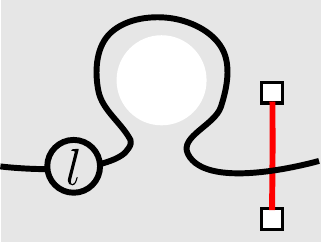}
	\end{array}
	+
	\eta
	\begin{array}{c}
		\includegraphics[scale=0.4]{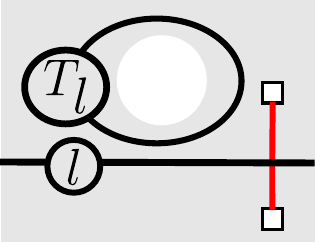}
	\end{array}
	\right)
	&&\text{by Lemma~\ref{lem-cut-N} and \eqref{eq-fg-Theta}}.
\end{align*}

Since $\overline\Theta_c$ is injective, it follows that
\begin{align}\label{lem-proof-fS}
	\begin{array}{c}
		\includegraphics[scale=0.4]{arc-31.pdf}
	\end{array}
	&=
	\eta^{-1}
	\begin{array}{c}
		\includegraphics[scale=0.4]{arc-03.pdf}
	\end{array}
	+
	\eta
	\begin{array}{c}
		\includegraphics[scale=0.4]{arc-04.pdf}
	\end{array}
	\qquad\text{in }\rd_\eta(\fS').
\end{align}

Finally, the inclusion $\fS'\hookrightarrow\fS$ induces an
$\wR$-module homomorphism
\[
\rd_\eta(\fS')\longrightarrow\rd_\eta(\fS).
\]
Applying this homomorphism to \eqref{lem-proof-fS} yields
\eqref{eq-self-crossing1}.
\end{proof}

\begin{figure}
\centering
\includegraphics[width=0.2\linewidth]{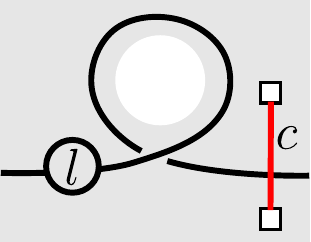}
\caption{The two newly added punctures of $\fS$. The red arc represents the ideal arc $c$.}
\label{fig:adding-puncture}
\end{figure}

The following lemma shows that the image of a stated essential arc (Definition~\ref{def-arcs}) under the Frobenius homomorphism is obtained by threading it by $x^\ell$.

\begin{lemma}\label{lem-image-arc}
Let $\alpha$ be a stated essential arc in $\fS$. Under the assumption of Corollary~\ref{Cor-Fro-first}, we have
\begin{align}\label{eq-image-arc}
	\cF(\alpha)
	= \alpha^{(\ell)}.
\end{align}
\end{lemma}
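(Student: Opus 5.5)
We split into the two kinds of essential arcs: edge arcs and edge--circle arcs. In both cases we reduce to simple arcs, where Lemma~\ref{lem-Fro-arc-circle} and \eqref{eq-image-fro-arc} give $\cF(\alpha)=\alpha^{(\ell)}$, by resolving self-crossings.

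\textbf{Edge arcs.} Here $\alpha$ is a stated tangle in $\fS'=\fS\setminus(C_1\cup\cdots\cup C_n)$ as well (isotope it away from the boundary circles). By \eqref{eq-Phi-image}, $\Phi(\alpha)=\alpha^{(\ell)}$ in $\rd^S_q(\fS')$. The commutative diagram \eqref{diag-Fro-sliced} then gives $\cF(\alpha)=\cF(P_S(\alpha))=P_S(\Phi(\alpha))=\alpha^{(\ell)}$, since $P_S$ sends the threading to the threading. The factor $\prod_{v\in C(\alpha)}v^{\frac{\ell-1}{2}}$ is trivial because $C(\alpha)=\emptyset$.

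\textbf{Edge--circle arcs.} We induct on the number of crossings of a diagram of $\alpha$. When there are no crossings, $\alpha$ is simple and Lemma~\ref{lem-Fro-arc-circle} applies. In general we also allow changing the height order at the circle endpoint; since an edge--circle arc has a single circle endpoint, this is not an issue.

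For the inductive step, pick a crossing of $D(\alpha)$.
\begin{itemize}
\item[(i)] Apply \eqref{cross} in $\rd_\eta(\fS)$ to write $\alpha=\eta\,\alpha_1+\eta^{-1}\alpha_2$, and apply $\cF$. Each $\alpha_i$ is a union of an essential arc with fewer crossings and possibly a loop, a circle arc, or an edge arc.
\item[(ii)] Apply the induction hypothesis to the arc components. For loops, use \eqref{eq-Phi-image} or \eqref{diag-Fro-sliced}; for circle arcs, use Proposition~\ref{prop-Fro-arc-T} in the simple case. Use multiplicativity of $\cF$ together with the height-exchange Lemmas~\ref{lem-relation-51}--\ref{lem-relation-53} (see Remark~\ref{rem-com-height}) to move components to disjoint heights.
\item[(iii)] Identify the result with $\alpha^{(\ell)}$. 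The needed identity is exactly Lemma~\ref{lem-arc-resolution}: the $\ell$-threaded edge--circle arc resolves at a crossing of threaded strands as $\eta^{\mp1}(\cdot)+\eta^{\pm1}(\cdot)$, matching the image of the skein resolution with $q$ replaced by $\eta$. For crossings between two strands of the same edge arc, Lemma~\ref{lem-arc-sum-Tl} plays the same role.
\end{itemize}

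The main obstacle is bookkeeping when a crossing resolution produces a closed component or a circle arc rather than a shorter arc. For example, one smoothing of a self-crossing near the circle endpoint can produce a circle arc or a loop together with an arc, and its image is a $T_\ell$-threading, not an $\ell$-threading. One must check that the mixed relations \eqref{eq-height-3}, \eqref{eq-height-33} and Lemma~\ref{lem-arc-resolution} exactly match these terms. To avoid this, I would first try to choose the crossing to resolve to be one between two distinct strands of $\alpha$ whose smoothings are both connected arcs. If that is not always possible, I would instead reduce to the edge-arc case by cutting along an ideal arc surrounding the boundary circle, as in the proof of Proposition~\ref{prop-Fro-arc-T}, using Lemma~\ref{lem-Fro-Cut} and injectivity of $\overline\Theta$.
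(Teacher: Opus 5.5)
Your proposal is correct and is essentially the paper's proof. Edge arcs are handled by $\Phi$ together with \eqref{eq-Phi-image} and the diagram \eqref{diag-Fro-sliced}; edge--circle arcs reduce to the simple case of Lemma~\ref{lem-Fro-arc-circle} via the threaded crossing resolution of Lemma~\ref{lem-arc-resolution}, and your induction on self-crossings just makes this explicit.

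The bookkeeping you worry about is already built into Lemma~\ref{lem-arc-resolution}. A self-crossing of an edge--circle arc smooths into an edge--circle arc with the same endpoints, and in exactly one of the two smoothings a loop splits off; a circle arc or edge arc never appears. Because that loop cannot be avoided, your alternative choice of crossing cannot work. On the right-hand side of the lemma the split-off loop is $T_\ell$-threaded, which matches $\cF(\gamma)=\gamma^{(T_\ell)}$; the paper obtains this for loops independently from \eqref{eq-Phi-image} and \eqref{diag-Fro-sliced}.
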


\begin{proof}
If $\alpha$ has no endpoint on a boundary circle, the result follows
from \eqref{eq-Fro-stated} and the commutative diagram
\eqref{diag-Fro-sliced}.

If $\alpha$ connects a boundary edge to a boundary circle $v$, the
result follows from Lemmas~\ref{lem-Fro-arc-circle}
and~\ref{lem-arc-resolution}.
\end{proof}

The following lemma computes $\cF(\alpha)$ when $\alpha$ consists of stated essential arcs (Definition~\ref{def-arcs}). It is a crucial step toward determining $\cF(\alpha)$ for an arbitrary stated $\fS$-tangle $\alpha$, as it establishes the compatibility between the Frobenius homomorphism and functoriality; see \eqref{eq-f-Fro-com}. This compatibility allows us to reduce the general problem to the case of crossingless stated $\fS$-tangle diagrams.

\begin{lemma}\label{lem-image-essential}
Let
$ \alpha=\bigcup_{i=1}^{n}\alpha_i$
be a stated $\fS$-tangle, where each $\alpha_i$ is an
essential arc in the sense of Definition~\ref{def-arcs}. Under the assumption of Corollary~\ref{Cor-Fro-first}, we have
\[          \cF(\alpha)=\bigcup_{i=1}^{n}\alpha_i^{(\ell)}.
\]

\end{lemma}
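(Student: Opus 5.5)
We reduce to a single arc by writing the tangle as a product. Since $\cF$ is an algebra homomorphism, it suffices to express $\alpha$, up to an explicit scalar, as a product of its components. Lemma~\ref{lem-image-arc} computes the Frobenius image of each factor, and we then reassemble.

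First, isotope $\alpha$ within $\widetilde{\fS}$ so that its components are vertically separated. That is, $\alpha_1$ lies in $\fS\times(a_1,b_1)$, $\alpha_2$ lies in $\fS\times(a_2,b_2)$, and so on, with the intervals disjoint and decreasing. Such an isotopy is not available in general, because $\alpha$ is a given tangle with prescribed height order at the boundary and possibly with linking between components. So we change heights at the boundary instead. Take the product $\alpha_1\alpha_2\cdots\alpha_n$ in $\cS_\eta(\fS)$. It differs from $\alpha$ in two ways: by crossing changes in the interior, and by reordering of the heights of endpoints on each boundary component.

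This forces a more careful plan. We will induct on the number of crossings between different components together with the number of boundary height inversions, and we will show that both sides of the claimed identity satisfy the same relations:
\begin{itemize}
\item Interior crossing changes between two essential arc components are governed, in $\rd_\eta(\fS)$, by the skein relation \eqref{cross}. On the image side, the corresponding relation for $\ell$ parallel copies of an essential arc crossing another is transparent by \eqref{eq-height-3}. That relation says a bundle $\alpha_i^{(\ell)}$ can pass through any other strand of an essential arc; hence crossings between different threaded components can be switched freely.
\item Height exchanges at boundary edges are governed by \eqref{eq-height-l}, which gives the factor $\eta$. At boundary circles they are governed by \eqref{eq-height-2} and \eqref{eq-height-21}. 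By Remark~\ref{rem-com-height}, the height exchange relations in $\cS_\eta(\fS)$ match those in $\cS_q(\fS)$ for $\ell$-threaded essential arcs.
\end{itemize}

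Concretely, we write $\alpha=\eta^{k}\,\alpha_1\cdots\alpha_n$ in $\rd_\eta(\fS)$ modulo crossing changes. The interior crossings between distinct components are handled by resolving $\alpha$ to the stacked product, using Remark~\ref{rem-com-height1} together with the equality of the under- and over-crossing versions. Boundary reorderings are recorded by powers of $\eta$. In $\rdS$, the same moves applied to $\bigcup\alpha_i^{(\ell)}$ produce the same scalar $\eta^k$: reordering an $\ell$-bundle past another $\ell$-bundle is governed by \eqref{eq-height-l}, and interior crossings between bundles are free by \eqref{eq-height-3}. This yields
\[
\cF(\alpha)=\eta^{k}\,\cF(\alpha_1)\cdots\cF(\alpha_n)=\eta^{k}\,\alpha_1^{(\ell)}\cdots\alpha_n^{(\ell)}=\bigcup_{i=1}^{n}\alpha_i^{(\ell)},
\]
where the second equality uses Lemma~\ref{lem-image-arc}.

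We expect the main obstacle to be verifying that the scalars agree exactly. In $\rd_\eta(\fS)$, a crossing change between two arcs is a genuine skein relation with two smoothing terms, not a scalar identity. So $\alpha$ is not simply a scalar multiple of the stacked product unless the crossing change is trivial. To get around this, we first use \eqref{eq-eta-ql}, namely $\eta=\pm1$, which makes $\rd_\eta(\fS)$ nearly commutative. We also use the $\cS_\eta$-version of \eqref{eq-height-3} from Remark~\ref{rem-com-height1}, which says crossings of essential arcs can be switched at the $\eta$ level. Together these show that $\alpha$ equals $\eta^k\alpha_1\cdots\alpha_n$ exactly. The boundary-circle endpoints need separate care, since there the relevant exchange rules are \eqref{eq-height-2} and \eqref{eq-height-21}, and we must track the $v^{\frac{\ell-1}{2}}$ normalizations built into $\alpha_i^{(\ell)}$.
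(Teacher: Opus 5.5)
Your proposal is correct and follows essentially the same route as the paper. You write $\alpha=\eta^{r}\alpha_1\cdots\alpha_n$ in $\rd_\eta(\fS)$ and $\bigcup_i\alpha_i^{(\ell)}=\eta^{r}\alpha_1^{(\ell)}\cdots\alpha_n^{(\ell)}$ in $\rdS$ with the same exponent, via the matching height-exchange and crossing-change relations (Remarks~\ref{rem-com-height1} and~\ref{rem-com-height}), and then conclude by multiplicativity of $\cF$ and Lemma~\ref{lem-image-arc}. Your worry about crossing changes in $\rd_\eta(\fS)$ is settled exactly as you indicate: $\eta=\eta^{-1}$ makes both crossings equal to $\eta$ times the sum of the two smoothings.
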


\begin{proof}
By Remark~\ref{rem-com-height}, there exist an integer $r$ such that
\[
\alpha
=
\eta^r\prod_{i=1}^{n}\alpha_i
\quad\text{in }\rd_\eta(\fS),
\qquad
\bigcup_{i=1}^{n}\alpha_i^{(\ell)}
=
\eta^r\prod_{i=1}^{n}\alpha_i^{(\ell)}
\quad\text{in }\rdS.
\]
The compatibility of the height-exchange relations established
in Remark~\ref{rem-com-height}, together with
Equation~\eqref{eq-image-arc}, gives
\begin{align*}
	\cF(\alpha)
	=
	\eta^r\prod_{i=1}^{n}\cF(\alpha_i) 
	=
	\eta^r\prod_{i=1}^{n}\alpha_i^{(\cF)} 
	=
	\bigcup_{i=1}^{n}\alpha_i^{(\cF)}.
\end{align*}

\end{proof}

By Proposition~\ref{prop-Fro-arc-T} and Lemma~\ref{lem-image-essential}, we obtain the following.

\begin{lemma}\label{imgea-circle-arc2}
Let $\alpha$ be either a loop or a circle arc in
$\widetilde{\fS}$; see Definition~\ref{def-arcs}. Under the assumption of Corollary~\ref{Cor-Fro-first}, we have
\[
\cF(\alpha)=\alpha^{(T_\ell)}.
\]
\end{lemma}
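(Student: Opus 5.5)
Throughout, $\fS$ is quasi-triangulable and Assumption~\Rlabel{A} holds, as in Corollary~\ref{Cor-Fro-first}. The plan is to split according to the type of $\alpha$: a loop, a circle arc joining two distinct boundary circles, or a circle arc with both endpoints on the same boundary circle.

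\textbf{Loops and circle arcs between distinct circles.} Proposition~\ref{prop-Fro-arc-T} already gives $\cF(\alpha)=\alpha^{(T_\ell)}$ for a simple arc joining distinct boundary circles $v\neq w$. For a general (possibly self-crossing) such arc, I would use the functoriality of Lemma~\ref{lem-functoriality}. Take a regular neighbourhood $f\colon\widetilde{\fS}_1\to\widetilde{\fS}$ of $\alpha$ together with the two boundary circles, enlarged by adding a boundary puncture on an auxiliary edge so that $\fS_1$ is quasi-triangulable. In $\fS_1$ the arc $\alpha$ becomes simple. Proposition~\ref{prop-Fro-arc-T} applies in $\fS_1$, and pushing forward by $f_*$ gives the claim, provided $\cF$ commutes with $f_*$ on such elements.

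For a loop $\alpha$ I would argue as follows. Remove the boundary circles and pass to $\fS'$ with punctures $S$. The diagram \eqref{diag-Fro-sliced} together with \eqref{eq-Phi-image} gives $\cF(P_S(\alpha))=P_S(\alpha^{(T_\ell)})=\alpha^{(T_\ell)}$. This works because a loop in $\fS$ avoids the boundary circles and hence lifts to $\fS'$.

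\textbf{Circle arcs with both endpoints on one circle $v$.} Here I would cut along an ideal arc $e$ surrounding $v$, as in the proof of Proposition~\ref{prop-Fro-arc-T}. Then $\overline\Theta_e(\alpha)$ is a sum of terms built from $u_\pm$-type pieces in the copy of $\mathfrak M$ and edge arcs elsewhere. The goal is to show these pieces pairwise $q^{\pm2}$-commute in the pattern required by Lemma~\ref{lem-root-Tl}, i.e.\ $\Theta(\alpha)=X+X'+Y$ with $XX'$ central (a product of $v^{-1}$'s, via \eqref{eq-uminusplus}) and $XY=q^{\pm2}YX$. Lemma~\ref{lem-root-Tl} would then give
\[
\Theta\bigl(T_\ell(v\,\alpha)\bigr)=v^{\ell}\bigl(X^\ell+X'^\ell+Y^\ell\bigr).
\]
On the other side, Lemma~\ref{lem-Fro-Cut} and Lemma~\ref{lem-image-essential} (applied in the cut surface) give $\Theta(\cF(\alpha))=v^{\ell-1}(X^\ell+X'^\ell+Y^\ell)$. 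Injectivity of $\Theta$ then yields $\cF(\alpha)=v^{-1}T_\ell(v\alpha)=\alpha^{(T_\ell)}$; the power of $v$ matches because $C(\alpha)=\{v,v\}$.

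\textbf{Main obstacle.} The hard part is the same-circle case. The decomposition must genuinely consist of three quasi-commuting terms of the required shape, and the cut pieces may fail to be simple there. If they are not simple, I would first reduce to the simple case by the functoriality trick above, then apply Lemma~\ref{lem-image-essential} to the resulting unions of essential arcs. Using Lemma~\ref{lem-image-essential} in this way is exactly how the statement is meant to follow from Proposition~\ref{prop-Fro-arc-T}.
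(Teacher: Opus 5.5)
\textbf{Assessment.} Your loop case, via \eqref{diag-Fro-sliced} and \eqref{eq-Phi-image}, agrees with the paper, and the neighbourhood-plus-functoriality idea is the right tool. However, the case of a circle arc with both ends on one circle $v$ has a genuine gap. You flag it as the main obstacle and leave it open, and the cutting route you sketch does not work as stated.

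\textbf{Why the cutting route fails.} Cut along a single ideal arc $e$ surrounding $v$. The outer piece of $\alpha$ is then an arc from $e$ back to $e$. Since it has both endpoints on one boundary edge, it is never a bad arc. So in general all four state assignments survive, and $\overline\Theta_e(\alpha)$ has four terms rather than the shape $x+x^{-1}+y$ needed for Lemma~\ref{lem-root-Tl}. Your last step $v^{-1}T_\ell(v\alpha)=\alpha^{(T_\ell)}$ also silently identifies the stacking power $\alpha^k$ with the framing push-off $\alpha^{[k]}$. With both ends on $v$, these have different height orders at $v$: stacking places both ends of one copy above both ends of the next, while the push-off interleaves them. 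Reordering endpoints on a boundary circle is not a scalar operation (it is controlled by \eqref{RY-relation}), so this step needs an argument too.

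\textbf{The missing observation.} No second method is needed, because the same neighbourhood construction handles $v_1=v_2$. Suppose the ends of $\alpha$ lie on $v\times\{t_1\}$ and $v\times\{t_2\}$. Let $E$ be the union of the boundary edges of a punctured component of $\partial\overline{\fS}$, and $\beta$ a path from $E\times(-1,1)$ to $\alpha$. Take a regular neighbourhood $U$ of $\alpha\cup\beta\cup(E\times(-1,1))\cup(v\times\{t_1\})\cup(v\times\{t_2\})$. The two circles at different heights get disjoint neighbourhoods, so $U=\widetilde{\fS}_1$ with $\fS_1$ quasi-triangulable. In $\fS_1$ the arc $\alpha$ is simple and joins two \emph{distinct} boundary circles $v_1',v_2'$, both sent to $v$, so Proposition~\ref{prop-Fro-arc-T} applies there.

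\textbf{Remaining details.}
\begin{itemize}
\item The auxiliary puncture must be supplied this way. By Lemma~\ref{lem-functoriality}, punctured boundary components of $\fS_1$ must map diffeomorphically onto collars of boundary components of $\fS$. So you cannot just add a puncture to a neighbourhood of $\alpha$ and the circles alone; it has to be a genuine punctured boundary component of $\fS$, reached by the path $\beta$.
\item The compatibility $f_*\circ\cF=\cF\circ f_*$, which you leave as a proviso, does hold. The algebra $\rd_\eta(\fS_1)$ is spanned by stated tangles all of whose components are essential arcs, $f_*$ preserves this class, and Lemma~\ref{lem-image-essential} gives $x^\ell$-threading on both sides.
\item Since $f_*$ is induced by an embedding and is not multiplicative, it sends $\alpha^{m}=\alpha^{[m]}$ in $\fS_1$ to the push-off $\alpha^{[m]}$ in $\fS$. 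It also sends $(v_1'v_2')^{\frac{m-1}{2}}$ to $v^{m-1}$, which is exactly the normalisation of $\alpha^{(m)}$ for $C(\alpha)=\{v,v\}$. This sidesteps both problems above.
\end{itemize}
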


\begin{proof}
If $\alpha$ is a loop, the assertion follows from
Equations~\eqref{eq-Phi-image} and \eqref{diag-Fro-sliced}.

Suppose now that $\alpha$ is a circle arc, and write
$C(\alpha)=\{v_1,v_2\},$
as in Definition~\ref{def-arcs}. There exist $t_1,t_2\in(-1,1)$
such that the endpoints of $\alpha$ lie on
$v_1\times\{t_1\}$ and $v_2\times\{t_2\}$, respectively.

Since $\fS$ is essentially punctured, it has at least one boundary
edge $e$. Let $C$ be the boundary component of $\overline{\fS}$
containing $e$, and let
$ e_1=e,e_2,\ldots,e_n$
be all the boundary edges contained in $C$. Set
$ E:=\bigcup_{i=1}^{n}e_i.$
Choose an embedded path
$\beta\colon[0,1]\longrightarrow\widetilde{\fS}$
such that
\[
\beta(0)\in E\times(-1,1),
\qquad
\beta(1)\in\alpha,
\]
and
\[
\beta((0,1))
\cap
\left(
\bigl(\partial\fS\times(-1,1)\bigr)
\cup\alpha
\right)
=
\emptyset.
\]
Let $U$ be a regular open neighborhood of
\[
\alpha
\cup\beta([0,1])
\cup\bigl(E\times(-1,1)\bigr)
\cup\bigl(v_1\times\{t_1\}\bigr)
\cup\bigl(v_2\times\{t_2\}\bigr).
\]
Then $U$ is the thickening of a connected punctured surface
$\fS_1$. By construction, $\fS_1$ contains two boundary circles  on which the endpoints of $\alpha$ lie, as well as at least
one boundary edge. Hence $\fS_1$ is essentially punctured.

Let $ f\colon\widetilde{\fS}_1\hookrightarrow\widetilde{\fS}$
denote the inclusion. The algebra $\rd_\eta(\fS_1)$ is spanned by
stated $\partial\fS_1$-tangles whose components are essential arcs.
Lemma~\ref{lem-image-essential} therefore implies that the following
diagram commutes:
\begin{equation}
	\label{eq-f-Fro-com}
	\begin{tikzcd}
		\rd_\eta(\fS_1)
		\arrow[r, "\cF"]
		\arrow[d, "f_*"']
		&
		\rd_q(\fS_1)
		\arrow[d, "f_*"]
		\\
		\rd_\eta(\fS)
		\arrow[r, "\cF"']
		&
		\rd_q(\fS).
	\end{tikzcd}
\end{equation}
Equations~\eqref{eq-Fro-arc-T} and \eqref{eq-f-Fro-com} yield
\[
\cF(\alpha)=\alpha^{(T_\ell)},
\]
as required.
\end{proof}

Using the same technique as in Lemma~\ref{lem-image-essential}, and using Lemma~\ref{imgea-circle-arc2}, we can prove the following.

\begin{proposition}\label{prop-image-all-diagrams}
Let $\beta=
\left(\bigcup_{i=1}^{m}\beta_i\right)
\cup
\left(\bigcup_{j=1}^{k}\gamma_j\right)$
be a stated $\fS$-tangle, where each $\beta_i$
is an essential arc and each $\gamma_j$ is either a loop or a
circle arc. Under the assumption of Corollary~\ref{Cor-Fro-first}, we have
\begin{align}\label{eq-image-all-diagrams}
	\cF(\beta)
	=
	\left(\bigcup_{i=1}^{m}\beta_i^{(\ell)}\right)
	\cup
	\left(\bigcup_{j=1}^{k}\gamma_j^{(T_\ell)}\right).
\end{align}
\end{proposition}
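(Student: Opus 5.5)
The plan follows the pattern of Lemma~\ref{lem-image-essential}. The stated $\fS$-tangle $\beta$ is rewritten in $\rd_\eta(\fS)$ as a scalar multiple of an ordered product of its components. The threaded tangle on the right-hand side of \eqref{eq-image-all-diagrams} is rewritten in $\rdS$ as the same scalar multiple of the product of the threaded components. We then apply $\cF$ componentwise, using Lemma~\ref{lem-image-arc} for the essential arcs and Lemma~\ref{imgea-circle-arc2} for the loops and circle arcs.

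First we reduce to the case where the components of $\beta$ are vertically stacked, with each component lying in its own slab $\fS\times(t_{s},t_{s+1})$.

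1. Choose a diagram of $\beta$ and an ordering of the components.
2. Pass from $\beta$ to the stacked product $\beta_1\cdots\beta_m\gamma_1\cdots\gamma_k$ by crossing changes between distinct components and by height exchanges of endpoints on the boundary.
3. In $\rd_\eta(\fS)$, with $\eta=\eta^{-1}\in\{\pm1\}$ by \eqref{eq-eta-ql}, a crossing change between two distinct components costs a factor in $\{\pm1\}$. This follows from the skein relation \eqref{cross} at $\eta$, or equivalently from the height-exchange relations in Remark~\ref{rem-com-height1}. A height exchange of endpoints on a boundary edge or boundary circle is governed by \eqref{eq-height-l}, \eqref{eq-height-2} and \eqref{eq-height-21} specialised to $\eta$.
4. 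Conclude that $\beta=\eta^{r}\,\beta_1\cdots\beta_m\gamma_1\cdots\gamma_k$ in $\rd_\eta(\fS)$ for some integer $r$.

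Second, we perform the same sequence of moves on the threaded tangle $\left(\bigcup_i\beta_i^{(\ell)}\right)\cup\left(\bigcup_j\gamma_j^{(T_\ell)}\right)$ in $\rdS$.

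1. A crossing change between a $T_\ell$-threaded component and any other threaded strand is free: this is relation \eqref{eq-height-33}, the transparency of Chebyshev-threaded strands.
2. A crossing change between two $\ell$-threaded essential arcs produces exactly the sign $\eta$, by \eqref{eq-height-3} combined with \eqref{eq-height-l}.
3. Height exchanges of $\ell$-threaded ends or $T_\ell$-threaded ends produce the factors in \eqref{eq-height-l}, \eqref{eq-height-2} and \eqref{eq-height-21}, which match the $\eta$-factors in $\rd_\eta(\fS)$ by Remark~\ref{rem-com-height}.
4. Conclude that the threaded tangle equals $\eta^{r}\prod_i\beta_i^{(\ell)}\prod_j\gamma_j^{(T_\ell)}$ with the same $r$.

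Since $\cF$ is an algebra homomorphism, we get $\cF(\beta)=\eta^r\prod_i\cF(\beta_i)\prod_j\cF(\gamma_j)$. Substituting Lemmas~\ref{lem-image-arc} and~\ref{imgea-circle-arc2} gives \eqref{eq-image-all-diagrams}.

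The main obstacle is the bookkeeping that shows the exponent $r$ is the same on both sides. The delicate moves are the crossing changes involving a circle arc or loop in $\rd_\eta(\fS)$: the Kauffman relation there gives a sum of two resolutions, not a scalar. For these I would argue as follows.

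1. Such crossings need not be changed at all. Instead, $\cF$ is applied to the whole sub-tangle formed by the essential arcs, which is handled by Lemma~\ref{lem-image-essential}.
2. Each circle arc or loop $\gamma_j$ is then pulled out through the rest of the tangle on both sides.
3. On the $q$-side this is free by \eqref{eq-height-33}. On the $\eta$-side it is matched by Remark~\ref{rem-com-height1}, the versions of the relations with the circle removed.

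The remaining issue is whether the $\eta$-side move of a single $\gamma_j$ past other strands is also free, since the $\eta$-side strand is not Chebyshev-threaded. I would check this by applying the injective map $\cF$ on a neighbourhood surface, as in \eqref{eq-f-Fro-com}. If that fails, I would fall back to expanding in $\rd_\eta(\fS)$ via the basis $\overline{B}(\fS)$ and comparing the two sides term by term.
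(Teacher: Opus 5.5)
Your overall plan is the paper's. The paper proves the statement by the technique of Lemma~\ref{lem-image-essential}: write both sides as the same $\eta^r$ times an ordered product, then apply $\cF$ factorwise via Lemmas~\ref{lem-image-arc} and~\ref{imgea-circle-arc2}, with Remark~\ref{rem-com-height} supplying the equality of exponents. However, the step you identify as the main obstacle is handled incorrectly, and as written your bookkeeping does not give the same $r$ on both sides.

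First, there is no delicate crossing in $\rd_\eta(\fS)$. By \eqref{eq-eta-ql} we have $\eta=\eta^{-1}$, so \eqref{cross} at parameter $\eta$ writes a crossing and its mirror image as $\eta$ times the same sum of the two smoothings. Hence every interior crossing change in $\rd_\eta(\fS)$ costs exactly $1$, not merely a sign, and this includes a crossing between a loop or circle arc and any other strand. It is irrelevant that each crossing is a sum of two resolutions: only the difference of the two crossings matters, and it equals $(\eta-\eta^{-1})(\cdots)=0$. So your workaround is unnecessary, and its open step (whether a single $\gamma_j$ passes freely through other strands on the $\eta$-side) is settled by this observation. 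The proposed check via the injective $\cF$ on a neighbourhood surface would also be circular, because it requires knowing $\cF$ on the very tangles in question.

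Second, on the $q$-side a crossing change between two $\ell$-threaded essential arcs is free by \eqref{eq-height-3}; it does not produce $\eta$. If it did, the corresponding crossing change in $\rd_\eta(\fS)$ would still cost exactly $1$, so the exponents on the two sides would differ whenever $q$ has even order, i.e.\ $\eta=-1$. As a consistency check, a crossing change next to a boundary edge is the composite of two height exchanges, so it costs $\eta^{2}=1$ on either side.

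The correct bookkeeping is as follows.
Interior crossing changes contribute nothing on either side, by \eqref{cross} at $\eta$ and by \eqref{eq-height-3} and \eqref{eq-height-33} at $q$.
Each height exchange of endpoints on a boundary edge or boundary circle contributes $\eta$ on both sides. This follows from \eqref{eq-height-l}, \eqref{eq-height-2}, \eqref{eq-height-21}, their $\eta$-versions in Remark~\ref{rem-com-height1}, and $\eta^\ell=\eta$.
Thus $r$ is simply the number of boundary height exchanges needed to reach the stacked order, which is the same on both sides. With this correction, your reduction and the final substitution go through exactly as in the paper.
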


The following theorem establishes the Frobenius map for the reduced
L{\^e}--Roger--Yang skein algebra and is the main result of this section.
We also prove that the Frobenius map is compatible with the cutting homomorphism
and the quantum trace map.
In \S\ref{sec-center}, we will use this map to describe the center of the
Muller--Roger--Yang skein algebra.

\begin{theorem}\label{thm-Fro-main}
Let $\fS$ be a punctured surface, and suppose that $q^2$ is a root
of unity of odd order $\ell$. Set
$\eta^{\frac12}=q^{\frac{\ell^2}{2}}.$
Then there exists a unique $\wR$-algebra embedding
\[
\cF\colon \rd_\eta(\fS)\longrightarrow\rdS
\]
such that it satisfies \eqref{eq-image-all-diagrams}.

This algebra embedding $\cF$ satisfies the following properties
\begin{enumerate}
	\item [\rm (a)] Let e be an ideal arc, the diagram \eqref{diag-Fro-cut} commutes.
	
	\item [\rm (b)] Suppose $\fS$ is triangulable with a triangulation $\lambda$. Then 
	the diagram \eqref{diag-Fro-trace} commutes.
\end{enumerate}
\end{theorem}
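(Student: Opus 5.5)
Uniqueness is the easy half. By Theorem~\ref{thm-basis-reduced}, $\rd_\eta(\fS)$ is spanned over $\wR$ by stated $\fS$-tangle diagrams. Every such diagram is a union of essential arcs, loops and circle arcs, so formula~\eqref{eq-image-all-diagrams} prescribes $\cF$ on a spanning set. Any $\wR$-linear map satisfying it is therefore unique.

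For existence I reduce to the quasi-triangulable case, where $\cF$ is already provided by Corollary~\ref{Cor-Fro-first} and satisfies \eqref{eq-image-all-diagrams} by Proposition~\ref{prop-image-all-diagrams}. Given an arbitrary punctured surface $\fS$, I enlarge it in two steps.
\begin{enumerate}
\item[(i)] For every component of $\overline\fS$, I add interior and boundary punctures until it becomes quasi-triangulable. Adding a boundary puncture is done by first adding an interior puncture $p$ and then blowing it up via Remark~\ref{rem-blow-map} into a boundary edge. Call the result $\fS''$.
\item[(ii)] I then use the chain of maps
\[
\rd_q(\fS'') \xleftarrow{\ f_*\ } \rd_q(\fS') \xrightarrow{\ \mathbf P\ } \rdS .
\]
Here $\fS'=\fS\setminus\{\text{new points}\}$, the map $f_*$ is the blow-up embedding, and $\mathbf P$ is the surjection of Remark~\ref{rem-adding-puncture}.
\end{enumerate}

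Next I check that $\cF_{\fS''}$ restricts to $f_*(\rd_\eta(\fS'))$. This holds because, by \eqref{eq-image-all-diagrams}, threading a diagram lying in the image of $f_*$ gives a diagram again in the image. Since $f_*$ is injective, this defines $\cF_{\fS'}$, and $\cF_{\fS'}$ again satisfies \eqref{eq-image-all-diagrams}.

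The main obstacle is showing that $\cF_{\fS'}$ descends through $\mathbf P$. The kernel of $\mathbf P$ is generated by differences $a-a'$ of diagrams related by sliding a strand across a removed puncture $p$. The image $\cF(a)-\cF(a')$ is a difference of threaded diagrams: $\ell$ parallel copies for an essential arc, or a $T_\ell$-threading for a loop or circle arc. This difference must be expanded into a combination of elementary slides. For the $x^\ell$ case I would slide the $\ell$ strands one at a time, each step lying in $\ker\mathbf P$. For the $T_\ell$ case, $T_\ell$ is a polynomial in parallel copies, so the same strand-by-strand sliding applies termwise. A subtlety is that the other components of the tangle may interact with the slid strand through crossings. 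Threading is local, though, and sliding one strand past the puncture in $\fS$ is an isotopy, so every intermediate difference still lies in $\ker\mathbf P$.

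With that done, the induced map $\cF\colon\rd_\eta(\fS)\to\rdS$ is an algebra homomorphism satisfying \eqref{eq-image-all-diagrams}. Injectivity I would obtain by a leading-term argument: under $\cF$, each basis diagram in $\overline B(\fS)$ maps to its $\ell$-fold parallel or $T_\ell$-threading. Its top term, after resolving crossings, is a distinct basis diagram. An alternative route is to map into a triangulable enlargement and use the injective map $\mathbb F$ together with $\tr_\lambda$.

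Properties (a) and (b) follow by the same reduction. For (b), I would prove Lemma~\ref{lem-Fro-trace} for $\fS''$ and compare traces after adding punctures. For (a), I would combine (b) with the cutting compatibility \eqref{thm-eq-cut-trace} and the injectivity of $\tr_\lambda$, exactly as in Lemma~\ref{lem-Fro-Cut}. If $\fS$ is not triangulable, I would instead verify (a) directly on diagrams: both compositions agree on threaded diagrams, because cutting commutes with threading, as in Lemma~\ref{lem-cut-N}.
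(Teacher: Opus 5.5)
Your uniqueness and existence arguments follow the paper: remove points, blow one up to reach a quasi-triangulable surface, restrict along $f_*$, then descend along $\mathbf P$. For the descent it already suffices that the threadings of $a$ and $a'$ are isotopic in $\widetilde{\fS}$.

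\textbf{The genuine gap is injectivity.}

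\emph{The leading-term argument.} It breaks down as soon as $\fS$ has boundary circles. Threading an edge--circle arc or a circle arc produces $\ell$ endpoints on a boundary circle. So the result is not crossingless in the sense of Definition~\ref{def-crossingless}, and it is not an element of $\overline{B}(\fS)$. Rewriting it in that basis requires \eqref{RY-relation}, and each application yields two terms of the same complexity. You give no filtration in which $\cF$ becomes unitriangular.

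\emph{The enlargement alternative.} This fails too. The enlargement only provides the surjection $\mathbf P\colon\overline{\cS}_q(\fS')\to\rdS$, and injectivity upstairs does not pass to a quotient. For puncture-free surfaces (closed surfaces, or surfaces with only boundary circles) there is no $\tr_\lambda$ at all. For triangulable $\fS$, injectivity would follow from (b) together with the injectivity of $\tr_\lambda$ and $\mathbb F$, once (b) is known.

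\emph{What the paper does.} It argues by cases.
\begin{enumerate}
\item Quasi-triangulable surfaces are handled by Corollary~\ref{Cor-Fro-first}.
\item The bigon and the one-circle-monogon are handled directly. For the latter, $\overline{\cS}_q(\mathfrak M)\cong\wR[x^{\pm1}]$ by Theorem~\ref{trace-monogon}, and $\cF(x)=x^\ell$.
\item Surfaces without boundary edges are handled via the basis $\mathsf B$ of \cite{karuo2024positivity}. Its elements are products $\prod_i\alpha_i^{(T_{\ell_i})}$ over disjoint, pairwise non-isotopic loops and circle arcs. $\cF$ maps $\mathsf B$ injectively into itself, since $T_\ell\circ T_{\ell_i}=T_{\ell\ell_i}$.
\end{enumerate}
That Chebyshev-type basis is the ingredient you are missing.

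\textbf{Your treatment of (a) and (b) also has a hole.}

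\emph{Step (b).} ``Comparing traces after adding punctures'' has no mechanism behind it. There is no homomorphism between $\overline{\mathcal X}_q(\fS'',\lambda'')$ and $\overline{\mathcal X}_q(\fS,\lambda)$ intertwining $\mathbf P\circ f_*^{-1}$. So (b) is not established, and your proof of (a) for triangulable $\fS$ rests on (b).

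\emph{Your fallback for non-triangulable $\fS$.} The claim that cutting commutes with threading is not a formal consequence of locality. For an arc $\alpha$ meeting $e$ once, $\overline{\Theta}_e(\alpha^{(\ell)})$ is a sum over all $2^\ell$ states on $\alpha^{(\ell)}\cap e$. By contrast, $\cF(\overline{\Theta}_e(\alpha))$ has only the two constant-state terms. The vanishing of the mixed-state contribution is the real content. Lemma~\ref{lem-cut-N} is a consequence of Lemma~\ref{lem-Fro-Cut}, not an independent input.

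\emph{The paper's order.} The paper runs the other way. It proves (a) by the same reduction as existence: isotope $e$ off the added points, so that $\overline{\Theta}_e$ commutes with $f_*$ and $\mathbf P$, and apply Lemma~\ref{lem-Fro-Cut} on the quasi-triangulable enlargement. It then deduces (b) from (a) by cutting along all interior edges of $\lambda$ into triangles and one-circle-monogons. On those pieces compatibility with the trace is known; Theorem~\ref{trace-monogon} covers $\mathfrak M$.
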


\begin{proof}

The uniqueness is straightforward since 
$\rd_\eta(\fS)$ is spanned by 
stated $\fS$-tangles.

We may assume that $\fS$ is connected.

\noindent\textbf{Existence.} 
Let $\fS_1$ be the punctured surface obtained from $\fS$ by removing three interior points $p_1,p_2$, and $p_3$. As in Remark~\ref{rem-blow-map}, let $\fS_1'$ be the punctured surface obtained from $\fS_1$ by blowing up one of the punctures, say $p=p_1$, into a boundary edge $c_p$. Then $\fS_1'$ is an quasi-triangulable. Lemmas~\ref{lem-Fro-trace} and \ref{lem-Fro-Cut}, together with Proposition~\ref{prop-image-all-diagrams}, imply that there exists an algebra embedding
\begin{equation}\label{thm-proof-Fro1}
	\cF\colon \rd_\eta(\fS_1')\longrightarrow\rd_q(\fS_1')
\end{equation}
satisfying \eqref{diag-Fro-trace}, \eqref{diag-Fro-cut}, and \eqref{eq-image-all-diagrams}.

By Remark~\ref{rem-blow-map}, $\rd(\fS_1)$ is a subalgebra of $\rd(\fS_1')$. Moreover, \eqref{eq-image-all-diagrams} implies that the algebra embedding in \eqref{thm-proof-Fro1} restricts to an algebra embedding
\begin{equation}\label{thm-proof-Fro2}
	\cF\colon \rd_\eta(\fS_1)\longrightarrow\rd_q(\fS_1).
\end{equation}
Since the embedding in \eqref{thm-proof-Fro1} satisfies \eqref{diag-Fro-cut} and \eqref{eq-image-all-diagrams}, so does its restriction in \eqref{thm-proof-Fro2}.

By Remark~\ref{rem-adding-puncture}, there exist surjective algebra homomorphisms
\begin{align*}
	{\bf P}_\eta\colon \rd_\eta(\fS_1)\longrightarrow\rd_\eta(\fS),\qquad
	{\bf P}_q\colon \rd_q(\fS_1)\longrightarrow\rd_q(\fS).
\end{align*}
It follows from Figure~\ref{eq-relation-point-p}, Remark~\ref{rem-adding-puncture}, and \eqref{eq-image-all-diagrams} that
$\cF\bigl(\ker {\bf P}_\eta\bigr)\subseteq \ker {\bf P}_q.$
Consequently, the algebra embedding in \eqref{thm-proof-Fro2} descends to an algebra homomorphism
\begin{equation}\label{thm-proof-Fro3}
	\cF\colon \rd_\eta(\fS)\longrightarrow\rd_q(\fS)
\end{equation}
satisfying \eqref{eq-image-all-diagrams}.

Let $e$ be an ideal arc of $\fS$. We may isotope $e$ so that it is disjoint from $p_1,p_2,p_3$ and is not a boundary edge of $\fS_1$. It is then straightforward to verify that the cutting homomorphism $\Theta_e$ commutes with the projection ${\bf P}$. Hence the algebra homomorphism in \eqref{thm-proof-Fro3} satisfies \eqref{diag-Fro-cut}, since the embedding in \eqref{thm-proof-Fro2} does.

Finally, since the quantum trace map ${\rm tr}_\lambda$ is compatible with the cutting homomorphisms, the commutativity of \eqref{diag-Fro-trace} follows from that of \eqref{diag-Fro-cut}.

\noindent\textbf{Injectivity.}
Suppose first that $\fS$ is quasi-triangulable. In this case,
injectivity follows from Corollary~\ref{Cor-Fro-first}.

Suppose that $\partial\fS\neq\emptyset$ and that $\fS$ is not
quasi-triangulable. Then $\fS$ is either the bigon
$
\begin{tikzpicture}[baseline=(current bounding box.center)]
	\draw[fill=gray!25,line width=0.8pt]
	(0,0) circle[radius=0.25];
	\draw[fill=white,line width=0.8pt]
	(-0.05,-0.30) rectangle (0.05,-0.20);
	\draw[fill=white,line width=0.8pt]
	(-0.05,0.20) rectangle (0.05,0.30);
\end{tikzpicture}
$
or the one--circle--monogon
$\begin{tikzpicture}[baseline=(current bounding box.center)]
	\draw[fill=gray!25,line width=0.8pt]
	(0,0) circle[radius=0.25];
	\draw[fill=white,line width=0.8pt]
	(-0.05,-0.30) rectangle (0.05,-0.20);
	\draw[fill=white,line width=0.8pt]
	(0,0.05) circle[radius=0.10];
\end{tikzpicture}.$
For the bigon, the injectivity of $\cF$ is well known; see
\cite{BL22}. For the one--circle--monogon, we have
$\rd_\eta(\fS)
\simeq R[x^{\pm1}]
\simeq \rd_q(\fS)$
by \cite[Theorem~5.6]{bloomquist2025degenerations}, and the Frobenius
homomorphism is given by
$\cF(x)=x^\ell.$
It follows immediately that $\cF$ is injective.

Finally, suppose that $\partial\fS=\emptyset$. A basis $\mathsf{B}$ of $\rd(\fS)$ is
defined in \cite[Equation~(4.2)]{karuo2024positivity}. Each element of
$\mathsf{B}$ is of the form
$\prod_{i=1}^{n}\alpha_i^{(T_{\ell_i})},$
where $\alpha_1,\ldots,\alpha_n$ are pairwise non-isotopic and mutually
disjoint loops or circle arcs, and each $\ell_i$ is a nonnegative
integer. Moreover,
$\cF\left(
\prod_{i=1}^{n}\alpha_i^{(T_{\ell_i})}
\right)
=
\prod_{i=1}^{n}\alpha_i^{(T_{\ell\ell_i})}.$
Consequently, $\cF$ maps $\mathsf{B}$ injectively into itself and is
therefore injective.
\end{proof}

\subsection{Frobenius maps for L{\^e}--Roger--Yang skein algebras}\label{sec-Fro-stated}
Let $\fS$ be a punctured surface.
Attach an ideal triangle $t_e$  to each boundary edge $e$ of $\fS$ as in Figure~\ref{fig:attaching-triangle}, and let $\fS^\ast$ denote the resulting punctured surface. 
There exists a smooth embedding
$\iota\colon \fS\hookrightarrow \fS^\ast$
that maps each boundary edge $e$ to $\hat{e}$ and restricts to the identity outside a small neighborhood of $e$; see Figure~\ref{fig:attaching-triangle}.
The embedding $\iota$ induces an algebra embedding
$\iota_*\colon \SS \rightarrow \rd_q(\fS^\ast)$ \cite{bloomquist2025degenerations}.

\begin{figure}[ht]
\centering
\begin{tikzpicture}[
	x=1cm,
	y=1cm,
	line cap=round,
	line join=round,
	surface/.style={
		fill=gray!20,
		draw=none
	},
	boundary/.style={
		draw=black,
		line width=0.85pt
	},
	tangle/.style={
		draw=black,
		line width=1.15pt
	},
	embedding arrow/.style={
		-{Latex[length=2.2mm,width=1.5mm]},
		line width=0.8pt
	},
	puncture/.style={
		rectangle,
		draw=black,
		fill=white,
		line width=0.75pt,
		minimum size=3.1mm,
		inner sep=0pt
	},
	every node/.style={
		font=\small
	}
	]
	
	\begin{scope}[shift={(0,0)}]
		\fill[surface]
		(-1.65,0) rectangle (1.65,1.20);
		
		\fill[gray!20]
		(-1.25,0) -- (1.25,0) -- (0,-1.10) -- cycle;
		
		\draw[boundary]
		(-1.65,0) -- (-1.25,0);
		\draw[boundary]
		(1.25,0) -- (1.65,0);
		\draw[boundary]
		(-1.25,0) -- (0,-1.10) -- (1.25,0);
		
		\draw[boundary]
		(-1.25,0) -- (1.25,0);
		
		\node[puncture] at (-1.25,0) {};
		\node[puncture] at ( 1.25,0) {};
		\node[puncture] at (0,-1.10) {};
		
		\node at (0,0.2) {$e$};
		\node at (0,-0.52) {$t_e$};
		\node at (0.9,-0.52) {$\hat e$};
		\node at (1.37,0.94) {$\fS$};
	\end{scope}
	
	\begin{scope}[shift={(4.40,0)}]
		\fill[surface]
		(-1.65,0) rectangle (1.65,1.20);
		
		\draw[boundary]
		(-1.65,0) -- (1.65,0);
		
		\node[puncture] at (-1.25,0) {};
		\node[puncture] at ( 1.25,0) {};
		
		\foreach \x in {-0.48,0,0.48}{
			\draw[tangle]
			(\x,0) -- (\x,0.88);
		}
		
		\draw[
		decorate,
		decoration={
			brace,
			amplitude=3.5pt
		},
		line width=0.65pt
		]
		(-0.64,1.00) -- (0.64,1.00)
		node[midway,yshift=10pt] {$\alpha$};
		
		\node at (1.37,0.94) {$\fS$};
	\end{scope}
	
	\draw[embedding arrow]
	(6.30,0.50) -- (7.05,0.50)
	node[midway,above=2pt] {$\iota$};
	
	\begin{scope}[shift={(8.90,0)}]
		\fill[surface]
		(-1.65,0) rectangle (1.65,1.20);
		
		\fill[gray!20]
		(-1.25,0) -- (1.25,0) -- (0,-1.10) -- cycle;
		
		\draw[boundary]
		(-1.65,0) -- (-1.25,0);
		\draw[boundary]
		(1.25,0) -- (1.65,0);
		\draw[boundary]
		(-1.25,0) -- (0,-1.10) -- (1.25,0);
		
		\draw[boundary]
		(-1.25,0) -- (1.25,0);
		
		\node[puncture] at (-1.25,0) {};
		\node[puncture] at ( 1.25,0) {};
		\node[puncture] at (0,-1.10) {};
		
		\draw[tangle]
		(-0.48,0.88)
		.. controls (-0.48,0.34) and (-0.48,-0.30) ..
		(0.24,-0.889);
		
		\draw[tangle]
		(0,0.88)
		.. controls (0,0.34) and (-0.04,-0.20) ..
		(0.50,-0.660);
		
		\draw[tangle]
		(0.48,0.88)
		.. controls (0.48,0.34) and (0.38,-0.12) ..
		(0.76,-0.431);
		
		\draw[
		decorate,
		decoration={
			brace,
			amplitude=3.5pt
		},
		line width=0.65pt
		]
		(-0.64,1.00) -- (0.64,1.00)
		node[midway,yshift=10pt] {$\iota(\alpha)$};
		
		\node at (1.37,0.94) {$\fS^*$};
	\end{scope}
	
\end{tikzpicture}

\caption{Left: attaching the ideal triangle $t_e$.
	Right: the embedding
	$\iota\colon\fS\hookrightarrow\fS^*$.}
\label{fig:attaching-triangle}
\end{figure}

Under Assumption~\Rlabel{A}, there exists an algebra embedding
$\cF\colon \rd_\eta (\fS^\ast) \rightarrow \rd_q(\fS^\ast)$.
Equation~\eqref{eq-image-all-diagrams} implies that
there exists a unique $\wR$-algebra embedding 
\begin{align}\label{eq-Fro-stated}
\cF\colon \cS_\eta(\fS) \rightarrow \SS
\end{align}
such that the following diagram commutes
\begin{equation}
\label{diag-Fro-stated}
\begin{tikzcd}
	\cS_\eta(\fS)
	\arrow[r, "\cF"]
	\arrow[d, "\iota_\ast"']
	&
	\cS_q(\fS)
	\arrow[d, "\iota_\ast"]
	\\
	\rd_\eta(\fS^\ast)
	\arrow[r, "\cF"']
	&
	\rd_q(\fS^\ast)
\end{tikzcd}.
\end{equation} 
It is straightforward that the Frobenius map in \eqref{eq-Fro-stated} also satisfies \eqref{eq-image-all-diagrams}.

\begin{remark}\label{rem-trace}
Suppose that $\fS$ is triangulable and let $\lambda$ be an ideal
triangulation of $\fS$. Then $\lambda$ induces an ideal triangulation
$\lambda^\ast$ of $\fS^\ast$. The quantum trace map for the
L{\^e}--Roger--Yang skein algebra is defined by the composition
\cite[Equation~(45)]{bloomquist2025degenerations}
\[
\cS_q(\fS)
\xrightarrow{\ \iota_\ast\ }
\rd_q(\fS^\ast)
\xrightarrow{\ \tr_{\lambda^\ast}\ }
\overline{\mathcal{X}}_q(\fS^\ast,\lambda^\ast).
\]
Theorem~\ref{thm-Fro-main}(b) implies that this quantum trace map
commutes with the Frobenius map in \eqref{eq-Fro-stated}.
\end{remark}

The following will be used to show that the Frobenius map in \eqref{eq-Fro-stated} commutes with the cutting homomorphism. 

\begin{lemma}\label{lem-quantum-binomial-uplus-uminus}
Suppose that $q^2$ is a root of unity of odd order $\ell$. Let
$u_+,u_-,a,b$ satisfy
\[
q u_+u_- - q^{-1}u_-u_+ = q-q^{-1},
\qquad
ab=q^{-2}ba.
\]
Then
\[
\left(u_+\otimes a+u_-\otimes b\right)^\ell
=
u_+^\ell\otimes a^\ell
+
u_-^\ell\otimes b^\ell.
\]
\end{lemma}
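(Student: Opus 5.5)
The plan is to reduce the statement to a $q$-commutation computation in a localized quantum torus, where Lemma~\ref{lem:expan1} applies. Write $X:=u_+\otimes a$ and $Y:=u_-\otimes b$. Everything takes place in the tensor product algebra, and the identity is universal. So it suffices to prove it in the algebra generated by $u_\pm$ subject only to the single relation, tensored with the quantum plane $\langle a,b\mid ab=q^{-2}ba\rangle$, possibly after enlarging these algebras.

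\textbf{Step 1 (normal form for $u_\pm$).} Set $c:=u_+u_--1$. The defining relation can be rewritten as $q(u_+u_--1)=q^{-1}(u_-u_+-1)$, that is,
\[
u_-u_+-1=q^2c .
\]
From $u_+(u_-u_+-1)=(u_+u_--1)u_+$ we get
\[
cu_+=q^2u_+c .
\]
I would then embed the universal algebra into the Ore localization at $u_+$. This is a quantum torus in $u_+^{\pm1}$ extended by the $q$-commuting polynomial variable $c$. Injectivity of this embedding should follow from a PBW-type basis $\{u_+^iu_-^j\}$, which I would check by the diamond lemma. In the localization we have $u_-=u_+^{-1}(1+c)$. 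Writing $u:=u_+$, this gives $Y=Y_1+Y_2$ with
\[
Y_1=u^{-1}\otimes b,\qquad Y_2=u^{-1}c\otimes b .
\]

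\textbf{Step 2 (commutation data).} Next I would compute the pairwise relations, using $cu^{-1}=q^{-2}u^{-1}c$ and $ba=q^2ab$:
\[
XY_1=q^{-2}Y_1X,\qquad Y_1Y_2=q^{2}Y_2Y_1,\qquad XY_2=q^{-4}Y_2X .
\]
Since $q^2$ has order $\ell$, Lemma~\ref{lem:expan1} gives $(Y_1+Y_2)^\ell=Y_1^\ell+Y_2^\ell$. The same lemma also gives $Y^\ell=u_-^\ell\otimes b^\ell$: indeed $u_-=u^{-1}+u^{-1}c$ is a sum of two $q^2$-commuting terms, so $u_-^\ell=u^{-\ell}+(u^{-1}c)^\ell$.

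\textbf{Step 3 (the three-term power, the main obstacle).} The three exponents $q^{-2},q^{2},q^{-4}$ are not of the uniform shape $\omega^{j-i}$ required in Theorem~\ref{thm:expan}. So I cannot apply that theorem directly to $X+Y_1+Y_2$, and this is the hard step. My first attempt is to pass to the pair $(X,Y)$, which satisfies
\[
XY=q^{-4}YX+(1-q^{-2})C,\qquad C:=1\otimes ab,
\]
with $C$ normal: $CX=q^2XC$ and $YC=q^2CY$. Then I would prove by induction on $n$ a normal-ordered expansion
\[
(X+Y)^n=\sum_{i+j+2k=n}\gamma^{(n)}_{i,j,k}(q)\,X^iC^kY^j .
\]
The coefficients $\gamma^{(n)}_{i,j,k}(q)$ satisfy a recursion obtained by moving a new factor $X$ past $C^kY^j$. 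The goal is to show that at $n=\ell$ every coefficient vanishes except $\gamma_{\ell,0,0}=\gamma_{0,\ell,0}=1$.

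To make the vanishing computable, I would test the identity in the commuting specialization $c=0$, where $u_-=u_+^{-1}$. There $X$ and $Y$ simply $q^{-2}$-commute and Lemma~\ref{lem:expan1} applies directly. I would then argue that the $\gamma$'s are the same universal polynomials in $q$ in both settings, because $C$ arises as $XY_1$ under the model of Step~1. This reduces the $k>0$ terms to $q$-binomial coefficients $\binom{\ell}{k}_{q^2}$, which vanish at the given root of unity.

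If this comparison argument does not close, the fallback is to compute the $\gamma$'s explicitly as products of $q$-binomials, using the recursion above, and check their vanishing at $n=\ell$ directly.
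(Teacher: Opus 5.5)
Your Steps~1 and~2 are correct. The relations $XY_1=q^{-2}Y_1X$, $Y_1Y_2=q^{2}Y_2Y_1$ and $XY_2=q^{-4}Y_2X$ all check out. Reducing to the universal algebra embedded in the localization at $u_+$ is, if anything, more careful than the paper, which simply adjoins $u_-^{-1}$ without justifying injectivity.

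The gap is Step~3, which you call the main obstacle: as written it proves nothing. The comparison with the specialization $c=0$ fails because there $C=XY_1=XY$. So each monomial $X^iC^kY^j$ collapses to a scalar multiple of $X^{i+k}Y^{j+k}$. The identity $(X+Y)^\ell=X^\ell+Y^\ell$ in the specialization then only says that, for each fixed $(i+k,j+k)$, a certain linear combination of the $\gamma^{(\ell)}_{i,j,k}$ vanishes. It does not say that each $\gamma^{(\ell)}_{i,j,k}$ with $k>0$ vanishes, which is what you need. The fallback, an explicit computation of the $\gamma$'s, is not carried out, so the key step is left open.

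The obstacle is illusory, and the missing observation is already in your Step~2. You have $Y_1X=q^{2}XY_1$ and $Y_1Y_2=q^{2}Y_2Y_1$, so $Y_1$ $q^{2}$-commutes with the sum $X+Y_2$ with a single factor. Lemma~\ref{lem:expan1} therefore gives $(Y_1+(X+Y_2))^\ell=Y_1^\ell+(X+Y_2)^\ell$. Since $\ell$ is odd, $q^{-4}$ also has order $\ell$, so a second application gives $(X+Y_2)^\ell=X^\ell+Y_2^\ell$. Together with $Y_1^\ell+Y_2^\ell=(Y_1+Y_2)^\ell=Y^\ell=u_-^\ell\otimes b^\ell$, this yields $(X+Y)^\ell=u_+^\ell\otimes a^\ell+u_-^\ell\otimes b^\ell$.

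This is exactly the paper's proof with the roles of $u_\pm$ mirrored. The paper inverts $u_-$ and splits $u_+\otimes a$ rather than $u_-\otimes b$ into two pieces. It then uses that one of the three summands commutes with each of the other two up to the same factor $q^{\pm2}$, while the remaining two commute up to $q^{\pm4}$. Neither Theorem~\ref{thm:expan} nor your normal-ordered expansion is needed.
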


\begin{proof}
Set $r=q^{-2}$. Then $r$ is a primitive $\ell$-th root of
unity, and the defining relations can be rewritten as
\[
u_+u_-=r\,u_-u_+ +(1-r),
\qquad
ab=r\,ba.
\]

Work temporarily in the localization in which $u_-$ is
invertible, and set
\[
K:=1-u_-u_+.
\]
The relation between $u_+$ and $u_-$ implies
\[
u_+K=rKu_+,
\qquad
Ku_-=r\,u_-K.
\]
Moreover,
\[
u_+=u_-^{-1}(1-K).
\]

Define
\[
A:=u_-^{-1}\otimes a,\qquad
B:=-u_-^{-1}K\otimes a,\qquad
C:=u_-\otimes b.
\]
Then
\[
u_+\otimes a+u_-\otimes b=A+B+C.
\]
A direct calculation gives
\[
AB=r\,BA,\qquad
AC=r\,CA,\qquad
BC=r^2CB.
\]
In particular,
\[
A(B+C)=r(B+C)A.
\]
Since $\ell$ is odd, $r^2$ is also a primitive $\ell$-th root
of unity. Applying Lemma~\ref{lem:expan1} twice yields
\[
(A+B+C)^\ell
=A^\ell+(B+C)^\ell
=A^\ell+B^\ell+C^\ell.
\]

Since $\ell$ is odd and $r^\ell=1$, we have
\[
A^\ell+B^\ell
=
u_-^{-\ell}(1-K^\ell)\otimes a^\ell.
\]
On the other hand, the expression
\[
u_+=u_-^{-1}-u_-^{-1}K
\]
is a sum of two $r$-commuting elements. Thus another application
of Lemma~\ref{lem:expan1} gives
\[
u_+^\ell=u_-^{-\ell}(1-K^\ell).
\]
It follows that
\[
A^\ell+B^\ell=u_+^\ell\otimes a^\ell,
\qquad
C^\ell=u_-^\ell\otimes b^\ell.
\]
Therefore,
\[
\left(u_+\otimes a+u_-\otimes b\right)^\ell
=
u_+^\ell\otimes a^\ell
+
u_-^\ell\otimes b^\ell.
\]
Although the calculation was carried out after adjoining
$u_-^{-1}$, the resulting identity contains no negative powers
of $u_-$ and hence holds in the original setting.
\end{proof}

\begin{figure}
\centering
\includegraphics[width=0.25\linewidth]{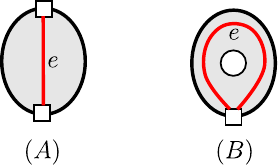}
\caption{The pair $(\fS,e)$.}
\label{fig:cuttinge}
\end{figure}

\begin{remark}\label{rem-cutting}
We show that the Frobenius map in~\eqref{eq-Fro-stated} also
commutes with the cutting homomorphism. If $\fS$ has no boundary
edges, this follows from Theorem~\ref{thm-Fro-main}.

Suppose now that $\fS$ has at least one boundary edge.
Relation~\eqref{hight} and Remark~\ref{rem-com-height1} imply that
the $\widetilde{\mathbb C}$-algebra $\cS_\eta(\fS)$ is generated
by stated essential arcs; see Definition~\ref{def-arcs}. Let $e$
be an ideal arc of $\fS$. To prove
$\cF\circ\Theta_e=\Theta_e\circ\cF,$
it therefore suffices to verify that
$\cF\bigl(\Theta_e(\alpha)\bigr)
=
\Theta_e\bigl(\cF(\alpha)\bigr)$
for every stated essential arc $\alpha$.

By~\cite[Lemma 3.6]{BL22}, we may assume that $\alpha$ intersects
$e\times(-1,1)$ exactly once. After restricting to a regular open
neighborhood of $\alpha\cup\bigl(e\times(-1,1)\bigr)$, we may
further assume that the pair $(\fS,e)$ is one of the two cases
depicted in Figure~\ref{fig:cuttinge}. The case in
Figure~\ref{fig:cuttinge}(A) follows from~\cite[Theorem 1.2]{BL22}, while
the case in Figure~\ref{fig:cuttinge}(B) follows from
Lemma~\ref{lem-quantum-binomial-uplus-uminus}.
\end{remark}

\section{Centers and ranks of the Muller--Roger--Yang and reduced L{\^e}--Roger--Yang skein algebras}\label{sec-center}

In this section, we compute the centers of the Muller--Roger--Yang
skein algebra and the reduced L{\^e}--Roger--Yang skein algebra
(Theorem~\ref{thm-center}, Remark~\ref{rem-center-reduced}), and use
these descriptions to determine their ranks as modules over the centers
(Theorem~\ref{prop-rank}, Remark~\ref{rem-rankkk}). These ranks
coincide with the squares of the maximal dimensions of the irreducible
representations. By Lemma~\ref{lem-positive-reduced}
(\cite[Proposition~3.1]{karuo2025cluster}), it suffices to carry out
these computations for the Muller--Roger--Yang skein algebra.

\subsection{Centers of the Muller--Roger--Yang and reduced L{\^e}--Roger--Yang skein algebras}

To describe the center $\mathcal Z(\cS_q^+(\fS))$ of the Muller--Roger--Yang skein algebra $\cS_q^+(\fS)$ when $q$ is a root of unity of even order, we introduce a subalgebra of $\cS_q^+(\fS)$.

Let $\alpha$ be a $\fS$-tangle diagram, and let $e$ be a boundary component of $\fS$. Denote by $e(\alpha)$ the number of endpoints of $\alpha$ lying on $e$. Define
\begin{align}\label{eqqq-Beven}
B^{\rm ev}(\fS)
:=
\left\{
\alpha\in B^+(\fS)
\,\middle|\,
e(\alpha)\equiv c(\alpha)\pmod 2
\text{ for all boundary components }c,e\text{ of }\fS
\right\}.
\end{align}

The following lemma gives an equivalent characterization of
$B^{\rm ev}(\fS)$ in terms of coordinates (see \eqref{eq-coordinate}). This characterization will be
used to compute the center of $\cS_q^+(\fS)$.

\begin{lemma}\label{lem-even}
Let $\alpha\in B^+(\fS)$. Then
\[
\alpha\in B^{\rm ev}(\fS)
\quad\Longleftrightarrow\quad
{\bf n}_\alpha\overline Q_\lambda{\bf n}_\beta^T
\equiv 0\pmod 2
\quad\text{for every }\beta\in B^+(\fS).
\]
\end{lemma}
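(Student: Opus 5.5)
We plan to compute the pairing ${\bf n}_\alpha\overline Q_\lambda{\bf n}_\beta^T \bmod 2$ locally, corner by corner, and interpret it in terms of endpoint counts on boundary components. By \eqref{eq:Q-Delta}, $\overline Q_\lambda(a,b)$ is a signed count of corners of triangles of $\lambda$ at which $a,b$ are adjacent. Modulo $2$ signs disappear, so
\[
{\bf n}_\alpha\overline Q_\lambda{\bf n}_\beta^T\equiv\sum_{\text{corners }\kappa}\bigl({\bf n}_\alpha(a_\kappa){\bf n}_\beta(b_\kappa)+{\bf n}_\alpha(b_\kappa){\bf n}_\beta(a_\kappa)\bigr)\pmod 2,
\]
where $a_\kappa,b_\kappa$ are the two edges meeting at $\kappa$. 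The task is to rewrite this sum.

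The first step is to express edge coordinates through corner coordinates. Take $\alpha$ in minimal position; inside each triangle it consists of corner arcs, and in addition some endpoints sit on boundary edges. Since interior ideal arcs do not terminate inside a triangle, but arcs of $\alpha$ can end on boundary circles, one must incorporate the pieces near boundary circles (which are punctures of $\overline\fS$ viewed through $\lambda$? no — boundary circles lie inside faces) with care. I would use the $\lambda$-adapted decomposition: for each triangle with edges $a,b,c$, write ${\bf n}(a)=k_{ab}+k_{ac}$ etc., with $k$ the corner-arc counts, which is possible by \eqref{eq-tri-1},\eqref{eq-tri-2}, augmented by contributions from ends on boundary circles and edges. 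Substituting, the mod 2 sum reorganizes, puncture by puncture, as a sum over punctures $p$ of $\sum_{\kappa\text{ at }p}$ of cross terms. Around a single puncture the corner pairing telescopes (as in the standard computation that the Weil–Petersson/Thurston form pairs a curve against the peripheral edges), and the only surviving terms mod $2$ are products of the total numbers of ``ends'' of $\alpha$ and $\beta$ near each boundary component. Concretely I expect a formula of the shape
\[
{\bf n}_\alpha\overline Q_\lambda{\bf n}_\beta^T\equiv\sum_{e}e(\alpha)\Bigl(\sum_{c\neq e}c(\beta)\Bigr)\ \text{or}\ \sum_e e(\alpha)e(\beta)+\bigl(\textstyle\sum_e e(\alpha)\bigr)\bigl(\sum_e e(\beta)\bigr)\pmod 2,
\]
to be pinned down by testing on simple arcs. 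Since total endpoint parity $\sum_e e(\alpha)$ is even, the expression reduces to $\sum_e e(\alpha)e(\beta)$-type terms.

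Given such a formula, both directions follow. If all $e(\alpha)$ have the same parity $\epsilon$, then the pairing is $\epsilon\sum_e e(\beta)\equiv 0$, giving ``$\Rightarrow$''. Conversely, if $e(\alpha)\not\equiv c(\alpha)$ for some components $e,c$, choose $\beta\in B^+(\fS)$ a single simple arc from $e$ to $c$ (positively stated, so in $B^+(\fS)$); then $e(\beta)=c(\beta)=1$ and other counts zero, and the pairing becomes $e(\alpha)+c(\alpha)\equiv1$, giving ``$\Leftarrow$''. Alternatively, Lemma~\ref{lem-leading-deg}(b) lets one compute the pairing as the exponent of $q$ in the commutation of leading terms, and for a simple arc $\beta$ the height-exchange relations (e.g. Lemma~\ref{lem-height-change}) give this directly modulo $2$; I would use this as a cross-check.

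The main obstacle is the local telescoping step: correctly handling corners at punctures adjacent to boundary circles and self-folded configurations, so that the mod $2$ reduction genuinely depends only on endpoint counts per boundary component. I would first try the representation-theoretic shortcut: since the pairing is bilinear on $\Gamma_\lambda$, it suffices to check it on a generating set of $\Gamma_\lambda$ (simple arcs and loops), where the commutation exponents of positively stated simple diagrams can be read off from \eqref{hight} and \eqref{RY-relation}: loops commute with everything up to even powers, and two arcs pick up $q^{\pm1}$ per shared boundary component, hence the parity is $\sum_e e(\alpha)e(\beta)$.
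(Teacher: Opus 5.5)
The final step of your proposal is sound. Given the congruence
\[
{\bf n}_\alpha\overline Q_\lambda{\bf n}_\beta^T\equiv\sum_{e}e(\alpha)e(\beta)\pmod 2 ,
\]
summed over the boundary components of $\fS$, both implications follow as you say. (Your two candidate shapes agree with this modulo $2$, because total endpoint counts are even.) Choosing a positively stated simple arc from $e$ to $c$ as the test element is also right.

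\textbf{The gap is that this congruence is never proved.} The corner computation stops at what you call the main obstacle, and an identity cannot be established by testing it on simple arcs.

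The skein-theoretic shortcut does not work as written, for three reasons.
\begin{enumerate}
\item At generic $q$, two positively stated simple diagrams that cross in the interior do not $q$-commute. So there are no commutation exponents to read off from \eqref{hight} and \eqref{RY-relation}, and ``loops commute with everything up to even powers'' has no meaning there.
\item The number of height exchanges at a component $e$ is $e(\alpha)e(\beta)$, not one per shared component.
\item To make the shortcut rigorous, you must specialize $q^{1/2}$ to a primitive fourth root of unity. At $q=-1$, over- and under-crossings coincide. Exchanging two positive endpoints on a boundary edge (cf.\ Lemma~\ref{lem-height-change}), or on a boundary circle (\eqref{RY-relation} versus its reflection), costs exactly $-1$. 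Hence $\alpha\beta=(-1)^{\sum_e e(\alpha)e(\beta)}\beta\alpha$, and Lemma~\ref{lem-leading-deg}(b) at this specialization identifies that sign with $(-1)^{{\bf n}_\alpha\overline Q_\lambda{\bf n}_\beta^T}$.
\end{enumerate}
This repaired shortcut is essentially the paper's proof. Taking $\eta=-1$, the paper identifies $B^{\rm ev}(\fS)$ with the basis elements central in $\cS^+_{-1}(\fS)$ via Remark~\ref{rem-com-height1}. It then characterizes centrality by the parity of the pairing, using Lemma~\ref{lem-leading-deg} and the even-difference expansion of Lemma~\ref{lem-leading-term}.

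Your combinatorial route also closes quickly if you group corners by triangle instead of by puncture; no telescoping is needed. Write $m={\bf n}_\alpha$ and $n={\bf n}_\beta$. For an ideal triangle with sides $x_1,x_2,x_3$ (listed with multiplicity), its three corners contribute
\[
\sum_{i\neq j}m_{x_i}n_{x_j}=\Bigl(\sum_i m_{x_i}\Bigr)\Bigl(\sum_j n_{x_j}\Bigr)-\sum_i m_{x_i}n_{x_i}\equiv\sum_i m_{x_i}n_{x_i}\pmod 2 .
\]
The congruence holds because $\sum_i m_{x_i}$ is even: $\alpha$ meets the triangle in arcs with both ends on its boundary (cf.\ \eqref{eq-tri-1}).

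Now sum over all triangles. Every edge that occurs as two triangle sides drops out. This covers all interior edges except the arcs cutting off one-circle-monogons, and it includes the inner edge of a self-folded triangle. What remains are the boundary edges and the arcs around boundary circles, which are exactly the boundary components of $\fS$. For an arc $e$ around a circle $v$, we have ${\bf n}_\alpha(e)\equiv v(\alpha)\pmod 2$. Indeed, the number of times $\alpha$ crosses the boundary of the monogon has the parity of the number of endpoints of $\alpha$ on $v$.

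This proves the congruence. Together with your last step, it gives a purely combinatorial proof, independent of quantum traces and roots of unity. The paper's route is natural there only because the same $\eta=-1$ relations and the trace are reused in the center computation.
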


\begin{proof}
Set $\eta=-1$. Remark~\ref{rem-com-height1} implies that
\[
\alpha\in B^{\rm ev}(\fS)
\quad\Longleftrightarrow\quad
\alpha\in\mathcal Z\bigl(\cS^+_\eta(\fS)\bigr).
\]
It therefore suffices to prove that
\begin{align}\label{eq-center-parity}
	\alpha\in\mathcal Z\bigl(\cS^+_\eta(\fS)\bigr)
	\quad\Longleftrightarrow\quad
	{\bf n}_\alpha\overline Q_\lambda{\bf n}_\beta^T
	\equiv 0\pmod 2
	\quad\text{for every }\beta\in B^+(\fS).
\end{align}

The forward implication in \eqref{eq-center-parity} follows immediately from
Lemma~\ref{lem-leading-deg}.

Conversely, suppose that
\[
{\bf n}_\alpha\overline Q_\lambda{\bf n}_\beta^T
\equiv 0\pmod 2
\quad\text{for every }\beta\in B^+(\fS).
\]
Since $B^+(\fS)$ is a basis of $\cS^+_\eta(\fS)$, it suffices to show that
\[
\alpha\beta=\beta\alpha
\qquad\text{for every }\beta\in B^+(\fS).
\]
By Lemma~\ref{lem-leading-term}, we may write
\[
\tr_\lambda(\alpha)
=
x^{{\bf n}_\alpha}
+
\sum_{{\bf t}\in S_1}r_{\bf t}x^{\bf t},
\qquad
\tr_\lambda(\beta)
=
x^{{\bf n}_\beta}
+
\sum_{{\bf k}\in S_2}c_{\bf k}x^{\bf k},
\]
where $S_1,S_2\subset\mathbb Z^\lambda$ are finite subsets such that
${\bf n}_\alpha\notin S_1$ and ${\bf n}_\beta\notin S_2$, and
$r_{\bf t},c_{\bf k}\in R\setminus\{0\}$. Moreover,
\[
{\bf n}_\alpha-{\bf t}\in(2\mathbb N)^\lambda
\quad\text{for every }{\bf t}\in S_1,
\qquad
{\bf n}_\beta-{\bf k}\in(2\mathbb N)^\lambda
\quad\text{for every }{\bf k}\in S_2.
\]
Since $\eta=-1$, the defining commutation relations of the quantum torus (\eqref{eq-def-X-torus})
therefore imply that every monomial occurring in $\tr_\lambda(\alpha)$
commutes with every monomial occurring in $\tr_\lambda(\beta)$. Hence
\[
\tr_\lambda(\alpha\beta)
=
\tr_\lambda(\alpha)\tr_\lambda(\beta)
=
\tr_\lambda(\beta)\tr_\lambda(\alpha)
=
\tr_\lambda(\beta\alpha).
\]
The injectivity of $\tr_\lambda$ now gives
$\alpha\beta=\beta\alpha$. Therefore
$\alpha\in\mathcal Z\bigl(\cS^+_\eta(\fS)\bigr)$, completing the proof.
\end{proof}

We make the following definition.
Recall that $\wR$ is defined in \eqref{eq-Rtilde}.
Define
\begin{align}
\cS^{\rm ev}_q(\fS)
:=
{\rm Span}_{\wR}
\left\{
\alpha \,\middle|\,
\begin{array}{l}
	\alpha \text{ is a positively stated }\partial\fS\text{-tangle diagram, and}\\
	e(\alpha)\equiv c(\alpha)\pmod 2
	\text{ for all boundary components }c,e\text{ of }\fS
\end{array}
\right\}.
\end{align}
Then we have the following.

\begin{lemma}\label{lem-sub-even}
The $\wR$-module $\cS^{\rm ev}_q(\fS)$ is a subalgebra of
$\cS^+_q(\fS)$, and $B^{\rm ev}(\fS)$ is an $\wR$-basis of
$\cS^{\rm ev}_q(\fS)$.
\end{lemma}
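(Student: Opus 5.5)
The basis part will follow from the closure statement plus the known basis $B'(\fS)$ of positively stated crossingless diagrams; the closure will be proved through the center characterization at $\eta=-1$ established in the proof of Lemma~\ref{lem-even}.

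First I reduce the spanning set to basis elements. By relations \eqref{cross}--\eqref{unknot-RY}, any positively stated $\partial\fS$-tangle diagram $\alpha$ can be resolved into a combination of elements of $B'(\fS)$, hence of $B^+(\fS)$, with coefficients in $\wR$. The parity data $e(\alpha) \bmod 2$ is preserved by every such move:
\begin{itemize}
\item crossing resolutions, the unknot relations and the Roger--Yang relation \eqref{RY-relation} do not change the endpoints of a diagram;
\item height exchange at a boundary edge, \eqref{hight}, either keeps the endpoints or removes two endpoints on the same component $e$, which changes $e(\alpha)$ by $2$;
\item relation \eqref{arc} likewise removes two endpoints on one component.
\end{itemize}
Some of these moves introduce negative states. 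I will check that these states cancel out again: since we start from positive states and the algebra $\cS_q^+(\fS)$ is spanned by $B^+(\fS)$ by \cite{bloomquist2025degenerations}, the final expansion lies in the span of $B'(\fS)$. Every term in it has the same parities $e(\cdot) \bmod 2$ as $\alpha$. Hence $\cS^{\rm ev}_q(\fS)$ is spanned by $B^{\rm ev}(\fS)$. Linear independence is inherited from $B^+(\fS)$, which is a basis of $\cS_q^+(\fS)$, so $B^{\rm ev}(\fS)$ is a basis once closure is known.

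For closure under multiplication, the naive observation is that stacking two diagrams adds the endpoint counts componentwise. So $e(\alpha\beta)\equiv e(\alpha)+e(\beta)$ and $c(\alpha\beta)\equiv c(\alpha)+c(\beta) \pmod 2$, and the condition $e\equiv c$ for all pairs $e,c$ is preserved under sums. Thus the product of two diagrams in the spanning set is again a positively stated diagram satisfying the parity condition. This argument is almost tautological at the level of diagrams, and I will present it that way.

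As a cross-check I will also use the route via Lemma~\ref{lem-even}. The key identity is
\[
\cS^{\rm ev}_q(\fS)\otimes_{q\mapsto -1} \;=\; \mathcal Z\bigl(\cS^+_{-1}(\fS)\bigr),
\]
where the right-hand side is clearly a subalgebra. Its basis description follows from Lemma~\ref{lem-even} together with Lemma~\ref{lem-leading-deg}: the set of ${\bf n}$ with ${\bf n}\overline Q_\lambda\Gamma_\lambda\equiv 0 \pmod 2$ is a submonoid of $\Gamma_\lambda$, and degrees add under multiplication. This shows that the leading term of a product lies in $B^{\rm ev}$, and a triangularity induction on $\deg$ then shows that the whole product lies in the span of $B^{\rm ev}$.

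The main obstacle is the bookkeeping in the first step. Height exchanges and the relations \eqref{arc} must never leave negative states in the final expansion, and the counts $e(\cdot)$ must change only by even numbers on each component. I will handle this by citing that positively stated diagrams expand within $B'(\fS)$ by \cite[Theorem~3.6]{bloomquist2025degenerations}, and by noting that every local relation changes the number of endpoints on a single component by $0$ or $2$.
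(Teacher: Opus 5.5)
Your main argument is exactly the paper's proof. Closure is immediate because stacking adds endpoint counts componentwise. Every defining relation changes the number of endpoints on a single boundary component by $0$ or $2$, so expanding a positively stated diagram in $B^+(\fS)$ only produces basis elements with the same parity vector. Linear independence is inherited from $B^+(\fS)$.

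Two corrections. First, \eqref{RY-relation} does change endpoints: it removes two endpoints on the boundary circle $v$, which still preserves parity, as your closing sentence says. Second, drop the $\eta=-1$ cross-check: closure of $\mathcal Z(\cS^+_{-1}(\fS))$ says nothing about generic $q$, and your degree induction only controls the leading term of a product, not its lower-order terms.
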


\begin{proof}

It is straightforward that $\cS^{\rm ev}_q(\fS)$ is closed under multiplication and
hence is a subalgebra of $\cS^+_q(\fS)$.

Since the parity of the number of endpoints on each boundary component
is preserved by all the defining relations, $\cS^{\rm ev}_q(\fS)$ is
spanned by those elements of $B^+(\fS)$ satisfying
$e(\alpha)\equiv c(\alpha)\pmod 2$
for all boundary components $c$ and $e$ of $\fS$. These elements are
precisely those in $B^{\rm ev}(\fS)$. Finally, because
$B^{\rm ev}(\fS)\subseteq B^+(\fS)$ and $B^+(\fS)$ is $\wR$-linearly
independent, $B^{\rm ev}(\fS)$ is an $\wR$-basis of
$\cS^{\rm ev}_q(\fS)$.
\end{proof}

Define
\begin{align}\label{eq-image-qF}
\im_q\cF
:=
\begin{cases}
	\cF\bigl(\cS^+_\eta(\fS)\bigr),
	& \text{if the order of $q$ is odd},\\[2pt]
	\cF\bigl(\cS^{\rm ev}_\eta(\fS)\bigr),
	& \text{if the order of $q$ is even}.
\end{cases}
\end{align}

Let $E$ be a boundary component of $\overline{\fS}$ containing at least one puncture, and let $e_1,\dots,e_n$ be the boundary edges contained in $E$. Define
\begin{align}\label{boundary-central}
A_E
:=
\left[
\prod_{i=1}^n A_{e_i}
\right]_{\mathrm{norm}}\qquad \text{(see \eqref{eq-D-X-q})}.
\end{align}
It was shown in \cite{korinman2021unicity,karuo2026center} that
$A_E\in \mathcal Z\bigl(\cS_q^+(\fS)\bigr).$

\begin{remark}
Recall that we use $\mathfrak{M}$ to denote the punctured surface 
obtained from a closed annulus by removing a puncture from one of its boundary component
(see Figure~\ref{fig:uplus-a}).
Let $\mathfrak{M}_1$  denote the punctured surface 
obtained from a closed annulus by removing a puncture from its interior. 
It is well-known that $\cS_q^+(\fS)$ is a commutative algebra when $\fS=\mathfrak{M}$ or $\mathfrak{M}_1$.
\end{remark}

The following theorem describes the center of the Muller--Roger--Yang skein algebra $\cS_q^+(\fS)$ when $q^2$ is a root of unity of odd
order. It is the main result of this section.
In \S\ref{sec-rank}, we will use the center formulated here to calculate the rank of $\cS_q^+(\fS)$ over its center.

\begin{theorem}\label{thm-center}
Let $\fS\neq \mathfrak{M}, \mathfrak{M}_1$ be a triangulable punctured surface, and suppose that $q^2$ is a root of unity of odd order. Then, as a $\widetilde{R}$-subalgebra of $\cS_q^+(\fS)$ (see \eqref{eq-Rtilde}),  its center $\mathcal Z\bigl(\cS_q^+(\fS)\bigr)$
is generated by $\im_q\cF$, the peripheral loops, and the boundary central elements defined in~\eqref{boundary-central}.
\end{theorem}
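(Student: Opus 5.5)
The plan is to prove the two inclusions separately, using the degree map of Definition~\ref{def-leading-term-degree} together with Lemmas~\ref{lem-leading-term} and~\ref{lem-leading-deg}. This reduces centrality to a lattice condition on $\Gamma_\lambda$.

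\emph{Inclusion $\supseteq$.} The peripheral loops are central, as noted before Corollary~\ref{cor-basis-reduced}. The elements $A_E$ are central by \cite{korinman2021unicity,karuo2026center}. It remains to show $\im_q\cF\subset\mathcal Z(\cS_q^+(\fS))$. I would check this on generators. By Lemma~\ref{lem-finite-order} and Proposition~\ref{prop-image-all-diagrams}, each element of $\cF(B^+_\eta)$ is a union of threadings $\beta^{(\ell)}$ (essential arcs) and $\gamma^{(T_\ell)}$ (loops and circle arcs). The transparency relations \eqref{eq-height-l}--\eqref{eq-height-21} show that such threadings commute with any strand up to a factor $\eta^{\pm1}$ coming from each endpoint exchange. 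When the order of $q$ is odd, $\eta=q^\ell=1$, so every factor is trivial. When the order is even, $\eta=-1$, and the total sign is $(-1)$ raised to a sum of products $e(\alpha)c(\beta)$ over boundary components. This is exactly the parity condition defining $\cS^{\rm ev}_\eta$; compare Lemma~\ref{lem-even}, where $\eta=-1$ is used in the same way. The signs therefore cancel for $\alpha\in B^{\rm ev}$.

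\emph{Inclusion $\subseteq$.} Let $Z$ be central and nonzero. By Lemma~\ref{lem-leading-deg}(b), $\deg Z=\mathbf n$ satisfies
\[
q^{\mathbf n\overline Q_\lambda\mathbf m^T}=1\quad\text{for all }\mathbf m\in\Gamma_\lambda .
\]
The key step is to characterize this sublattice of $\Gamma_\lambda$. I claim that it is generated, as a monoid up to saturation, by three families:
\begin{itemize}
\item the degrees $\ell\,\mathbf n_\beta$, restricted to $\beta\in B^{\rm ev}$ in the even case;
\item the degrees of the peripheral loops;
\item the degrees of the $A_E$.
\end{itemize}
These three families correspond to the kernel of $\overline Q_\lambda$, which is spanned by puncture and boundary-component vectors, together with $\ell$ times the lattice. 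In the even-order case the parity analysis of Lemma~\ref{lem-even} supplies the extra refinement.

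Given the claim, I would induct on $\deg Z$ in the lexicographic order. Choose a monomial $W$ in $\cF(\cdot)$, peripheral loops and $A_E^{\pm}$ with $\deg W=\deg Z$. By Lemma~\ref{lem-leading-deg}(a), $\deg$ is additive, so such a $W$ exists once the claim holds. One may need to multiply $Z$ by a power of some $A_E$ and argue in the localization given by Lemma~\ref{lem-positive-reduced}. Since $B^+$ is a basis, $\lt(W)$ is a scalar multiple of $\lt(Z)$. Subtracting that multiple gives a central element of strictly smaller degree, and the induction closes. The condition $\fS\ne\mathfrak M,\mathfrak M_1$ is needed so that $\overline Q_\lambda$ is nondegenerate enough for the kernel description to hold.

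\emph{Main obstacle.} I expect the main obstacle to be the lattice computation. Concretely, one must show that every $\mathbf n\in\Gamma_\lambda$ with $\mathbf n\overline Q_\lambda\Gamma_\lambda^T\subset\ell'\mathbb Z$ (where $\ell'$ is the order of $q$) decomposes inside $\Gamma_\lambda$, not merely inside $\mathbb Z^\lambda$. This is where both the triangle inequalities \eqref{eq-tri-2} and the parity condition \eqref{eq-tri-1} enter. I would first work in the group $\mathbb Z\Gamma_\lambda$, using the rank formula \eqref{eq-rank-lambda} and the known kernel of $\overline Q_\lambda$. Then I would return to $\Gamma_\lambda$ by adding large multiples of central degrees, such as $\ell$ times the degrees of peripheral loops and boundary elements, which lie in the interior of the cone. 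This mirrors the technique of \cite{kim2024unicity}.
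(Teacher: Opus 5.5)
Your inclusion $\supseteq$ is the paper's argument. In the even case the signs cancel because all $e(\alpha)$ share one parity, while the total number of endpoints of the other tangle is even. The overall shape of $\subseteq$ is also the paper's: compare leading terms via Lemma~\ref{lem-leading-deg}, extract the lattice condition, induct on degree.

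\textbf{The gap.} The step you defer as the ``main obstacle'' is the whole content of the proof, and the route you sketch does not give what the induction needs.
\begin{enumerate}
\item The solution set of $\mathbf n\overline Q_\lambda\equiv 0\pmod\ell$ in $\langle\Gamma_\lambda\rangle$ is not a formal consequence of the integral kernel of $\overline Q_\lambda$. You must control the factors of $2$ in $\overline Q_\lambda$, and that is where oddness of $\ell$ enters.
\item More seriously, ``adding large multiples of central degrees'' goes from the monoid to the group (this is how the paper's Lemma~\ref{lem-center-torus} is proved, using this theorem), not back. From a group-level decomposition you only get $\mathbf n+\mathbf c\in\ell\Gamma_\lambda+\mathbb N\{\text{degrees of peripheral loops and }A_E\}$ for some central degree $\mathbf c$. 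That is, you get $ZC\in\mathcal A$ for a central $C$ of positive degree, which is not a unit in $\cS_q^+(\fS)$. Since $\mathcal A$ is not closed under division, this does not give $Z\in\mathcal A$.
\item Allowing $A_E^{-1}$ in $W$ takes you outside $\cS_q^+(\fS)$, where the degree induction lives. At best it yields the statement of Remark~\ref{rem-center-reduced} for $\rdS$.
\end{enumerate}

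\textbf{The missing idea.} You need an exact decomposition inside $\Gamma_\lambda$, which the paper obtains with corner coordinates.
\begin{itemize}
\item Write the leading basis element as $\alpha'=\alpha\beta$, with $\beta$ a monomial in peripheral loops and the $A_E$, and $\alpha$ containing neither. Since $\mathbf n_\beta\overline Q_\lambda=0$, you get $\mathbf n_\alpha\overline Q_\lambda\equiv0\pmod\ell$, using $2\mathbb N^\lambda\subseteq\Gamma_\lambda$.
\item For each corner $\mathrm c$ of $\lambda$, set $g_{\mathrm c}=\tfrac12(n_{e_1}+n_{e_3}-n_{e_2})$. This is a nonnegative integer by \eqref{eq-tri-1}--\eqref{eq-tri-2}.
\item The entries of $\mathbf n_\alpha\overline Q_\lambda$ have the form $2(g_{\mathrm c}-g_{\mathrm c'})$ or $g_{\mathrm c}-g_{\mathrm c'}$ for suitable adjacent pairs of corners. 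Since $\ell$ is odd, all corner coordinates around an interior puncture are congruent mod $\ell$, and likewise around all punctures of a boundary component of $\overline\fS$.
\item The absence of peripheral loops and boundary central elements in $\alpha$ supplies a vanishing corner coordinate in each of these groups. Hence every $g_{\mathrm c}\equiv 0\pmod\ell$.
\item Via \eqref{center-recover}, $\mathbf f_{\mathbf n}/\ell$ then defines $\mathbf m'\in\Gamma_\lambda$ with $\ell\mathbf m'=\mathbf n_\alpha$ on the nose.
\item A multiple of $\cF(\gamma)\beta$, with $\mathbf n_\gamma=\mathbf m'$, cancels the leading term of $X$ inside $\cS_q^+(\fS)$, with no localization.
\item In the even case, $\mathbf n_\gamma\overline Q_\lambda\equiv 0\pmod 2$, and Lemma~\ref{lem-even} gives $\gamma\in B^{\rm ev}(\fS)$.
\end{itemize}
This corner argument is also where the hypothesis $\fS\neq\mathfrak M,\mathfrak M_1$ is actually used. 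Ideal triangles must exist so that the corner coordinates determine $\mathbf n$; it is not a nondegeneracy condition on $\overline Q_\lambda$.
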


\begin{proof}
Let $\ell'$ denote the order of $q$, and let $\mathcal A$ be the $\cR$-subalgebra of
$\cS_q^+(\fS)$ generated by $\im_q\cF$ (see \eqref{eq-image-qF}), the peripheral loops, and the boundary central elements defined in~\eqref{boundary-central}. Equations~\eqref{eq-height-l}--\eqref{eq-height-21} imply that
$\mathcal A\subseteq \mathcal Z\bigl(\cS_q^+(\fS)\bigr).$

We prove the reverse inclusion. Let
$0\neq X\in\mathcal Z\bigl(\cS_q^+(\fS)\bigr),$
and fix an ideal triangulation $\lambda$ of $\fS$. We show that $X\in\mathcal A$ by induction on $\deg(X)$ with respect to the order introduced in Definition~\ref{def-leading-term-degree}.

If $\deg(X)={\bf 0}$, then
$X=r\emptyset\in\mathcal A$
for some $r\in R\setminus\{0\}$, where $\emptyset$ denotes the empty tangle.

Now suppose that
$\lt(X)=s\alpha'$
for some $s\in R\setminus\{0\}$ and $\alpha'\in B^+(\fS)$. Since $X$ is central, Lemmas~\ref{lem-basis-coordinate-monoid} and~\ref{lem-leading-deg} imply that
\begin{align}\label{eq-center-ell-p}
	{\bf n}_{\alpha'}\overline Q_\lambda{\bf k}^{T}=0
	\qquad\text{in }\mathbb Z_{\ell'}
\end{align}
for every ${\bf k}\in\Gamma_\lambda$. Since
$2\mathbb N^\lambda\subseteq\Gamma_\lambda$, it follows that
\begin{align}\label{eq-center-ell-ppp}
	{\bf n}_{\alpha'}\overline Q_\lambda={\bf 0}
	\qquad\text{in }(\mathbb Z_\ell)^\lambda.
\end{align}

Write $\alpha'=\alpha\beta,$
where $\beta$ is a product of peripheral loops and boundary central elements, and $\alpha$ contains neither peripheral loops nor boundary central elements. We have
${\bf n}_{\alpha'}={\bf n}_\alpha+{\bf n}_\beta.$
Since
${\bf n}_\beta\overline Q_\lambda={\bf 0},$
we obtain
\begin{align}\label{eq-center-ell-pppp}
	{\bf n}_\alpha\overline Q_\lambda
	=
	{\bf n}_{\alpha'}\overline Q_\lambda
	=
	{\bf 0}
	\qquad\text{in }(\mathbb Z_\ell)^\lambda.
\end{align}

Let $\mathrm C$ denote the set of all corners of the ideal triangles of $\lambda$. For each
${\bf m}=(m_e)_{e\in\lambda}\in\Gamma_\lambda,$
define
${\bf f}_{\bf m}
=
(f_{\mathrm c})_{\mathrm c\in\mathrm C}
\in\mathbb N^{\mathrm C}$
as follows. If $\mathrm c_1$ is a corner bounded by the edges $e_1$ and $e_3$, with $e_2$ opposite to $\mathrm c_1$, set
\[
f_{\mathrm c_1}
=
\frac{m_{e_1}+m_{e_3}-m_{e_2}}{2}.
\]
Equations~\eqref{eq-tri-1} and \eqref{eq-tri-2} ensure that
$f_{\mathrm c_1}\in\mathbb N$. Moreover, since every
$\begin{tikzpicture}[baseline=(current bounding box.center)]
	\draw[fill=gray!25,line width=0.8pt]
	(0,0) circle[radius=0.25];
	\draw[fill=white,line width=0.8pt]
	(-0.05,-0.30) rectangle (0.05,-0.20);
	\draw[fill=white,line width=0.8pt]
	(0,0.05) circle[radius=0.10];
\end{tikzpicture}$
is adjacent to an ideal triangle, the vector ${\bf f}_{\bf m}$ determines ${\bf m}$. More precisely, if $e_1$ separates the corners $\mathrm c_1$ and $\mathrm c_2$, then
\begin{align}\label{center-recover}
	m_{e_1}=f_{\mathrm c_1}+f_{\mathrm c_2}.
\end{align}

Set
\[
{\bf n}:={\bf n}_\alpha=(n_e)_{e\in\lambda}
\qquad\text{and}\qquad
{\bf f}_{\bf n}
=
(g_{\mathrm c})_{\mathrm c\in\mathrm C}
\in\mathbb N^{\mathrm C}.
\]

Let $e$ be an interior edge situated as in Figure~\ref{fig:corners}$(A)$. Then
\begin{align}\label{eq-center1}
	({\bf n}\overline Q_\lambda)(e)
	=
	n_{e_1}+n_{e_2}-n_{e_3}-n_{e_4} 
	=
	2(g_{\mathrm c_3}-g_{\mathrm c_1})
	=
	2(g_{\mathrm c_2}-g_{\mathrm c_4})
	=
	0
	\qquad\text{in }\mathbb Z_\ell.
\end{align}

Let $a$ be an interior edge situated as in Figure~\ref{fig:corners}$(B)$. Then
\begin{align}\label{eq-center2}
	({\bf n}\overline Q_\lambda)(a)
	=
	n_{a_1}-n_{a_2}
	=
	g_{\mathrm c_5}-g_{\mathrm c_6}
	=
	0
	\qquad\text{in }\mathbb Z_\ell.
\end{align}
Similarly, if $b$ is a boundary edge situated as in Figure~\ref{fig:corners}$(C)$, then
\begin{align}\label{eq-center3}
	({\bf n}\overline Q_\lambda)(b)
	=
	n_{b_1}-n_{b_2}
	=
	g_{\mathrm c_7}-g_{\mathrm c_8}
	=
	0
	\qquad\text{in }\mathbb Z_\ell.
\end{align}

For each puncture $p$, let $\mathrm C_p$ denote the set of corners surrounding $p$. Since $\alpha$ contains no peripheral loops, for every interior puncture $p$ there exists at least one corner
$\mathrm c\in\mathrm C_p$ such that $g_{\mathrm c}=0$. Equations~\eqref{eq-center1} and~\eqref{eq-center2} therefore imply that
$g_{\mathrm c}=0\text{ in }\mathbb Z_\ell$
for every $\mathrm c\in\mathrm C_p$ (recall that $\ell$ is odd).

Now let $E$ be a boundary component of $\overline{\fS}$ containing the punctures
$p_1,\ldots,p_m$, where $m\geq1$. Since $\alpha$ contains no boundary central elements, there exists some
$p_i\in\{p_1,\ldots,p_m\}$ such that
$g_{\mathrm c}=0$
for every $\mathrm c\in\mathrm C_{p_i}$. Equations~\eqref{eq-center1}--\eqref{eq-center3} then imply that
$g_{\mathrm c}=0\text{ in }\mathbb Z_\ell$
for every $\mathrm c\in\mathrm C_{p_j}$ and every $1\leq j\leq m$.

Consequently,
\begin{align}\label{eq-generic}
	{\bf f}_{\bf n}={\bf 0}
	\qquad\text{in }(\mathbb Z_\ell)^{\mathrm C}.
\end{align}
Hence
${\bf f}'
:=
\frac{1}{\ell}{\bf f}_{\bf n}
\in\mathbb N^{\mathrm C}.$
Using~\eqref{center-recover}, we obtain an element
${\bf m}'\in\Gamma_\lambda$ satisfying
${\bf f}_{{\bf m}'}={\bf f}'.$
In particular,
$\ell{\bf m}'={\bf n}_\alpha.$

Choose $\gamma\in B^+(\fS)$ such that
${\bf n}_\gamma={\bf m}'.$
By Lemma~\ref{lem-leading-deg}(a) and~\eqref{eq-image-all-diagrams},
$\deg\bigl(\cF(\gamma)\bigr)
=
\ell{\bf m}'
=
{\bf n}_\alpha.$
It follows that
\[
\deg\bigl(\cF(\gamma)\beta\bigr)
=
{\bf n}_\alpha+{\bf n}_\beta
=
{\bf n}_{\alpha'}
=
\deg(X).
\]
Therefore either
$X-s\cF(\gamma)\beta=0$
or
$\deg\bigl(X-s\cF(\gamma)\beta\bigr)<\deg(X).$
The induction hypothesis gives
$X-s\cF(\gamma)\beta\in\mathcal A.$

It remains to show that
$\cF(\gamma)\in\im_q(\cF).$
If $\ell'$ is odd, this follows immediately from the definition of
$\im_q(\cF)$. Suppose that $\ell'$ is even. Equation~\eqref{eq-center-ell-p} implies that
\[
{\bf n}_\gamma\overline Q_\lambda={\bf 0}
\qquad\text{in }(\mathbb Z_2)^\lambda.
\]
Hence, by Lemma~\ref{lem-even}, $\gamma\in\cS_\eta^{\rm ev}(\fS)$, and therefore
$\cF(\gamma)\in\im_q(\cF).$
Since $\beta\in\mathcal A$, we conclude that
$s\cF(\gamma)\beta\in\mathcal A$, and hence $X\in\mathcal A$.

\end{proof}

\begin{figure}
\centering
\includegraphics[width=0.6\linewidth]{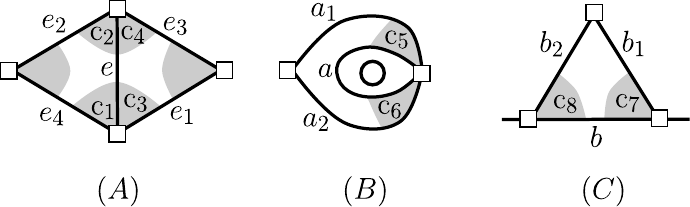}
\caption{Pictures of corners.}
\label{fig:corners}
\end{figure}

\begin{remark}\label{rem-center}
We explain how the preceding argument shows that, when $q$ is not a root of unity, the center
$\mathcal Z\bigl(\cS_q^+(\fS)\bigr)$
is generated by the peripheral loops and the boundary central elements defined in~\eqref{boundary-central}.

Indeed, in this case, the congruences in
\eqref{eq-center-ell-p}--\eqref{eq-generic}
become equalities in $\mathbb Z$ or $\mathbb Z^\lambda$.
In particular, \eqref{eq-generic} becomes
\[
{\bf f}_{\bf n}={\bf 0}
\qquad\text{in }\mathbb Z^{\rm C}.
\]
It follows that ${\bf n}_\alpha={\bf 0}$, and hence
$\alpha=\emptyset$.
Applying the induction argument from the proof of
Theorem~\ref{thm-center}, we conclude that
$\mathcal Z\bigl(\cS_q^+(\fS)\bigr)$
is generated by the peripheral loops and the boundary central elements defined in~\eqref{boundary-central}.
\end{remark}

\begin{remark}\label{rem-center-reduced}
Lemma~\ref{lem-positive-reduced} and Theorem~\ref{thm-center} imply that the center $\mathcal Z (\rdS)$ is generated by $\mathcal Z(\cS_q^+(\fS))$ and the inverses of the boundary central elements defined in~\eqref{boundary-central}.
\end{remark}

\subsection{Ranks of the Muller--Roger--Yang and reduced L{\^e}--Roger--Yang skein algebras}\label{sec-rank}
All algebras mentioned in this subsection are over $\mathbb C$. We will assume $R=\mathbb C$ throughout this subsection. 

In this subsection, we compute the rank of the Muller--Roger--Yang skein algebra over its center. We begin with the
following definition.

\def\AA{\mathcal{A}}

\begin{definition}[\cite{korinman2021unicity}]\label{def-Azu}
A $\mathbb C$-algebra $\AA$ is \textbf{affine almost Azumaya} if $\AA$ satisfies the following conditions; 
\begin{enumerate}[label={\rm (Az\arabic*)}]\itemsep0,3em
	\item\label{aaz1} $\AA$ is finitely generated as a $\mathbb C$-algebra, 
	\item\label{aaz2} $\AA$ is a domain, 
	\item\label{aaz3} $\AA$ is finitely generated as a module over its center. 
\end{enumerate}
We will use $\mathcal Z(\AA)$ to denote the center of $\AA$.
\end{definition}

The following lemma will be used to show that $\cS_q^+(\fS)$ is an affine almost Azumaya algebra when $q^2$ is a root of unity of odd order.

\begin{lemma}\label{lem-finite-center}
Suppose that $q^2$ is a root of unity of odd order. Then
$\cS_q^+(\fS)$ is finitely generated as a module over its center.
\end{lemma}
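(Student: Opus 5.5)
The plan is to exhibit an explicit finite set of module generators over a central subalgebra, using the orderly generating set of Lemma~\ref{lem-finite-order}. That lemma gives elements $\beta_1,\dots,\beta_m\in B^+(\fS)$, each an arc or a loop, with
\[
\cS_q^+(\fS)={\rm Span}_{\wR}\{\beta_1^{k_1}\cdots\beta_m^{k_m}\mid k_i\in\mathbb N\}.
\]
Let $N=2\ell$ when the order of $q$ is even and $N=\ell$ otherwise. I claim each $\beta_i^{N}$ is congruent, modulo lower-order terms, to a central element. Concretely, for an essential arc $\beta_i$ one has $\cF(\beta_i)=\beta_i^{(\ell)}$, and for a loop or circle arc one has $\cF(\beta_i)=\beta_i^{(T_\ell)}$, by Proposition~\ref{prop-image-all-diagrams} applied through \eqref{eq-Fro-stated}. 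Since $\beta_i$ is positively stated, $\cF(\beta_i)$ lies in $\cS_q^+(\fS)$.

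Next I would compare $\cF(\beta_i)$ with $\beta_i^\ell$. Using Lemma~\ref{lem-height-change} and the same-height normalization, $\beta_i^{(\ell)}$ equals $\beta_i^\ell$ up to a power of $q^{\frac12}$ and a monomial in the $v^{\frac12}$. Similarly $\beta_i^{(T_\ell)}=T_\ell(\beta_i)$ up to such factors, since $\beta_i^{[k]}=\beta_i^k$ for a simple loop, and a suitably normalized version holds for circle arcs. In either case $\beta_i^\ell$ lies in the $\cF(\cS^+_\eta)$-submodule generated by $1,\beta_i,\dots,\beta_i^{\ell-1}$. This is because $T_\ell(x)-x^\ell$ has degree less than $\ell$, and each $\cF(\beta_i)$ is central by \eqref{eq-height-l}--\eqref{eq-height-21}. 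In the even-order case I would use $\cF(\beta_i^2)$ or $\cF(\beta_i)^2$ in place of $\cF(\beta_i)$. The reason is that $\beta_i^2$ has all endpoint counts even, so it lies in $\cS^{\rm ev}_\eta(\fS)$ and $\cF(\beta_i^2)\in\im_q\cF$. The analogous reduction then works with exponent $2\ell$.

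Given this, an induction on $\sum k_i$ finishes the argument. Each monomial $\beta_1^{k_1}\cdots\beta_m^{k_m}$ with some $k_i\ge N$ can be rewritten as $\beta_1^{k_1}\cdots\beta_i^{k_i-N}\cdots\beta_m^{k_m}$ times a central element of $\im_q\cF$, plus terms with smaller exponent of $\beta_i$. This step uses centrality to move the central factor out. It follows that $\cS_q^+(\fS)$ is generated as a $\mathcal Z(\cS_q^+(\fS))$-module by the finitely many monomials with all $k_i<N$.

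The main obstacle is making the reduction $\beta_i^N\in\sum_{j<N}\mathcal Z\cdot\beta_i^j$ precise for circle arcs and edge--circle arcs. There the relation \eqref{RY-relation} and the $v$-factors make the identification of $\beta^{(m)}$ with $\beta^m$ less immediate. I would handle this by invoking Lemma~\ref{lem-m-arc}-type height exchanges and the factor $\prod_{v\in C(\alpha)}v^{\frac{m-1}{2}}$ built into $\alpha^{(m)}$, which is designed exactly for this purpose. If that identification fails, the fallback is a leading-term argument via Lemma~\ref{lem-leading-deg}: $\deg(\cF(\beta_i))=\ell{\bf n}_{\beta_i}=\deg(\beta_i^\ell)$, so one can subtract and induct on degree instead.
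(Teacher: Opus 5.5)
Your proposal is correct and follows essentially the same route as the paper. You take the orderly generating set of Lemma~\ref{lem-finite-order} and use that each Frobenius image (or its square in the even-order case, via $\cS^{\rm ev}_\eta(\fS)$) is central and is, up to units of $\wR$, a monic polynomial in $\beta_i$; reducing exponents then leaves the finitely many monomials with exponents below $2\ell$. The paper carries out the same reduction by rewriting loops and circle arcs through $T_{k}(\beta_i)$ and citing the standard argument of \cite[Theorem~5.1]{unicity}, which your induction on $\sum k_i$ makes explicit.
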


\begin{proof}
Let $\beta_1,\ldots,\beta_m\in B^+(\fS)$ be the basis elements
given by Lemma~\ref{lem-finite-order}. After relabeling, we may assume
that $\beta_1,\ldots,\beta_n$ are loops or circle arcs, whereas
$\beta_{n+1},\ldots,\beta_m$ are essential arcs.
Equation~\eqref{eq-span-finite} implies that
\begin{align}
	\label{eq-span-finite1}
	\cS_q^+(\fS)
	=
	{\rm Span}_R
	\left\{
	T_{k_1}(\beta_1)\cdots T_{k_n}(\beta_n)
	\beta_{n+1}^{k_{n+1}}\cdots\beta_m^{k_m}
	\;\middle|\;
	k_i\in\mathbb N,\ 1\leq i\leq m
	\right\}.
\end{align}
Since
$\im_q(\cF)\subset\mathcal Z\bigl(\cS_q^+(\fS)\bigr),$
the standard reduction argument shows that $\cS_q^+(\fS)$ is generated,
as a module over its center, by the finite set
\[
\left\{
T_{k_1}(\beta_1)\cdots T_{k_n}(\beta_n)
\beta_{n+1}^{k_{n+1}}\cdots\beta_m^{k_m}
\;\middle|\;
0\leq k_i\leq 2\ell,\ 1\leq i\leq m
\right\}.
\]
For details of this reduction argument, see
\cite[Theorem~5.1]{unicity}. Therefore,
$\cS_q^+(\fS)$ is finitely generated as a module over its center.
\end{proof}

It is shown in \cite{bloomquist2025degenerations} that $\cS_q^+(\fS)$ is a domain. Then Lemmas~\ref{lem-finite-order} and \ref{lem-finite-center} imply the following.

\begin{proposition}\label{prop-Azumaya}
Suppose that $q^2$ is a root of unity of odd order. Then $\cS_q^+(\fS)$ is affine almost Azumaya.
\end{proposition}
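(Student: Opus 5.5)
The proposition follows by checking the three conditions \ref{aaz1}--\ref{aaz3} of Definition~\ref{def-Azu} one at a time, each from a result already available.

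For \ref{aaz1}, use Lemma~\ref{lem-finite-order}. It supplies finitely many elements $\beta_1,\ldots,\beta_m\in B^+(\fS)$ whose ordered monomials span $\cS_q^+(\fS)$ over $\wR$. Here $R=\mathbb C$, so $\wR=\mathbb C[v^{\pm\frac12}\mid v\in V]$ is a finitely generated $\mathbb C$-algebra, generated by the finitely many elements $v^{\pm\frac12}$. Hence $\cS_q^+(\fS)$ is generated as a $\mathbb C$-algebra by the $\beta_i$ together with the $v^{\pm\frac12}$. These last elements are central, since they are scalars of the ground ring $\wR$.

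For \ref{aaz2}, cite the result of \cite{bloomquist2025degenerations} that $\cS_q^+(\fS)$ is a domain. One can also argue directly. By Lemma~\ref{lem-positive-reduced}, $\cS_q^+(\fS)$ embeds in $\rdS$. When $\fS$ is triangulable, $\rdS$ embeds via $\tr_\lambda$ into the quantum torus $\overline{\mathcal X}_q(\fS,\lambda)$, which is a domain. Alternatively, Lemma~\ref{lem-leading-deg}(a) gives $\deg(XY)=\deg(X)+\deg(Y)$, so $XY\neq 0$ whenever $X,Y\neq 0$.

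For \ref{aaz3}, apply Lemma~\ref{lem-finite-center} directly.

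The only point needing care is scalars. The center is an $\wR$-algebra, so finite generation over the center is unaffected by the ring $\wR$. Finite generation as a $\mathbb C$-algebra, however, requires $\wR$ itself to be finitely generated over $\mathbb C$, and that is exactly what \ref{aaz1} uses. I do not expect a genuine obstacle: all the substantive work is contained in Lemmas~\ref{lem-finite-order} and \ref{lem-finite-center}, which in turn rely on the Frobenius map of Theorem~\ref{thm-Fro-main} to produce enough central elements.
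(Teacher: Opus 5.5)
Your proposal is correct and follows the paper's own argument: (Az1) from Lemma~\ref{lem-finite-order}, (Az2) by citing \cite{bloomquist2025degenerations}, and (Az3) from Lemma~\ref{lem-finite-center}. Two of your additions are sound but not used in the paper. Your remark that finite generation over $\mathbb C$ also needs the finitely many ground-ring scalars $v^{\pm\frac12}$ makes explicit a point the paper leaves unstated. Your alternative proofs of the domain property are also valid, with the trace-map version requiring $\fS$ to be triangulable, as you note.
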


\begin{definition}\label{def-rank}
Suppose $\AA$ is an affine almost Azumaya algebra. We use $\widetilde{\mathcal Z(\AA)}$ to denote the fraction field of $\mathcal Z(\AA)$. Then $\widetilde{\AA}=\AA\otimes_{\mathcal Z(\AA)}\widetilde{\mathcal Z(\AA)}$ is a vector space over the field $\widetilde{\mathcal Z(\AA)}$.  
We call the dimension of $\widetilde{\AA}$ over $\widetilde{\mathcal Z(\AA)}$ as the {\bf rank} of $\AA$ over $\mathcal Z(\AA)$, denoted as $\rank_{\mathcal Z} \mathcal A$.
\end{definition}

\begin{remark}\label{rem-unicty}
The unicity theorem for affine almost Azumaya algebras was established
in \cite{unicity}. It states that
$\rank_{\mathcal Z}\mathcal A$ is the square of the maximal dimension
of  irreducible representations of $\mathcal A$. Moreover, the
irreducible representations of maximal dimension are classified by
the so-called Azumaya locus; see \cite{unicity}.
\end{remark}

Proposition~\ref{prop-Azumaya} shows that the unicity theorem applies to
$\cS_q^+(\fS)$ when $q^2$ is a root of unity of odd order. Therefore, by
Remark~\ref{rem-unicty}, computing
$\rank_{\mathcal Z}\cS_q^+(\fS)$ is essential for understanding the
representation theory of $\cS_q^+(\fS)$. We carry out this computation in
the remainder of this section.

\vs

We first introduce some notations. 
Let $\fS$ be a triangulable punctured surface with an ideal triangulation $\lambda$.
For any subset $S\subset\mathbb Z^\lambda$, let
$\langle S \rangle$ denote the subgroup of $\mathbb Z^\lambda$ generated by $S$.

Define
\[
S_1
:=
\left\{
{\bf n}_\alpha
\;\middle|\;
\alpha \text{ is either a peripheral loop or a boundary central element
defined in~\eqref{boundary-central}}
\right\},
\]
and set
$\Gamma^\circ:=\langle S_1 \rangle.$
Let $\mathcal M$ be the set of all monomials in the peripheral loops and
boundary central elements with nonnegative integer exponents. Then
$\mathcal M\subset B^+(\fS)$, and hence $\mathcal M$ is linearly independent.
It follows that $S_1$ is linearly independent over $\mathbb Z$. Therefore,
\begin{align}\label{rank-circ}
\rank_{\mathbb Z}\Gamma^\circ
=
a+d,
\end{align}
where $a$ is the number of interior punctures and $d$ is the number of components of
$\partial\overline{\fS}$ that contain at least one puncture.
Then proof of \cite[Lemma 7.7(a)]{kim2024unicity} and 
Remark~\ref{rem-center} implies that 
\begin{align}\label{eq-summand}
\text{$\Gamma^\circ$ is a direct summand of $\langle\Gamma_\lambda \rangle$,}
\end{align}
where $\Gamma_\lambda$ is defined in Lemma~\ref{lem-basis-coordinate-monoid}(a).

For each boundary circle $e$ of $\fS$, we also use $e$, by abuse of
notation, to denote the ideal arc of $\lambda$ surounding $e$ that cuts off a subsurface
of the form
$\begin{tikzpicture}[baseline=(current bounding box.center)]
\draw[fill=gray!25,line width=0.8pt]
(0,0) circle[radius=0.25];
\draw[fill=white,line width=0.8pt]
(-0.05,-0.30) rectangle (0.05,-0.20);
\draw[fill=white,line width=0.8pt]
(0,0.05) circle[radius=0.10];
\end{tikzpicture}.$
Thus, every boundary component of $\fS$ is identified with an ideal arc
of $\lambda$. Let $E$ denote the set of boundary components of $\fS$,
and set
\[
\Gamma_{\mathrm{ev}}:=\kappa\bigl(B^{\mathrm{ev}}\bigr),
\]
where $\kappa$ is defined in Lemma~\ref{lem-basis-coordinate-monoid}(a) and $B^{\mathrm{ev}}$ is defined in 
\eqref{eqqq-Beven}.
We have the following lemma.

\begin{lemma}\label{lem-gamma-even}
We have
\[
\left|
\frac{\langle\Gamma_\lambda\rangle}
{\langle\Gamma_{\mathrm{ev}}\rangle}
\right|
=
2^{2\left\lfloor\frac{b-1}{2}\right\rfloor}, 
\]
where $b$ is the number of connected components of
$\partial\fS$ and
\begin{align}\label{eq-def-lrfloor}
	\left\lfloor\frac{b-1}{2}\right\rfloor = \begin{cases}
		\frac{b-1}{2} & b \text{ is odd},\\
		\frac{b-2}{2} & b \text{ is even}.
	\end{cases}
\end{align}
\end{lemma}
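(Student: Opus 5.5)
We will realise $\langle\Gamma_{\mathrm{ev}}\rangle$ as the kernel of an explicit parity homomorphism out of $\langle\Gamma_\lambda\rangle$ and then count the image. For each boundary component $e\in E$ (identified, as above, with an arc of $\lambda$), the number $e(\alpha)$ of endpoints of $\alpha$ on $e$ is congruent mod $2$ to a $\mathbb Z$-linear function of ${\bf n}_\alpha$. For a boundary edge this is simply $n_e$. For a boundary circle it is the coordinate on the arc of $\lambda$ surrounding it: the monogon cut off by that arc contains only the circle, and $\alpha$ meets the monogon in strands that either end on the circle or pass around it, so the parity of ${\bf n}_\alpha(e)$ equals $e(\alpha)$. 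For a boundary component of $\overline{\fS}$ with several boundary edges we use instead the sum of its edge coordinates. Thus we obtain a homomorphism
\[
\pi\colon \langle\Gamma_\lambda\rangle\longrightarrow (\mathbb Z/2)^{E},\qquad {\bf n}\longmapsto \bigl(\textstyle\sum_{e'\subset c}{\bf n}(e')\bmod 2\bigr)_{c\in E},
\]
and $B^{\rm ev}$ is exactly the set of $\alpha$ with $\pi({\bf n}_\alpha)$ constant, i.e.\ $\pi({\bf n}_\alpha)\in\{0,\mathbf 1\}$.

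Next we identify the image of $\pi$. The total number of endpoints of any tangle is even, so $\sum_c\pi({\bf n})_c=0$. Hence $\pi(\langle\Gamma_\lambda\rangle)$ lies in the even-weight subspace $W\subset(\mathbb Z/2)^E$, which has dimension $b-1$. Conversely, for any two boundary components $c\neq c'$ (taking $\fS$ connected), a simple arc from $c$ to $c'$ lies in $B^+(\fS)$, and its coordinate vector maps to $\delta_c+\delta_{c'}$. These vectors span $W$, so $\pi(\langle\Gamma_\lambda\rangle)=W$.

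It remains to determine $\langle\Gamma_{\mathrm{ev}}\rangle$. We claim that it equals $\pi^{-1}(\{0\}\cup(\{\mathbf 1\}\cap W))$. The inclusion "$\subseteq$" is clear. The main obstacle is the inclusion "$\supseteq$": we must show that $\pi^{-1}(0)$ is generated by elements ${\bf n}_\alpha$ with $\alpha\in B^{\rm ev}$, and not merely by differences that leave $\Gamma_{\rm ev}$. Our plan is as follows. Any ${\bf n}\in\langle\Gamma_\lambda\rangle$ is ${\bf n}_\alpha-{\bf n}_\beta$ with $\alpha,\beta\in B^+$. By adding suitable arcs between boundary components, we can adjust $\alpha$ and $\beta$ to have equal $\pi$-values, and adding a further such arc to both makes that common value $0$; since ${\bf n}\in\pi^{-1}(0)$, the same adjustment works for both terms. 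Here we use that $\Gamma_\lambda$ is additive in disjoint unions of crossingless diagrams, or at least that ${\bf n}_\alpha+{\bf n}_\gamma\in\Gamma_\lambda$ with the correct parities. We also use that $2\mathbb N^\lambda\subseteq\Gamma_\lambda$, which lets us absorb the correction terms. The same argument handles the preimage of $\mathbf 1$ when $b$ is even.

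Consequently the index is $|W|/|\{0\}\cup(\{\mathbf 1\}\cap W)|$. This equals $2^{b-1}$ when $b$ is odd, since then $\mathbf 1\notin W$, and $2^{b-2}$ when $b$ is even. In both cases it is $2^{2\lfloor\frac{b-1}{2}\rfloor}$ only after reconciling the exponent conventions of \eqref{eq-def-lrfloor}. This last point worries us, and we expect the intended reading to be that the squared exponent reflects the symplectic rank of $\overline Q_\lambda$ mod $2$ on this quotient, via Lemma~\ref{lem-even}. If the direct count above disagrees, we would instead compute $\langle\Gamma_\lambda\rangle/\langle\Gamma_{\rm ev}\rangle$ as the quotient by the mod-$2$ radical of $\overline Q_\lambda$, using Lemma~\ref{lem-even} to characterise $\Gamma_{\rm ev}$. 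We would then evaluate the rank of the induced alternating form on $W$, where the pairing of the arc classes $\delta_c+\delta_{c'}$ is $1$ exactly when they share one boundary component. That form on $W$ has rank $2\lfloor\frac{b-1}{2}\rfloor$, giving the stated index.
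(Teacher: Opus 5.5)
Your argument is correct and is essentially the paper's proof. It uses the same parity homomorphism $\langle\Gamma_\lambda\rangle\to\mathbb Z_2^E$, the same identification of $\langle\Gamma_{\mathrm{ev}}\rangle$ with the preimage of $\{\mathbf 0,\mathbf 1\}$, and the same count via the even-weight image.

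The only real variation is in the reverse inclusion. The paper shifts $\mathbf n$ by a large multiple of the all-$2$ vector and then invokes the inequality description of $\Gamma_\lambda$. You only need that $\Gamma_\lambda$ is a monoid. Concretely, if $\mathbf n=\mathbf n_\alpha-\mathbf n_\beta$ with $\pi(\mathbf n)=\mathbf 0$, then $\mathbf n=2\mathbf n_\alpha-(\mathbf n_\alpha+\mathbf n_\beta)$ is a difference of elements of $\Gamma_\lambda\cap\pi^{-1}(\mathbf 0)\subseteq\Gamma_{\mathrm{ev}}$. Similarly, $\mathbf n=(\mathbf n_\alpha+\mathbf n_\beta)-2\mathbf n_\beta$ handles the case $\pi(\mathbf n)=\mathbf 1$. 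Your route is slightly more economical; the paper's is more direct once Lemma~\ref{lem-basis-coordinate-monoid}(a) is in hand.

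A few points to clean up:
\begin{enumerate}
\item The components of $\partial\fS$ are the individual boundary edges and boundary circles. This is the reading under which $|E|=b$ and \eqref{eq-rank-lambda} hold. So your clause about summing edge coordinates over a component of $\partial\overline{\fS}$ never applies, and $\pi$ is exactly the paper's $f$.
\item The inclusion $2\mathbb N^\lambda\subseteq\Gamma_\lambda$ that you cite is false. A vector equal to $2$ on one side of a triangle and $0$ on the other two violates \eqref{eq-tri-2}. Nothing in your argument uses it, so it should simply be deleted.
\item There is nothing to reconcile at the end. By \eqref{eq-def-lrfloor}, $2\lfloor\frac{b-1}{2}\rfloor$ equals $b-1$ for $b$ odd and $b-2$ for $b$ even, which is exactly your direct count $|W|/|\{\mathbf 0,\mathbf 1\}\cap W|$. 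The fallback via the mod-$2$ radical of $\overline Q_\lambda$ is unnecessary and should be dropped.
\end{enumerate}
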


\begin{proof}
Define a group homomorphism
\[
f\colon \langle\Gamma_\lambda\rangle\longrightarrow\mathbb Z_2^E,
\qquad
\mathbf n=(n_e)_{e\in\lambda}
\longmapsto
(n_e)_{e\in E}\pmod 2.
\]
Let $\mathbf 1=(1,1,\ldots,1)\in\mathbb Z_2^E$. We first show that,
for every $\mathbf n\in\langle\Gamma_\lambda\rangle$,
\begin{equation}\label{eq-Gamma-ev-characterization}
	\mathbf n\in\langle\Gamma_{\mathrm{ev}}\rangle
	\quad\Longleftrightarrow\quad
	f(\mathbf n)\in\{\mathbf 0,\mathbf 1\}.
\end{equation}

The forward implication follows immediately from the definition of
$B^{\mathrm{ev}}$. Conversely, suppose that
$f(\mathbf n)\in\{\mathbf 0,\mathbf 1\}$. Let
$\mathbf m\in\Gamma_{\mathrm{ev}}$ be the vector all of whose entries
are equal to $2$. For all sufficiently large $k\in\mathbb N$, the vector
$\mathbf n+k\mathbf m$
has nonnegative entries and satisfies~\eqref{eq-tri-2}. Consequently,
$\mathbf n+k\mathbf m\in\Gamma_\lambda$. Moreover,
$f(\mathbf n+k\mathbf m)=f(\mathbf n)
\in\{\mathbf 0,\mathbf 1\}.$
It therefore follows from the definition of $B^{\mathrm{ev}}$ that
$\mathbf n+k\mathbf m\in\Gamma_{\mathrm{ev}}$. Since
$k\mathbf m\in\Gamma_{\mathrm{ev}}$, we conclude that
$\mathbf n\in\langle\Gamma_{\mathrm{ev}}\rangle$. This proves
\eqref{eq-Gamma-ev-characterization}.

It is straightforward to verify that
\[
{\rm im}f
=
\left\{
(n_e)_{e\in E}\in\mathbb Z_2^E
\;\middle|\;
\sum_{e\in E}n_e=0
\right\}.
\]
In particular, $|{\rm im}f|=2^{b-1}$. If $b$ is odd, then
$\mathbf 1\notin {\rm im}f$, whereas if $b$ is even, then
$\mathbf 1\in {\rm im}f$. Hence
\[
\left|
\frac{\langle\Gamma_\lambda\rangle}
{\langle\Gamma_{\mathrm{ev}}\rangle}
\right|
=
\begin{cases}
	2^{b-1}, & b \text{ is odd},\\[2pt]
	2^{b-2}, & b \text{ is even},
\end{cases}
=
2^{2\left\lfloor\frac{b-1}{2}\right\rfloor}.
\]
\end{proof}

Let $\ell'$ denote the order of $q$. Define
\begin{align}
\Gamma_q
:=
\begin{cases}
	\ell\Gamma_\lambda+{\rm Span}_{\mathbb N}(S_1),
	& \text{if $\ell'$ is odd},\\
	\ell\Gamma^{\mathrm{ev}}+{\rm Span}_{\mathbb N}(S_1),
	& \text{if $\ell'$ is even}.
\end{cases}
\end{align}
The proof of Theorem~\ref{thm-center} implies that
\begin{align}\label{eq-Gamma-q-characterization}
\Gamma_q
=
\left\{
\mathbf n\in\Gamma_\lambda
\;\middle|\;
\mathbf n\overline Q_\lambda\mathbf k^T=0
\text{ in }\mathbb Z_{\ell'}
\text{ for every }
\mathbf k\in\langle\Gamma_\lambda\rangle
\right\}.
\end{align}

\begin{lemma}\label{lem-center-torus}
We have
\begin{align}
	\langle\Gamma_q\rangle
	=
	\left\{
	\mathbf n\in\langle\Gamma_\lambda\rangle
	\;\middle|\;
	\mathbf n\overline Q_\lambda\mathbf k^T=0
	\text{ in }\mathbb Z_{\ell'}
	\text{ for every }
	\mathbf k\in\langle\Gamma_\lambda\rangle
	\right\}.
\end{align}
\end{lemma}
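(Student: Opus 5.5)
We will prove the two inclusions separately, writing $L$ for the right-hand side. The set $L$ is a subgroup of $\langle\Gamma_\lambda\rangle$, since the condition $\mathbf n\overline Q_\lambda\mathbf k^T=0$ in $\mathbb Z_{\ell'}$ is additive in $\mathbf n$. By \eqref{eq-Gamma-q-characterization}, every element of $\Gamma_q$ lies in $L$. Hence $\langle\Gamma_q\rangle\subseteq L$.

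For the reverse inclusion, let $\mathbf n\in L$. We will shift $\mathbf n$ back into the monoid $\Gamma_\lambda$ by adding an element of $\Gamma_q$, which is the same trick as in the proof of Lemma~\ref{lem-gamma-even}. Let $\mathbf m$ be the vector all of whose entries equal $2$. Then $\mathbf m\in\Gamma_\lambda$, because it satisfies \eqref{eq-tri-1} and \eqref{eq-tri-2}. It also lies in $\Gamma_{\mathrm{ev}}$, because every boundary component then has an even number of endpoints. Hence $\ell\mathbf m\in\Gamma_q$ in both parity cases of $\ell'$.

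For $k\in\mathbb N$ sufficiently large, $\mathbf n+k\ell\mathbf m$ has nonnegative entries. It satisfies the triangle inequalities \eqref{eq-tri-2}, since the added part contributes $2k\ell$ to each side and dominates the fixed deviations of $\mathbf n$. The parity condition \eqref{eq-tri-1} holds because the added part is even on every edge and $\mathbf n\in\langle\Gamma_\lambda\rangle$. This last point needs checking: the parity condition is additive, so it passes from $\Gamma_\lambda$ to its group completion $\langle\Gamma_\lambda\rangle$. Therefore $\mathbf n+k\ell\mathbf m\in\Gamma_\lambda$.

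Next, $\mathbf n+k\ell\mathbf m$ still satisfies the pairing condition. Indeed, $\mathbf n\in L$ by assumption, and $\ell\mathbf m\in\Gamma_q\subseteq L$. By \eqref{eq-Gamma-q-characterization} it follows that $\mathbf n+k\ell\mathbf m\in\Gamma_q$. Writing $\mathbf n=(\mathbf n+k\ell\mathbf m)-k(\ell\mathbf m)$ exhibits $\mathbf n$ as a difference of two elements of $\Gamma_q$. This gives $\mathbf n\in\langle\Gamma_q\rangle$, completing the reverse inclusion.

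The main point requiring care is the claim $\ell\mathbf m\in\Gamma_q$ when $\ell'$ is even, which amounts to checking $\mathbf m\in\Gamma_{\mathrm{ev}}$. An alternative is to verify directly, as in Lemma~\ref{lem-even}, that $\ell\mathbf m$ pairs to zero mod $\ell'$ with $\langle\Gamma_\lambda\rangle$, and then apply \eqref{eq-Gamma-q-characterization}. The conclusion is the same either way: $\mathbf m$ corresponds to two parallel copies of the multicurve dual to all edges, so every boundary component receives an even number of endpoints.
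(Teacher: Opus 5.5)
Your proof is correct and follows the paper's argument. You get $\subseteq$ from \eqref{eq-Gamma-q-characterization}. For $\supseteq$, you shift $\mathbf n$ by a large multiple of the constant vector with entries $2\ell$, which lies in $\Gamma_q$, so that it lands back in $\Gamma_\lambda$, and then apply the characterization again. You also spell out details the paper leaves implicit: why the shifted vector satisfies \eqref{eq-tri-1} and \eqref{eq-tri-2}, and why the all-$2$ vector lies in $\Gamma_{\mathrm{ev}}$.
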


\begin{proof}
Equation~\eqref{eq-Gamma-q-characterization} immediately gives
\[
\langle\Gamma_q\rangle
\subseteq
\left\{
\mathbf n\in\langle\Gamma_\lambda\rangle
\;\middle|\;
\mathbf n\overline Q_\lambda\mathbf k^T=0
\text{ in }\mathbb Z_{\ell'}
\text{ for every }
\mathbf k\in\langle\Gamma_\lambda\rangle
\right\}.
\]

Conversely, suppose that $\mathbf n\in\langle\Gamma_\lambda\rangle$
satisfies
$\mathbf n\overline Q_\lambda\mathbf k^T=0
\text{ in }\mathbb Z_{\ell'}$
for every $\mathbf k\in\langle\Gamma_\lambda\rangle$. Let
$\mathbf m\in\Gamma_q$ be the vector all of whose entries are equal to
$2\ell$. For all sufficiently large $r\in\mathbb N$, we have
$\mathbf n+r\mathbf m\in\Gamma_\lambda.$
Since both $\mathbf n$ and $\mathbf m$ satisfy the required
orthogonality condition,
\[
(\mathbf n+r\mathbf m)\overline Q_\lambda\mathbf k^T=0
\quad\text{in }\mathbb Z_{\ell'}
\]
for every $\mathbf k\in\langle\Gamma_\lambda\rangle$. It follows from
\eqref{eq-Gamma-q-characterization} that
$\mathbf n+r\mathbf m\in\Gamma_q.$
Since $r\mathbf m\in\langle\Gamma_q\rangle$, we obtain
\[
\mathbf n
=
(\mathbf n+r\mathbf m)-r\mathbf m
\in\langle\Gamma_q\rangle.
\]
This proves the reverse inclusion.
\end{proof}

Define
\[
\overline{\mathcal {X}}_q^{\mathrm{bl}}(\fS,\lambda)
:=
{\rm Span}_R
\left\{
x^{\mathbf n}
\;\middle|\;
\mathbf n\in\langle\Gamma_\lambda\rangle
\right\}
\subseteq
\overline{\mathcal {X}}_q(\fS,\lambda).
\]
It is straightforward to verify that
$\overline{\mathcal {X}}_q^{\mathrm{bl}}(\fS,\lambda)$ is a subalgebra of
$\overline{\mathcal {X}}_q(\fS,\lambda)$.

Lemma~\ref{lem-center-torus} and
\cite[Lemma~3.2(b)]{karuo2024center} imply that
\begin{align}\label{rank-torus}
{\rm rank}_{\mathcal Z}
\overline{\mathcal {X}}_q^{\mathrm{bl}}(\fS,\lambda)
=
\left|
\frac{\langle\Gamma_\lambda\rangle}
{\langle\Gamma_q\rangle}
\right|.
\end{align}
Moreover, Proposition~\ref{prop-Azumaya} and
\cite[Lemmas~6.1 and~6.2]{korinman2021unicity} imply that
\begin{align}\label{rank-torus1}
{\rm rank}_{\mathcal Z}\cS_q^+(\fS)
=
{\rm rank}_{\mathcal Z}
\overline{\mathcal {X}}_q^{\mathrm{bl}}(\fS,\lambda).
\end{align}

The following Theorem formulates the rank of the Muller--Roger--Yang skein $\cS_q^+(\fS)$ over its center when $q^2$ is a root of unity of odd order. 

\begin{theorem}\label{prop-rank}
Under the assumption of Theorem~\ref{thm-center},
we have
\[
{\rm rank}_{\mathcal Z}\cS_q^+(\fS)
=
\begin{cases}
	\ell^{\,2a+2b+6g+2d-6},
	& \text{if $\ell'$ is odd},\\[2pt]
	2^{2\left\lfloor\frac{b-1}{2}\right\rfloor}
	\ell^{\,2a+2b+6g+2d-6},
	& \text{if $\ell'$ is even},
\end{cases}
\]
where $a$ is the number of interior punctures,
$g$ is the genus of $\fS$, $b$ is the number of connected components of
$\partial\fS$, and $d$ is the number of components of
$\partial\overline{\fS}$ that contain at least one puncture.
\end{theorem}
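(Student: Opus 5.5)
By \eqref{rank-torus1} and \eqref{rank-torus}, it suffices to compute the index $\bigl|\langle\Gamma_\lambda\rangle/\langle\Gamma_q\rangle\bigr|$. I will factor it through the chain
\[
\langle\Gamma_q\rangle\subseteq \ell\langle\Gamma'\rangle+\Gamma^\circ\subseteq\langle\Gamma_\lambda\rangle,
\]
where $\Gamma'=\Gamma_\lambda$ if $\ell'$ is odd and $\Gamma'=\Gamma_{\mathrm{ev}}$ if $\ell'$ is even. By definition of $\Gamma_q$ the first inclusion is in fact an equality, since the group generated by $\ell\Gamma'+{\rm Span}_{\mathbb N}(S_1)$ is exactly $\ell\langle\Gamma'\rangle+\langle S_1\rangle$. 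So the task is to compute $\bigl|\langle\Gamma_\lambda\rangle/(\ell\langle\Gamma'\rangle+\Gamma^\circ)\bigr|$.

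First treat the odd case. By \eqref{eq-summand}, write $\langle\Gamma_\lambda\rangle=\Gamma^\circ\oplus L$ with $L$ free. Then $\ell\langle\Gamma_\lambda\rangle+\Gamma^\circ=\Gamma^\circ\oplus\ell L$, and the index is $\ell^{\operatorname{rank}L}$. By \eqref{eq-rank-lambda} and \eqref{rank-circ},
\[
\operatorname{rank}L=(3a+2b+6g+3d-6)-(a+d)=2a+2b+6g+2d-6,
\]
which gives the first formula.

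For the even case, use $\langle\Gamma_{\mathrm{ev}}\rangle\subseteq\langle\Gamma_\lambda\rangle$ to split
\[
\bigl|\langle\Gamma_\lambda\rangle/(\ell\langle\Gamma_{\mathrm{ev}}\rangle+\Gamma^\circ)\bigr|
=\bigl|\langle\Gamma_\lambda\rangle/(\ell\langle\Gamma_\lambda\rangle+\Gamma^\circ)\bigr|\cdot
\bigl|(\ell\langle\Gamma_\lambda\rangle+\Gamma^\circ)/(\ell\langle\Gamma_{\mathrm{ev}}\rangle+\Gamma^\circ)\bigr|.
\]
The first factor is $\ell^{2a+2b+6g+2d-6}$ as before. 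For the second factor, note that $\Gamma^\circ\subseteq\langle\Gamma_{\mathrm{ev}}\rangle$: peripheral loops have no endpoints, and a boundary central element $A_E$ has an even number of endpoints on each boundary component, so both lie in $B^{\mathrm{ev}}$. Hence the second factor equals $\bigl|\ell\langle\Gamma_\lambda\rangle/(\ell\langle\Gamma_{\mathrm{ev}}\rangle+(\Gamma^\circ\cap\ell\langle\Gamma_\lambda\rangle))\bigr|$.

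Since $\Gamma^\circ$ is a direct summand, $\Gamma^\circ\cap\ell\langle\Gamma_\lambda\rangle=\ell\Gamma^\circ\subseteq\ell\langle\Gamma_{\mathrm{ev}}\rangle$. The second factor therefore reduces to $\bigl|\langle\Gamma_\lambda\rangle/\langle\Gamma_{\mathrm{ev}}\rangle\bigr|$, because multiplication by $\ell$ is injective. Lemma~\ref{lem-gamma-even} evaluates this as $2^{2\lfloor\frac{b-1}{2}\rfloor}$. (Alternatively, $\ell$ is odd and the quotient is a $2$-group, so scaling by $\ell$ does not change the index.)

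The main obstacle is the bookkeeping justifying $\langle\Gamma_q\rangle=\ell\langle\Gamma'\rangle+\Gamma^\circ$, together with the direct-summand intersection step. Beyond that, the argument rests on the cited identifications \eqref{rank-torus} and \eqref{rank-torus1}, which I take as given from earlier in the paper.
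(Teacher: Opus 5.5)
Your proof is correct. The reduction to the index $|\langle\Gamma_\lambda\rangle/\langle\Gamma_q\rangle|$ and the odd case are exactly as in the paper. The even case uses a different intermediate subgroup.

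\textbf{The paper's route.} It factors through $\langle\Gamma_{\mathrm{ev}}\rangle$, writing the index as $|\langle\Gamma_\lambda\rangle/\langle\Gamma_{\mathrm{ev}}\rangle|\cdot|\langle\Gamma_{\mathrm{ev}}\rangle/(\ell\langle\Gamma_{\mathrm{ev}}\rangle+\Gamma^\circ)|$. The displayed $\ell(\langle\Gamma^{\mathrm{ev}}\rangle+\Gamma^\circ)$ in the paper has to be read this way. It then reruns the odd-case computation inside $\langle\Gamma_{\mathrm{ev}}\rangle$. This tacitly needs two facts: that $\Gamma^\circ$ is still a direct summand of the smaller lattice, and that $\langle\Gamma_{\mathrm{ev}}\rangle$ has full rank. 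The first holds because $\langle\Gamma_{\mathrm{ev}}\rangle/\Gamma^\circ$ embeds in the torsion-free group $\langle\Gamma_\lambda\rangle/\Gamma^\circ$.

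\textbf{Your route.} You factor through $\ell\langle\Gamma_\lambda\rangle+\Gamma^\circ$, so the first factor is literally the odd-case number. You handle the second factor by the second isomorphism theorem together with two facts. The first is the modular law $\ell\langle\Gamma_\lambda\rangle\cap(\ell\langle\Gamma_{\mathrm{ev}}\rangle+\Gamma^\circ)=\ell\langle\Gamma_{\mathrm{ev}}\rangle+(\ell\langle\Gamma_\lambda\rangle\cap\Gamma^\circ)$, which holds because $\ell\langle\Gamma_{\mathrm{ev}}\rangle\subseteq\ell\langle\Gamma_\lambda\rangle$; you use this implicitly and should state it. The second is $\ell\langle\Gamma_\lambda\rangle\cap\Gamma^\circ=\ell\Gamma^\circ$.

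\textbf{Comparison.} Your version uses the direct-summand property only in the ambient lattice, as supplied by the paper, at the cost of a little more bookkeeping. The paper's version is shorter but leaves the summand and rank facts unstated. Both routes need $\Gamma^\circ\subseteq\langle\Gamma_{\mathrm{ev}}\rangle$; the paper merely asserts this, whereas you justify it by counting endpoints.

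Your parenthetical ``alternative'' is not needed, and is not an argument on its own. Multiplication by $\ell$ is injective on a free abelian group, so it preserves indices regardless of the parity of $\ell$.
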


\begin{proof}
By~\eqref{rank-torus} and~\eqref{rank-torus1}, it suffices to prove that
\[
\left|
\frac{\langle\Gamma_\lambda\rangle}
{\langle\Gamma_q\rangle}
\right|
=
\begin{cases}
	\ell^{\,2a+2b+6g+2d-6},
	& \text{if $\ell'$ is odd},\\[2pt]
	2^{2\left\lfloor\frac{b-1}{2}\right\rfloor}
	\ell^{\,2a+2b+6g+2d-6},
	& \text{if $\ell'$ is even}.
\end{cases}
\]

Suppose first that $\ell'$ is odd. We have
\begin{align}
	&\left|
	\frac{\langle\Gamma_\lambda\rangle}
	{\langle\Gamma_q\rangle}
	\right|
	=
	\left|
	\frac{\langle\Gamma_\lambda\rangle}
	{\ell\langle\Gamma_\lambda\rangle+\Gamma^\circ}
	\right|
	=
	\left|
	\frac{\langle\Gamma_\lambda\rangle/\Gamma^\circ}
	{\bigl(\ell\langle\Gamma_\lambda\rangle
		+\Gamma^\circ\bigr)/\Gamma^\circ}
	\right|
	\nonumber\\
	=&
	\label{eq-rank-odd}
	\left|
	\frac{\langle\Gamma_\lambda\rangle/\Gamma^\circ}
	{\ell\bigl(\langle\Gamma_\lambda\rangle/
		\Gamma^\circ\bigr)}
	\right|\overset{\eqref{eq-summand}}{=}
	\ell^{
		{\rm rank}_{\mathbb Z}\langle\Gamma_\lambda\rangle
		-
		{\rm rank}_{\mathbb Z}\Gamma^\circ
	}\overset{\eqref{eq-rank-lambda},\,\eqref{rank-circ}}{=}
	\ell^{\,2a+2b+6g+2d-6}.
\end{align}

Now suppose that $\ell'$ is even. Since
$\Gamma^\circ\subseteq\langle\Gamma^{\mathrm{ev}}\rangle$, we obtain
\begin{align*}
	\left|
	\frac{\langle\Gamma_\lambda\rangle}
	{\langle\Gamma_q\rangle}
	\right|
	=
	\left|
	\frac{\langle\Gamma_\lambda\rangle}
	{\langle\Gamma^{\mathrm{ev}}\rangle}
	\right|
	\left|
	\frac{\langle\Gamma^{\mathrm{ev}}\rangle}
	{\ell\left(\langle\Gamma^{\mathrm{ev}}\rangle+\Gamma^\circ\right)}
	\right|=
	2^{2\left\lfloor\frac{b-1}{2}\right\rfloor}
	\ell^{
		{\rm rank}_{\mathbb Z}
		\langle\Gamma^{\mathrm{ev}}\rangle
		-
		{\rm rank}_{\mathbb Z}\Gamma^\circ
	}=
	2^{2\left\lfloor\frac{b-1}{2}\right\rfloor}
	\ell^{\,2a+2b+6g+2d-6},
\end{align*}
where the second equality follows from
Lemma~\ref{lem-gamma-even} and the same argument as in
\eqref{eq-rank-odd}, while the last equality follows from the fact that
$\langle\Gamma^{\mathrm{ev}}\rangle$ has finite index in
$\langle\Gamma_\lambda\rangle$, and hence
${\rm rank}_{\mathbb Z}
\langle\Gamma^{\mathrm{ev}}\rangle
=
{\rm rank}_{\mathbb Z}
\langle\Gamma_\lambda\rangle.$
\end{proof}

\begin{remark}\label{rem-rankkk}
Remark \ref{rem-center-reduced} implies 
$\widetilde{\mathcal Z(\cS_q^+(\fS))} = \widetilde{\mathcal Z(\rdS)}$.
It is well-known that every nonzero element in $\widetilde{\cS_q^+(\fS)}$ is invertible. This shows 
$\widetilde{\cS_q^+(\fS)}=\widetilde{\rdS}$. Then we have 
$\rank_{\mathcal Z} \rdS = \rank_{\mathcal Z} \cS_q^+(\fS)$.
\end{remark}

\bibliography{ref.bib}

\end{document}